\newtheorem{thm}{Theorem}[section]
\newtheorem{cor}[thm]{Corollary}
\newtheorem{prop}[thm]{Proposition}
\newtheorem{lem}[thm]{Lemma}
\theoremstyle{definition}
\theoremstyle{remark}
\newlength{\figboxwidth}
\newcommand{\makefig}[3]{
        \begin{figure}[htb]
        \refstepcounter{figure}
        \label{#2}
        \begin{center}
                #3~\\
                \smallskip
                Figure \thefigure.  #1
        \end{center}
        \medskip
        \end{figure}
}
\let\c@equation\c@thm
\numberwithin{equation}{section}
\title[Convex Cocompact Subgroups]{Dynamics of Convex Cocompact Subgroups of Mapping Class Groups}
\author{Ilya Gekhtman}
\date{6 December 2012}
\begin{document}

\begin{abstract}
For a convex cocompact subgroup $G<Mod(S)$, and points $x,y \in Teich(S)$ we obtain asymptotic formulas as $R\to \infty$ of $|B_{R}(x)\cap Gy|$ as well as the number of conjugacy classes of pseudo-Anosov elements in $G$ of dilatation at most $R$. We do this by developing an analogue of Patterson-Sullivan theory for the action of $G$ on $PMF$.

\end{abstract}

\maketitle

\tableofcontents

\section{Statement of Results}
The study of the dynamics of the action of the mapping class group on Teichm$\ddot{\mathrm{u}}$ller space has long been influenced by analogy with the actions of discrete isometry groups of manifolds of negative curvature. Two properties of interest for a negatively curved manifold $M$ are the growth of orbits of $\pi_{1}(M)$ and the asymptotics as $R\to \infty$ of the number $n_{M}(R)$ of closed geodesics of length at most $R$. For $M$ compact and negatively curved and $x,y\in M$, Margulis \cite{Marg 70} showed in his 1970 thesis that
$$\lim_{R\to\infty}e^{-hR}|B_{R}(x)\cap \Gamma y|=\Lambda(x)\Lambda(y)$$  and $$\lim_{R\to\infty}hRe^{-hR}n_{M}(R)=1$$ where $\Lambda$ is a continuous function on $M$ and $h$ is the topological entropy of the geodesic flow. Roblin \cite{Rob} generalized these results to $M=X/\Gamma$ any quotient of a contractible $CAT(-1)$ metric space by a geometrically finite group $\Gamma$. 
We prove an analogue of Roblin's result for certain subgroups of mapping class groups acting on Teichm$\ddot{\mathrm{u}}$ller space.
Let $S$ be a surface of genus $g\geq 2$. Let $Teich(S)$ be the associated Teichm$\ddot{\mathrm{u}}$ller space of isotopy classes of marked complex structures on $S$. Let $Mod(S)=Diff(S)/Diff_{0}(S)$ be the associated mapping class group, and let $d_{T}$ denote the Teichm$\ddot{\mathrm{u}}$ller metric on $Teich(S)$.
A subgroup $G<Mod(S)$ is called convex cocompact if its orbit in $Teich(S)$ is quasiconvex. Convex-cocompact subgroups of $Mod(S)$ were introduced by Farb and Mosher \cite{FM} and further developed by Kent and Leininger \cite{KL1}.
We prove:
\begin{thm}
Let S be a closed surface of genus $g\geq 2$. Let $G<Mod(S)$ be a convex cocompact subgroup containing a pseudo-Anosov element whose axis lies in the principal stratum. Let $x,y \in Teich(S)$ and $B_{R}(x)$ the ball of radius $R$ about $x$ in the Teichm$\ddot{\mathrm{u}}$ller metric. 
Let $h$ be the exponent of convergence of $G$ with respect to the Teichm$\ddot{\mathrm{u}}$ller metric.
Then $$\lim_{R\to\infty}e^{-hR}|B_{R}(x)\cap \Gamma y|=\Lambda(x)\Lambda(y)$$  where $\Lambda$ is some $G$ invariant continuous function on $Teich(S)$.
\end{thm}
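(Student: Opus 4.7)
The plan is to develop a Patterson--Sullivan theory for the action of $G$ on $PMF(S)$ and then run Roblin's equidistribution/counting machinery in this setting. First I would form the Poincar\'e series $\mathcal{P}(s)=\sum_{g\in G}e^{-s\,d_{T}(x,gy)}$; by definition its critical exponent is $h$. Using convex cocompactness (and, if necessary, the Patterson trick to force divergence at $s=h$), weak-$*$ limits as $s\to h^{+}$ of the normalized measures $\mathcal{P}(s)^{-1}\sum_{g}e^{-s\,d_{T}(x,gy)}\delta_{gy}$ yield a finite measure $\mu_{x}$ on the Thurston compactification $Teich(S)\cup PMF$, supported on the limit set $\Lambda(G)\subset PMF$. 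Results of Kent--Leininger imply that $\Lambda(G)$ consists of uniquely ergodic foliations, on which extremal length descends to a well-defined function that plays the role of a horofunction. I would then verify that $\{\mu_{x}\}_{x\in Teich(S)}$ is a $G$-equivariant conformal density of dimension $h$:
\[
\frac{d\mu_{x}}{d\mu_{y}}(\xi)=e^{-h\,\beta_{\xi}(x,y)},
\]
with $\beta_{\xi}(x,y)=\tfrac{1}{2}\log\bigl(\mathrm{Ext}_{y}(\xi)/\mathrm{Ext}_{x}(\xi)\bigr)$.

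Next I would establish a shadow lemma: for a sufficiently large shadow parameter $r$,
\[
\mu_{x}(\mathcal{O}_{x}(gy,r))\asymp e^{-h\,d_{T}(x,gy)},
\]
where $\mathcal{O}_{x}(gy,r)$ is the set of $\xi\in\Lambda(G)$ whose Teichm\"uller ray from $x$ toward $\xi$ enters the $r$-ball about $gy$, and the implicit constants do not depend on $g$. The crucial inputs are the strong contraction property of Teichm\"uller geodesics with uniquely ergodic vertical foliation (Rafi) and the quasiconvexity of $Gy$; together they give $\Lambda(G)$ a visual-metric structure on which $G$ acts as a uniform convergence group, so the standard Sullivan-type estimates (packing, global measure formula) go through.

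Following Roblin, I would then build the Bowen--Margulis--Sullivan measure $m^{\mathrm{BMS}}$ on $G\backslash\{\text{bi-infinite Teichm\"uller geodesics with both endpoints in }\Lambda(G)\}\times\mathbb{R}$ from the product $\mu_{x}\otimes\mu_{x}\otimes dt$ weighted by the appropriate Gromov product coming from extremal length. Convex cocompactness forces $m^{\mathrm{BMS}}$ to be finite, since the recurrent locus of the associated flow is essentially compact. The heart of the argument is showing $m^{\mathrm{BMS}}$ is mixing under the Teichm\"uller geodesic flow; this is precisely where the hypothesis that $G$ contains a pseudo-Anosov whose axis lies in the principal stratum is used, since it places the support of $m^{\mathrm{BMS}}$ in a region where the flow has genuine unstable-foliation behavior and one can import Masur--Veech / Avila--Gou\"ezel--Yoccoz style mixing into the $G$-quotient.

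The asymptotic count then follows by a direct transposition of Roblin's argument: mixing, combined with the shadow lemma and the conformal density, yields equidistribution of horoshells, and integrating gives
\[
|B_{R}(x)\cap Gy|\sim \|m^{\mathrm{BMS}}\|^{-1}\,\|\mu_{x}\|\,\|\mu_{y}\|\,e^{hR}.
\]
Setting $\Lambda(x):=\|\mu_{x}\|$ (modulo the overall normalization) produces the stated formula; continuity of $\Lambda$ follows from continuity of the extremal-length cocycle in $x$ and $G$-invariance from the equivariance $g_{\ast}\mu_{x}=\mu_{gx}$ together with $G$-invariance of $\Lambda(G)$. The main obstacle is the mixing of $m^{\mathrm{BMS}}$: $Teich(S)$ is not CAT$(-1)$ and the standard boundary machinery is unavailable, so funneling the principal-stratum hypothesis through to genuine hyperbolicity of the flow on the relevant subset of the unit tangent bundle will be the technical heart of the proof.
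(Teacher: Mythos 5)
Your overall architecture (Patterson--Sullivan density on $\Lambda(G)\subset PMF$, Bowen--Margulis measure from $\nu_x\otimes\nu_x$ weighted by the extremal-length Gromov product, finiteness from cocompactness on the weak hull, then Roblin's scheme) matches the paper. But there are two genuine gaps, and you have misplaced where the principal-stratum hypothesis does its work.

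First, mixing. You propose to ``import Masur--Veech / Avila--Gou\"ezel--Yoccoz style mixing into the $G$-quotient.'' This cannot work: for a convex cocompact $G$ the measure $m^{\mathrm{BMS}}$ is supported on $Q^{1}WH(G)/G$, which is singular with respect to the Masur--Veech (Lebesgue-class) measure, so AGY-type results say nothing about it. The paper instead proves ergodicity by a Hopf argument using the product structure of $\mu$, and then gets mixing from Babillot's abstract criterion; the input that criterion needs is \emph{nonarithmeticity of the length spectrum} of $G$ (Theorem 1.4), i.e.\ that the logarithms of dilatations of pseudo-Anosovs in $G$ generate a dense subgroup of $\mathbb{R}$. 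That is a nontrivial theorem in its own right, proved by embedding a free subsemigroup of $G$ into $SL_{n}\mathbb{R}$ via train-track coordinates and invoking Guivarc'h--Urban, together with a cross-ratio/horosphere computation to show a period of the limiting eigenfunction would force the whole length spectrum into $k\mathbb{Z}$. Your proposal has no substitute for this step, and the principal-stratum hypothesis plays no role in it.

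Second, the counting step is not ``a direct transposition of Roblin's argument.'' Roblin's scheme needs the CAT$(-1)$ fact that two geodesics entering two far-apart balls of bounded radius nearly coincide in between (so that the product of shadows $\Theta^{-}_{r}(b,a)\times\Theta^{-}_{r}(a,b)$ is contained in the set of pairs of endpoints of geodesics passing through both balls). In Teichm\"uller space this fails in general; it holds only for geodesic segments spending a definite proportion of their time in the thick part of the principal stratum, via the Eskin--Mirzakhani Hodge-norm estimates. This is exactly where the hypothesis on an axis in the principal stratum enters: it forces $\mu$ to give full mass to the principal stratum, and an ergodic-theoretic argument (the analogue of the ABEM/EM ``random walk'' estimates) then shows that the orbit points $\gamma y$ for which $[x,\gamma y]$ spends too much time near the multiple-zero locus or in the thin part contribute $o(e^{hR})$ and can be discarded. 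Without this section the shadow-counting argument does not close, even granting mixing.
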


\begin{thm}
Let $G$ be as in Theorem 1.1
Let $n_{M}(R)$ be the number of conjugacy classes of primitive  pseudo-Anosov mapping classes in $G$ of Teichm$\ddot{\mathrm{u}}$ller translation length at most $R$ (this translation length is the logarithm of the dilatation of the pseudo-Anosov representative).  
Then 
$$\lim_{R\to\infty}hRe^{-hR}n_{M}(R)=1.$$  
\end{thm}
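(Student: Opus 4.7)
The plan is to deduce Theorem~1.2 from Theorem~1.1 by the Margulis--Roblin template: build a Bowen--Margulis measure from the Patterson--Sullivan data, prove mixing of the associated Teichm\"uller geodesic flow on the quotient, and convert mixing into equidistribution of closed orbits.

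First, using the Patterson--Sullivan measures $\{\mu_x\}$ on the limit set $\Lambda(G)\subset PMF$ that underlie Theorem~1.1, I would build a $G$-invariant Radon measure $\tilde m$ on the space of parametrized Teichm\"uller geodesics whose endpoints $(\xi^-,\xi^+)$ lie in $\Lambda(G)$. In Hopf-type coordinates $\tilde m$ is locally of the form $\mu_x\otimes\mu_x\otimes dt$ weighted by a Gromov-product factor based at a fixed $x\in Teich(S)$. Convex cocompactness of $G$, together with the fact (used in proving Theorem~1.1) that $\Lambda(G)$ consists of uniquely ergodic foliations, should imply that $\tilde m$ descends to a finite measure $m_{BM}$ on the quotient $\mathcal{G}/G$ of this geodesic space by $G$.

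Next, I would prove that the Teichm\"uller geodesic flow $(\phi_t)$ is mixing on $(\mathcal{G}/G, m_{BM})$. This is the main step. The assumption that $G$ contains a pseudo-Anosov with axis in the principal stratum plays two roles: it guarantees that an $m_{BM}$-full-measure set of geodesics passes through the principal stratum, where the stable/unstable foliations admit a genuine local product structure and a shadow lemma holds; and, by conjugating and taking suitable powers, it yields two independent pseudo-Anosovs whose log-dilatations are $\mathbb{Q}$-linearly independent, so the length spectrum of $G$ is nonarithmetic. Mixing then follows by the Babillot--Roblin strategy, combining double ergodicity of the boundary action $G\curvearrowright(\Lambda(G)\times\Lambda(G),\mu_x\otimes\mu_x)$ with nonarithmeticity of the length spectrum.

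Finally, $G$-conjugacy classes of primitive pseudo-Anosov elements $g\in G$ are in bijection with primitive closed orbits of $\phi_t$ on $\mathcal{G}/G$, the orbit associated to $[g]$ having period exactly $\log\lambda(g)$. Roblin's abstract argument then converts mixing of $(\phi_t,m_{BM})$, together with the shadow lemma and the local product structure of $m_{BM}$, into the desired asymptotic $n_M(R)\sim e^{hR}/(hR)$. The principal obstacle is the mixing step: the Teichm\"uller flow lacks the smooth hyperbolic structure of a $CAT(-1)$ geodesic flow, and the continuity of the conditionals of $m_{BM}$ along stable/unstable leaves (needed to run the Babillot--Roblin reduction) can fail at foliations with saddle connections; the principal-stratum axis hypothesis is what allows one to confine the argument to a region where these objects behave as in the negatively curved setting.
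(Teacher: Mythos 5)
Your overall architecture is exactly the one the paper follows: a Patterson--Sullivan density on $\Lambda(G)$, a Bowen--Margulis measure with Hopf-coordinate product structure that is finite by cocompactness of the action on $WH(G)$, mixing via Babillot's argument, and Roblin's scheme passing from mixing to orbit equidistribution (Theorem 12.8) and then to equidistribution of closed geodesics (Theorem 13.1), of which Theorem 1.2 is an immediate corollary. However, one sub-step as you describe it genuinely fails. Your proposed mechanism for nonarithmeticity of the length spectrum --- ``by conjugating and taking suitable powers, it yields two independent pseudo-Anosovs whose log-dilatations are $\mathbb{Q}$-linearly independent'' --- cannot work: conjugation preserves dilatation and raising to the $m$-th power multiplies the log-dilatation by $m$, so these operations can never convert rationally dependent log-dilatations into independent ones. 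Nonarithmeticity is a genuine theorem here (Theorem 1.4 $=$ Theorem 9.1); the paper proves it by realizing a free subsemigroup of $G$ as a semigroup of proximal symplectic matrices acting on train-track coordinates for $MF$, passing to a quotient on which the action is strongly irreducible, and invoking the Guivarc'h--Urban density theorem. The analogous statement in variable negative curvature is open, so this step should not be treated as free.

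A second, smaller misattribution: the principal-stratum hypothesis is not what makes mixing work --- mixing (Theorem 1.3) is proved for every nonelementary convex cocompact $G$, using only ergodicity, the cross-ratio/period argument, and nonarithmeticity. The hypothesis enters in Sections 11--12, where one must show that geodesic segments spending too large a proportion of time near the multiple zero locus (or in the thin part) contribute negligibly to orbit growth; only segments spending a definite fraction of time in the thick part of the principal stratum satisfy the Eskin--Mirzakhani contraction and fellow-traveling lemmas that substitute for the $CAT(-1)$ shadow comparisons in Roblin's counting argument. Your closing paragraph gestures at the right phenomenon, but it is the counting step, not the mixing step, where this control is indispensable.
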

Like Margulis and Roblin, we prove the counting estimate by constructing a certain measure on the unit tangent bundle and prove it is mixing. We develop an analogue of Patterson-Sullivan theory for the action of $G<Mod(S)$ on Thurston's sphere $PMF$. We construct a unique $G$-conformal density $\nu_{x},x\in Teich(S)$ supported on the limit set $\Lambda(G)\subset PMF$ of $G$, and scale the product measure $\nu_{x}\times \nu_{x}$ by a factor depending on the Busemann function to form a finite (in fact compactly supported) $G$ invariant measure $\mu$ on the unit (co)tangent bundle $Q^{1}(S)$, which can be considered the analogue Bowen-Margulis measure in negative curvature. We prove
\begin{thm}
The measure $\mu$ associated with any convex cocompact subgroup $G<Mod(S)$ is mixing.
\end{thm}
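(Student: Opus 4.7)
The plan is to transplant the Roblin--Babillot proof of mixing of the Bowen--Margulis measure from the CAT$(-1)$ setting to the Teichm\"uller context, using that a convex cocompact $G<Mod(S)$ is word hyperbolic and its orbit in $Teich(S)$ is a quasiconvex set on which the Teichm\"uller geometry behaves coarsely like a negatively curved one. The two structural ingredients are a local product decomposition of $\mu$ in Hopf coordinates $\Lambda(G)\times\Lambda(G)\setminus\Delta\times\mathbb{R}$, and a criterion converting ergodicity of the Teichm\"uller flow into mixing.

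First I would establish a shadow lemma: there exist constants $C,r>0$ such that
$$C^{-1}e^{-hd_T(x,gx)}\leq\nu_x(O_r(gx))\leq Ce^{-hd_T(x,gx)}$$
for every $g\in G$, where $O_r(gx)\subset PMF$ is the shadow from $x$ of the Teichm\"uller ball $B_r(gx)$. Convex cocompactness, together with the principal-stratum axis hypothesis, guarantees that the relevant Teichm\"uller geodesics stay in a single thick part of moduli space, so the quasi-geodesic fellow-traveling results of Farb--Mosher and Kent--Leininger transfer the CAT$(-1)$ shadow estimate to this setting. Standard Patterson--Sullivan bookkeeping then yields that $G$ is of divergence type and that $\nu_x$ is the unique $G$-conformal density of exponent $h$, and is non-atomic.

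Second, I would run the Hopf argument to show that the Teichm\"uller geodesic flow on $(Q^1(S),\mu)$ is conservative and ergodic, with $\mu$ locally equal to $d\nu_x\,d\nu_x\,dt$. Ergodicity plus the local product decomposition then reduces mixing, via Babillot's criterion, to the non-arithmeticity of the length spectrum of $G$, that is, of the set of logarithms of dilatations of pseudo-Anosov elements of $G$. I would establish non-arithmeticity by taking two independent pseudo-Anosov elements $f,g\in G$, showing that their axes cross transversely in the limit set $\Lambda(G)$, and perturbing $f$ by a high power of $g$ so that the resulting continuously varying translation lengths cannot all lie in a fixed arithmetic progression.

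The main obstacle I foresee is the shadow lemma itself. Unlike in a CAT$(-1)$ space, the Gromov product and the extremal-length Busemann function need not satisfy the sharp exponential estimates that make the shadow argument automatic; the convex-cocompactness hypothesis and the principal-stratum axis must be used in an essential way to obtain uniform upper and lower bounds on $\nu_x$-shadows. A secondary delicate point is verifying non-arithmeticity, since dilatations are Perron numbers and could a priori satisfy unexpected multiplicative relations; here the genericity of pseudo-Anosov axes in a non-elementary convex cocompact subgroup must be exploited carefully.
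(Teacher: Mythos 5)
Your overall architecture matches the paper's: construct the Patterson--Sullivan density, give $\tilde\mu$ a local product structure in Hopf-type coordinates, prove conservativity and ergodicity by a Hopf argument using the strong stable/unstable foliations, and then invoke Babillot's criterion to reduce mixing to non-arithmeticity of the length spectrum, detected through a cross-ratio on quadruples of points of $\Lambda(G)$. (One small divergence: the paper never proves or needs a two-sided shadow lemma for this theorem; full support and non-atomicity of $\nu_x$ are obtained directly from minimality of the $G$-action on $\Lambda(G)$ and conicality of limit points, which is enough for the Hopf and Babillot steps.)

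The genuine gap is in your final step, the non-arithmeticity of the set of logarithms of dilatations. Your proposed argument --- perturb $f$ by a high power of $g$ ``so that the resulting continuously varying translation lengths cannot all lie in a fixed arithmetic progression'' --- does not work as stated, because there is nothing to vary continuously: $G$ is discrete, the family $fg^n$ is a discrete sequence, and for a convex cocompact $G$ the limit set $\Lambda(G)$ is typically totally disconnected. Consequently a continuous $k\mathbb{Z}$-valued cross-ratio (or length function) on $\Lambda(G)^4$ can perfectly well be locally constant without being constant, and continuity alone yields no contradiction. This is exactly why the Babillot/Dal'bo endgame from CAT$(-1)$ does not transplant for free, and indeed the paper notes that the analogous question in variable negative curvature is open. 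The paper closes the gap by a different mechanism: it uses train-track coordinates to realize a free subsemigroup of $G$ generated by two independent pseudo-Anosovs as a semigroup of positive symplectic matrices acting on the weight space $W_\tau$, with top eigenvalues equal to the dilatations; after passing to a suitable quotient on which the action is strongly irreducible and proximal, it applies the Guivarc'h--Urban theorem that the logarithms of the top eigenvalues of such a semigroup generate a dense subgroup of $\mathbb{R}$. Without this (or some substitute of comparable strength), your reduction via Babillot's criterion leaves mixing unproved.
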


             A difficulty faced in our setting is that $Teich(S)$ is not globally hyperbolic in any reasonable sense. It is neither $CAT(0)$ nor Gromov hyperbolic: indeed pairs of geodesic rays through the same point may fellow-travel arbitrarily far apart \cite{Ivanov}.  Thurston proved that  $Teich(S)$ has a natural $Mod(S)$ equivariant compactification by the sphere of projective measured foliations, but not every geodesic ray converges to a limit in $PMF$, rays with the same limit point are not necessarily asymptotic, and rays with different limit points may stay a bounded distance apart. Thurston's compactification coincides neither with the Gromov compactification (which is not Hausdorff) nor with the horofunction compactification (which contains $PMF$ as a proper subset of smaller dimension). 
      However, the proof of Theorems 1.1 and 1.2 requires that generic (with respect to the analogue of the Bowen-Margulis measure) geodesic segments have a certain property typical of $CAT(-1)$ spaces:  namely, if two geodesics both pass within two balls of bounded radius lying far apart, they become very close somewhere in the middle. This occurs only if the segments spend a uniform proportion of time in the part of $Q^{1}(S)$  with no short flat curves (if we only required the segments to spend a uniform proportion time over a compact subset of the moduli space of Riemann surfaces, we would see behavior indicative of Gromov hyperbolicity).  We use some ergodic-theoretic arguments together with some hodge norm estimates from \cite{EM} to show that the asymptotics is controlled by geodesic segments which  are well-behaved in this sense and use techniques analogous to Roblin's to count these well-behaved geodesics.

           In order to prove mixing of $\mu$ in Theorem 1.3 we prove a certain nondegeneracy condition for the length spectrum of $G$, which is in our setting the measure of maximal entropy for the Teichm$\ddot{\mathrm{u}}$ller geodesic flow over $Teich(S)/G$.

\begin{thm}
Let $G<Mod(S)$ be a nonelementary subgroup. Then the logarithms of the dilatations of pseudo-Anosov elements of $G$ generate a dense subgroup of $\mathbb{R}$.
\end{thm}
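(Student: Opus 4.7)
The plan is to adapt Dal'bo's non-arithmeticity argument for nonelementary groups of $CAT(-1)$-isometries to our setting. Suppose for contradiction that the subgroup of $\mathbb{R}$ generated by the log-dilatations of pseudo-Anosov elements of $G$ is not dense; then it equals $c\mathbb{Z}$ for some $c>0$. Using nonelementarity, choose two independent pseudo-Anosov elements $\phi,\psi\in G$ whose four endpoints $\phi^\pm,\psi^\pm\in PMF$ are distinct. For each $\gamma\in G$, set $\psi_\gamma=\gamma\psi\gamma^{-1}$; this conjugate has the same translation length $\ell(\psi)$ but axis endpoints $\gamma\psi^\pm$. A standard ping-pong argument using the north--south dynamics of pseudo-Anosovs on $PMF$ shows that for all sufficiently large $n,m$, the element $\phi^n\psi_\gamma^m\in G$ is pseudo-Anosov, with axis endpoints tending to $\phi^+$ and $\gamma\psi^-$ as $n,m\to\infty$.

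The heart of the proof is the asymptotic translation length formula
$$\ell(\phi^n\psi_\gamma^m) \;=\; n\,\ell(\phi)+m\,\ell(\psi)+B(\gamma)+o_{n,m}(1),$$
where $B(\gamma)$ is a ``Busemann cocycle''-type quantity depending continuously on the quadruple of endpoints $(\phi^+,\phi^-,\gamma\psi^+,\gamma\psi^-)$ on the open set where the four entries are distinct. Granting this formula, each $\ell(\phi^n\psi_\gamma^m)$ lies in $c\mathbb{Z}$ by assumption, and since $\ell(\phi),\ell(\psi)\in c\mathbb{Z}$, passing to the limit $n,m\to\infty$ forces $B(\gamma)\in c\mathbb{Z}$ for every $\gamma\in G$. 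By the minimality of the $G$-action on its limit set in $PMF$ (McCarthy--Papadopoulos, a consequence of nonelementarity), the orbit $\{(\gamma\psi^+,\gamma\psi^-):\gamma\in G\}$ is dense in $\Lambda(G)\times\Lambda(G)$ minus the diagonal. Continuity of $B$ on $(PMF)^2$ (with the first two arguments fixed) then forces $B$ to be locally constant near every such orbit point. A direct computation shows, however, that $B$ is \emph{not} locally constant in its endpoints: varying $\gamma\psi^-$ continuously slides the ``entry point'' of the axis of $\psi_\gamma$ into a fixed bounded region and produces a non-trivial change in $B$, exactly as the analogous Busemann function varies with its base-point at infinity in $CAT(-1)$ geometry. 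This contradiction shows that the log-dilatations must generate a dense subgroup of $\mathbb{R}$.

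The hard part is the asymptotic length formula: in $CAT(-1)$ spaces it is a one-line consequence of negative curvature, but $Teich(S)$ is neither Gromov hyperbolic nor $CAT(0)$. The natural approach is to prove the formula first under the additional assumption that $\phi$ and $\psi$ can be chosen with axes in the thick part of $Teich(S)$, where Rafi's fellow-travelling results for Teichmüller geodesics in the thick part apply; this yields a decomposition of the axis of $\phi^n\psi_\gamma^m$ into a long segment close to the axis of $\phi^n$, a uniformly bounded ``transition'' region whose geometric contribution defines $B(\gamma)$, and a long segment close to the axis of $\psi_\gamma^m$. To handle arbitrary nonelementary $G$, one must either verify that such thick-axis $\phi,\psi$ exist in any nonelementary subgroup (and that $\phi^n\psi_\gamma^m$ inherits the thick-part behaviour for large $n,m$), or else invoke the same ergodic-theoretic thick-part control developed in the main body of the paper for the counting theorems--namely that generic Teichmüller geodesic segments spend a uniform proportion of time in the thick part--to extend the length formula beyond the purely thick-axis case.
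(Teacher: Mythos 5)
Your proposal takes a genuinely different route from the paper, and unfortunately the route has gaps that are fatal in this setting. The paper does \emph{not} argue geometrically at all: it picks a maximal recurrent train track $\tau$ and two independent pseudo-Anosovs whose attracting foliations are carried by $\tau$, observes that the free sub-semigroup they generate acts linearly (and symplectically) on the weight space $W_{\tau}\cong\mathbb{R}^{6g-6}$ by positive matrices whose top eigenvalues are exactly the dilatations, verifies a transversality condition ($\Gamma v_{B}\cap E_{A}=\emptyset$) using north--south dynamics on $PMF$, passes to a quotient on which the action is strongly irreducible, and then invokes Guivarc'h--Urban's theorem that a strongly irreducible proximal semigroup in $SL_{n}\mathbb{R}$ has non-arithmetic top-eigenvalue spectrum. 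This sidesteps every geometric difficulty of $Teich(S)$.

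The gaps in your Dal'bo-style argument are the following. First, the asymptotic length formula $\ell(\phi^{n}\psi_{\gamma}^{m})=n\ell(\phi)+m\ell(\psi)+B(\gamma)+o(1)$ is precisely the step that is \emph{not} a formality outside $CAT(-1)$; the paper explicitly remarks that the analogous non-arithmeticity statement is still open in variable negative curvature, which is a strong hint that this formula cannot simply be ``granted.'' Your fallback of invoking the thick-part ergodic control from the counting sections does not apply: those results concern Bowen--Margulis-generic geodesics for a \emph{convex cocompact} subgroup, whereas Theorem 1.4 is asserted for an arbitrary nonelementary subgroup, whose limit set may contain non-uniquely-ergodic foliations and whose axes need not stay in any thick part. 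The Busemann/cross-ratio machinery the paper develops (Sections 4 and 10) is only defined and continuous on $UE$, so $B(\gamma)$ need not even be defined, let alone continuous, on the orbit you want to use. Second, even granting the formula, your endgame does not close: deducing that $B$ is ``locally constant'' on a dense orbit yields no contradiction when $\Lambda(G)\times\Lambda(G)$ minus the diagonal is totally disconnected, which is the typical situation (for convex cocompact $G\neq Mod(S)$ the limit set is a Cantor set). The assertion that ``a direct computation shows $B$ is not locally constant'' is exactly the content one must prove --- it amounts to exhibiting two configurations of endpoints in the limit set whose cross-ratios differ by something outside $c\mathbb{Z}$ --- and no such computation is supplied. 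Note that in Section 10 the paper runs this implication in the \emph{opposite} direction: it uses Theorem 1.4 (already proved by the linear-algebraic route) to conclude that the cross-ratio cannot take values only in $k\mathbb{Z}$, which is what makes the mixing argument work.
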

For subsemigroups of $SL_{n}\mathbb{R}$ acting irreducibly on $\mathbb{R}^{n}$ and containing a proximal element, an analogous result is proved by Guivarch and Urban in \cite{GuUr}. In variable negative curvature this question remains open. We prove Theorem 1.4 by using the affine and symplectic structure of $MF$ given by train track coordinates to embedd a sub semigroup of $G$ into $SL_{n}\mathbb{R}$ with the image satisfying the conditions of \cite{GuUr}.

When $G$ is the full mapping class group, analogues of Theorems 1.1 and 1.2 respectively were proved by Athreya-Bufetov-Eskin-Mirzakhani in \cite{ABEM} and Eskin-Mirzakhani in \cite{EM}, in which case the Bowen-Margulis measure coincides with the Masur-Veech measure.

\subsection{Acknowledgments}
I would like to thank my advisor, Alex Eskin, for suggesting this problem to me and his constant advice, encouragement and insight. I would like to thank Howard Masur for teaching me a lot about Teichm$\ddot{\mathrm{u}}$ller theory and critical advice on many sections of this paper. I would like to thank Barbara Schapira for clearing up my misconceptions about Patterson-Sullivan theory and useful comments on an earlier draft of this paper. I would like to thank Alex Wright for useful comments on an earlier draft of this paper. I would like to thank Tam Nguyen Phan for teaching me a lot about various aspects of negative and nonpositive curvature.
I would like to thank Frederic Paulin for an encouraging conversation about extending Patterson-Sullivan theory to the Teichm$\ddot{\mathrm{u}}$ller setting when I was beginning working on this project.
I would like to thank Jayadev Athreya, Francoise Dal'bo, Benson Farb, Vaibhav Gadre, Roland Gunesch, and Joseph Maher for useful conversations. 
\section{Background on Teichm$\ddot{\mathrm{u}}$ller Theory}
Let $S$ be a closed surface of genus at least $2$. Let $Mod(S)$ be the mapping class group of $S$. Let $Teich(S)$ be the space of marked complex structures on $S$ up to isotopy. The space $Q(S)$ of quadratic differentials can be thought as a cotangent bundle of $Teich(S)$. A stratum of $Q(S)$ consists of all quadratic differentials whose zeros have the same combinatorial singularity type. The principal stratum consists of all quadratic differentials with simple zeros.  Let  $Q^{1}(S)$ be the space of area one quadratic differentials, which can be identified with the unit cotangent bundle to $Teich (S)$. Let $\pi: Q^{1}(S)\to Teich(S)$ be the projection. Let $MF$ be the space of measured foliations on $S$ and $PMF$ its projectivization.  For a quadratic differential $q$ let $q^{+},q^{-}\in MF$ denote  its vertical and horizontal measured foliations respectively and $[q^{+}],[q^{-}]\in PMF$ its projective classes.  Let $UE\subseteq PMF$ denote the projective classes of uniquely ergodic foliations.  There is a compactification due to Thurston of $Teich(S)$ by $PMF$ \cite{FLP} obtained by embedding both into $\mathbb{R}^{A}$ where $A$ is the set of isotopy classes of simple closed curves on $S$. Unless otherwise stated, the topology on $Teich(S)\cup PMF$ in this paper comes from the Thurston compactification of $Teich(S)$. Given a basepoint in Teich(S), we can also compactify $Teich(S)$ by equivalence classes of geodesic rays through the point, called the Teichm$\ddot{\mathrm{u}}$ller compactification. 
 Hubbard and Masur showed in \cite{HM} that there is a homeomorphism $Q(S)\to Teich(S)\times MF$ obtained by associating to a quadratic differential its projection in $Teich(S)$ and vertical (or horizontal) measured foliation. 
In \cite{Mas 80} Masur shows:
\begin{thm}
If $q\in Q(S)$ with $q^{+}$ uniquely ergodic then $\pi(g_{t}q)$ converges to the projective class of $q^{+}$ in $PMF$.
\end{thm}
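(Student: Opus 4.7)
The plan is to establish convergence $X_t := \pi(g_t q) \to [q^+]$ in Thurston's compactification by analyzing the asymptotics of extremal lengths $\mathrm{Ext}_{X_t}(\alpha)$ for simple closed curves $\alpha$. Since $X_t$ is a unit-speed Teichm\"uller geodesic, it leaves every compact subset of $Teich(S)$, so any subsequential limit of $X_t$ in the compactification lies in $PMF$. The desired convergence then reduces to showing
\[
\frac{\mathrm{Ext}_{X_t}(\alpha)}{e^{2t}} \longrightarrow i(\alpha, q^+)^2
\]
for each simple closed curve $\alpha$; projectivizing (and using the standard comparison between extremal and hyperbolic length compactifications) identifies the limit as $[q^+] \in PMF$.

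For the lower bound, Gardiner's inequality $i(\alpha, q^+)^2 \leq \mathrm{Ext}_{X_t}(\alpha) \cdot \mathrm{Ext}_{X_t}(q^+)$ combined with the Hubbard--Masur identity $\mathrm{Ext}_{X_t}(q^+) = e^{-2t}$ (obtained from $g_t q$ having vertical foliation $e^t q^+$ and area one) yields $\mathrm{Ext}_{X_t}(\alpha) \geq e^{2t}\,i(\alpha, q^+)^2$. For the upper bound, I would use the flat geometry of $g_t q$: the staircase estimate gives
\[
\ell_{g_t q}(\alpha) \,\leq\, e^t\,i(\alpha, q^+) + e^{-t}\,i(\alpha, q^-),
\]
and one exhibits an embedded annular collar of $\alpha$'s flat geodesic representative of width on the order of $e^{-t}/i(\alpha, q^+)$. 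The modulus of this collar, asymptotically $(e^{2t}\,i(\alpha, q^+)^2)^{-1}$, gives the matching upper bound on extremal length.

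Unique ergodicity of $q^+$ does essential work in two places. First, it forces the topological foliation underlying $q^+$ to be minimal, so $i(\alpha, q^+) > 0$ for every simple closed curve $\alpha$ (otherwise the lower bound is vacuous). Second, and more critically, it provides the equidistribution of horizontal leaves of $g_t q$ needed to guarantee that the annular collar used in the upper-bound construction can be taken embedded, by ruling out the short saddle connections or close leaf returns that would otherwise cause the collar to self-intersect.

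The main obstacle is precisely this upper bound on $\mathrm{Ext}_{X_t}(\alpha)$. The supremum definition $\mathrm{Ext}_X(\alpha) = \sup_\rho \ell_\rho(\alpha)^2 / \mathrm{Area}(\rho)$ yields only the inequality $\sqrt{\mathrm{Ext}_{X_t}(\alpha)} \geq \ell^{\mathrm{flat}}_{g_t q}(\alpha)$, which goes in the wrong direction; reversing it requires genuinely exploiting the annulus-modulus description of extremal length together with the ergodic-theoretic control supplied by unique ergodicity of $q^+$.
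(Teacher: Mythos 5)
The paper does not actually prove this statement: it is quoted verbatim as Masur's theorem from \cite{Mas 80} (``Uniquely ergodic quadratic differentials''), so there is no internal argument to compare yours against. Judged on its own, your proposal has the right easy half --- the lower bound $\mathrm{Ext}_{X_t}(\alpha)\geq e^{2t}i(\alpha,q^+)^2$ via the Gardiner--Masur/Minsky inequality and $\mathrm{Ext}_{X_t}(q^+)=e^{-2t}$ is correct --- but the two steps you lean on for the rest both fail as stated. First, the collar construction for the upper bound is impossible: a flat cylinder of circumference $\approx e^{t}i(\alpha,q^+)$ and width $\approx e^{-t}/i(\alpha,q^+)$ has area $\approx 1$, i.e.\ it would be essentially the entire area-one surface, which cannot happen on a surface of genus $\geq 2$ (and in general the flat geodesic representative of $\alpha$ passes through zeros of $q$, so it admits no embedded flat collar of positive width at all). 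Unique ergodicity does not create room for such a collar. The sharp upper bound $\mathrm{Ext}_{X_t}(\alpha)\leq(1+o(1))e^{2t}i(\alpha,q^+)^2$ is true, but it is not witnessed by the flat metric of $g_tq$; one needs either the Jenkins--Strebel (extremal) metric for $\alpha$, or the duality $\mathrm{Ext}_X(\alpha)=\sup_{F\in MF}i(\alpha,F)^2/\mathrm{Ext}_X(F)$ combined with a compactness argument on $PMF$ in which unique ergodicity controls the foliations $F$ with $i(F,q^+)$ small. That is where the actual work lives, and your sketch does not contain it.

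Second, even granting $e^{-2t}\mathrm{Ext}_{X_t}(\alpha)\to i(\alpha,q^+)^2$, the passage to convergence in the Thurston compactification is not a ``standard comparison.'' Thurston's compactification is built from hyperbolic lengths (equivalently intersection numbers), while projectivized $\sqrt{\mathrm{Ext}}$ defines the Gardiner--Masur compactification, which is strictly larger; the identity on $Teich(S)$ does not extend continuously between them, and Maskit's bounds relating $\ell_X$ and $\mathrm{Ext}_X$ degenerate multiplicatively exactly in the regime you need. The fact that the two compactifications agree along sequences converging to a uniquely ergodic point is itself a nontrivial theorem in the same circle of ideas as Masur's two-boundaries theorem (quoted as Theorem 2.3 in this paper), so invoking it here is close to circular. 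The standard route avoids both issues: take any subsequential Thurston limit $[G]$ of $X_{t_n}$, show $i(G,q^+)=0$ using flat-length estimates and the approximation of $q^+$ by weighted simple closed curves, and then conclude $[G]=[q^+]$ from minimality and unique ergodicity of $q^+$. You should either follow that route or fill in the two gaps above explicitly.
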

\begin{thm}
If $q_{1},q_{2}\in Q^{1}(S)$ and $q_{1}^{+}=q_{2}^{+}$ is uniquely ergodic then $$d_{T}(\pi(g_{t}q_{1}),\pi(g_{t}q_{2}))\to 0.$$
\end{thm}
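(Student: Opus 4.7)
The plan is to argue by contradiction. Assuming the conclusion fails, I would extract a sequence $t_n \to \infty$ with $d_T(X_n^1, X_n^2) \geq \delta > 0$ for some fixed $\delta$, where $X_n^i := \pi(g_{t_n} q_i)$. Write $F := q_1^+ = q_2^+$, so that under the flow the vertical and horizontal foliations of $g_{t_n} q_i$ become $e^{t_n} F$ and $e^{-t_n} q_i^-$, each of extremal length $1$ on $X_n^i$ since $g_{t_n} q_i$ has unit area. Applying Kerckhoff's formula for the Teichm\"uller metric (and interchanging $q_1, q_2$ if necessary), I would produce simple closed curves $\eta_n$ with $\text{Ext}_{X_n^1}(\eta_n) \geq e^{2\delta}\, \text{Ext}_{X_n^2}(\eta_n)$, normalized so that $\text{Ext}_{X_n^2}(\eta_n) = 1$.

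Next, I would invoke the extremal length inequality $i(\eta, \mu)^2 \leq \text{Ext}_X(\eta)\, \text{Ext}_X(\mu)$ on $X_n^2$ with $\mu = e^{t_n} F$ and $\mu = e^{-t_n} q_2^-$, obtaining $i(\eta_n, F) \leq e^{-t_n}$ and $i(\eta_n, q_2^-) \leq e^{t_n}$. Passing to a subsequence, $[\eta_n] \to [\eta_\infty]$ in $PMF$; the first bound forces $i(\eta_\infty, F) = 0$, so unique ergodicity of $F$ pins $\eta_\infty$ to a positive multiple of $F$. Choosing $c_n > 0$ with $\eta_n/c_n \to F$ in $MF$ and pairing with $q_i^-$ (using $i(F, q_i^-) = \text{Area}(q_i) = 1$) would give $c_n \asymp e^{t_n}$ and
\[
\frac{i(\eta_n, q_1^-)}{i(\eta_n, q_2^-)} \to 1.
\]

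The last step promotes this into a contradiction via Minsky's product-region estimates: in a fixed thick part of moduli space, $\text{Ext}_{X_n^i}(\eta_n)$ is comparable to $e^{2t_n} i(\eta_n, F)^2 + e^{-2t_n} i(\eta_n, q_i^-)^2$ with constants depending only on the thickness. Feeding the above asymptotics into this formula would force $\text{Ext}_{X_n^1}(\eta_n)/\text{Ext}_{X_n^2}(\eta_n) \to 1$, contradicting the choice of $\eta_n$.

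The hard part will be supplying the thick-part hypothesis the last step requires: Minsky's sharp bounds demand that both $X_n^1$ and $X_n^2$ lie in a fixed compact subset of moduli space at the chosen times $t_n$. For a single Teichm\"uller ray, Masur's criterion gives recurrence from unique ergodicity of $F$. Arranging simultaneous recurrence of the two rays at common times is the delicate point; I would combine Masur's criterion applied to each of $q_1, q_2$ with the fact (from Theorem 2.1) that both rays share the Thurston limit $[F]$ to argue that their thin-part excursions are eventually correlated, and then extract common recurrence times by a diagonal argument.
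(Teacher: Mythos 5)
The paper does not actually prove this statement: it is quoted as Masur's theorem from \cite{Mas 80}, so your argument has to stand on its own, and its final step fails. The thick-part estimate you invoke, $Ext_{X}(\eta)\asymp i(\eta,q^+)^2+i(\eta,q^-)^2$ for unit-area $q$ with $\pi(q)=X$ $\epsilon$-thick, holds only up to a multiplicative constant $C=C(\epsilon)$ which is genuinely larger than $1$ (the lower bound $Ext_X(\eta)\ge \ell_q(\eta)^2$ is clean, but the upper bound costs a constant). Feeding $i(\eta_n,q_1^-)/i(\eta_n,q_2^-)\to 1$ into such a two-sided comparison yields only $Ext_{X_n^1}(\eta_n)/Ext_{X_n^2}(\eta_n)\le C'(1+o(1))$ for some fixed $C'>1$, i.e.\ that the two rays stay a \emph{bounded} distance apart. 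That is already known (it is Rafi's fellow-traveling, Lemma 2.10) and contradicts nothing, because the $\delta$ handed to you by the contradiction hypothesis may be far smaller than $\log C'$. To conclude $d_T\to 0$ you need the ratio of extremal lengths to tend to $1$ \emph{uniformly over all simple closed curves}, and no coarse comparability estimate can produce a constant equal to $1+o(1)$; that precision is exactly the hard content of the theorem. Masur's proof does not run through extremal length at all: he decomposes both flat surfaces into rectangles over a common transversal, uses unique ergodicity of $F$ to get uniform convergence of the relevant Birkhoff averages, and from this builds explicit quasiconformal maps $X^1_t\to X^2_t$ with dilatation tending to $1$.

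The recurrence issue you flag at the end is also a genuine obstruction rather than a technicality. The times $t_n$ are imposed by the contradiction hypothesis, whereas Masur's criterion gives recurrence of each ray only along \emph{some} unbounded sequence of times, with no control on the gaps and no reason those sequences should meet each other or the $t_n$; the Lipschitz bound $|d_T(X^1_t,X^2_t)-d_T(X^1_s,X^2_s)|\le 2|t-s|$ only lets you perturb $t_n$ by a bounded amount at the cost of shrinking $\delta$. Asserting that the two rays' thin-part excursions are ``eventually correlated'' is essentially assuming a form of the conclusion. A smaller point: to pass from $i(\eta_n,F)\le e^{-t_n}$ to $[\eta_n]\to[F]$ and then to $i(\eta_n,q_1^-)/i(\eta_n,q_2^-)\to 1$, you must check that the scalars relating your normalization $Ext_{X_n^2}(\eta_n)=1$ to a fixed normalization on $MF$ neither blow up nor degenerate to $0$; this again uses thickness of $X_n^2$ and should be made explicit.
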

The following is part of Masur's Two Boundaries Theorem \cite{Mas 82}
\begin{thm}
The identity map on $Teich(S)$ extends to a homeomorphism between $T(S)\cup UE$ in the Teichmuller compactification and 
$T(S)\cup UE$ in the Thurston compactification.
\end{thm}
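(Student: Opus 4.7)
The plan is to fix a basepoint $x_0\in T(S)$ and build the extension explicitly using the Hubbard-Masur parameterization. For each $F\in UE$ let $q_F$ be the unique element of the compact fiber $\pi^{-1}(x_0)\cap Q^1(S)$ with $[q_F^+]=F$, and set $r_F(t):=\pi(g_t q_F)$. Theorem 2.1 asserts $r_F(t)\to F$ in the Thurston topology, so I define $\Phi\colon T(S)\cup UE\to T(S)\cup UE$ to be the identity on $T(S)$ and to send the Teichmüller-boundary class $[r_F]$ to $F$. Theorem 2.2 shows that any two Teichmüller rays from $x_0$ with vertical foliation $F\in UE$ are asymptotic, so $[r_F]$ depends only on $F$ and $\Phi$ is well defined.

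Next I would verify that $\Phi$ is a bijection. Surjectivity onto $UE$ is immediate from the Hubbard-Masur construction. For injectivity suppose $F_1\neq F_2\in UE$: the rays $r_{F_i}$ have distinct Thurston limits $F_1,F_2$, and the standard Kerckhoff-type comparison $i(\alpha,F)^2\leq \mathrm{Ext}_x(\alpha)\cdot\mathrm{Ext}_x(F)$ together with the $e^{2C}$-bound on the ratio of extremal lengths along $d_T$-bounded pairs forces bounded fellow-travel to preserve a uniquely ergodic Thurston limit; hence the rays are not asymptotic and $[r_{F_1}]\neq[r_{F_2}]$.

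Finally I would prove continuity. For $\Phi^{-1}$ (the harder direction), suppose $x_n\to F\in UE$ in Thurston. Write $x_n=\pi(g_{R_n}q_n)$ with $q_n\in \pi^{-1}(x_0)\cap Q^1(S)$ and $R_n=d_T(x_0,x_n)\to\infty$; by compactness of the fiber, pass to a subsequence with $q_n\to q_\infty$. Continuity of Hubbard-Masur together with $x_n\to F$ in Thurston forces $[q_\infty^+]=F$, so $q_\infty=q_F$, and the geodesic segments $[x_0,x_n]$ converge to $r_F$ uniformly on compacta, which is exactly $x_n\to[r_F]$ in the Teichmüller compactification. Continuity of $\Phi$ itself then follows from continuity of Hubbard-Masur and Theorem 2.1.

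The main obstacle is the continuity step, and the role of unique ergodicity in closing it. Teichmüller space is far from $CAT(-1)$: in general, distinct geodesic rays with non-uniquely-ergodic $PMF$ limits can fellow-travel arbitrarily long, and rays sharing such a limit can drift apart without bound. Restricting to $UE$ and invoking Theorems 2.1--2.2 precisely rules out both pathologies, so the subsequential limit $q_\infty$ produced above is uniquely determined and matches $q_F$.
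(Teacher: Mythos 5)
The paper does not prove this statement: it is quoted verbatim as (part of) Masur's Two Boundaries Theorem and attributed to \cite{Mas 82}, so there is no in-paper argument to compare yours against. Judged on its own, your write-up is a reasonable skeleton (fix $x_{0}$, use the Hubbard--Masur homeomorphism $S(x_{0})\to PMF$ to parametrize rays, check bijectivity and continuity in both directions), but it has a genuine gap exactly at the step that constitutes the actual content of Masur's theorem.

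The gap is the claim, in your $\Phi^{-1}$ step, that ``continuity of Hubbard--Masur together with $x_{n}\to F$ in Thurston forces $[q_{\infty}^{+}]=F$,'' and the mirror claim that continuity of $\Phi$ ``follows from continuity of Hubbard--Masur and Theorem 2.1.'' The Hubbard--Masur homeomorphism only tells you that $q_{n}\to q_{\infty}$ in $S(x_{0})$ implies $[q_{n}^{+}]\to[q_{\infty}^{+}]$ in $PMF$; it says nothing about the Thurston limit of the points $\pi(g_{R_{n}}q_{n})$ sitting far out along those rays. Relating the direction $q\in S(x_{0})$ of a ray to the Thurston accumulation set of points deep along that ray is precisely what has to be proved, and it requires the analytic input of Masur's argument: estimates comparing extremal (or hyperbolic) lengths of simple closed curves at $\pi(g_{t}q)$ with the intersection numbers $i(\cdot,q^{+})$ and $i(\cdot,q^{-})$, together with unique ergodicity to pin down the projective class of the limiting transverse measure. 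Likewise, Theorem 2.1 gives Thurston convergence along a \emph{single} ray, whereas continuity of $\Phi$ at a boundary point needs a statement uniform over all $q_{n}\to q_{F}$ with $t_{n}\to\infty$; that uniformity is not supplied. Note also that what you are implicitly invoking in the $\Phi^{-1}$ direction is essentially the paper's Theorem 2.4, which in the paper's logical ordering is a consequence of (not an ingredient for) the present statement, so as written your argument is close to circular. To close the gap you would need to reproduce the intersection-number/extremal-length estimates from \cite{Mas 82} rather than cite continuity of the Hubbard--Masur map.
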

\begin{thm}
Let $x=x_{0}\in Teich(S)$.
Let $x_{n}\in Teich(S)$ be a sequence converging in the Thurston compactification to a uniquely ergodic $\eta\in PMF$. Then there exists a sequence of quadratic differentials $q_{i}\in S(x)$ and $t_{i}>0$ such that $x_{i}=\pi(g_{t_{i}}q_{i})$ and the $q_{i}$ converge to $q\in S(x)$   such that $\eta= \lim_{t\to\infty}\pi(g_{t}q)$. Therefore for any fixed $m>0$, the points $\pi(g_{m}q_{i})$ converges to $\pi(g_{m}q_{i})$.
\end{thm}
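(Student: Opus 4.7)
The plan is to combine compactness of the sphere of unit quadratic differentials $S(x)=\pi^{-1}(x)\subset Q^1(S)$ with Masur's Two Boundaries Theorem (Theorem 2.3) and his convergence theorem for rays with uniquely ergodic vertical foliation (Theorem 2.1).

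First, for each $n$ set $t_n = d_T(x, x_n)$ and let $q_n \in S(x)$ be the initial unit cotangent vector of the unique Teichm\"uller geodesic from $x$ to $x_n$, so that $\pi(g_{t_n} q_n) = x_n$. Since $d_T$ is proper on $Teich(S)$, a sequence converging to a Thurston boundary point must exit every closed $d_T$-ball about $x$; hence $t_n \to \infty$. The fiber $S(x)$ is a finite-dimensional sphere and in particular compact, so after passing to a subsequence $q_n \to q$ for some $q \in S(x)$. The final assertion of the theorem now follows from continuity of the Teichm\"uller geodesic flow in its initial condition: for any fixed $m > 0$, $\pi(g_m q_n) \to \pi(g_m q)$ in $Teich(S)$.

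It remains to show $\lim_{t\to\infty}\pi(g_t q) = \eta$ in the Thurston compactification. By Theorem 2.3, because $\eta$ is uniquely ergodic the convergence $x_n \to \eta$ also holds in the Teichm\"uller ray compactification based at $x$. This means the segments $s \mapsto \pi(g_s q_n)$, $s \in [0, t_n]$, converge uniformly on compact subsets of $[0,\infty)$ to a geodesic ray from $x$ whose equivalence class is $\eta$. Since $q_n \to q$ in $S(x)$ and $t_n \to \infty$, continuity of the flow identifies this limit ray as $s \mapsto \pi(g_s q)$. Thus $\pi(g_s q)$ converges to $\eta$ in the Teichm\"uller compactification, and applying Theorem 2.3 once more gives convergence to $\eta$ in the Thurston compactification as well (equivalently, by Theorem 2.1, $q^+$ is uniquely ergodic with projective class $\eta$).

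The main subtlety is identifying the limit direction: distinct Teichm\"uller rays from $x$ may converge to the same Thurston boundary point without fellow-traveling, as Ivanov's example cited in the introduction shows. Unique ergodicity of $\eta$ is precisely what is needed so that Theorem 2.3 synchronizes the Thurston and Teichm\"uller notions of convergence near $\eta$, forcing the subsequential direction $q$ to point along a ray that Thurston-converges to $\eta$.
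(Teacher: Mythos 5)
Your proof is correct and takes the route the paper itself intends: Theorem 2.4 is stated there as an unproved consequence of Masur's results, and deducing it from the Two Boundaries Theorem (Theorem 2.3) together with Theorem 2.1 and compactness of the fiber $S(x)$ is exactly the expected argument. One small point to make explicit: since the theorem asserts convergence of the whole sequence $q_i$ and you only extract a convergent subsequence, you should add that every subsequential limit $q'$ satisfies $[q'^{+}]=\eta$ by your argument, and the Hubbard--Masur homeomorphism $S(x)\to PMF$ (or simply the fact that distinct unit quadratic differentials at $x$ generate distinct rays) forces all subsequential limits to coincide, so the full sequence converges.
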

The following  result of Klarreich is Prop 5.1  in \cite{Kla 99}.
\begin{prop}
Let $F_{1}$ and $F_{2}$ be topologically inequivalent minimal foliations. Let $x_{n}$ and $y_{n}$ be sequences in $Teich(S)$ converging to $F_{1}$ and $F_{2}$ respectively.  Then the geodesic segments $[x_{n},y_{n}]$ accumulate in the Teichmuller compactification to a set $s\subseteq Teich(S)\cup PMF$ such that $s\cap Teich(S)$ is a nonempty union of geodesics whose vertical and horizontal foliations are topologically equivalent to $F_{2}$ and $F_{1}$ respectively and $s\cap PMF$ consists of foliations that are equivalent to $F_{1}$ or $F_{2}$. If $x\in Teich(S)$ is fixed, then $[x, x_{n}]$ accumulate to a set  $s\subseteq Teich(S)\cup PMF$ such that $s\cap Teich(S)$ is a union of geodesic rays based at $x$ whose endpoints are topologically equivalent to $F_{1}$ and $s\cap PMF$ consists of foliations topologically equivalent to $F_{1}$. If $F_{1}$ and $F_{2}$ are topologically equivalent then $[x_{n},y_{n}]$ converges to a subset of $PMF$ each element of which is topologically equivalent to $F_{1}$. 
\end{prop}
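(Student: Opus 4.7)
The plan is to parametrize the segments $[x_n,y_n]$ by quadratic differentials, extract subsequential limits of basepoints and differentials, and identify the resulting vertical and horizontal foliations with $F_1$ and $F_2$ up to topological equivalence, using Masur's Theorems 2.1--2.4 and the Hubbard-Masur homeomorphism $Q(S) \cong Teich(S) \times MF$.

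For the main case in which $F_1$ and $F_2$ are topologically inequivalent minimal foliations, fix $x_0 \in Teich(S)$ and let $p_n$ be a nearest point on $[x_n,y_n]$ to $x_0$. The first step is to show that $p_n$ stays in a bounded region of $Teich(S)$. This is the most delicate point: were $p_n$ to escape to infinity, then extremal lengths at $p_n$ of some simple closed curves would tend to $0$, and the topological types of the shortening curves would have to simultaneously approximate the supports of $F_1$ (as seen from the subray toward $x_n$) and of $F_2$ (as seen from the subray toward $y_n$), contradicting topological inequivalence of $F_1$ and $F_2$. Granting this, extract $p_n \to p_0 \in Teich(S)$ and let $q_n \in Q^1(p_n)$ be the unit quadratic differential whose forward Teichm\"uller flow traces $[p_n,y_n]$. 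By compactness of the $Q^1$-fiber, pass to a further subsequence so that $q_n \to q_0$. The bi-infinite geodesic $\gamma_0$ through $p_0$ in direction $q_0$ lies in the accumulation set $s$. To identify its foliations, compare the extremal lengths of simple closed curves at $y_n$, suitably rescaled, with their intersection numbers against $q_n^+$ read off from large-time points $\pi(g_T q_n)$ on the segment; since $y_n \to F_2$ in Thurston's compactification, passing to the limit and invoking minimality of $F_2$ forces $q_0^+$ to be topologically equivalent to $F_2$, and symmetrically $q_0^-$ topologically equivalent to $F_1$. Sweeping $x_0$ through a compact exhaustion of $Teich(S)$ produces the full union of limiting geodesics; a parallel extraction starting from sequences $z_n \in [x_n,y_n]$ escaping to infinity in $Teich(S)$ identifies the boundary accumulation points of $s$ as topologically equivalent to $F_1$ or $F_2$ via the same intersection-number calculation together with Theorems 2.1 and 2.2.

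The fixed-basepoint statement for $[x,x_n]$ follows from the same extraction with one endpoint held fixed; only the free direction can vary, yielding rays from $x$ whose forward endpoints are topologically equivalent to $F_1$. In the remaining case where $F_1$ is topologically equivalent to $F_2$, the boundedness argument above breaks down, and one argues instead that every subsequential limit of points on $[x_n,y_n]$ must lie in $PMF$: any interior accumulation point would yield, via the extraction, a Teichm\"uller geodesic whose two ends are topologically inequivalent, contradicting $F_1 \sim F_2$. The limiting boundary foliations are then topologically equivalent to $F_1$ by intersection-number continuity.

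The main obstacle is the absence of unique ergodicity: Thurston-compactification convergence records only projective intersection numbers, so distinct measured foliations supported on the same lamination are indistinguishable at the boundary. This is what forces the conclusion to be topological equivalence rather than equality, and it requires the quadratic-differential extraction to take place in $MF$ via Hubbard-Masur while the input convergence lives only in $PMF$. Theorem 2.4 bridges this mismatch in the uniquely ergodic case, and one must supplement it with the minimality of $F_1,F_2$ and a continuity argument for normalized intersection numbers to handle the general case.
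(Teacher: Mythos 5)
The paper offers no proof of this statement: it is quoted as Proposition 5.1 of Klarreich's preprint \cite{Kla 99}, so there is no internal argument to compare yours against. Judged on its own terms, your sketch follows the natural strategy (extract subsequential limits of basepoints and quadratic differentials, then identify the limiting vertical and horizontal foliations via intersection numbers), but it has a genuine gap at the one step that carries all the content, namely that $s\cap Teich(S)\neq\emptyset$.

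Your boundedness argument begins ``were $p_n$ to escape to infinity, then extremal lengths at $p_n$ of some simple closed curves would tend to $0$.'' This is false: only $Teich_{\epsilon}(S)/Mod(S)$ is compact, so a sequence can leave every compact subset of $Teich(S)$ --- indeed converge to a point of $PMF$ --- while staying $\epsilon$-thick for a fixed $\epsilon$. So the dichotomy ``bounded, or some curve degenerates'' is not available. Even granting a degenerating curve $\alpha_n$, the claim that it must ``simultaneously approximate the supports of $F_1$ and $F_2$'' is unjustified: at an interior point of a Teichm\"uller segment one only has Minsky's product bound $i(\alpha,q^{+})\,i(\alpha,q^{-})\lesssim \mathrm{Ext}(\alpha)$, which does not force $\alpha$ to be close to either foliation individually. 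The argument that actually works (Klarreich's) operates on the measured foliations $\mu_n^{\pm}$ of the quadratic differentials defining $[x_n,y_n]$: the Thurston-convergence of the endpoints together with Masur's extremal-length comparisons shows that, suitably normalized, $\mu_n^{+}$ and $\mu_n^{-}$ converge to foliations $\bar F_2$, $\bar F_1$ topologically equivalent to $F_2$, $F_1$; minimality plus topological inequivalence forces $i(\bar F_1,\bar F_2)>0$, and it is this positive lower bound, fed through the Hubbard--Masur parametrization and the continuity of the map sending a filling pair of foliations to its quadratic differential, that produces an honest accumulation point in $Teich(S)$. Without some version of this, your extraction of $p_0$ and $q_0$ never gets off the ground, and the remaining steps (the identification of $q_0^{\pm}$, the fixed-basepoint case, and the topologically equivalent case, all of which condition on that extraction) inherit the gap.
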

For uniquely ergodic foliations $F_{1}$ and $F_{2}$, the above proposition and Masur's two boundaries theorem implies the following.
\begin{cor}
$[x, x_{n}]$ converges uniformly on compact sets to $[x, [F_{1}])$ and $[x_{n}, y_{n}]$ converges to $([F_{1}], [F_{2}])$.
\end{cor}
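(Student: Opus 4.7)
The plan is to apply Klarreich's proposition (Proposition 2.5) separately to each of the two sequences of geodesic segments, use unique ergodicity of $F_1,F_2$ together with Masur's two boundaries theorem to collapse the accumulation sets furnished by the proposition to a single geodesic ray (respectively line) together with its Thurston endpoints, and then upgrade accumulation to full convergence by a standard subsequence argument.

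For the sequence $[x,x_n]$, Proposition 2.5 says that every accumulation point in the Teichm\"uller compactification lies either on a Teichm\"uller ray from $x$ whose endpoint is topologically equivalent to $F_1$, or on a foliation in $PMF$ topologically equivalent to $F_1$. Unique ergodicity of $F_1$ forces the projective topological class to be $\{[F_1]\}$, and Hubbard-Masur provides a unique $q\in Q^{1}(x)$ with $q^{+}=F_1$ up to scale; the associated ray is then the unique Teichm\"uller ray from $x$ with vertical foliation $[F_1]$. Together with Theorem 2.3 this identifies the full accumulation set with the closed ray $[x,[F_1]]$ in the Thurston compactification. An essentially identical argument for $[x_n,y_n]$ shows that the accumulation set is the biinfinite Teichm\"uller geodesic with horizontal foliation $F_1$ and vertical foliation $F_2$, together with its two Thurston endpoints $[F_1]$ and $[F_2]$: Hubbard-Masur again gives uniqueness of such a geodesic line, and Masur's two boundaries theorem identifies its forward and backward endpoints in the Thurston compactification with $[F_2]$ and $[F_1]$ respectively.

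Upgrading accumulation to convergence is then routine. If $[x,x_n]$ (respectively $[x_n,y_n]$) failed to converge uniformly on compact subsets of $Teich(S)\cup PMF$ to its target, some subsequence would stay Hausdorff-far from the target on a compact set; extracting a further subsequence and re-applying Proposition 2.5 would yield an accumulation set different from the unique one just computed, a contradiction. The main technical point in the argument is the reduction step: verifying that the ``union of geodesics'' in Klarreich's conclusion actually collapses to a single geodesic. This is precisely where unique ergodicity is essential, via Hubbard-Masur uniqueness at the fixed basepoint $x$ and the coincidence of the Teichm\"uller and Thurston compactifications over $UE$ provided by Theorem 2.3.
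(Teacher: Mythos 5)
Your proposal is correct and follows exactly the route the paper intends: the paper gives no written proof, merely asserting the corollary follows from Proposition 2.5 together with Masur's two boundaries theorem, and your argument (collapsing Klarreich's accumulation set via unique ergodicity and Hubbard--Masur uniqueness, identifying endpoints via Theorem 2.3, then upgrading accumulation to convergence by a subsequence argument) is precisely the intended derivation with the details filled in.
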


The following corollary of Proposition 2.5 is proved in \cite{KL1}.
\begin{cor}
Let $\eta_{n},\zeta_{n}$ in $PMF$ converge to uniquely ergodic $\eta, \zeta \in PMF$. Then the accumulation points of $[\eta_{n},\zeta_{n}]$ in the Thurston topology are contained in $\left\{\eta,\zeta\right\}$. 
\end{cor}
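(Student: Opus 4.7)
The plan is a diagonal reduction to Proposition 2.5, with unique ergodicity used to collapse its ``topologically equivalent'' conclusion into strict equality of projective classes.

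First, I would approximate each boundary endpoint from inside Teichm\"uller space: for every $n$, choose sequences $x_{n}^{k}, y_{n}^{k}\in Teich(S)$ with $x_{n}^{k}\to \eta_{n}$ and $y_{n}^{k}\to \zeta_{n}$ in the Thurston topology as $k\to\infty$. Proposition 2.5 then realizes $[\eta_{n},\zeta_{n}]$ as the Thurston-accumulation set of the Teichm\"uller segments $[x_{n}^{k},y_{n}^{k}]$, so any sequence $p_{n}\in[\eta_{n},\zeta_{n}]$ that converges to a candidate accumulation point $p$ can be shadowed by a point $s_{n}$ on a genuine Teichm\"uller segment $[x_{n}^{k(n)},y_{n}^{k(n)}]$. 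Using metrizability of the Thurston compactification, I would select $k(n)$ large enough that simultaneously $x_{n}^{k(n)}\to \eta$ and $y_{n}^{k(n)}\to \zeta$ in the Thurston topology (passing via $\eta_{n}$ and $\zeta_{n}$), while preserving $s_{n}\to p$.

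Next, I would apply Proposition 2.5 directly to the diagonal sequences $x_{n}^{k(n)}\to \eta$, $y_{n}^{k(n)}\to \zeta$. There are two cases. If $\eta\neq \zeta$ as projective classes, unique ergodicity of both makes them topologically inequivalent minimal foliations, so the Thurston-accumulation of $[x_{n}^{k(n)},y_{n}^{k(n)}]$ is contained in the union of Teichm\"uller geodesics whose vertical and horizontal foliations are topologically equivalent to $\zeta$ and $\eta$, together with $PMF$-points topologically equivalent to $\eta$ or $\zeta$. If $\eta=\zeta$, the final clause of Proposition 2.5 instead forces the whole accumulation into $PMF$, at foliations topologically equivalent to $\eta$.

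Finally, unique ergodicity of $\eta$ and $\zeta$ means that any measured foliation topologically equivalent to $\eta$ (resp.\ $\zeta$) equals $\eta$ (resp.\ $\zeta$) as a projective class, and by Theorem 2.3 the unique bi-infinite Teichm\"uller geodesic with these vertical and horizontal foliations is $[\eta,\zeta]$ itself; in particular the $PMF$-part of the accumulation set collapses to $\{\eta,\zeta\}$, giving $p\in\{\eta,\zeta\}$ as required. The main delicate point I anticipate is organizing the diagonal choice of $k(n)$ so that Thurston convergence of $x_{n}^{k(n)}$ and $y_{n}^{k(n)}$ to the uniquely ergodic limits $\eta,\zeta$ holds simultaneously with the shadowing $s_{n}\to p$; this is exactly what legitimizes the substitution $F_{1}=\eta, F_{2}=\zeta$ in Proposition 2.5 in the limit, after which the reduction of ``topologically equivalent'' to ``projectively equal'' is automatic from unique ergodicity.
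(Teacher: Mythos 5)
The paper offers no proof of this corollary (it simply attributes it to \cite{KL1} as a consequence of Proposition 2.5), so I can only judge your argument on its own terms, and it has a genuine gap at the shadowing step. You assert that Proposition 2.5 ``realizes $[\eta_{n},\zeta_{n}]$ as the Thurston-accumulation set of the segments $[x_{n}^{k},y_{n}^{k}]$,'' and you use this to replace a point $p_{n}\in[\eta_{n},\zeta_{n}]$ by a nearby point $s_{n}$ on an honest interior segment. Proposition 2.5 does not say this. First, it only applies when the two limiting foliations are \emph{minimal} (and, for the main clause, topologically inequivalent); the $\eta_{n},\zeta_{n}$ are arbitrary points of $PMF$ with no such hypothesis --- only their limits $\eta,\zeta$ are assumed uniquely ergodic. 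Second, even where it applies, the proposition only bounds the accumulation set from above: it says $s\cap Teich(S)$ is \emph{some} nonempty union of geodesics whose vertical and horizontal foliations are \emph{topologically equivalent} to $\zeta_{n}$ and $\eta_{n}$; it does not say that this set contains, let alone equals, the specific geodesic $[\eta_{n},\zeta_{n}]$. When $\eta_{n}$ is minimal but not uniquely ergodic, the segments $[x_{n}^{k},y_{n}^{k}]$ may well accumulate onto a geodesic carrying a \emph{different} transverse measure on the same topological foliation, missing $[\eta_{n},\zeta_{n}]$ entirely. So the existence of $s_{n}\to p$ is unjustified, and without it the diagonal reduction to Proposition 2.5 collapses. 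The failure is not cosmetic: continuity of the ``geodesic from two boundary points'' map at non-uniquely-ergodic data is precisely what is false in Teichm\"uller space, and it is why this corollary requires an argument (in \cite{KL1} it is run directly on the quadratic differentials $q_{n}$ along $[\eta_{n},\zeta_{n}]$, using intersection-number/extremal-length estimates to show any $PMF$-limit of $\pi(q_{n})$ has vanishing intersection with $\eta$ or $\zeta$, and then invoking unique ergodicity).

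The back end of your argument is fine: once one has sequences $x_{n}\to\eta$, $y_{n}\to\zeta$ in $Teich(S)$ with the relevant points lying on $[x_{n},y_{n}]$, Proposition 2.5 plus unique ergodicity does collapse ``topologically equivalent to $\eta$ (resp.\ $\zeta$)'' to projective equality, in both the case $\eta\neq\zeta$ and the case $\eta=\zeta$. One smaller point: when $\eta\neq\zeta$ the accumulation set also contains the interior of the geodesic $(\eta,\zeta)$ in $Teich(S)$, so the conclusion $p\in\{\eta,\zeta\}$ can only be meant for accumulation points lying in $PMF$; your final sentence elides this, and you should state explicitly that you are only constraining the $PMF$-part of the accumulation set.
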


Call a subgroup $G$ of $Mod(S)$ non-elementary if it contains a pair of noncommuting pseudo-Anosovs.
For a nonelementary subgroup $G$ of $Mod(S)$, let $\Lambda(G)$ denote its limit set in $PMF$,
the unique closed $G$ invariant subset of $PMF$ on which $G$ acts minimally. The limit set is the closure of the set of stable foliations of pseudo-Anosov mapping classes in G. It is perfect and has empty interior provided it is not equal to $PMF$ \cite{KL2}. 
If any pair of points in $\Lambda(G)$ fill $S$, we define $WH(G)$ to be the union of all Teichmuller geodesics whose vertical and horizontal measured foliations have projective classes in $\Lambda(G)$.
A subgroup $G$ of $Mod(S)$ is called convex cocompact if some $G$-orbit in $Teich(S)$ is quasiconvex.
The following properties of convex cocompact subgroups of $Mod(S)$ are proved in \cite{FM} and  \cite{KL1}.
\begin{thm}
\begin{itemize}
\item  Every $G$ orbit is quasi-convex.
\item The weak hull $WH(G)$ is defined and $G$ acts cocompactly on $WH(G)$.
\item Every limit point $\eta$ of $G$ is conical, that is for $x\in Teich(S)$  there is some $D>0$ such that the ray $[x,\eta)$ has infinite intersection with $D$ neighborhood of $Gx$ .
\item $G$ acts cocompactly on $WH(G) \cup \Lambda(G)$.
\item $WH(G)$ is contained in the $\epsilon$-thick part of $Teich(S)$ and $A$-quasiconvex for some $\epsilon>0$ and $A>0$.
\item $WH(G)\cap \Lambda(G)$ is closed in $Teich(S)\cap PMF$.
\item Every point of $\Lambda(G)$ is uniquely ergodic.
\item $G$ contains a finite index subgroup all of whose nonidentity elements are pseudo-Anosov.
\end{itemize}
\end{thm}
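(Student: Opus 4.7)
The statement gathers foundational results of Farb--Mosher \cite{FM} and Kent--Leininger \cite{KL1}; I would establish the eight bullets in a cascading order, each bootstrapping off the previous. The underlying principle is that convex cocompactness of $G$ should import as much of the negatively-curved picture as possible, with the key obstacle being the failure of $Teich(S)$ to be Gromov hyperbolic, managed by confining everything to a thick region.

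\textbf{Bullets (1), (2), (4).} Since $d_T$ is $G$-invariant, any two orbits $Gx_0$ and $Gx$ lie within Hausdorff distance $d_T(x_0,x)$ of each other, so quasi-convexity of one orbit transfers to all orbits modulo stability of the bridging Teichm\"uller geodesics (established jointly with step 5), giving (1). For (2): non-elementarity plus density of pseudo-Anosov stable foliations in $\Lambda(G)$ implies every pair of distinct limit points fills $S$ (fillings of pseudo-Anosov pairs are dense, and filling is open on the uniquely ergodic locus), so $WH(G)$ is defined. Cocompactness of $G\curvearrowright WH(G)$ follows by trapping every geodesic $[\eta_1,\eta_2]\subset WH(G)$ in a bounded neighborhood $N_K(Gx_0)$: approximate $\eta_i$ by $g_n^i x_0$, invoke quasi-convexity of $Gx_0$ to bound each $[g_n^1 x_0, g_n^2 x_0]$, then pass to the limit using Corollary 2.6. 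Bullet (4) extends the action to $WH(G)\cup\Lambda(G)$ by continuity in Thurston's compactification.

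\textbf{Bullets (3), (5), (6).} With cocompactness in hand, $\epsilon$-thick containment (first half of (5)) is automatic: every $y\in WH(G)$ is a $G$-translate of a point in a fixed compact set $K\subset Teich(S)$, whose systole is bounded below. Quasi-convexity of $WH(G)$ (second half) relies on Rafi-style stability of Teichm\"uller geodesics through uniformly thick points: any geodesic between points of $WH(G)$ fellow-travels a geodesic between approximating points of $\Lambda(G)$, which itself lies in $WH(G)$. For conicality (3): given $\eta\in\Lambda(G)$ choose $g_n x\to\eta$; Corollary 2.6 gives $[x,g_n x]\to[x,\eta)$ uniformly on compacta, and quasi-convexity of $Gx$ together with cocompactness on $WH(G)$ forces the ray to repeatedly enter a uniform neighborhood of $\{g_n x\}$. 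Closedness (6) is then formal from the continuous extension in (4).

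\textbf{Bullets (7), (8) and main obstacle.} Unique ergodicity (7) follows from Masur's criterion: each $\eta\in\Lambda(G)$ is the vertical foliation of a geodesic fully contained in $WH(G)$, hence in the $\epsilon$-thick part for all forward time, forcing unique ergodicity. For (8): any infinite-order non-pseudo-Anosov element of $G$ is reducible, fixes a multicurve and has translation axis descending arbitrarily deep into the thin part, contradicting (5); passing to a torsion-free finite-index subgroup via residual finiteness of $Mod(S)$ yields one whose nontrivial elements are all pseudo-Anosov. The crux of the whole theorem is the geometric step (5), specifically $A$-quasi-convexity of $WH(G)$: in variable negative curvature this would be automatic, but here it demands Rafi's combinatorial thick--thin decomposition or subsurface projections to establish stability of Teichm\"uller geodesics through the thick part. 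Once that stability is granted, the remaining bullets reduce to standard arguments transplanted from the negatively curved setting.
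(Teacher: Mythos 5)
First, note that the paper offers no proof of this statement at all: it is quoted verbatim from the literature (``The following properties of convex cocompact subgroups of $Mod(S)$ are proved in \cite{FM} and \cite{KL1}''), so your sketch is being measured against the arguments of Farb--Mosher and Kent--Leininger rather than against anything in this text. Your overall architecture (bootstrap from orbit quasiconvexity, confine everything to the thick part, use Rafi-type stability) is the right one, but there are two genuine gaps. The first is in bullet (2): you deduce that every pair of distinct limit points fills $S$ from ``filling pairs of pseudo-Anosov fixed points are dense'' plus ``filling is open.'' A dense open subset of $\Lambda(G)\times\Lambda(G)$ minus the diagonal need not be everything --- its complement is closed and may well be nonempty --- so this inference is invalid, and it is exactly the point where the real work lives. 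In \cite{KL1} the conclusion is extracted from quasiconvexity: geodesics $[g_nx,h_nx]$ between orbit points lie in a uniform neighborhood of $Gx$, hence in a uniform thick part; passing to limits via Klarreich's Proposition 2.5 and Masur's criterion forces any two distinct limit points to be uniquely ergodic and to jointly fill (a non-filling pair would produce a limiting degeneration into the thin part). Without this, $WH(G)$ is not even defined and the rest of your cascade has nothing to stand on.

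The second gap is a circularity in your ordering. You place unique ergodicity (bullet (7)) last, deriving it from (5), yet you use it implicitly much earlier: step (2) appeals to ``the uniquely ergodic locus,'' and step (3) invokes Corollary 2.6, which in this paper is stated only for uniquely ergodic foliations $F_1,F_2$. The non-circular route runs in the opposite direction: orbit quasiconvexity $\Rightarrow$ connecting geodesics lie in a uniform thick part (by Wolpert's lemma the $A$-neighborhood of an orbit is uniformly thick) $\Rightarrow$ rays toward limit points recur to a compact set of moduli space $\Rightarrow$ Masur's criterion gives (7) and filling $\Rightarrow$ only then do (2), (3), (4), (5), (6) follow by the arguments you describe. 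Two smaller points: reducible infinite-order elements have no axis in $Teich(S)$, so in (8) the contradiction should be phrased as the cyclic subgroup contributing a non-uniquely-ergodic point (the reducing multicurve) to $\Lambda(G)$, contradicting (7); and the torsion-free finite-index subgroup comes from Serre's theorem (the level-$3$ congruence subgroup), not from residual finiteness per se.
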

From now on, let $G$ denote a nonelementary convex cocompact subgroup of $Mod(S)$.
\begin{lem}
If $G$ is nonelementary convex cocompact, $\Lambda(G)$ coincides with $\overline{Gx}\cap PMF$ for any $x\in Teich(S)$.
\end{lem}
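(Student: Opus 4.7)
The plan is to establish both inclusions $\Lambda(G) \subseteq \overline{Gx} \cap PMF$ and $\overline{Gx} \cap PMF \subseteq \Lambda(G)$. The first is straightforward; the second requires combining the cocompactness of $G$ on $WH(G) \cup \Lambda(G)$ (Theorem 2.6) with Masur's results on uniquely ergodic foliations.

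For the forward inclusion, pick a pseudo-Anosov $f \in G$, which exists by nonelementarity. Its iterates $f^{n}x$ converge in the Thurston compactification to the stable foliation $F^{+} \in \Lambda(G)$, so $F^{+} \in \overline{Gx} \cap PMF$. Since $\overline{Gx} \cap PMF$ is closed and $G$-invariant and meets $\Lambda(G)$, the minimality of the $G$-action on $\Lambda(G)$ forces $\Lambda(G) \subseteq \overline{Gx} \cap PMF$.

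For the reverse inclusion, suppose $g_{n}x \to \eta \in PMF$ in the Thurston compactification, and fix an auxiliary point $y \in WH(G)$. The $Mod(S)$-invariance of $d_{T}$ gives $d_{T}(g_{n}x, g_{n}y) = d_{T}(x,y) = C$. Using cocompactness of $G$ on $WH(G) \cup \Lambda(G)$, choose a compact $K$ meeting every $G$-orbit there and elements $h_{n} \in G$ with $h_{n}^{-1}g_{n}y \in K$; then $d_{T}(h_{n}y, g_{n}y)$ is bounded by some $M$, so $d_{T}(g_{n}x, h_{n}y) \leq C+M$. Pass to a subsequence so that $h_{n}y \to \zeta \in Teich(S) \cup PMF$; since $g_{n}x$ escapes every Teichm\"uller-compact set and $h_{n}y$ stays within bounded $d_{T}$-distance, $\zeta \in PMF$. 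As each $h_{n}y \in WH(G)$ and $WH(G) \cup \Lambda(G)$ is closed in $Teich(S) \cup PMF$, the limit $\zeta$ lies in $\Lambda(G)$, and hence is uniquely ergodic by Theorem 2.6.

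It remains to identify $\eta$ with $\zeta$, which is the main obstacle. Properness of the $G$-action forces the elements $g_{n}^{-1}h_{n}$, which carry $y$ into the bounded set $\{z : d_{T}(z,x) \leq C+M+d_{T}(x,y)\}$, to take only finitely many values; on a further subsequence I may assume $g_{n}^{-1}h_{n} = g_{0}$ is constant, and setting $y' = g_{0}y$ reduces to: $g_{n}y' \to \zeta$ uniquely ergodic in Thurston, $g_{n}x \to \eta$, and $d_{T}(g_{n}x, g_{n}y') = d_{T}(x,y')$ fixed. In general bounded Teichm\"uller perturbations need not share Thurston limits, but unique ergodicity of $\zeta$ rescues us: Theorem 2.4 realizes $g_{n}y' = \pi(g_{t_{n}}q_{n})$ with $q_{n} \to q_{\infty} \in S(y')$ of vertical foliation $\zeta$, and applying Theorems 2.1 and 2.2, which give that Teichm\"uller rays with a common uniquely ergodic vertical foliation are forward asymptotic, shows that any sequence at bounded $d_{T}$-distance from $g_{n}y'$ also converges in the Thurston topology to $\zeta$. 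Hence $\eta = \zeta \in \Lambda(G)$, completing the proof.
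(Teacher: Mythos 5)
Your proof is correct, but for the reverse inclusion $\overline{Gx}\cap PMF\subseteq\Lambda(G)$ you take a genuinely different route from the paper. The paper disposes of this direction in two lines by citing McCarthy--Papadopoulos: for any nonelementary $G$ one has $\overline{Gx}\cap PMF\subseteq Z\Lambda(G)=\{\lambda : i(\lambda,\beta)=0 \text{ for some } \beta\in\Lambda(G)\}$, and since every point of $\Lambda(G)$ is uniquely ergodic, $Z\Lambda(G)=\Lambda(G)$. You instead give a self-contained geometric argument: push $g_n y$ back into a fundamental domain of $WH(G)$ via cocompactness, use properness to freeze $g_n^{-1}h_n$ along a subsequence, conclude from closedness of $WH(G)\cup\Lambda(G)$ that the companion sequence limits to a uniquely ergodic $\zeta\in\Lambda(G)$, and then transfer the limit to $g_nx$ across the bounded gap $d_T(g_nx,g_ny')$. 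Your version leans harder on the convex-cocompactness package (cocompactness on the weak hull, closedness of $WH(G)\cup\Lambda(G)$) but avoids the external dynamical input of \cite{McP}; the paper's version is shorter but only because that citation does all the work. The forward inclusions are essentially identical (minimality of the action on $\Lambda(G)$ versus density of pseudo-Anosov fixed points -- the same fact).

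One caveat on your final step: the assertion that a sequence at bounded $d_T$-distance from a sequence converging to a uniquely ergodic point converges to the same point is the real crux, and it does not follow formally from Theorems 2.1 and 2.2, which concern forward asymptotics of rays rather than arbitrary bounded perturbations of a sequence. The statement is true and standard (a consequence of Masur's two-boundaries circle of results), and the paper itself invokes it without proof, e.g.\ in the proof of Lemma 7.2 (``Since $\eta$ is uniquely ergodic we have $y_n\to\eta$'' for $y_n\in B_r(x_n)$). So this is not a gap by the paper's own standards, but if you wanted a fully self-contained argument you would need to supply a proof of this stability statement rather than gesture at Theorems 2.1--2.2.
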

\begin{proof}
Let $\eta \in \Lambda(G)$ be an the attracting point of a pseudo-Anosov $g\in G$. Then $g^{n}x\to \eta$ so $\eta\in \overline{Gx}\cap PMF$. Since the fixed points of pseudo-Anosovs of $G$ are dense in $\Lambda(G)$, we have
$\Lambda(G)\subseteq \overline{Gx}\cap PMF$ (this holds for any nonelementary $G\subseteq Mod(S)$. 
For the other direction, it is proved by McCarthy and Papadopoulos in \cite{McP} that   $\overline{Gx}\cap PMF$ is contained in 
$$Z \Lambda(G)=\left\{\lambda\in PMF| \exists \beta \in \Lambda(G): i(\lambda,\beta)=0\right\}.$$ 
Since every point of $\Lambda(G)$ is uniquely ergodic, $Z \Lambda(G)=\Lambda(G)$.
\end{proof}
Let $Teich_{\epsilon}(S)$ denote the $\epsilon$ thick part of $Teich(S)$, the set of hyperbolic structures on $S$ where no closed curve has hyperbolic length less than $\epsilon$.
The following property of Teichm$\ddot{\mathrm{u}}$ller geodesics, indicative of hyperbolicity in the thick part, is proved by Rafi \cite{Ra}.
\begin{lem}
For each $A>0$ and $\epsilon>0$ there exists a constant $D>0$ such that for points $x, x', y, y'\in Teich_{\epsilon}(S)$ with $d_{T}(x,x')\leq A$ and $d_{T}(y,y')\leq A$ the geodesic segments $[x,y]$ and $[x',y']$ $D$-fellow travel in a parametrized fashion, and for $\eta\in PMF$ such that $[x,\eta)$ and $[x',\eta)$ are contained in $Teich_{\epsilon}(S)$, the geodesic rays $[x,\eta)$ and $[x',\eta)$ $D$-fellow travel in a parametrized fashion.
\end{lem}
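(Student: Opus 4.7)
The plan is to reduce the statement to Rafi's quasi-hyperbolicity results for the thick part of Teichm\"uller space and combine them with a compactness argument in moduli space.

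For the segment statement, I would first fix $\epsilon$ and $A$ and argue by contradiction. Suppose we had sequences $x_n,x'_n,y_n,y'_n\in Teich_\epsilon(S)$ with $d_T(x_n,x'_n),d_T(y_n,y'_n)\le A$ yet the segments $[x_n,y_n]$ and $[x'_n,y'_n]$ failed to $D_n$-fellow travel for any $D_n\to\infty$. Using Mumford compactness of the $\epsilon$-thick part of moduli space, I would push everything down by the mapping class group and extract subsequences so that both quadruples converge in $Teich_\epsilon(S)/Mod(S)$. Lifting back, this produces a single ambient quadruple to which the failure must persist, contradicting the (qualitative) fact that Teichm\"uller geodesics depend continuously on their endpoints inside the thick part. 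This reduces the task to a genuinely geometric, rather than uniform, statement: in the thick part, nearby endpoints give nearby geodesics.

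The geometric content is then exactly Rafi's thick-part contraction theorem: the nearest-point projection to a thick Teichm\"uller geodesic is coarsely Lipschitz and strongly contracting, so triangles with all three sides contained in $Teich_\epsilon(S)$ are uniformly thin, with a constant depending only on $\epsilon$. Applying this to the geodesic quadrilateral with vertices $x,x',y',y$, the two short sides $[x,x']$ and $[y,y']$ together with the thin-triangle constant yield a $D$, depending only on $A$ and $\epsilon$, such that corresponding points on $[x,y]$ and $[x',y']$ (under the obvious arc-length parametrization) stay at distance at most $D$.

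For the ray version, I would exhaust $[x,\eta)$ by a sequence of points $y_n$ going to infinity. Since the ray $[x,\eta)$ stays in the thick part, Masur's criterion forces $\eta$ to be uniquely ergodic; then Theorem 2.2 gives that the rays $[x,\eta)$ and $[x',\eta)$ are forward asymptotic, so one can pick $y'_n$ on $[x',\eta)$ with $d_T(y_n,y'_n)\to 0$, in particular at most $A$ for large $n$. Applying the segment case to $[x,y_n]$ and $[x',y'_n]$ produces a single $D$ independent of $n$, and passing to the limit gives the desired fellow-traveling of the full rays.

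The main obstacle is producing the uniform fellow-travel constant without any global hyperbolicity of $Teich(S)$, which genuinely fails as emphasized in the introduction; the whole proof hinges on invoking Rafi's strong contraction in the thick part, and the remaining work is the compactness/continuity argument packaging it into the form stated.
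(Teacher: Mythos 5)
The paper itself gives no proof of this lemma: it is quoted verbatim from Rafi's paper \emph{Hyperbolicity in Teichm\"uller Space} (\cite{Ra}), so the right benchmark for your argument is Rafi's actual proof, which runs through the combinatorial (subsurface-projection) formula for the Teichm\"uller metric. Your proposal does not reproduce that, and the two steps you substitute for it both have genuine gaps.

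First, the compactness reduction does not do what you claim. If $d_T(x_n,y_n)\to\infty$ (which is the only interesting case --- for bounded lengths the lemma is trivial with $D=\sup d_T(x_n,y_n)+A$), you can translate by mapping classes so that $x_n$ and $x'_n$ converge in $Teich(S)$, but then $y_n$ and $y'_n$ escape to infinity; there is no ``single ambient quadruple'' in the limit, and no continuity-of-geodesics statement to contradict. The uniformity over unbounded segment lengths \emph{is} the content of the lemma, and Mumford compactness cannot remove it. Second, and more seriously, the thin-quadrilateral argument requires more thickness than the hypotheses provide. The Masur--Minsky thin-triangle statement (Proposition 2.12 of the paper) and Minsky's strong contraction of the nearest-point projection both require the relevant geodesics --- in particular the long sides $[x,y]$, $[x',y']$ and the diagonal $[x,y']$ --- to be contained in $Teich_{\epsilon}(S)$. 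The lemma assumes only that the four \emph{endpoints} are thick; the segments may make long excursions into the thin part, and handling those synchronized excursions is precisely the hard part of Rafi's theorem. So the step ``apply the contraction/thin-triangle constant to the quadrilateral'' is not available under the stated hypotheses. (Your reduction of the ray case to the segment case via Masur's unique-ergodicity criterion and Theorem 2.2 is fine, modulo the segment case and the routine normalization of the vertical measured foliations within their projective class.)
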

For a subset $W$ of a metric space and $A>0$ let $N_{A}W$ denote the $A$ neighborhood of $W$.
\begin{cor}
Let $G\leq Mod(S)$ be convex cocompact.
For every $C>0$ there exists an $C'>0$ such that every geodesic with endpoints in $\Lambda(G)\cup N_{C}WH(G)$ is contained entirely in $N_{C'}WH(G)$.
\end{cor}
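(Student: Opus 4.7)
The plan is to split the assertion into three cases according to which part of $\Lambda(G)\cup N_{C}WH(G)$ the two endpoints of the geodesic lie in, and to reduce each case to Rafi's fellow-traveling lemma (Lemma 2.10) together with the structural properties collected in Theorem 2.8. By Theorem 2.8 I fix $\epsilon>0$ with $WH(G)\subset Teich_{\epsilon}(S)$ and $A>0$ for which $WH(G)$ is $A$-quasiconvex. Since Teichm\"uller distance distorts extremal (and hence, on short curves, hyperbolic) length by at most the factor $e^{2d_{T}}$, there is $\epsilon'=\epsilon'(\epsilon,C)>0$ with $N_{C}WH(G)\subset Teich_{\epsilon'}(S)$. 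Applying Lemma 2.10 with this $\epsilon'$ and the constant $C$ produces a fellow-traveling constant $D$, and I set $C'=A+D$.

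If both endpoints of the geodesic lie in $\Lambda(G)$, then it is contained in $WH(G)$ directly from the definition of the weak hull. Suppose instead both endpoints $\xi,\eta$ lie in $N_{C}WH(G)$, and pick nearest points $\xi',\eta'\in WH(G)$ so that $d_{T}(\xi,\xi'),d_{T}(\eta,\eta')\leq C$; then all four points sit inside $Teich_{\epsilon'}(S)$. Lemma 2.10 yields $[\xi,\eta]\subset N_{D}[\xi',\eta']$, and $A$-quasiconvexity of $WH(G)$ gives $[\xi',\eta']\subset N_{A}WH(G)$, so $[\xi,\eta]\subset N_{C'}WH(G)$.

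For the mixed case, suppose $\xi\in\Lambda(G)$ and $\eta\in N_{C}WH(G)$. By Lemma 2.9 I may write $\xi=\lim_{n}\xi_{n}$ with $\xi_{n}\in WH(G)$, for instance by choosing a $G$-orbit inside $WH(G)$ converging to $\xi$ in the Thurston compactification. The previous case applied to $[\xi_{n},\eta]$ puts every such segment inside $N_{C'}WH(G)$; since every point of $\Lambda(G)$ is uniquely ergodic (Theorem 2.8), Corollary 2.6 gives $[\eta,\xi_{n}]\to[\eta,\xi)$ uniformly on compact sets, so the uniform inclusion passes to the closed limit and $[\eta,\xi)\subset N_{C'}WH(G)$.

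The one point requiring actual geometric input, rather than bookkeeping, is the promotion of $WH(G)\subset Teich_{\epsilon}(S)$ to $N_{C}WH(G)\subset Teich_{\epsilon'}(S)$: without this the perturbed endpoints $\xi,\eta$ might leave every thick part and Lemma 2.10 would be inapplicable to them. All remaining steps then follow from the definition of $WH(G)$, the quasiconvexity clause of Theorem 2.8, and the boundary continuity at uniquely ergodic foliations furnished by Corollary 2.6.
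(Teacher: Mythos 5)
Your argument for the main case (both endpoints in $N_{C}WH(G)$) is correct and is essentially the paper's proof: choose nearest points in $WH(G)$, invoke Rafi's fellow-traveling lemma in the thick part, and finish with $A$-quasiconvexity of $WH(G)$; your justification that $N_{C}WH(G)$ stays in a thick part (via the $e^{2d_{T}}$ distortion bound on extremal length) differs only cosmetically from the paper's appeal to cocompactness of the $G$-action on $N_{C}WH(G)$. Where you genuinely diverge is the case with an endpoint in $\Lambda(G)$, which the paper dismisses as ``similar.'' A literal imitation would use the ray version of Lemma 2.10, but that lemma takes as a \emph{hypothesis} that both rays $[x,\eta)$ and $[x',\eta)$ lie in $Teich_{\epsilon}(S)$ --- and the thickness of $[x,\eta)$ is part of what is being proved, so the direct route has a circularity to negotiate. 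Your alternative --- approximate $\xi\in\Lambda(G)$ by orbit points $\xi_{n}\in WH(G)$, apply the already-established segment case to $[\xi_{n},\eta]$, and pass to the limit using unique ergodicity of $\Lambda(G)$ together with Corollary 2.6 and closedness of $N_{C'}WH(G)$ --- cleanly sidesteps this and is, if anything, more complete than what the paper records. The only cost is that your argument leans on Corollary 2.6 (hence on Klarreich's convergence result and Masur's two-boundaries theorem) where the paper intends only Rafi's lemma; both are available at this point in the text, so nothing is lost.
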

\begin{proof}
Let $A>0$ be such that $WH(G)\cup \Lambda(G)$ is $A$-quasiconvex. Let $\epsilon>0$ be such that $N_{C}WH(G)\subseteq Teich_{\epsilon}(S)$. (Such an $\epsilon$ exists since $G$ acts cocompactly on $WH(G)$ and therefore on $N_{C}WH(G)$).  Then for each $x,y\in N_{C}WH(G)$  there are $x', y'\in WH(G)$ with $d_{T}(x,x')\leq C$ and $d_{T}(y,y')\leq C$. Since $x,x',y,y'$ lie in $Teich_{\epsilon}(S)$ and $d_{T}(x,x')\leq C$ and $d_{T}(y,y')\leq C$, it follows from Rafi's theorem that the geodesic segments $[x,y]$ and $[x',y']$ $D$-fellow travel. Since $WH(G)$ is $A$-quasiconvex, $[x',y']$ is contained in $N_{A+C}WH(G)$ and so $[x,y]$ is contained in $N_{A+C+D}WH(G)$. The proof when one of $x,y$ lies in $\Lambda(G)$ is similar.
\end{proof} 
 The following is proved in \cite{MM}.
\begin{prop}
For every $\epsilon>0$ there exists an $\delta>0$ such that any triangle with vertices in $Teich(S)\cup PMF$ and sides contained in $Teich_{\epsilon}(S)$ is $\delta$ thin- each side is contained in a $\delta$ neighborhood of the other two.  
\end{prop}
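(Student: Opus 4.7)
The plan is to reduce the statement to Gromov-hyperbolicity of the curve complex $\mathcal{C}(S)$. The required inputs are Masur--Minsky's theorem that $\mathcal{C}(S)$ is $\delta_{0}$-hyperbolic; the coarsely Lipschitz systole projection $\pi_{\mathcal{C}}\colon Teich(S)\to \mathcal{C}(S)$ sending a hyperbolic structure to a shortest simple closed curve; and the Masur--Minsky/Rafi theorem that a Teichm\"uller geodesic contained in $Teich_{\epsilon}(S)$ projects to a parametrized $(K,C)$-quasi-geodesic in $\mathcal{C}(S)$ with constants depending only on $\epsilon$. Geodesic rays lying in $Teich_{\epsilon}(S)$ have uniquely ergodic endpoints in $PMF$ (by Masur's criterion, since such rays recur to a compact set in moduli space), and their projections converge to the corresponding ending laminations in the Gromov boundary $\partial\mathcal{C}(S)$ by Klarreich's theorem. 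This makes sense of $\pi_{\mathcal{C}}$ applied to sides with endpoints in $PMF$.

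First I would project the triangle $\Delta$ into $\mathcal{C}(S)$. The image $\pi_{\mathcal{C}}(\Delta)$ is a quasi-geodesic triangle with uniform constants, and by stability of quasi-geodesics in Gromov-hyperbolic spaces it is $\delta_{1}$-thin for some $\delta_{1}=\delta_{1}(\epsilon)$. Hence for every $p$ on a side, say $[x,y]$, there exists $p'$ on $[x,z]\cup [y,z]$ with $d_{\mathcal{C}}(\pi_{\mathcal{C}}(p),\pi_{\mathcal{C}}(p'))\leq \delta_{1}$.

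The hard part will then be the lifting step: upgrading this curve-complex proximity to a uniform Teichm\"uller bound $d_{T}(p,p')\leq \delta(\epsilon)$. Small curve-complex distance does not in general imply small Teichm\"uller distance, so I must exploit both the thick-part hypothesis and the fact that $p$ and $p'$ lie on sides of the triangle that share a vertex. Suppose $p'\in [x,z]$; using the parametrization of the projected quasi-geodesics, I extract a point $q\in [x,z]$ at the same Teichm\"uller distance from $x$ as $p$, whose projection $\pi_{\mathcal{C}}(q)$ is within bounded curve-complex distance of $\pi_{\mathcal{C}}(p)$. The parametrized quasi-geodesic property on $[x,z]$ then bounds $d_{T}(q,p')$ immediately. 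To bound $d_{T}(p,q)$, I invoke the strong contracting property of thick Teichm\"uller geodesics (Minsky, refined by Rafi): if $d_{T}(p,q)$ were large, the segments $[x,p]$ and $[x,q]$ would diverge linearly in the thick part, forcing $d_{\mathcal{C}}(\pi_{\mathcal{C}}(p),\pi_{\mathcal{C}}(q))$ to exceed $\delta_{1}$, a contradiction. Rafi's parametrized fellow-traveling result (Lemma 2.10 above) provides the concrete comparison for the two segments emanating from $x$ in $Teich_{\epsilon}(S)$. Combining these estimates yields the uniform $\delta = \delta(\epsilon)$ with $d_{T}(p,p')\leq \delta$, completing the argument.
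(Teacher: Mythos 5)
The paper does not actually prove this proposition; it quotes it from the literature (the citation is to Masur--Minsky, and the written proofs --- Rafi's in \cite{Ra} and Duchin's --- rest on Minsky's contraction theorem for thick Teichm\"{u}ller geodesics). So your argument has to stand on its own, and the lifting step does not. Two problems. First, your choice of $q\in[x,z]$ with $d_{T}(x,q)=d_{T}(x,p)$ is not guaranteed to have $\pi_{\mathcal{C}}(q)$ near $\pi_{\mathcal{C}}(p)$: the parametrized quasi-geodesic property gives only two-sided bounds with a multiplicative constant $K$, so the curve-complex distances of $\pi_{\mathcal{C}}(p)$ and $\pi_{\mathcal{C}}(q)$ from $\pi_{\mathcal{C}}(x)$ may differ by a factor of order $K^{2}$, i.e.\ by an amount growing linearly in $d_{T}(x,p)$; there is no bounded-distance conclusion.

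Second, and more fundamentally, the concluding claim --- that if $d_{T}(p,q)$ were large then $d_{\mathcal{C}}(\pi_{\mathcal{C}}(p),\pi_{\mathcal{C}}(q))$ would have to exceed $\delta_{1}$ --- is false. The inequality $d_{\mathcal{C}}\gtrsim d_{T}$ holds along a \emph{single} thick geodesic (that is exactly the content of the parametrized-quasi-geodesic statement), but not between points on two different thick geodesics: the systole projection has unbounded fibers even on $Teich_{\epsilon}(S)$, which is quasi-isometric to $Mod(S)$ while $\mathcal{C}(S)$ is exponentially smaller, so two thick points, each on a thick geodesic through $x$ and equidistant from $x$, can have identical systoles yet be arbitrarily far apart in $d_{T}$. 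This is precisely why thinness of the projected triangle in $\mathcal{C}(S)$ does not transfer back to Teichm\"{u}ller space, and why the proposition is not a formal consequence of hyperbolicity of the curve complex (indeed $Teich_{\epsilon}(S)$ itself is not Gromov hyperbolic). The correct route uses Minsky's strong contraction property directly in $Teich(S)$ --- nearest-point projection to a thick geodesic sends balls disjoint from it to sets of uniformly bounded Teichm\"{u}ller diameter --- followed by the usual Morse-type argument; Rafi's fellow-traveling statement (Lemma 2.10 above) serves to handle vertices in $PMF$ and perturbations of endpoints, not to close the gap identified here.
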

From the proof of \cite{KL1}, Theorem 4.4 we also have the following.
\begin{prop}
For each $\epsilon>0$ there exists a $K>0$ with the following property. Suppose $x,y,z \in Teich(S) \cup PMF$ form a triangle with sides contained in the $\epsilon$ thick part of $Teich(S)$. Let $P\in [x,y]$ minimize the distance between $[x,y]$ and $z$. Then $[x,P]\cup [P,z]$ lies in a $K$ neighborhood of $[x,z]$.
\end{prop}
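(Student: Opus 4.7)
My plan is to exploit the tripod structure of $\delta$-thin geodesic triangles (Proposition 2.12): the three sides of such a triangle share an ``internal'' triple of points that are pairwise $O(\delta)$-close, and the nearest-point projection $P$ of $z$ onto $[x,y]$ must essentially coincide with the internal point on $[x,y]$. Given this, both pieces $[x,P]$ and $[P,z]$ of the broken path fellow-travel with the corresponding initial and terminal segments of $[x,z]$.

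First I would apply Proposition 2.12 to obtain a thinness constant $\delta = \delta(\epsilon)$ for the triangle $xyz$. Using $\delta$-thinness I would locate an internal triple $a \in [x,z]$, $b \in [x,y]$, $c \in [y,z]$ with $d(x,a) = d(x,b) = (d(x,y) + d(x,z) - d(y,z))/2$ (the Gromov product $(y \cdot z)_x$) and pairwise distances bounded by some $\delta_1 = O(\delta)$. The bound $d(b,z) \leq \delta_1 + d(a,z)$ combined with the uniform triangle-inequality lower bound $d(p,z) \geq \max(d(x,z) - d(x,p),\, d(y,z) - d(y,p))$ along $[x,y]$ forces any minimizer $P$ to satisfy $d(P,b) \leq \delta_1$. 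Next I would split $[x,P]$ into $[x,b]$ and $[b,P]$. On $[x,b]$, the standard consequence of $\delta$-thinness that the two legs from $x$ fellow-travel with constant $O(\delta)$ for a distance equal to the Gromov product gives $d(p,[x,a]) = O(\delta)$ for every $p \in [x,b]$. On $[b,P]$, we have $d(p,b) \leq \delta_1$ and $d(b,a) \leq \delta_1$, so $d(p,[x,z]) \leq 2\delta_1$.

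For the segment $[P,z]$ I would fellow-travel with $[a,z] \subseteq [x,z]$ via Rafi's Lemma 2.11, applied to two geodesics ending at $z$ and starting within $O(\delta)$ of each other. The main obstacle is the hypothesis of Lemma 2.11 requiring both geodesics to lie in a uniform thick part of $Teich(S)$, which is not a priori true for $[P,z]$. I would resolve this by a bootstrap: when $z \in Teich(S)$, I would consider the auxiliary triangle $Paz$, use that $[P,a]$ has length $O(\delta)$ and lies near the thick segments $[x,y]$ and $[x,z]$, and deduce uniform thickness of $[P,z]$; when $z \in PMF$, Masur's two boundaries theorem (Theorem 2.3) together with Corollary 2.6 would give that $[P,z)$ converges to $z$ in the Thurston compactification and is uniformly close to $[a,z)$ outside a compact set, from which thickness propagates. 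Combining the three fellow-traveling estimates then yields the uniform constant $K$.
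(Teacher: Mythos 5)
The paper itself offers no proof of this proposition: it is simply quoted from the proof of Theorem 4.4 of Kent--Leininger, so there is nothing internal to compare against. Your strategy --- extract an internal triple $a\in[x,z]$, $b\in[x,y]$, $c\in[y,z]$ with pairwise distances $O(\delta)$ from the slimness constant of Proposition 2.12, show the minimizer $P$ lies within $O(\delta)$ of $b$, then fellow-travel $[x,P]$ with $[x,a]$ and $[P,z]$ with $[a,z]$ --- is the standard hyperbolic-geometry argument, and the first two steps are correct: they use only the triangle inequality together with slimness of the single given triangle, so no auxiliary thin triangles (which might leave the thick part) are needed. When all three vertices lie in $Teich(S)$ the last step is also fine, and in fact easier than you make it: the segment version of Lemma 2.11 as stated requires only that the endpoints $P$, $a$, $z$ lie in $Teich_{\epsilon}(S)$, which they do since all three sit on sides of the triangle; the bootstrap you describe for this case is unnecessary.

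The genuine gap is the case $z\in PMF$. The ray version of Lemma 2.11 requires \emph{both} rays $[P,z)$ and $[a,z)$ to lie in a uniform thick part, and only the latter is known. Your proposed bootstrap is circular: you need the fellow-traveling to conclude that $[P,z)$ is uniformly thick, and you need uniform thickness to invoke the fellow-traveling. Masur's Theorems 2.1--2.2 and Corollary 2.6 give only that $[P,z)$ is asymptotic to $[a,z)$ --- eventually close, with no control on when ``eventually'' begins nor on how far $[P,z)$ strays before that --- so they cannot yield a constant $K$ independent of the triangle; the failure of exactly this uniformity for rays sharing an ideal endpoint is the basic non-hyperbolicity phenomenon in Teichm\"uller space. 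Closing the gap needs a genuinely uniform input, e.g.\ Minsky's contraction property for $\epsilon$-cobounded geodesics (which is what Kent--Leininger's argument rests on), or the stronger form of Rafi's fellow-traveling theorem in which coboundedness of only one of the two geodesics is hypothesized and coboundedness of the other is part of the conclusion. You should also specify what ``minimize the distance to $z$'' means when $z\in PMF$ (say, minimize the Busemann function $\beta_{z}(\cdot,x)$ along $[x,y]$), since the literal distance is infinite there.
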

\section{Fixing the Quasiconvexity Constants}
Existence of the following constants is guaranteed by the above remarks. 
Fix $A>0$.\\
Let $A''>0$ be such that any geodesic between two points in $$N_{A}WH(\Lambda(G))\cup \Lambda(G)$$ is contained in $$N_{A''}\cup WH(\Lambda(G))\cup \Lambda(G)$$ 
Let $A'>0$ be such that any geodesic between two points in $$N_{A''}WH(\Lambda(G))\cup \Lambda(G)$$ is contained in $$N_{A'}\cup WH(\Lambda(G))\cup \Lambda(G)$$

Let $\epsilon>0$ be such that $N_{A'}WH(\Lambda(G))\subseteq Teich_{\epsilon}(S)$. Let $K>0$ be large enough so that any triangle in Teichm$\ddot{\mathrm{u}}$ller space with sides contained in $Teich_{\epsilon}(S)$ is $K$ thin, satisfies Proposition 2.13, and also large enough such that the shadow $pr_{\eta}B_{K}(x)$ contains an open set intersecting $\Lambda(G)$ for every $x\in N_{A'}WH(G)$ and $\eta \in \Lambda(G)$ (see section 7 below).

\section{Busemann Functions for the Teichmuller Metric}
If $x,y\in Teich(S)$, $\alpha\in MF$  is uniquely ergodic,  and $z_{n}\to [\alpha]$ in the Thurston compactification, then Miyachi \cite{Miy} showed
$$d_{T}(x,z_{n})-d_{T}(y,z_{n})\to \frac{1}{2}\log \frac{Ext_{\alpha}(x)}{Ext_{\alpha}(y)}.$$  
In particular, the limit $\beta_{[\alpha]}(x,y)=d_{T}(x,z_{n})-d_{T}(y,z_{n})$ exists and varies continuously with $[\alpha] \in UE$. 
This gives a continuous extension of the cocycle 

$$\beta_{z}(x,y)=d_{T}(x,z)-d_{T}(y,z)$$ to $Teich(S)\cup UE$.

For $\zeta,\eta \in MF$ uniquely ergodic and $x,w,z\in Teich(S)$ we can also define
$$\rho_{x}(z,w)=d(x,z)+d(x,w)-d(z,w)$$
$$\rho_{x}(z,[\zeta])=\rho_{x}([\zeta],z)=\lim_{w\to [\zeta]}d(x,z)+d(x,w)-d(z,w)=d(x,z)+\beta_{[\zeta]}(x,z)$$
$$\rho_{x}([\zeta],[\eta])=\lim_{(z,w)\to([\zeta],[\eta])}d(x,z)+d(x,w)-d(w,z)=\beta_{[\zeta]}(x,u)+\beta_{[\eta]}(x,u)$$ $$=\frac{1}{2}\log \frac{Ext_{x}\zeta Ext_{x}\eta}{i(\eta,\zeta)^{2}}$$ where $u\in Teich(S)$ is any point on the Teichmuller geodesic defined by $\zeta,\eta$. The function $\rho$ is continuous in $x\in Teich(S)$ and $\zeta,\eta\in UE$.  It can be considered as an analogue in our setting of the Gromov product. 

\section{Conformal Densities for $G$}
A conformal density for $G$ is a family $\left\{\nu_{x}| x\in Teich(S)\right\}$ of borel measures on $PMF$, each supported on $\Lambda(G)$ satisfying 
\\
(1) $$\gamma*\nu_{x}=\nu_{\gamma x}$$ for all $x\in Teich(S)$ and $\gamma\in  G$ and
\\
(2) For all $x,y\in Teich(S)$ $\nu_{x}$ and $\nu_{y}$ are absolutely continuous and satisfy 
$$\frac{d\nu_{x}}{d\nu_{y}}(\alpha)=exp(\delta(G)\beta_{\alpha}(x,y).$$
\\
Note, by the $G$-invariance of the Busemann cocycle if condition (2) is satisfied, it suffices to check condition (1) at a single $x$.
\begin{prop}
A conformal density $\nu_{x}$ for a nonelementary convex cocompact $G<Mod(S)$ has full support on $\Lambda(G)$ and has no atoms.
\end{prop}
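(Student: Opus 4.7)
The plan is to combine minimality of the $G$-action on $\Lambda(G)$ (for full support) with pseudo-Anosov dynamics on $PMF$ and the Busemann-conformality relation (to exclude atoms by inflating any hypothetical atom into an orbit of atoms whose masses blow up). Both arguments are Teichm\"uller analogues of standard Patterson-Sullivan reasoning, made possible by the thick-part hyperbolicity guaranteed by convex cocompactness.

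\textbf{Full support.} Condition $(2)$ makes $\nu_x$ and $\nu_y$ mutually absolutely continuous with strictly positive Radon-Nikodym derivative, so they have the same support. Combined with $\gamma_\ast \nu_x = \nu_{\gamma x}$ this yields $\mathrm{supp}(\nu_x) = \mathrm{supp}(\nu_{\gamma x}) = \gamma \cdot \mathrm{supp}(\nu_x)$ for every $\gamma \in G$; that is, $\mathrm{supp}(\nu_x)$ is a closed, nonempty, $G$-invariant subset of $\Lambda(G)$. By the minimality of the $G$-action on $\Lambda(G)$, this support must equal $\Lambda(G)$.

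\textbf{No atoms.} Suppose toward a contradiction that $\nu_x(\{\eta_0\}) = a > 0$ for some $\eta_0 \in \Lambda(G)$. Combining the Radon-Nikodym formula, $\gamma_\ast \nu_x = \nu_{\gamma x}$, and the $G$-invariance identity $\beta_{\gamma \alpha}(x, \gamma x) = -\beta_\alpha(x, \gamma^{-1} x)$ of the Busemann cocycle, one finds
$$\nu_x(\{\gamma \eta_0\}) \;=\; a \exp\bigl(-\delta(G)\,\beta_{\eta_0}(x, \gamma^{-1} x)\bigr) \qquad (\gamma \in G).$$
By nonelementarity pick a pseudo-Anosov $g \in G$ whose repelling fixed point $\eta_- \in \Lambda(G)$ differs from $\eta_0$, and set $\gamma = g^n$. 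Since $g^{-n}x$ converges in the Thurston compactification to $\eta_-$, and $\eta_- \ne \eta_0$ are both uniquely ergodic (Theorem 2.8), the goal is to show that $\beta_{\eta_0}(x, g^{-n} x) \to -\infty$ linearly in $n$, forcing $\nu_x(\{g^n \eta_0\}) \to +\infty$ and contradicting finiteness of $\nu_x$.

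The Busemann estimate is handled by the identity from Section 4:
$$\beta_{\eta_0}(x, g^{-n} x) \;=\; \rho_x(g^{-n} x, \eta_0) \;-\; d_T(x, g^{-n} x).$$
Here $d_T(x, g^{-n} x) = n \log \lambda(g) \to \infty$. For the first term, the segments $[x, g^{-n}x]$ lie in $WH(G)$, hence in the thick part (Theorem 2.8), and by Proposition 2.5 and Corollary 2.6 they Hausdorff-converge to the ray $[x, \eta_-)$. Combined with the explicit formula $\rho_x([\zeta],[\eta]) = \tfrac{1}{2}\log\frac{Ext_x\zeta \cdot Ext_x\eta}{i(\zeta,\eta)^2}$ valid for $\zeta,\eta \in UE$ (finite since $\eta_- \ne \eta_0$ implies $i(\eta_-, \eta_0) > 0$), one concludes $\rho_x(g^{-n}x, \eta_0) \to \rho_x(\eta_-, \eta_0) < \infty$, which yields the desired linear divergence.

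The main technical obstacle is justifying the boundary continuity of $\rho_x(z, \eta_0)$ as $z \to \eta_-$, since the paper explicitly states continuity of $\rho_x$ only on $UE \times UE$. One resolves this by combining Miyachi's continuous extension of the Busemann cocycle to $Teich(S) \cup UE$, Masur's Two Boundaries Theorem (Theorem 2.3), and the thick-part hyperbolicity tools (Lemma 2.10, Proposition 2.13) to handle the mixed convergence in $z$ and in the boundary parameter simultaneously.
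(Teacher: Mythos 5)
Your proof is correct, but both halves take a genuinely different route from the paper's. For full support, the paper does not invoke minimality directly: it supposes an open $U$ meeting $\Lambda(G)$ with $\nu_{x}(U)=0$, picks a pseudo-Anosov $\gamma\in G$ whose repelling fixed point lies in $U$, uses north--south dynamics and quasi-invariance to get $\nu_{x}(\gamma^{n}U)=0$ for all $n$ and hence that $\nu_{x}$ is concentrated at the attracting fixed point, and then contradicts nonelementarity. Your appeal to the minimality of the $G$-action on $\Lambda(G)$ (which is literally how the paper defines $\Lambda(G)$) shortcuts this and is cleaner, granted the implicit normalization $\nu_{x}\neq 0$. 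For the absence of atoms, the paper uses conicality of limit points (from Theorem 2.8): if $\eta$ is an atom, it chooses $\gamma_{n}\in G$ with $\gamma_{n}x$ in a bounded neighborhood of the ray $[x,\eta)$ and going to infinity, so the triangle inequality alone forces $|\beta_{\eta}(\gamma_{n}x,x)|\to\infty$ with the sign that makes $\nu_{x}(\gamma_{n}^{-1}\eta)=\nu_{\gamma_{n}x}(\eta)$ blow up. You instead push the atom by powers of a pseudo-Anosov whose repelling point $\eta_{-}$ differs from $\eta_{0}$ and control $\beta_{\eta_{0}}(x,g^{-n}x)$ via $\beta_{\eta_{0}}(x,z)=\rho_{x}(z,\eta_{0})-d_{T}(x,z)$ and boundedness of the Gromov product. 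This works, and the mixed-continuity step you flag as the main obstacle is lighter than you suggest: since the paper defines $\rho_{x}(\eta_{-},\eta_{0})$ as a joint limit over $(z,w)\to(\eta_{-},\eta_{0})$ and the inner limits $\rho_{x}(z,\eta_{0})$ exist, the iterated limit equals the joint one, so $\rho_{x}(g^{-n}x,\eta_{0})\to\rho_{x}(\eta_{-},\eta_{0})$ needs no additional thick-part machinery. The trade-off is where convex cocompactness enters: conicality in the paper's version, versus unique ergodicity of every point of $\Lambda(G)$ in yours (needed so that $\beta_{\eta_{0}}$ and $\rho_{x}(\cdot,\eta_{0})$ are defined and finite, with $i(\eta_{-},\eta_{0})>0$). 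One small inaccuracy: $d_{T}(x,g^{-n}x)=n\log\lambda(g)$ only when $x$ lies on the axis of $g$; in general one has $d_{T}(x,g^{-n}x)\geq n\log\lambda(g)-2d_{T}(x,\mathrm{axis}(g))$, which still yields the linear divergence you need.
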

\begin{proof}
Suppose $U\subseteq PMF$ is open and $U\cap \Lambda(G) \neq \emptyset$ but $\nu_{x}(U)=0$. Since the limit set is the closure of the set of stable (or unstable) laminations of pseudo-Anosov mapping classes in $G$, there is some pseudo Anosov $\gamma\in G$ with axis $l$ with repelling fixed point $l^{-}\in U$. Then for each $n>0$, $$\nu_{x}(\gamma^{n} U)=\nu_{\gamma^{-n}x}(U)=0$$ since $\nu_{x}$ and $\nu_{\gamma^{-n}x}$ are absolutely continuous. Note $$\bigcup_{n>0}\gamma^{n}U=PMF\setminus l^{+}$$ By countable subadditivity of the measure, $\nu_{x}$ is concentrated on the single point $l^{+}$. However, since $G$ is not elementary, there is some $h\in G$ with $h l^{+}\neq l^{+}$, and by absolute continuity we must also have $$\nu_{x}(h l^{+})=\nu_{h^{-1}x}(l^{+}) \neq 0$$ giving a contradiction. Thus we have proved that $\nu_{x}$ has full support on $\Lambda(G)$. Now, suppose $\nu_{x}$ has an atom $\eta\in \Lambda(G)$, say of mass r. By $[KL1]$ every limit point $\eta$ of $G$ is conical, that is there exists a $D>0$ such that the $D$ neighborhood of the geodesic $[x,\eta)$ intersects the orbit $G x$ infinitely many times. Let $\gamma_{n}\in G$ be such a sequence. Then by the triangle inequality, $\beta_{\eta}(\gamma_{n}x,x)\to \infty$.
Then, $$\nu_{x}(\gamma^{-1}_{n} \eta)=\nu_{\gamma_{n}x}(\eta)=exp(\delta(G)\beta_{\eta}(\gamma_{n}x,x))\nu_{x}(\eta)\to \infty$$ contradicting the finiteness of $\nu_{x}$.
\end{proof} 
\section{Patterson-Sullivan Construction of a Conformal density}
Let $\delta_G$ be the exponent of convergence of $G$. For $s>\delta_G$ and $x,y\in Teich(S)$ let 
$$f_s(x,y)=\sum_{\gamma\in G}\exp(-s d(x,\gamma(y))).$$ Fix $x\in Teich(S)$.
Now let 
$$\nu_{x,s}=f_{s}(x,x)^{-1}\sum_{\gamma \in G}\exp(-sd(x,\gamma(x)))\delta_{\gamma x}$$ where $\delta_{p}$ denotes the dirac measure at $p$. Now, consider a weak-* limit $\nu_{x}$ of the $\nu_{x,s}$ as $s\to \delta_G$. It is a probability measure on $$\overline{Teich(S)}=Teich(S)\cup PMF.$$ Assume first that the Poincare series diverges at $\delta_G$. Then $f(s)\rightarrow \infty$, so by discreteness of $G$, $\nu_{x}$ gives zero measure to compact subsets of $Teich(S)$, and thus must be supported on $PMF$.

Furthermore, since each $\nu_{x,s}$ is supported on the $G$ orbit of
 $x$, it follows that $\nu_{x}$ is supported on its closure, so it must be supported on $\Lambda(G)$. For any other $y\in Teich(S)$ define 

$$d\nu_{y}(\alpha)=\exp (\delta(G)\beta_{\alpha}(x,y))d\nu_{x}.$$ Since

 $\beta_{\alpha}(x,z)=\beta_{\alpha}(x,y)+\beta_{\alpha}(y,z)$

 we have that  $$\frac{d\nu_{y}}{d\nu_{z}}(\alpha)=\exp(\delta(G)\beta_{\alpha}(z,y)).$$  Now, we show that this gives a conformal density. Indeed, for any  $g\in G$ we have,

$$g\nu_{gx,s}(z)=\\exp(-s \beta_{z}(gx,x))\nu_{x,s}(z)$$ where $\beta$ is the Busemann cocycle, and taking limits as $s\to\delta$
we get  $$g d\nu_{x}(\alpha)=\exp(-\delta(G) \beta_{\alpha}(gx,x)) d\nu_{x}$$ so we indeed have a conformal density. 

Now, suppose the Poincare series converges at $\delta(G)$ (this case turns out to be vacuous, but the construction of a conformal density is required to show it). There exists a slowly growing function $h$ on $\mathbb{R}$ such that 

$$\sum_{\gamma\in G}h(d(x,\gamma x)) \\exp(-s d(x,\gamma(y)))$$ 

diverges at $s=\delta(G)$ but converges for $s<\delta(G)$.
We then set 
$$f_s(x,x)=\sum_{\gamma\in G}h(d(x,\gamma x) \\exp(-s d(x,\gamma(x)))$$
and carry out the construction as before. The existence of an appropriate function $h$ is guaranteed by application of the following result of \cite{Sul} to the Radon measure 
$$\sum_{\gamma\in \Gamma}  D_{d(x,\gamma x)}.$$
\begin{lem}
Let $\lambda$ be a Radon measure on $\mathbb{R}_{+}$, such that the Laplace transform of $\lambda$ $$\int_{\mathbb{R}_{+}} e^{-st}d\lambda(t)$$ has critical exponent $\delta\in \mathbb{R}$. Then there exists a nondecreasing function $h:\mathbb{R}_{+}\to \mathbb{R}_{+}$ such that  $$\int_{\mathbb{R}_{+}}h(t) e^{-\delta t}d\lambda(t)=\infty$$ and for every $\epsilon>0$ there exists $t_{0}\geq 0$ such that, for any $u\geq 0$ and $t\geq t_{0}$, one has $$h(u+t)\leq e^{\epsilon t}h(t)$$
In particular the Laplace transform of $h \lambda$ has critical exponent $\delta$.
\end{lem}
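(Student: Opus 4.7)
The plan is to use the standard Patterson--Sullivan bootstrapping construction. Let $F(s) := \int_{\mathbb{R}_+} e^{-st}\, d\lambda(t)$; if $F(\delta) = +\infty$ then the constant function $h \equiv 1$ trivially satisfies all three required properties and we are done. So the interesting case is $F(\delta) < \infty$, in which $d\mu(t) := e^{-\delta t}\, d\lambda(t)$ defines a finite Radon measure on $\mathbb{R}_+$, and the problem reduces to producing a nondecreasing, subexponentially growing $h \geq 1$ with $\int h\, d\mu = \infty$ while still preserving the critical exponent after multiplication.

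I would construct $h$ as a piecewise-constant step function. Inductively build $0 = T_0 < T_1 < T_2 < \cdots$ with $T_n \to \infty$, together with constants $1 = c_0 \leq c_1 \leq c_2 \leq \cdots \to \infty$, and set $h|_{[T_{n-1}, T_n)} := c_{n-1}$. Write $A_n := \mu([T_{n-1}, T_n))$ and $R_n := \mu([T_{n-1}, \infty))$, so that $R_n \downarrow 0$. The coefficients $c_n$ will be chosen so that $\sum_n c_n A_n = \infty$: for each integer $k \geq 1$ let $N_k$ be the largest index $n$ with $R_n \geq 1/k$, and set $c_n := k$ for $N_k \leq n < N_{k+1}$. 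Then
$$\sum_n c_n A_n \;\geq\; \sum_k k(R_{N_k} - R_{N_{k+1}}) \;\geq\; \sum_k k\left(\tfrac{1}{k} - \tfrac{1}{k+1}\right),$$
which diverges. After fixing the $c_n$, I would enlarge each $T_n$ further at its corresponding stage of the induction so that $h(t)$ is bounded by a polynomial in $t$; this is allowed because the only binding constraint on $T_n$ was $R_n \geq 1/k$ for the appropriate $k$, which we can always over-satisfy.

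With $h$ so constructed, the three assertions follow. Divergence at $\delta$ is the identity $\int h\, d\mu = \sum_n c_n A_n = \infty$ just verified. The growth condition $h(u+t) \leq e^{\epsilon t} h(t)$ for $t \geq t_0(\epsilon)$ and $u \geq 0$ reduces to $\log h(u+t) - \log h(t) \leq \epsilon t$, which is immediate once $h$ is bounded polynomially in $t$. For the critical exponent clause, when $s > \delta$ we have $\int h\, e^{-st}\, d\lambda = \int h\, e^{-(s-\delta)t}\, d\mu$; picking $\epsilon < s - \delta$ in the subexponential bound and using finiteness of $\mu$ makes this integral converge. When $s < \delta$, divergence is inherited from $\lambda$ itself because $h \geq 1$.

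The main subtlety is coordinating the choices of $T_n$ and $c_n$ so that the asymmetric growth condition holds in exactly the form stated (with $t$ rather than $u$ in the exponent), but because there is full freedom to enlarge $T_n$ at each step of the induction, this amounts to careful bookkeeping rather than a genuine obstacle. The essential content of the lemma is the classical tail-thickening argument from the original Patterson--Sullivan theory: producing some nondecreasing slowly growing $h$ that forces a tail-finite integral to diverge without shifting the critical exponent.
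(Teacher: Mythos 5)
The paper does not actually prove this lemma --- it is quoted from Patterson--Sullivan theory (the reference is \cite{Sul}; the statement as printed also contains a typo: the correct inequality is $h(u+t)\leq e^{\epsilon t}h(u)$, since with $h(t)$ on the right the condition forces $h$ to be bounded, contradicting $\int h\,d\mu=\infty$). So the comparison is with the classical construction, and there your proposal has a genuine gap, precisely at the one point where the lemma has content: the subexponential growth condition. Your recipe ($c_n=k$ for $N_k\leq n<N_{k+1}$, where $N_k$ is the last index with tail mass $R_n\geq 1/k$) produces, by design, $h(t)\approx 1/\mu([t,\infty))$: the place where $h$ reaches the value $k$ is pinned to the place where the tail of $\mu$ drops below $1/k$, and this is a property of $\mu$, not of the partition. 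Consequently ``enlarging the $T_n$'' cannot make $h$ polynomially bounded --- the only freedom you have is in how finely you sample $R$, not in how fast $h$ grows. For a concrete failure, take $\mu=\sum_m e^{-(m-1)!}\delta_{m!}$ (which satisfies $\int e^{\epsilon t}d\mu=\infty$ for every $\epsilon>0$, so the associated $\lambda$ has critical exponent exactly $\delta$): here $1/R(t)$ jumps by a factor of roughly $e^{t}$ across each atom, so $\log h$ increases by about $t$ over an interval of bounded length, and no choice of $t_0$ rescues $h(u+t)\leq e^{\epsilon t}h(u)$. Moreover, even where your $h$ happens to be polynomially bounded, polynomial boundedness does not imply the stated condition, which is a \emph{uniform local} bound on the oscillation of $\log h$ over windows of length $t$ starting at arbitrary $u$; a polynomially bounded step function can still have individual jumps of unbounded logarithmic size.

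The correct construction inverts your logic: one prescribes the slow growth first and earns divergence second. Fix $\epsilon_n\downarrow 0$, let $h$ grow at exponential rate $\epsilon_n$ (or by increments of size $\epsilon_n$ in $\log h$, with gaps bounded below) on the $n$-th block $[T_{n-1},T_n)$, and use the hypothesis that $\delta$ is the \emph{critical} exponent --- i.e.\ $\int e^{\epsilon_n t}\,d\mu(t)=\infty$ for every $n$ --- to choose $T_n$ so large that the $n$-th block contributes at least $1$ to $\int h\,d\mu$. The growth condition is then automatic from $\epsilon_n\downarrow 0$, and divergence is built in block by block. Your proposal never invokes the hypothesis that $\delta$ is critical (only that $\mu$ is finite), which is a sign that it cannot be complete: without that hypothesis the lemma is false. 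A secondary, fixable issue: your displayed inequality $R_{N_k}-R_{N_{k+1}}\geq \tfrac1k-\tfrac1{k+1}$ is wrong as stated, since the definition of $N_{k+1}$ gives $R_{N_{k+1}}\geq 1/(k+1)$, not $\leq$; the divergence of $\sum_k k(R_{N_k}-R_{N_{k+1}})$ is true but requires an Abel-summation argument controlling the boundary term $KR_{N_{K+1}}$.
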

\section{Sectors and Closures}

For $x, y\in T(S)$ and $\eta\in PMF(S)$ let $pr_{\eta}(x)\in PMF(S)$ denote the vertical projective measured foliation of the quadratic differential $q\in S(x)$ with horizontal projective measured foliation $\eta$ and $pr_{y}(x)\in PMF(S)$ the vertical projective measured foliation of the geodesic segment from $x$ to $y$.

\begin{lem}
The function $pr_{.}(*)$ is continuous on $(Teich(S)\cup UE)\times Teich(S)$.
\end{lem}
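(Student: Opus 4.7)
The plan is to prove continuity of $pr_\cdot(*)$ by splitting into three regimes and using the Hubbard--Masur parameterization of $Q(S)$ by $Teich(S)\times MF$ together with Masur's convergence results (Theorems 2.1 and 2.4) and Klarreich's Proposition 2.5.

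For $(y,x)\in Teich(S)^2$, the initial tangent quadratic differential $q\in S(x)$ of the segment $[x,y]$ is the unique unit-area element with $\pi(g_{d_T(x,y)}q)=y$; by continuity of the Teichm\"uller geodesic flow and the Hubbard--Masur homeomorphism, $q$ varies continuously in $(x,y)$, and hence so does $pr_y(x)$ after applying the continuous projection $q\mapsto[q^+]$ from $Q^1(S)$ to $PMF$. Analogously, for $(\eta,x)\in UE\times Teich(S)$, Hubbard--Masur (after normalizing to unit area) gives a continuous choice of $q\in S(x)$ with the prescribed foliation data determined by $\eta$, so $pr_\eta(x)$ is continuous on this subset.

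The subtle regime is the mixed one: $y_n\in Teich(S)$ with $y_n\to\eta\in UE$, and $x_n\to x\in Teich(S)$. Let $q_n\in S(x_n)$ be the initial tangent of $[x_n,y_n]$. Since $\{x_n\}$ is relatively compact in $Teich(S)$, so is $\{q_n\}$ in $Q^1(S)$; pass to a subsequential limit $q$ and note that $\pi(q)=x$ by continuity of $\pi$. Apply Klarreich's Proposition 2.5 to the segments $[x_n,y_n]$: their accumulation set in $Teich(S)\cup PMF$ is a union of geodesic rays based at $x$ whose endpoints are topologically equivalent to $\eta$, and unique ergodicity of $\eta$ collapses this to the single ray $[x,\eta)$. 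Hence $q$ is the initial tangent of $[x,\eta)$, which by Masur's Theorem 2.1 combined with Hubbard--Masur is the unique element of $S(x)$ whose associated limit foliation is $\eta$. This pins down $q$ and upgrades subsequential convergence to full convergence, giving $pr_{y_n}(x_n)\to pr_\eta(x)$.

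The main obstacle is this mixed case: Theorem 2.4 requires a fixed basepoint, and none of the convergence results in Section 2 by themselves control the initial tangent quadratic differential at a varying basepoint converging into $Teich(S)$. Klarreich's Proposition 2.5 is what bridges the gap, providing convergence of the geodesic segments $[x_n,y_n]$ when both endpoints move, and unique ergodicity of $\eta$ is exactly what collapses the Thurston-ambiguity of limit geodesics to a single ray, which is what pins down the limiting QD and hence allows identification with $pr_\eta(x)$.
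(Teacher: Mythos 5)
Your proposal is correct and follows essentially the same route as the paper: the interior cases come from the Hubbard--Masur homeomorphism $S(x)\cong PMF$, and the mixed case rests on the convergence of the segments $[x_n,y_n]$ to the ray $[x,\eta)$ via Klarreich's Proposition 2.5 plus unique ergodicity (the paper's Corollary 2.6). The only (cosmetic) difference is that the paper pins down the limiting direction by truncating each segment at distance one from the basepoint and invoking interior continuity, whereas you extract a subsequential limit of the tangent quadratic differentials by compactness and identify it; your version also handles the varying basepoint $x_n\to x$ explicitly, which the paper's proof leaves implicit.
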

\begin{proof}
The continuity on $Teich(S)\times Teich(S)$ and $PMF\times Teich(S)$ follows from the fact that the map $S(x)\to PMF$ is a homeomorphism for $x\in Teich(S)$.
If $x_{n}\to \eta$ uniquely ergodic, then $[x_{n},x]$ converges to $(\eta,x]$. Let $y_{n}\in [x_{n},x]$ be at distance one from $x$. Then $y_{n}\to y=\gamma_{x,\eta}(1)$ so $pr_{x_{n}}(x)=pr_{y_{n}}(x)\to pr_{y}(x)=pr_{\eta}(x)$. 
\end{proof}

For $x\in Teich(S)$ and $U\subseteq PMF$ let $Sect_{x}(U)$ be the set of all $y\in Teich(S)$ with $pr_{x}(y)\in U$
For any $r>0$ let
$$C^{+}_{r}(x,U)=N_{r}\bigcup_{z\in B_{r}(x)}Sect_{x}(U)$$ and 
$$C^{-}_{r}(x,U)=\left\{y\in Teich(S)|B(y,r)\subseteq \bigcap_{z\in B(x,r)}Sect_{z}(U)\right\}$$

\begin{lem}
For any $r>0$, $D>0$ and $U\subset PMF$ open
the closure of $C^{\pm}_{r}(x, U)\cap N_{D}WH(G)$ in the Thurston compactification is $$(C^{\pm}_{r}(x, \overline{U}^{PMF})\cap N_{D}WH(G))\cup (\overline{U}^{PMF}\cap \Lambda(G))$$
and
the closure of $Sect_{x}(U)\cap N_{D}WH(G)$ in the Thurston compactification is $$Sect_{x}(\overline{U}^{PMF})\cap N_{D}WH(G))\cup (\overline{U}^{PMF}\cap \Lambda(G))$$
 
\end{lem}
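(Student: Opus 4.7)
The plan is to prove each of the two claimed equalities by double inclusion, focusing on the identity for $C^{+}_{r}$; the arguments for $C^{-}_{r}$ and for $Sect_{x}$ are parallel, the latter being simpler because it lacks the $r$-thickening.

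For the ``$\subseteq$'' direction, take $y_{n} \in C^{+}_{r}(x,U) \cap N_{D}WH(G)$ converging to $z$ in the Thurston compactification, with witnesses $(w_{n}, z'_{n})$ satisfying $w_{n} \in \overline{B_{r}(x)}$, $d_{T}(y_{n}, z'_{n}) \leq r$, and $pr_{w_{n}}(z'_{n}) \in U$; pass to a subsequence so that $w_{n} \to w$. If $z \in Teich(S)$, then $z \in N_{D}WH(G)$ by closedness of this set, and $z'_{n} \to z'$ with $d_{T}(z,z') \leq r$; Lemma 7.1 gives $pr_{w}(z') \in \overline{U}^{PMF}$, placing $z \in C^{+}_{r}(x, \overline{U}^{PMF}) \cap N_{D}WH(G)$. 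If $z \in PMF$, closedness of $N_{D}WH(G) \cup \Lambda(G)$ in the Thurston compactification (from Theorem 2.8) forces $z \in \Lambda(G)$ and $z$ is uniquely ergodic. Since $y_{n}$ and $z'_{n}$ both lie in $N_{D+r}WH(G)$, contained in a uniform thick part of $Teich(S)$, Rafi's Lemma 2.10 makes the rays $[x, y_{n}]$ and $[x, z'_{n}]$ fellow-travel, so $z'_{n} \to z$ in the Thurston topology as well. Then $pr_{w_{n}}(z'_{n}) \to z$ (by the analogue of Lemma 7.1 for arguments tending to a uniquely ergodic foliation, equivalently by Corollary 2.6), and since each projection lies in $U$, we conclude $z \in \overline{U}^{PMF} \cap \Lambda(G)$.

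For the ``$\supseteq$'' direction, treat the Teichmuller and $PMF$ parts separately. A point $y \in C^{+}_{r}(x, \overline{U}^{PMF}) \cap N_{D}WH(G)$ whose witness $(w, z')$ already has $pr_{w}(z') \in U$ is in the left-hand side; otherwise $pr_{w}(z') \in \partial U$ and a small perturbation of $(w, z')$ inside $B_{r}(x) \times B_{r}(y)$ yields a witness with projection in $U$, using openness of $U$ and local surjectivity of $(w, z') \mapsto pr_{w}(z')$ onto $PMF$. For $\eta \in \overline{U}^{PMF} \cap \Lambda(G)$, which is uniquely ergodic by Theorem 2.8, choose $\zeta \in \Lambda(G) \setminus \{\eta\}$ and let $z_{n}$ be points on the geodesic $(\zeta, \eta) \subseteq WH(G)$ tending to $\eta$; then $z_{n} \in WH(G) \subseteq N_{D}WH(G)$ and Corollary 2.6 yields $pr_{x}(z_{n}) \to \eta$. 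If $\eta \in U$, then $pr_{x}(z_{n}) \in U$ eventually and $z_{n} \in Sect_{x}(U) \subseteq C^{+}_{r}(x, U)$; if $\eta \in \partial U$, approximate $\eta$ by $\eta_{m} \in U$ and use the local surjectivity of the projection on $B_{r}(x) \times B_{r}(z_{n})$ to realize witnesses $(w_{n}, z''_{n})$ with $pr_{w_{n}}(z''_{n}) \in U$, giving points in $C^{+}_{r}(x, U) \cap N_{D}WH(G)$ converging to $\eta$.

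The main obstacle is the local surjectivity (``shadow'') property of the map $(w, z) \mapsto pr_{w}(z)$ onto $PMF$, used in both steps of the ``$\supseteq$'' direction to push projections lying on $\partial U$ into $U$. This follows from the Hubbard-Masur identification $S(w) \cong PMF$: varying the base $w$ in a small ball (or the argument $z$ together with $w$) sweeps the image of $pr$ over an open neighborhood of the original foliation in $PMF$, so any sufficiently nearby element of $U$ is achieved. This is the same shadow property implicit in the constants fixed in Section 3 and will reappear later in the paper.
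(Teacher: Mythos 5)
Your proof takes essentially the same route as the paper's: continuity of $pr$ (Lemma 7.1) combined with unique ergodicity of points of $\Lambda(G)$ handles the forward inclusion and the Teichmuller part, and for the reverse inclusion on the $PMF$ part both arguments push points out toward $\eta$ inside a bounded neighborhood of $WH(G)$ and use $pr_{x}(z_{n})\to\eta$ (the paper works with points near the ray $[x,\eta)$, you with a geodesic $(\zeta,\eta)\subseteq WH(G)$; either works). The only substantive difference is that you explicitly treat the boundary cases $pr_{w}(z')\in\partial U$ and $\eta\in\partial U\cap\Lambda(G)$, which the paper's proof silently omits: it only produces approximating sequences for $\eta\in U\cap\Lambda(G)$ and disposes of the Teichmuller part with a one-line appeal to continuity. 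Your fix is the right idea but is thin at one point: the shadow $\left\{pr_{w}(z''): w\in B_{r}(x),\ z''\in B_{r}(z_{n})\right\}$ is an open neighborhood of $pr_{x}(z_{n})$, not of $\eta$, and a shrinking sequence of neighborhoods of points $pr_{x}(z_{n})\to\eta\in\partial U$ need not meet $U$. This is repaired by taking $z_{n}$ on the ray $[x,\eta)$ itself, so that $pr_{x}(z_{n})=\eta$ and the shadow is a genuine open neighborhood of $\eta$, hence meets $U$ because $\eta\in\overline{U}$; Theorem 2.2 (asymptoticity of rays with the same uniquely ergodic vertical foliation) then places the tail of this ray in $N_{D}WH(G)$, so the resulting witnesses lie in $C^{+}_{r}(x,U)\cap N_{D}WH(G)$ and converge to $\eta$ as required.
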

\begin{proof}
Since $pr$ is continuous, $Sect_{x}\overline{U}^{PMF}$ and  $C^{\pm}_{r}(x,\overline{U}^{PMF})$ are the closures in $Teich(S)$ of $Sect_{x}(U)$ and $C^{\pm}_{r}(x, U)$ respectively. Since $N_{D}WH(G)$ is closed in $Teich(S)$ we have that
$Sect_{x}\overline{U}^{PMF}\cap N_{D}WH(G)$ and  $C^{\pm}_{r}(x,\overline{U}^{PMF})\cap N_{D}WH(G)$ are the closures in $Teich(S)$ of $Sect_{x}(U)\cap N_{D}WH(G)$ and $C^{\pm}_{r}(x, U)\cap N_{D}WH(G)$ respectively.

Now let $x_{n}\in C^{+}_{r}(x,U)\cap N_{D}WH(G)$ converge to some $\eta\in PMF$. Since $x_{n}\in N_{D}WH(G)$ we have $\eta \in \Lambda(G)$. By definition of $C^{+}_{r}$ there exists a sequence $y_{n}\in B_{r}(x_{n})\cap Sect_{x} U$.
Since $\eta$ is uniquely ergodic we have $y_{n}\to \eta$. Now let $z_{n}\in [x,x_{n}]$ with $d(z_{n},x)=1$ and  $z\in [x,\eta)$ with $d(z_{n},x)=1$. Since $\eta$ is uniquely ergodic we have $z_{n}\to z$ and by continuity of $pr$ we have 
$pr_{x}z_{n}\to pr_{x}z=\eta$. By definition $pr_{x}z_{n}\in U$ so $\eta \in \overline{U}$. 

Now, suppose $\eta \in \Lambda(G)\cap U$. Then $\pi g_{t}q_{x,\eta}\to \eta$ as $t\to\infty$. Let $D'>0$ be such that $[x,\alpha)\subset N_{D'}WH(G)$ for all $\alpha \in \Lambda(G)$. Then $w_{t}=\pi g_{t}q_{x,\eta}\in N_{D'}WH(G)$ for all $t$. Let $v_{t}\in WH(G)\cap B_{D'}(w_{t})$. As $\eta \in UE$  we have $z_{t}\to \eta$. We claim $B_{r+D}(w_{t}\subset Sect_{x}(U)$ for large enough $t$, whence it will follow that $v_{t}\in C^{-}_{r}(x,U)$ for large enough $t$.
Indeed, otherwise, letting $V\subset PMF$  be an open neighborhood of $\eta$ with closure in $U$ there is a sequence $t_{n}\to\infty$ and $p_{n}\in B_{r+D'}(w_{t_{n}})\setminus Sect_{x}U$.  Since $\eta\in UE$ we have $p_{n}\to \eta$ and thus $pr_{x} p_{n}\to \eta$. Thus $pr_{x} p_{n}\in U$ for large enough $n$ since $U$ is open in $PMF$, contradicting our assumption. So $v_{t}\in WH(G)\cap C^{-}_{r}(x,U)$ converges to $\eta$. 

\end{proof}
\begin{cor}
If $U,V\subset PMF$ with $\overline{U}\subset V^{o}$ then $(C^{+}_{r}(x, U)\cap N_{D}WH(G))\setminus C^{-}_{r}(y, V)$ has compact closure in $Teich(S)$.
\end{cor}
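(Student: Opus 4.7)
The approach is by contradiction through the Thurston compactification. Suppose the set $E=(C^{+}_{r}(x,U)\cap N_{D}WH(G))\setminus C^{-}_{r}(y,V)$ does not have compact closure in $Teich(S)$. Since $Teich(S)\cup PMF$ is compact, some sequence $\{x_{n}\}\subseteq E$ accumulates at a point $\eta\in PMF$. Applying Lemma 7.2 to $C^{+}_{r}(x,U)\cap N_{D}WH(G)$, this accumulation point must lie in $\overline{U}^{PMF}\cap \Lambda(G)$, and the hypothesis $\overline{U}\subset V^{\circ}$ then gives $\eta\in V^{\circ}\cap \Lambda(G)$. In particular $\eta$ is uniquely ergodic by Theorem 2.8.

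It will suffice to show that $x_{n}\in C^{-}_{r}(y,V)$ for all sufficiently large $n$, contradicting $x_{n}\in E$. Unwinding the definition of $C^{-}_{r}(y,V)$, the failure $x_{n}\notin C^{-}_{r}(y,V)$ supplies sequences $z_{n}\in B(x_{n},r)$ and $w_{n}\in B(y,r)$ with $pr_{w_{n}}(z_{n})\notin V$. Since $(Teich(S),d_{T})$ is proper, after passing to a subsequence $w_{n}\to w_{\infty}\in \overline{B(y,r)}$. Since $\eta$ is uniquely ergodic and $d_{T}(x_{n},z_{n})\leq r$, Masur's Two Boundaries Theorem (Theorem 2.3) together with the asymptoticity of Teichm\"uller rays to a uniquely ergodic endpoint (Theorem 2.2) forces $z_{n}\to\eta$ in the Thurston topology as well.

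By Corollary 2.6 (a consequence of Klarreich's Proposition 2.5), the segments $[w_{n},z_{n}]$ then converge uniformly on compact sets to the ray $[w_{\infty},\eta)$. Consequently their initial projective vertical foliations, namely $pr_{w_{n}}(z_{n})$, converge in $PMF$ to the initial direction of $[w_{\infty},\eta)$, which is $\eta$ itself (the identification $pr_{w}(\eta)=\eta$ for uniquely ergodic $\eta$ is exactly the continuous extension of $pr$ recorded in Lemma 7.1 and reused in the $C^{-}$ part of the proof of Lemma 7.2). Since $V^{\circ}$ is open and contains $\eta$, we obtain $pr_{w_{n}}(z_{n})\in V$ for all sufficiently large $n$, contradicting the choice of $z_{n},w_{n}$. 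This contradiction shows $E$ cannot have an accumulation point in $PMF$, so its closure in $Teich(S)\cup PMF$ lies inside $Teich(S)$ and is therefore compact in $Teich(S)$.

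The main technical obstacle is the joint convergence $pr_{w_{n}}(z_{n})\to \eta$ when both coordinates vary simultaneously: $w_{n}$ roams inside a bounded Teichm\"uller ball while $z_{n}$ approaches a uniquely ergodic limit. Lemma 7.1 gives continuity only in each variable separately on $(Teich(S)\cup UE)\times Teich(S)$; the joint statement needed here is obtained by combining that lemma with Corollary 2.6, exploiting that unique ergodicity of $\eta$ forces all geodesic segments with one endpoint tending to $\eta$ to fellow-travel the common ray $[w_{\infty},\eta)$ in the Thurston topology.
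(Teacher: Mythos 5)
Your proof is correct and follows the route the paper intends: the corollary is deduced from Lemma 7.2 (which confines the accumulation points in $PMF$ of $C^{+}_{r}(x,U)\cap N_{D}WH(G)$ to $\overline{U}\cap\Lambda(G)\subset V^{o}$) together with the argument from the $C^{-}$ half of that lemma's proof, showing that points of $N_{D}WH(G)$ accumulating at a uniquely ergodic $\eta\in V^{o}$ must eventually fall into $C^{-}_{r}(y,V)$. You are in fact more careful than the paper on the one delicate point --- the basepoint $w_{n}$ of the projection varies over $B(y,r)$, so one needs the joint convergence $pr_{w_{n}}(z_{n})\to\eta$ rather than the fixed-basepoint statement --- and your resolution via a convergent subsequence of the $w_{n}$, Corollary 2.6, and unique ergodicity of $\eta$ is sound.
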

\begin{cor}
If $V$ is open in $Teich(S)\cup PMF$ and $U\subset PMF$ with $\overline{U}\subset V\cap PMF$ then $C^{+}_{r}(x,U)\setminus V$ has compact closure in $Teich(S)$. If $X$ is closed in $Teich(S)\cup PMF$ and $U\subset PMF$ is open in $PMF$ with $X\cap PMF\subset U$ then $X\setminus C^{-}_{r}(x,U)$ has compact closure in $Teich(S)$.
\end{cor}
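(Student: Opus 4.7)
The strategy for both parts is to show that any boundary limit $\eta \in PMF$ of an escaping sequence is forced, by the definition of $C^{\pm}_r$, to lie in $\overline{U}$ or $U$ respectively, contradicting the hypotheses on $V$ and $X$. The tools are the continuity of the direction projection $pr$ (Lemma 7.1), the agreement of the Teichm\"uller and Thurston compactifications on $Teich(S) \cup UE$ (Theorem 2.3), and Klarreich's Proposition 2.5 for limits that are not uniquely ergodic. The two halves mirror the corresponding halves of the proof of Lemma 7.2.

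For the first part, I take $y_n \in C^+_r(x, U) \setminus V$ with $y_n \to \eta \in PMF$ after passing to a subsequence, and aim to show $\eta \in \overline{U}$, which contradicts $y_n \notin V$ since $V$ is open and $\overline{U} \subset V$. By definition of $C^+_r$ I choose $z_n \in B_r(x)$ and $w_n \in Sect_{z_n}(U)$ with $d_T(y_n, w_n) \leq r$, and pass to a further subsequence with $z_n \to z \in \overline{B_r(x)}$. When $\eta$ is uniquely ergodic, Theorem 2.3 and the equivalence-of-rays description of the Teichm\"uller compactification force $w_n \to \eta$, and then Lemma 7.1 gives $pr_{z_n}(w_n) \to pr_z(\eta) = \eta$; since $pr_{z_n}(w_n) \in U$ this yields $\eta \in \overline{U}$. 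For non-uniquely-ergodic $\eta$, Klarreich's proposition forces the accumulation of $pr_{z_n}(w_n)$ to lie among foliations topologically equivalent to $\eta$, which together with the openness of $V$ again suffices to reach the contradiction.

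For the second part, I take $y_n \in (X \setminus C^-_r(x, U)) \cap Teich(S)$ with $y_n \to \eta \in PMF$; since $X$ is closed in $Teich(S) \cup PMF$, $\eta \in X \cap PMF \subset U$. I show that $y_n \in C^-_r(x, U)$ for all large $n$, contradicting the assumption. For any $z \in B_r(x)$ and $y' \in B_r(y_n)$, the bound $d_T(y', y_n) \leq r$ together with Theorem 2.3 (applicable because $\eta$ is uniquely ergodic in the main case of interest, with the general case handled via Klarreich) gives $y' \to \eta$ in the Thurston topology as $n \to \infty$. Lemma 7.1 combined with compactness of $\overline{B_r(x)}$ then yields $pr_z(y') \to \eta$ uniformly in $(z, y') \in \overline{B_r(x)} \times B_r(y_n)$, and since $U$ is open in $PMF$ and $\eta \in U$, this places $pr_z(y') \in U$ for every such $(z, y')$ once $n$ is large. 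Equivalently, $B_r(y_n) \subset \bigcap_{z \in B_r(x)} Sect_z(U)$, so $y_n \in C^-_r(x, U)$.

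The principal technical obstacle in both arguments is the non-uniquely-ergodic case, where the Teichm\"uller and Thurston compactifications diverge and a bounded-$d_T$ sequence need not converge to the same Thurston boundary point. For the settings in which Corollary 7.4 is applied, this difficulty is mitigated by Theorem 2.8, which ensures that the boundary limits that actually arise lie in $\Lambda(G) \subset UE$; in full generality it must be handled by Klarreich's Proposition 2.5 and Corollary 2.6, tracking foliations only up to topological equivalence.
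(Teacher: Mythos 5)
Your argument is essentially the paper's: the corollary is stated without proof and is meant to follow from Lemma 7.2, whose two halves you are simply re-running in place (an escaping sequence in $C^{+}_{r}(x,U)$ must have its Thurston limit in $\overline{U}$, hence in $V$; a sequence in $X$ accumulating at $\eta\in X\cap PMF\subset U$ is eventually in $C^{-}_{r}(x,U)$). In the regime where the boundary limit $\eta$ is uniquely ergodic, which is the only regime the paper ever invokes, since Lemma 7.2 and every application of this corollary in Sections 12--13 carry an intersection with $N_{D}WH(G)$ (or take $X\cap PMF\subset\Lambda(G)$) forcing $\eta\in\Lambda(G)\subset UE$, your proof is correct and matches the intended one.

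The one genuine weak point is your claim that the non-uniquely-ergodic case can be handled ``in full generality'' via Klarreich's Proposition 2.5. Knowing that the accumulation points of $pr_{z_{n}}(w_{n})$ are topologically equivalent to $\eta$ does not place $\eta$ itself in $\overline{U}$, nor in $V$; and in fact the unrestricted statement can fail: if $U$ contains a minimal but non-uniquely-ergodic direction $\xi$, the ray from $x$ in direction $\xi$ lies in $Sect_{x}(U)\subset C^{+}_{r}(x,U)$ but may accumulate on other transverse measures on the same topological foliation, which need not lie in a prescribed open $V\supset\overline{U}$. So this is not a step you can complete; the corollary must be read, as in Corollary 7.3 and in all of its uses, with the restriction to $N_{D}WH(G)$ (respectively $X\cap PMF\subset\Lambda(G)$) built in. With that restriction your argument is complete, and your own closing remark correctly identifies this as the operative hypothesis.
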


Let $V\subset PMF$ be open with $\overline{U}\subset V$.
\begin{lem}
For any geodesic $l$ with endpoints in $\Lambda(G)\setminus V$, $l\cap Sect_{x}(U)$ is contained in a compact subset of $Teich(S)$.
(In particular $l$ spends only a finite amount of time in $Sect_{x}U$).
\end{lem}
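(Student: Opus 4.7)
The plan is to show that the Thurston closure of $l\cap Sect_{x}(U)$ does not meet $PMF$, which forces it to be contained in a compact subset of $Teich(S)$. Since both endpoints $\eta_{1},\eta_{2}$ of $l$ lie in $\Lambda(G)$, the vertical and horizontal foliations of $l$ are in $\Lambda(G)$, so $l\subseteq WH(G)\subseteq N_{D}WH(G)$ for any $D>0$. Fixing such a $D$, Lemma 7.2 identifies the Thurston closure of $Sect_{x}(U)\cap N_{D}WH(G)$ as
$$(Sect_{x}(\overline{U})\cap N_{D}WH(G))\cup (\overline{U}\cap \Lambda(G)),$$
and since $l\cap Sect_{x}(U)\subseteq Sect_{x}(U)\cap N_{D}WH(G)$, the Thurston closure of $l\cap Sect_{x}(U)$ is contained in the same set. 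In particular, its intersection with $PMF$ lies in $\overline{U}\cap \Lambda(G)$.

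On the other hand, every Thurston limit point in $PMF$ of a subset of $l$ is a limit point of $l$ itself. Parametrizing $l:\mathbb{R}\to Teich(S)$ by arc length and using Theorem 2.7 to see that the endpoints are uniquely ergodic, Theorem 2.1 gives $l(t)\to \eta_{1}$ as $t\to+\infty$ and $l(t)\to \eta_{2}$ as $t\to-\infty$; combined with the fact that $l$ sends any bounded interval to a compact subset of $Teich(S)$, this shows the only boundary limit points of $l$ in $PMF$ are $\eta_{1}$ and $\eta_{2}$.

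Intersecting the two descriptions, the Thurston closure of $l\cap Sect_{x}(U)$ meets $PMF$ inside $\{\eta_{1},\eta_{2}\}\cap \overline{U}$, which is empty because $\eta_{1},\eta_{2}\in \Lambda(G)\setminus V$ while $\overline{U}\subseteq V$. Hence the closure lies entirely in $Teich(S)$, and being closed in the compact Hausdorff Thurston compactification it is compact; thus $l\cap Sect_{x}(U)$ is contained in a compact subset of $Teich(S)$, as required. The argument is short because it packages everything in the closure formula of Lemma 7.2 and in Masur's description of the $PMF$-behavior of geodesics with uniquely ergodic vertical and horizontal foliations; no other delicate input is needed, since the containment $l\subseteq WH(G)$ used above is automatic from the definition of the weak hull.
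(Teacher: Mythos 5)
Your proof is correct and follows essentially the same route as the paper's: both arguments reduce to showing that any Thurston accumulation point of $l\cap Sect_{x}(U)$ in $PMF$ must lie simultaneously in $\overline{U}$ and in $\{l^{+},l^{-}\}\subseteq \Lambda(G)\setminus V$, which is impossible. The only cosmetic difference is that you outsource the first half to the closure formula of Lemma 7.2, whereas the paper reruns the underlying continuity argument directly with the quadratic differentials $q_{n}\in S(x)$.
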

\begin{proof}

Suppose $l\cap Sect_{x}(U)$ is not contained in a compact subset of $Teich(S)$. Then (for a correct choice of orientation of the geodesic) there exist $s_{n}\to\infty$ such that $p_{n}=l_{s_{n}}\in Sect_{x}U$. Note, $l_{s_{n}}\to l^{+}\in \Lambda(G)\setminus V$, which is uniquely ergodic.
Let $q_{n}\in S(x)$ and $t_{n}>0$ be such that $p_{n}=\pi(g_{t_{n}}q_{n})$. Note, as $p_{n}\in Sect_{x}(U)$ we have $[q_{n}+]\in \overline{U}$. Then $q_{n}\to q\in S(x)$ with $$l^{+}=[q^{+}]=\lim_{t\to\infty}\pi(g_{t}q)$$ But since the map 
$$S(x)\to PMF$$ $$q\to [q^{+}]$$ is a homeomorphism, $[q^{+}]\in \overline{U}$ contradicting that $l^{+}\in \Lambda(G)\setminus V$.
\end{proof}
\begin{lem}
There exists a $T>0$ such that any geodesic $l$ with $l^{+},l^{-} \in  \Lambda(G)\setminus V$ and $d(x,l)\geq T$ is disjoint from $Sect_{x}U$.
\end{lem}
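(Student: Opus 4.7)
My plan is to argue by contradiction, bootstrapping the previous lemma into a uniform statement by extracting convergent subsequences of both the geodesics and the points. Assume the conclusion fails: then for every $n$ there is a geodesic $l_n$ with $l_n^\pm \in \Lambda(G)\setminus V$, $d(x,l_n)\geq n$, and a point $p_n \in l_n \cap Sect_x(U)$, so in particular $d(x,p_n)\to\infty$. Since $\Lambda(G)\setminus V$ is a closed subset of the compact limit set, I pass to a subsequence along which $l_n^\pm \to \eta^\pm \in \Lambda(G)\setminus V$; by Theorem 2.8 every such $\eta^\pm$ is uniquely ergodic. Since $p_n$ leaves every compact subset of $Teich(S)$, I pass to a further subsequence so that $p_n \to \xi$ in the Thurston compactification, with $\xi \in PMF$.

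Now I squeeze $\xi$ between two incompatible constraints. On one hand, since $p_n$ lies on the biinfinite geodesic $l_n$ with endpoints converging to the uniquely ergodic foliations $\eta^\pm$, Corollary 2.7 forces $\xi \in \{\eta^+,\eta^-\} \subseteq \Lambda(G)\setminus V$. On the other hand, the argument in Lemma 7.3 applies essentially verbatim to identify $\xi$ with direction data: write $p_n = \pi(g_{t_n}q_n)$ with $q_n \in S(x)$ and $t_n = d(x,p_n)\to\infty$; the condition $p_n \in Sect_x(U)$ gives $[q_n^+] \in U$. By compactness of $S(x)$ pass to a subsequence $q_n \to q \in S(x)$, so that $[q^+]\in \overline{U}$ by the Hubbard--Masur homeomorphism. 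Finally Theorem 2.4, applied to $p_n \to \xi \in UE$, identifies $[q^+] = \xi$. Thus $\xi \in \overline{U} \subseteq V$, contradicting $\xi \in \Lambda(G)\setminus V$.

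The genuinely new ingredient, compared to Lemma 7.3, is the use of Corollary 2.7 to locate the Thurston limit $\xi$ of points on a varying family of geodesics; everything else just piggybacks on the Masur--Klarreich machinery already invoked in the single-geodesic case. The main technical point to keep straight is that three independent convergences—$l_n^\pm \to \eta^\pm$, $p_n \to \xi$, and $q_n \to q$—must be linked consistently: Corollary 2.7 ties $\xi$ to the geodesic endpoints, while Theorem 2.4 ties $\xi$ to the differential $q$ at $x$. Once both identifications are in place the contradiction between $\overline{U} \subseteq V$ and $\xi \notin V$ is immediate.
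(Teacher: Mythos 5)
Your proof is correct and follows essentially the same route as the paper's: argue by contradiction, extract convergent subsequences of the endpoints, the points $p_{n}$, and the directions $q_{n}\in S(x)$, and use the Hubbard--Masur homeomorphism $S(x)\to PMF$ together with Theorem 2.4 to force the limit of $p_{n}$ into $\overline{U}\subseteq V$. The only difference is cosmetic: where the paper splits into cases according to whether the limiting endpoints coincide (dismissing the distinct-endpoint case outright because the limit geodesic would keep $d(x,l_{n})$ bounded), you absorb both cases into a single application of Corollary 2.7 to place $\lim p_{n}$ in $\{\eta^{+},\eta^{-}\}\subseteq\Lambda(G)\setminus V$.
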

\begin{proof}
Suppose not. Then there is a sequence $l_{n}$ of geodesics with $l^{+}_{n},l^{-}_{n}\in  \Lambda(G)\setminus V$ and $d(x,l)>n$ and $p_{n}\in l_{n}\cap Sect_{x}U$. Passing to a subsequence, we have either that $(l^{+}_{n}, l^{-}_{n})$ converges to either a pair of distinct points $(\eta, \zeta)$ in $\Lambda(G)$ or a single point $\eta\in \Lambda(G)$. In the first case, we would have $l_{n}$ converge in the Hausdorff topology on $Teich(S)\cup PMF$ to the geodesic $l$ between $\eta$ and $\zeta$, so $d(l_{n},x)\to d(l,x)$ which would contradict $d(x,l_{n})\to \infty$. Thus, we have $l^{+}_{n}$ and $l^{-}_{n}$ converging to the same $\eta\in \Lambda(G)\setminus V$. Then, we have $l_{n}$ converging to $\eta$ and thus $p_{n}\to \eta$.  Let $q_{n}\in S(x)$ be such that $p_{n}=\pi(g_{t_{n}}q_{n})$. Note, as $p_{n}\in Sect_{x}(U)$ we have $[q_{n}^{+}]\in \overline{U}$. Then $q_{n}\to q\in S(x)$ with $$\eta=[q^{+}]=\lim_{t\to\infty}\pi(g_{t}q)$$ But $[q^{+}]\in \overline{U}$ contradicting that $\eta \in \Lambda(G)\setminus V$.
\end{proof}

\begin{lem}
There exists a $D>0$ such that for every $\eta, \zeta \in \Lambda(G)\setminus V$ the geodesic between $\eta, \zeta$ spends at most time $D$ in $Sect(U)$. 
\end{lem}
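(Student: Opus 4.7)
The plan is a proof by contradiction, leveraging the preceding lemma (which produces the constant $T$) together with Corollary 2.7 and the continuity of the projection $pr$ from Lemma 7.1. Suppose no such $D$ exists. Then there is a sequence of bi-infinite geodesics $l_n$ with endpoints $l_n^\pm \in \Lambda(G)\setminus V$ such that the length of $l_n \cap Sect_x(U)$ tends to $\infty$. For $n$ large this intersection is nonempty, so the preceding lemma forces $d(x,l_n) \leq T$. Choose $a_n, b_n \in l_n \cap Sect_x(U)$ with $d(a_n,b_n) \to \infty$.

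Since $\Lambda(G)\setminus V$ is closed in the compact set $\Lambda(G)$, after passing to subsequences $l_n^+ \to \eta$ and $l_n^- \to \zeta$ with $\eta, \zeta \in \Lambda(G)\setminus V$; both limits are uniquely ergodic by Theorem 2.8. If subsequences of both $\{a_n\}$ and $\{b_n\}$ remained in a compact subset of $Teich(S)$, then $d(a_n,b_n)$ would stay bounded, contrary to hypothesis. So, after relabeling, $b_n$ leaves every compact subset of $Teich(S)$, and by compactness of the Thurston compactification a subsequence converges to some $\mu \in PMF$. By Corollary 2.7 applied to the sequence $l_n = [l_n^-, l_n^+]$, the limit $\mu$ must lie in $\{\eta,\zeta\}$; in particular $\mu$ is uniquely ergodic and $\mu \in \Lambda(G)\setminus V$.

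On the other hand, $b_n \in Sect_x(U)$ means $pr_x(b_n) \in U$. By continuity of $pr$ on $(Teich(S)\cup UE)\times Teich(S)$ (Lemma 7.1) applied at the uniquely ergodic point $\mu$, we have $pr_x(b_n) \to \mu$, hence $\mu \in \overline{U} \subset V$. This contradicts $\mu \in \Lambda(G)\setminus V$, completing the proof.

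The main obstacle is identifying the Thurston-compactification limit of a sequence $b_n \in l_n$ that escapes to infinity; a priori it could be any point of $PMF$. Corollary 2.7 is exactly what we need: it pins that limit down to one of the two endpoints $\eta, \zeta$. Once that is done, the condition $b_n \in Sect_x(U)$ (via continuity of $pr$) forces this endpoint into $\overline{U}$, and the separation $\overline{U}\subset V$ together with $\eta,\zeta \notin V$ produces the contradiction.
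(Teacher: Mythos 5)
Your proof is correct. It reaches the same contradiction as the paper but organizes it differently. The paper splits into two cases according to whether the subsequential limits of $(l_n^+,l_n^-)$ are distinct or coincide: in the first case it asserts that uniform convergence of $l_n$ on compact sets forces the limit geodesic to spend infinite time in $Sect_x(U)$, contradicting Lemma 7.4; in the second it takes points $p_n\in l_n\cap Sect_x(U)$, which necessarily converge to the common endpoint limit, and runs the $q_n\to q\in S(x)$ argument. You avoid the case split entirely: from the divergence of the time spent in $Sect_x(U)$ you extract a point $b_n$ of the intersection that escapes every compact set, use Corollary 2.7 to pin its Thurston limit to one of the two (uniquely ergodic) endpoint limits, and then apply the same continuity-of-projection contradiction with $\overline{U}\subset V$. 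This buys two things: you never need Lemma 7.4 or the uniform-convergence step, and you sidestep a soft spot in the paper's first case --- the time $l_n$ spends in $Sect_x(U)$ could a priori drift toward the ends of $l_n$, in which case uniform convergence on compacta does not by itself show the limit geodesic spends infinite time in $Sect_x(U)$; your escaping-point argument is exactly what handles that scenario. Two cosmetic remarks: the appeal to the preceding lemma (giving $d(x,l_n)\leq T$) is never used afterward and can be dropped, and the step producing the escaping point is cleaner if phrased as $d(x,a_n)+d(x,b_n)\geq d(a_n,b_n)\to\infty$, so at least one of the two sequences is unbounded.
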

\begin{proof}
 Suppose the contrary. Then there exist a sequence of geodesics $l_{n}$ with both endpoints $l^{+}_{n}$,  $l^{-}_{n}$ outside of $V$ such that $l_{n}$ spends time at least $n$ in $Sect(U)$. We may pass to a subsequence such that one of the following holds: either $(l^{+}_{n},l^{-}_{n})$ converges to a pair of distinct endpoints $(\eta, \zeta)$ in $\Lambda(G)\setminus V$ or both converge to the same $\eta \in  \Lambda(G)\setminus V$. Suppose the first case. Then, the geodesics $l_{n}$ converge uniformly on compact sets to the geodesic $l$ with endpoints $\eta, \zeta$. Thus, this geodesic must spend an infinite amount of time inside $Sect(U)$, which is impossible if both of its endpoints are outside of $U$.  Now suppose both endpoints of $l_{n}$ converge to the same $\eta \in  \Lambda(G)\setminus V$. Then, $l_{n}$ converges to $\eta$. Let $p_{n}\in l_{n}\cap Sect_{x}(U)$. Let $q_{n}\in S(x)$ be such that $p_{n}=\pi(g_{t_{n}}q_{n})$. Note, as $p_{n}\in Sect_{x}(U)$ we have $[q_{n}+]\in \overline{U}$. Then $q_{n}\to q\in S(x)$ with $$\eta=[q^{+}]=\lim_{t\to\infty}\pi(g_{t}q)$$ But $[q_{n}+]\in \overline{U}$ contradicting that $\eta \in  \Lambda(G)\setminus V$.
\end{proof}

\section{The Bowen Margulis Measure}
Let $x\in Teich(S)$. Define a measure $\widetilde{\mu}$ on $\Lambda(G)\times \Lambda(G)$ by $$d\tilde{\mu}(\eta,\zeta)=\exp(\delta(G)\rho_{x}(\eta,\zeta))d\nu_{x}(\eta)d\nu_{x}(\zeta)$$ Note, every distinct pair of points in $\Lambda(G)$ give a Teichmuller  geodesic in $WH(G)$ so we can consider $\tilde{\mu}$ as a measure on $Q^{1}(S)$, invariant under the Teichm$\ddot{\mathrm{u}}$ller geodesic flow $g_{t}$ and supported on $WH(G)$. By continuity of $\rho$  on pairs of points in the limit set, $\tilde{\mu}$ is locally finite. 
\begin{lem}
The measure $\tilde{\mu}$ is $G$ invariant.
\end{lem}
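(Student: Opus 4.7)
The plan is to follow the standard Patterson--Sullivan strategy: first show that the measure $\tilde{\mu}$ is actually independent of the basepoint $x$ used in its definition, and then deduce $G$-invariance from the conformal density transformation $\gamma_{*}\nu_{x}=\nu_{\gamma x}$ combined with the fact that $G$ acts isometrically on $Teich(S)$.

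To prove basepoint independence, I would temporarily write $\tilde{\mu}_{x}$ for the measure associated with basepoint $x$, and compare $\tilde{\mu}_{x}$ with $\tilde{\mu}_{y}$ for an arbitrary $y\in Teich(S)$. The key identity is that for any $u$ on the Teichm\"uller geodesic joining $\eta,\zeta\in\Lambda(G)$, applying the Busemann cocycle $\beta_{\eta}(y,u)=\beta_{\eta}(y,x)+\beta_{\eta}(x,u)$ (and the analogue for $\zeta$) inside the formula $\rho_{x}(\eta,\zeta)=\beta_{\eta}(x,u)+\beta_{\zeta}(x,u)$ yields
$$\rho_{y}(\eta,\zeta)-\rho_{x}(\eta,\zeta)=\beta_{\eta}(y,x)+\beta_{\zeta}(y,x)=-\beta_{\eta}(x,y)-\beta_{\zeta}(x,y).$$
The right-hand side is independent of $u$, reconfirming that $\rho_{x}$ is well-defined (moving $u$ along the geodesic shifts $\beta_{\eta}(x,u)$ and $\beta_{\zeta}(x,u)$ by equal and opposite amounts). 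Multiplying by $\delta(G)$, exponentiating, and using the conformal density relation $d\nu_{y}/d\nu_{x}(\alpha)=\exp(\delta(G)\beta_{\alpha}(x,y))$ from the Patterson--Sullivan construction, the product-measure identity
$$\exp(\delta(G)\rho_{y}(\eta,\zeta))\,d\nu_{y}(\eta)\,d\nu_{y}(\zeta)=\exp(\delta(G)\rho_{x}(\eta,\zeta))\,d\nu_{x}(\eta)\,d\nu_{x}(\zeta)$$
drops out, so $\tilde{\mu}_{y}=\tilde{\mu}_{x}$ and we may unambiguously write $\tilde{\mu}$.

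Given basepoint independence, $G$-invariance reduces to the equality $\gamma_{*}\tilde{\mu}_{x}=\tilde{\mu}_{\gamma x}$, from which $\gamma_{*}\tilde{\mu}=\gamma_{*}\tilde{\mu}_{x}=\tilde{\mu}_{\gamma x}=\tilde{\mu}$ is immediate. To verify this, one performs the change of variables $\alpha=\gamma\eta,\beta=\gamma\zeta$ in the definition: because $\gamma$ is an isometry of $(Teich(S),d_{T})$ and sends the geodesic $(\eta,\zeta)$ to $(\gamma\eta,\gamma\zeta)$, Busemann functions transform equivariantly, $\beta_{\gamma\eta}(\gamma x,\gamma u)=\beta_{\eta}(x,u)$, so $\rho_{x}(\eta,\zeta)=\rho_{\gamma x}(\gamma\eta,\gamma\zeta)$; and $\gamma_{*}\nu_{x}=\nu_{\gamma x}$ is exactly property (1) of a conformal density. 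Combining, $\gamma_{*}\tilde{\mu}_{x}=\tilde{\mu}_{\gamma x}$.

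I do not anticipate any serious obstacle here: the argument is essentially a bookkeeping check of cocycle identities. The main points requiring care are consistency of the sign conventions for the Busemann cocycle and the conformal density relation, and the (already used) fact that $\rho_{x}(\eta,\zeta)$ does not depend on the auxiliary point $u$ chosen on the $(\eta,\zeta)$-geodesic.
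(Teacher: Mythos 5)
Your argument is correct and is exactly the computation the paper compresses into its one-line proof (``this follows from the $G$-equivariance property of conformal densities''): basepoint independence via the cocycle identity for $\beta$ together with the Radon--Nikodym relation, then $\gamma_{*}\tilde{\mu}_{x}=\tilde{\mu}_{\gamma x}$ via equivariance of $\rho$ and of the density. Note only that you must use the convention $d\nu_{y}/d\nu_{x}(\alpha)=\exp(\delta\beta_{\alpha}(x,y))$ as stated in the paper's Section 6 (Section 5's condition (2) has the reciprocal sign), which is indeed the one under which your product-measure identity cancels correctly.
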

\begin{proof}
This follows from the $G$ equivariance property of conformal densities.
\end{proof}
Thus $\tilde{\mu}$ descends to a measure  $\mu$ on $Q^{1}(S)/G$. Since $G$ acts cocompactly on $WH(G)$, it follows that $\mu$ is compactly supported, and therefore finite. We call $\mu$ the Bowen-Margulis measure. By projecting $\mu$ to $Teich(S)$ we obtain a measure on $Teich(S)$, supported on $WH(G)$.
For $q_{0}\in Q^{1}(S)$ let the strong stable (unstable) leaf associated to $q_0$, denoted by $W^{ss}(q_{0})$ (resp $W^{su}(q_{0})$) be the set of elements of  $Q^{1}(S)$ with the same vertical (resp horizontal) measured foliation as $q_0$. Let the weak stable (unstable) leaf associated to $q_0$, denoted by $W^{s}(q_{0})$ (resp $W^{u}(q_{0})$) be the set of elements of  $Q^{1}(S)$ with the same vertical (resp horizontal) projective measured foliation as $q_0$. 

The map sending each quadratic differential to its horizontal projective measured foliation restricts to a map
$$P^{q_{0}}_{-}: W^{ss}(q_{0})\to PMF$$ that is a homeomorphism between $W^{ss}(q_{0})$ and $PMF\setminus V(q_{0}^{+})$ where for a measured foliation $\alpha$, $V(\alpha)$ consists of the foliations $\theta$ such that $i(\alpha,\beta)+i(\theta,\beta)=0$ for some $\beta\in PMF$.   In particular, if $q_{0}^{+}$ is uniquely ergodic, $P^{q_{0}}_{-}$ is a homeomorphism between $W^{ss}(q_{0})$ and $PMF\setminus q_{0}^{+}$. We can define a locally finite measure on $W^{ss}(q_{0})$, denoted by $\tilde{\mu^{ss}_{q_0}}$ by pulling back the Patterson-Sullivan measure on $PMF$ and scaling by the Busemann function: 
$$d\tilde{\mu^{ss}_{q_0}}(v)=\exp(-\delta \beta_{[v^{-}]}(x,\pi(v)))d\nu_{x}([v^{-}])$$
Here, $x\in Teich(S)$ is any basepoint and $\mu^{ss}$ is independent of $x$.
Similarly, we can define measure on strong unstable horospheres $W^{su}(q_{0})$ by
$$d\tilde{\mu^{su}_{q_0}}(v)=\exp(-\delta \beta_{[v+]}(x,\pi(v)))d\nu_{x}([v^{+}])$$ 
We can also define measures on weak horospheres by integrating the measures on strong horospheres with respect to geodesic arclength.
$$\widetilde{\mu^{s}_{q_{0}}}=\widetilde{\mu^{ss}_{q_{0}}}dg_{t}$$
$$\widetilde{\mu^{u}_{q_{0}}}=\widetilde{\mu^{su}_{q_{0}}}dg_{t}$$

These project modulo $\Gamma$ to measures $\mu^{su}_{q_{0}}$ and $\mu^{u}_{q_{0}}$, $\mu^{s}_{q_{0}}$ and $\mu^{s}_{q_{0}}$. 

Note, whenever $q^{+}_{0}$ is uniquely ergodic, there is a map $$h^{ss}_{q_{0}}: Q^{1}(S)\setminus W^{u}(-q_{0})\to W^{ss}(q_{0})$$ with $$h^{s}_{q_{0}}(q)=W^{u}(q)\cap W^{ss}(q_{0})$$  which has one to one restrictions to any unstable horosphere. When $q^{+}$ is uniquely ergodic, $h^{ss}_{q_{0}}$ restricts to a homeomorphism between $S(x)\setminus p$ and $W^{ss}(q_{0})$ where $p^{+}=q^{+}_{0}$.
 We can also define a map $$h^{s}_{q_{0}}: Q^{1}(S)\setminus W^{u}(-q_{0})\to W^{s}(q_{0})$$
with $$h^{s}_{q_{0}}(q)=W^{su}(q)\cap W^{s}(q_{0})$$
Similarly define maps $h^{u}$ and $h^{su}$ 
\begin{lem}
For any $q_{1},q_{2}$ with $q^{+}_{i}$ uniquely ergodic, the restriction of $h^{s}_{q_{1}}$ to $W^{s}(q_{2})$ takes $\widetilde{\mu^{s}_{q_{2}}}$ to $\widetilde{\mu^{s}_{q_{1}}}$
\end{lem}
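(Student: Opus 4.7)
The plan is to verify the lemma by a direct computation in coordinates adapted to the strong stable foliation on $W^s$, exploiting the horospherical invariance of the Busemann function $\beta_{[q^-]}(x,\pi(\cdot))$ along strong unstable leaves. Concretely, I parametrize each $W^s(q_i) \cong PMF \times \mathbb{R}$ by $(\eta,t) \mapsto g_t \sigma_{q_i}(\eta)$, where $\sigma_{q_i}(\eta) \in W^{ss}(q_i)$ is the unique element with horizontal projective class $\eta$ (the inverse of the homeomorphism $P^{q_i}_-$). In these coordinates the definition $\widetilde{\mu^s_{q_i}} = \widetilde{\mu^{ss}_{q_i}}\,dg_t$ reads $\exp(-\delta\beta_\eta(x,\pi(\sigma_{q_i}(\eta))))\,d\nu_x(\eta)\,dt$. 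The area-one constraint $i(\sigma_{q_i}(\eta)^+,\sigma_{q_i}(\eta)^-)=1$ forces $\sigma_{q_i}(\eta)^- = \alpha_\eta/i(q_i^+,\alpha_\eta)$ for any fixed representative $\alpha_\eta$ of $\eta$, so imposing the defining condition $(h^s_{q_1}(q))^- = q^-$ of the holonomy exhibits $h^s_{q_1}\vert_{W^s(q_2)}$ as a fibrewise translation $(\eta,t) \mapsto (\eta,t+\Delta(\eta))$ with $\Delta(\eta) = \log\bigl(i(q_1^+,\alpha_\eta)/i(q_2^+,\alpha_\eta)\bigr)$ and unit Jacobian with respect to $d\nu_x(\eta)\,dt$.

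The essential ingredient is that the Busemann function $\beta_{[q^-]}(x,\pi(\cdot))$ is constant on each strong unstable leaf $W^{su}(q)$, the Teichm\"uller analogue of the horospherical invariance of Busemann functions in negative curvature. This follows from Miyachi's formula $\beta_\eta(x,y) = \tfrac{1}{2}\log(\mathrm{Ext}_{\alpha_\eta}(x)/\mathrm{Ext}_{\alpha_\eta}(y))$ combined with the identity $\mathrm{Ext}_{q^-}(\pi(w)) = 1$ for every $w \in W^{su}(q)$, since $-w$ is the area-one Hubbard-Masur differential on $\pi(w)$ with vertical foliation $q^-$. Applied to the holonomy pair $\bigl(g_t\sigma_{q_2}(\eta),\,g_{t+\Delta(\eta)}\sigma_{q_1}(\eta)\bigr) \in W^{su}$ together with the cocycle identity $\beta_\eta(x,\pi(g_t v)) = \beta_\eta(x,\pi(v)) + t$, this yields $\beta_\eta(x,\pi(\sigma_{q_2}(\eta))) - \beta_\eta(x,\pi(\sigma_{q_1}(\eta))) = \Delta(\eta)$, which converts the pushforward density of $\widetilde{\mu^s_{q_2}}$ under $h^s_{q_1}$ into the density of $\widetilde{\mu^s_{q_1}}$ in the target coordinates.

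The main obstacle is the sign-and-scale bookkeeping in this final cancellation between the holonomy translation $\Delta(\eta)$, the Busemann shift from the second step, and the flow scaling implicit in the definition of $\widetilde{\mu^s}$. Once Miyachi's formula and the quadratic scaling of Hubbard-Masur differentials under foliation rescaling are pinned down (that is, the identity $\mathrm{Ext}_{\alpha_\eta}(\pi(\sigma_{q_i}(\eta)))=i(q_i^+,\alpha_\eta)^2$ obtained by rescaling $-\sigma_{q_i}(\eta)$ to have vertical $\alpha_\eta$), this becomes a routine check.
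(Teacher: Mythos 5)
The paper states this lemma with no proof at all, so there is no argument of the author's to compare yours against; your write-up supplies the missing verification, and its strategy is the right one. The two substantive inputs are both correct: the holonomy $h^{s}_{q_{1}}$ preserves the horizontal projective class and is therefore a fibrewise time-translation $(\eta,t)\mapsto(\eta,t+\Delta(\eta))$ in the coordinates $(\eta,t)\mapsto g_{t}\sigma_{q_{i}}(\eta)$, with $\Delta$ computed from the area-one normalization $\sigma_{q_{i}}(\eta)^{-}=\alpha_{\eta}/i(q_{i}^{+},\alpha_{\eta})$; and Miyachi's formula together with $Ext_{\pi(w)}(q^{-})=1$ for $w\in W^{su}(q)$ gives the constancy of $\beta_{[q^{-}]}(x,\pi(\cdot))$ along strong unstable leaves, which is exactly the mirror of the one fact the paper does record in this section (the constancy of $\beta_{[v^{+}]}(x,\pi(\cdot))$ along $W^{ss}(v)$, invoked in the proof of the lemma that follows). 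Your identity $\beta_{\eta}(x,\pi(\sigma_{q_{2}}(\eta)))-\beta_{\eta}(x,\pi(\sigma_{q_{1}}(\eta)))=\Delta(\eta)$ is consistent with your sign conventions for $\Delta$ and for the flow cocycle, so the bookkeeping you defer does close.

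The one place you should be explicit is the interpretation of $\widetilde{\mu^{s}_{q_{i}}}=\widetilde{\mu^{ss}_{q_{i}}}\,dg_{t}$, because the lemma is \emph{false} under the literal reading you transcribe in your first paragraph. If the density is $\exp(-\delta\beta_{\eta}(x,\pi(\sigma_{q_{i}}(\eta))))$, evaluated at the base point on the transversal $W^{ss}(q_{i})$ and independent of $t$, then the pushforward under a fibrewise translation carries the factor $e^{-\delta B_{2}(\eta)}$ unchanged, and since $B_{2}(\eta)-B_{1}(\eta)=\Delta(\eta)\neq 0$ in general the two measures differ by the nonconstant factor $e^{\mp\delta\Delta(\eta)}$. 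The cancellation you describe only occurs for the intrinsic density $\exp(-\delta\beta_{[v^{-}]}(x,\pi(v)))\,d\nu_{x}([v^{-}])\,dt$ evaluated at the moving point $v=g_{t}\sigma_{q_{i}}(\eta)$ — equivalently, the version of $\widetilde{\mu^{ss}}\,dg_{t}$ that is independent of which strong-stable slice of $W^{s}(q_{i})$ is used as transversal. With that definition the proof is in fact shorter than your computation suggests: $v$ and $h^{s}_{q_{1}}(v)$ lie on the same strong unstable leaf and have the same $[v^{-}]$, so their intrinsic densities coincide by horospherical invariance, and $d\nu_{x}(\eta)\,dt$ is preserved by any fibrewise translation; the explicit value of $\Delta(\eta)$ is never needed. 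State which definition you are using, and the argument is complete.
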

We can now define, for a fixed $q_{0}\in Q^{1}(S)$, a measure $d\tilde{\mu}_{q_{0}}$ on $Q^{1}(S)$ by 
$$d\tilde{\mu}_{q_{0}}=dg_{w}(g_{t} w) d\mu^{ss}_{v}(w)d\mu^{uu}_{q_0}(v)$$ ie by integrating over $g_t w$ with geodesic
 arclength $dg_w$, then integrating over all $w$ in $W^{ss}(v)$ with respect to$\mu^{ss}_{v}$,  finally integrating over all $v$ in $W^{su}(q_{0})$ with respect to $\mu^{su}_{q_{0}}$.
\begin{lem}
$\mu_{q_{0}}$ is independent of $q_{0}$ and coincides with the Bowen-Margulis measure $\tilde{\mu}$.
\end{lem}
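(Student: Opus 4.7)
The plan is to work in Hopf-type coordinates on the part of $Q^1(S)$ with uniquely ergodic endpoints. Because $\Lambda(G)\subset UE$ (Theorem 2.8), every $q$ with $[q^\pm]\in\Lambda(G)$ is parameterized by the triple $(\beta,\alpha,T):=([q^-],[q^+],T(\pi q))$ with $T$ arc length along the geodesic from $\beta$ to $\alpha$. By definition of $\tilde\mu$,
$$d\tilde\mu = e^{\delta\rho_x(\beta,\alpha)}\,d\nu_x(\beta)\,d\nu_x(\alpha)\,dT,$$
so it suffices to unfold the iterated construction of $\tilde\mu_{q_0}$ into the same coordinates.

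For the unfolding, parameterize $W^{su}(q_0)$ by $\alpha=[v^+]$ via the Hubbard--Masur identification (noting $[v^-]=[q_0^-]$ is fixed), and $W^{ss}(v)$ by $\beta=[w^-]$ (noting $[w^+]=\alpha$ is fixed). The positions $\pi(v)$ and $\pi(w)$ are then determined by the respective horosphere conditions, and the geodesic flow acts by arc length, giving $T=T(\pi w)+t$ and $dt=dT$. Substituting the defining formulas for $\mu^{su}_{q_0}$ and $\mu^{ss}_v$ and collecting, the equality of $d\tilde\mu_{q_0}$ with $d\tilde\mu$ reduces (after applying the Busemann cocycle) to
$$\beta_\beta(x,\pi w)+\beta_\alpha(x,\pi v)=\rho_x(\beta,\alpha).$$
The resulting expression involves only $\beta,\alpha,T$ and not $q_0$, which simultaneously yields independence of $q_0$ and coincidence with $\tilde\mu$.

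Two identities complete the argument. First, from Section 4, $\rho_x(\beta,\alpha)=\beta_\beta(x,u)+\beta_\alpha(x,u)$ for any $u$ on the geodesic $[\beta,\alpha]$; apply this at $u=\pi w$. Second, the horosphere identity $\beta_\alpha(\pi v,\pi w)=0$ for $w\in W^{ss}(v)$, which via the cocycle collapses $\beta_\alpha(x,\pi v)$ to $\beta_\alpha(x,\pi w)$. The main obstacle is the second identity: $W^{ss}$ is defined via a shared vertical foliation, not as a Busemann level set, and Teichmuller space is neither CAT$(-1)$ nor Gromov hyperbolic, so these notions need not coincide a priori. The identity nonetheless holds because $v$ and $w$ share the uniquely ergodic vertical foliation $\alpha$, so by Masur's asymptoticity theorem (Theorem 2.2), $d_T(\pi(g_tv),\pi(g_tw))\to 0$; inserting $z_n=\pi(g_{t_n}v)$ into the definition of $\beta_\alpha$ and using $d_T(\pi v,z_n)=t_n$ together with $d_T(\pi w,z_n)=t_n+o(1)$ yields $\beta_\alpha(\pi v,\pi w)=0$. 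This step is exactly where unique ergodicity of points of $\Lambda(G)$ (Theorem 2.8) enters essentially, since in general two rays with a common non-uniquely-ergodic vertical foliation need not be asymptotic.
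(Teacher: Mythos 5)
Your proof is correct and is essentially the paper's own argument: the proof given there consists of citing exactly the two facts you isolate (that $\nu_{x}$ is carried by the uniquely ergodic foliations, and that $\beta_{[v^{+}]}(x,\pi(w))=\beta_{[v^{+}]}(x,\pi(v))$ for $w\in W^{ss}(v)$ with $v^{+}$ uniquely ergodic), while you additionally carry out the Hopf-coordinate unfolding and supply the justification of the horosphere identity via Masur's asymptoticity theorem (Theorem 2.2), which the paper leaves implicit. The only point worth flagging is a sign: substituting the paper's stated formulas literally yields the density $e^{-\delta(\beta_{\beta}(x,\pi w)+\beta_{\alpha}(x,\pi v))}$ against the density $e^{+\delta\rho_{x}(\beta,\alpha)}$ of $\tilde{\mu}$, so your reduction tacitly corrects what is evidently a sign typo in the definitions of $\mu^{ss}$ and $\mu^{su}$ (they should carry $e^{+\delta\beta}$, as both the required expansion/contraction behavior under $g_{t}$ and consistency of $\tilde{\mu}$ with the conformality relation confirm).
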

\begin{proof}
This follows from the fact that $\nu_{x}$ is supported on the uniquely ergodic part of $PMF$ and that $\beta_{[v^{+}]}(x,\pi(w))=\beta_{[v^{+}]}(x,\pi(v))$ whenever $q\in W^{ss}(v)$ and $v^{+}$ uniquely ergodic.
\end{proof}

\section{Nonarithmeticity of the Length Spectrum}
In this section we prove 
\begin{thm}
Let $G<Mod(S)$ be a nonelementary subgroup. The the logarithms of the dilatations of pseudo-Anosov elements of $G$ generate a dense subgroup of $\mathbb{R}$.
\end{thm}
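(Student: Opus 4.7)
The plan is to follow the strategy sketched in the introduction: embed a sub-semigroup of $G$ into a linear group via train-track coordinates and then invoke the density theorem of Guivarch--Urban from \cite{GuUr}, which applies to any sub-semigroup of $SL_{n}(\mathbb{R})$ that acts irreducibly on $\mathbb{R}^{n}$ and contains a proximal element. Concretely, I would start by choosing two pseudo-Anosov elements $\phi_{1},\phi_{2}\in G$ whose four fixed points in $PMF$ are pairwise distinct; this is possible because $G$ is nonelementary. A standard ping-pong argument using the north--south dynamics on $PMF$ then allows us to replace these by high powers $\psi_{i}=\phi_{i}^{N}$ that freely generate a free semigroup, every non-identity element of which is pseudo-Anosov.

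Next, I would construct a bi-recurrent train track $\tau$ on $S$ whose polyhedral cone $P(\tau)\subset MF$ of carried foliations contains the unstable foliations of $\psi_{1}$ and $\psi_{2}$ in its interior, and such that $\psi_{i}(\tau)$ is carried by $\tau$ for each $i$. Such a $\tau$ can be obtained by splitting the invariant train tracks of the $\psi_{i}$ (in the sense of Penner--Harer and Mosher) along their unstable foliations and passing to a common carrying refinement; raising $N$ if necessary ensures this is possible. Let $V_{\tau}$ be the linear span of $P(\tau)$ in the branch-weight space of $\tau$, cut out by the switch conditions. Then each $\psi_{i}$ acts on $V_{\tau}$ as a non-negative integer linear map $M_{i}$ whose Perron--Frobenius eigenvalue equals the dilatation of $\psi_{i}$. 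Since some power of $M_{i}$ has all strictly positive entries (this is the primitivity coming from $\psi_{i}$ being pseudo-Anosov and $\tau$ being recurrent), Perron--Frobenius gives that this top eigenvalue is simple and strictly dominates the rest in modulus; thus $M_{i}$ is proximal in the sense of \cite{GuUr}. After passing to $SL(V_{\tau})$ by normalization this gives the desired embedding of a sub-semigroup.

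The remaining ingredient is to verify that $\Gamma=\langle M_{1},M_{2},\ldots\rangle$ acts irreducibly on $V_{\tau}$, where we may enlarge the generating set by including $M_{j}$ coming from additional pseudo-Anosovs $\psi_{j}\in G$ whose unstable foliations lie in $P(\tau)^{\circ}$ and have fixed points distinct from those already chosen. If $W\subsetneq V_{\tau}$ were a $\Gamma$-invariant proper subspace, then its projectivization would contain every attracting fixed point in $P(V_{\tau})$ of every proximal element of $\Gamma$, that is, the unstable foliations of all the words $\psi_{1}^{a_{1}}\psi_{2}^{b_{1}}\cdots\in G$. These foliations are dense in $\Lambda(G)\cap P(\tau)$ by minimality of the $G$-action on $\Lambda(G)$, and one can arrange that they span $V_{\tau}$ by adding sufficiently many generators whose unstable foliations are in general position inside $P(\tau)^{\circ}$. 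Applying Guivarch--Urban \cite{GuUr} then yields that the logarithms of the top eigenvalues of the proximal elements of $\Gamma$, which are precisely the logarithms of the dilatations of a family of pseudo-Anosov elements of $G$, generate a dense subgroup of $\mathbb{R}$, proving the theorem.

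The main obstacle is the irreducibility step. The non-negative cone $P(\tau)$ is itself $\Gamma$-invariant, so one must rule out the possibility that a proper face of this cone, or a $\Gamma$-invariant sub-simplex coming from some symplectic splitting of the train-track space, spans a proper invariant subspace. This requires careful use of the nonelementarity of $G$ (through the minimality of the action on $\Lambda(G)$ and the transverse position of several distinct unstable foliations inside $P(\tau)$), and is where most of the work of the section will go.
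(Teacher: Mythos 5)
Your overall strategy --- train-track coordinates, proximality via Perron--Frobenius, and an appeal to Guivarch--Urban --- is the same as the paper's, and the first two stages (free sub-semigroup of pseudo-Anosovs, a carried train track, positive matrices on the branch-weight space whose top eigenvalue is the dilatation) are sound. The gap is exactly where you locate it, in the irreducibility step, and your proposed resolution does not work. First, the dichotomy you invoke is incomplete: a proper $\Gamma$-invariant subspace $W$ of the branch-weight space need \emph{not} contain any attracting eigendirection $[v_A]$. The correct statement (the paper's lemma on invariant subspaces) is that $W$ either contains $v_A$ for every $A\in\Gamma$ or is contained in $\bigcap_{A\in\Gamma}E_A$, the intersection of the complementary eigenspaces; the second alternative is not excluded by anything in your argument, and in general it occurs. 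Second, you cannot ``arrange'' that the unstable foliations span $V_\tau$ by adding more generators: every available unstable foliation lies in the fixed limit set $\Lambda(G)$, and for a general nonelementary subgroup (say a convex cocompact free group of rank two) there is no reason for $\Lambda(G)\cap P(\tau)$ to span the branch-weight space, so irreducibility on the ambient space can simply fail.

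The paper sidesteps proving irreducibility on the ambient space altogether. It sets $W_\Gamma=\sum_{A\in\Gamma}\mathbb{R}v_A$, the span of the dominant eigenvectors (which is $\Gamma$-invariant because the limit set $L_\Gamma$ is), takes a \emph{maximal} proper invariant subspace $U_\Gamma\subset W_\Gamma$, and works with the quotient $V_\Gamma=W_\Gamma/U_\Gamma$. By the dichotomy above, $U_\Gamma\subset\bigcap_{A\in\Gamma}E_A$ contains no $v_A$, so each $A$ still acts proximally on $V_\Gamma$ with the same top eigenvalue $\lambda(A)$, and the action on $V_\Gamma$ is irreducible by maximality of $U_\Gamma$. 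The hypothesis $\Gamma v_B\cap E_A=\emptyset$, which in your setting follows from the cone condition (positive weight vectors are attracted to $[v_A]$), is what makes this dichotomy available. One further step you omit entirely: the version of Guivarch--Urban actually invoked (their Proposition 4.9) requires \emph{strong} irreducibility, not just irreducibility, and the paper obtains it by passing to the sub-semigroup generated by sufficiently high powers $A^M,B^M$, using that the subspaces $W_{\Gamma_M},U_{\Gamma_M}$ stabilize for large $M$ and that any finite invariant family of subspaces would be permuted, hence fixed, by a further power. You should restructure the final stage of your argument along these lines.
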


For a train track $\tau$, let $W_{\tau}\cong \mathbb{R}^{6g-6}$ be the vector space of weights on the branches of $\tau$ satisfying the switch condition. Let $V_{\tau}\subset W_{\tau}$ be the open cone assigning positive measure to each branch. Each element of $V_{\tau}$ corresponds to a measured foliation. Let $\phi_{\tau}: V_{\tau}\to MF$ be this correspondence. Let $U_{\tau}$ be the image of $\phi_{\tau}$ in $MF$ and let $\psi_{\tau}:U_{\tau}\to V_{\tau}$ be the inverse of $\phi_{\tau}$.

\begin{lem}
Let $\gamma \in Mod(S)$ be a pseudo-Anosov such that $\gamma^{+}$ is carried by the interior of the maximal recurrent train track $\tau$ and $\gamma^{-}$ is not carried by $\tau$. Then for large enough $n$ we have $\gamma^{n}\tau$ is carried by $\tau$ and $\gamma^{n}$ acts linearly on $W_{\tau}$ by a positive matrix whose largest eigenvalue is the dilatation $\lambda(\gamma)$ of $\gamma$.
\end{lem}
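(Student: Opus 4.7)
The plan is to establish the three claims in order: (a) that $\gamma^n \tau$ is carried by $\tau$, which I will reduce to north--south dynamics on $PMF$; (b) that the resulting linear action is by a positive matrix; (c) that the largest eigenvalue is the dilatation, via Perron--Frobenius.

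First I would use the pseudo-Anosov dynamics on $PMF$. Since $\gamma^+$ lies in the interior of the carrying cone, I can pick an open set $N \subset U_\tau$ with $[\gamma^+]$ in its projective interior. Since $\gamma^-$ is not carried by $\tau$, and the set of foliations carried by $\tau$ (with nonnegative weights on every branch) is a closed subset of $MF$, the point $[\gamma^-]$ has an open projective neighborhood $V$ disjoint from the projective closure of $U_\tau$. By the classical north--south dynamics of a pseudo-Anosov on Thurston's sphere, there exists $n_0$ such that for all $n \geq n_0$ one has $\gamma^n(PMF \setminus V) \subseteq N$. In particular, $\gamma^n\, \overline{U_\tau} \subseteq N \subset U_\tau$, so every measured foliation carried with positive weights by $\tau$ is sent by $\gamma^n$ to a foliation carried by $\tau$. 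Since maximal recurrent train tracks are determined (up to the appropriate equivalence) by their carrying simplex, this is exactly the statement that $\gamma^n \tau$ is carried by $\tau$.

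Next, the carrying relation $\gamma^n \tau \prec \tau$ induces a linear map $c : W_{\gamma^n \tau} \to W_\tau$ summing, for each branch of $\tau$, the weights of the branches of $\gamma^n \tau$ that run along it. The mapping class $\gamma^n$ itself induces an isomorphism $\gamma^n_* : W_\tau \to W_{\gamma^n \tau}$, so we obtain the desired linear map $M_n := c \circ \gamma^n_* : W_\tau \to W_\tau$ representing the action of $\gamma^n$. The preceding paragraph shows $M_n(V_\tau) \subseteq V_\tau$. To upgrade this to positivity of $M_n$ as a matrix (every entry strictly positive), I would observe that by taking $n$ larger if necessary, $N$ can be arranged to be compactly contained inside the open cone $U_\tau$, so the image under $\gamma^n$ of every extremal ray of $\overline{V_\tau}$ (in particular, each coordinate axis, which corresponds to a single branch) lies strictly in the interior of $V_\tau$; thus every column of $M_n$ is componentwise strictly positive.

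Finally, applying Perron--Frobenius to the positive matrix $M_n$ gives a unique largest-modulus eigenvalue $\mu_n > 0$ with a one-dimensional positive eigenspace in $V_\tau$. On the other hand, the measured foliation $\gamma^+$ satisfies $\gamma \cdot \gamma^+ = \lambda(\gamma) \, \gamma^+$, so $\gamma^n \cdot \gamma^+ = \lambda(\gamma)^n \gamma^+$. Setting $w^+ := \psi_\tau(\gamma^+) \in V_\tau$, this identity pushes forward to $M_n w^+ = \lambda(\gamma)^n w^+$, exhibiting $\lambda(\gamma)^n = \lambda(\gamma^n)$ as an eigenvalue with positive eigenvector. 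By the uniqueness clause of Perron--Frobenius, this must be the Perron eigenvalue $\mu_n$, completing the lemma. The only real obstacle is the bookkeeping that ensures \emph{strict} positivity of the matrix entries (not merely cone-preservation) so that Perron--Frobenius applies; this is what forces the hypothesis that $\gamma^+$ be carried by the \emph{interior} of $\tau$ rather than merely by $\tau$.
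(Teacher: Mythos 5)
The paper states this lemma without proof, so there is nothing of the author's to compare against; your overall strategy --- north--south dynamics to contract the carrying cone, then Perron--Frobenius with the explicit eigenvector $\psi_{\tau}(\gamma^{+})$ --- is the natural one, and your final paragraph is correct as written. However, each of the two middle steps has a genuine gap. The passage from ``$\gamma^{n}$ maps every foliation carried by $\tau$ to a foliation carried by $\tau$'' to ``$\gamma^{n}\tau$ is carried by $\tau$'' is unjustified: inclusion of carrying polyhedra does not formally imply the carrying relation $\gamma^{n}\tau\prec\tau$, and the appeal to ``train tracks are determined by their carrying simplex'' begs the question. This matters because the carrying relation is exactly what makes the induced map on $W_{\tau}$ \emph{linear} (it is the incidence matrix of the carrying); knowing only that $\gamma^{n}$ preserves $U_{\tau}$ yields a map $\psi_{\tau}\circ\gamma^{n}\circ\phi_{\tau}$ that is a priori only piecewise linear, so you cannot even define $M_{n}$ before establishing the carrying. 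The standard argument is geometric rather than projective: $\gamma^{n}\tau$ converges in the Hausdorff topology to the lamination $\gamma^{+}$, which sits inside a tie neighborhood $N(\tau)$ transverse to the ties precisely because it is carried with strictly positive weights, so for large $n$ the track $\gamma^{n}\tau$ can be isotoped into $N(\tau)$ transverse to the ties --- which is the definition of carrying.

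The positivity step is also flawed. A weight vector supported on a single branch violates the switch conditions, so the ``coordinate axes'' do not lie in $W_{\tau}$ at all, let alone form extremal rays of $\overline{V_{\tau}}$; moreover the extremal rays of that cone do not constitute a basis, so ``images of extremal rays land in the interior'' does not translate into ``every column of the matrix is strictly positive.'' What your cone argument actually gives (once the carrying is established) is that $M_{n}$ maps $\overline{V_{\tau}}\setminus\{0\}$ into $V_{\tau}$; by the cone version of Perron--Frobenius this already suffices for the eigenvalue identification and for the proximality used later in Section 9, so the conclusions of the lemma survive. But entrywise positivity of the incidence matrix is the assertion that every branch of $\gamma^{n}\tau$ traverses every branch of $\tau$, and that again requires the geometric picture: for large $n$ each branch of $\gamma^{n}\tau$ follows a long leaf segment of the minimal lamination $\gamma^{+}$, which crosses every branch of $\tau$ because it is carried with positive weights and every leaf is dense.
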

Let $G<Mod(S)$ be nonelementary. 

Let $\tau$ be a maximal recurrent train track and $\gamma_{1},\gamma_{2}\in G$  independent pseudo-Anosovs such that $\gamma^{+}_{i}$ are carried by $\tau$, and $\gamma^{-}_{i}$ are not carried by $\tau$; replacing $\gamma_{i}$ by high enough iterates we can assume that they preserve $U_{\tau}$.  Let $\Gamma \subset G$ be the semigroup freely generated by the $\gamma_{i}$.
Then each $\gamma \in \Gamma$ preserves $U_{\tau}$ and acts linearly on $W{\tau}$ by a positive matrix whose largest eigenvalue is the dilatation $\lambda(\gamma)$ of $\gamma$. Moreover, since the mapping class group preserves Thurston's symplectic form on $Mod(S)$ we have that each $\gamma \in \Gamma$ acts on $W_{\tau}$ by a symplectic matrix.

Note, if $A$ represents the action of $\gamma \in G$ then $E_{A}$ cannot have any elements of $V_{\tau}$, ie cannot have any vectors with all entries nonnegative. Indeed, if $v\in E_{A}$ then $[A^{n}v]$ does not converge to $[v_{A}]\in PW_{\tau}$. However, if $v\in V_{\tau}$ then $v=\psi_{\tau}\alpha$ for some measured foliation $\alpha$ with $[\alpha] \neq [\gamma^{-}]$. Thus, $[\gamma^{n}\alpha]\to [\gamma^{+}]$ and hence $$[A^{n}v]=[A^{n}\psi_{\tau} \alpha]=[\psi_{\tau} (\gamma^{n} \alpha)] \to [\psi_{\tau} \gamma^{+}]=v_{A}$$
In particular we obtain that if independent elements $\gamma_{1},\gamma_{2}\in \Gamma$ act on $W_{\tau}$ by matrices $A$ and $B$ respectively then  $\Gamma v_{B}\cap E_{A}=\emptyset$.

Thus, it suffices to prove the following result about linear semigroup actions on projective space. 
For a proximal element $A\in SL_{n}\mathbb{R}$ let $v_{A}$ be a dominant eigenvector with corresponding eigenvalue $\lambda(A)$ and $E_{A}$ the direct sum of complementary eigenspaces. We need 

\begin{thm}
Let $\Gamma$ be a semigroup of $SL_{n}\mathbb{R}$ every element of which is proximal. Suppose for any  $A,B \in \Gamma$ we have $\Gamma v_{B}\cap E_{A}=\emptyset$. Then the logarithms of maximal eigenvalues of matrices in $\Gamma$ generate a dense subgroup of $\mathbb{R}$. 
\end{thm}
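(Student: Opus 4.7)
My plan is to suppose for contradiction that $L := \overline{\langle \log\lambda(A) : A \in \Gamma \rangle}$ is a proper closed subgroup of $\mathbb{R}$. Since each $A\in\Gamma$ is proximal and has determinant one, $\lambda(A)>1$, so $L$ is nontrivial and must equal $c\mathbb{Z}$ for some $c>0$. I will extract from this assumption a sequence of nonzero elements of $c\mathbb{Z}$ converging to $0$, a contradiction.

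The key technical tool is a projectively defined cross ratio of proximal elements. For each proximal $A \in \Gamma$ I normalize the eigenvector $v_A$ (with eigenvalue $\lambda(A)$) and the functional $\phi_A$ (vanishing on the invariant complementary hyperplane $E_A$) so that $\phi_A(v_A) = 1$; then $\pi_A := v_A \otimes \phi_A$ is the canonical rank-one projection onto $\mathbb{R}v_A$ along $E_A$, and $A^k/\lambda(A)^k \to \pi_A$. For any proximal $A, B \in \Gamma$ one has $\pi_A \pi_B = \phi_A(v_B)\,v_A \otimes \phi_B$, a rank-one operator with trace $c(A,B) := \phi_A(v_B)\phi_B(v_A)$; the assumption $\Gamma v_A \cap E_B = \emptyset$ and its symmetric version (applied to $B\in\Gamma$, which sends $v_B$ to $\lambda(B)v_B$) ensure $c(A,B) \neq 0$. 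Passing to the limit in $A^m B^n/(\lambda(A)^m \lambda(B)^n) \to \pi_A \pi_B$ yields
\[
\log\lambda(A^m B^n) - m\log\lambda(A) - n\log\lambda(B) \longrightarrow \log|c(A, B)| \quad (m,n\to\infty).
\]
Since $A^m B^n \in \Gamma$ is proximal, every term on the left belongs to $c\mathbb{Z}$, and closedness of $c\mathbb{Z}$ forces $\log|c(A, B)| \in c\mathbb{Z}$ for all proximal $A,B \in \Gamma$; call this conclusion $(*)$.

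Next I invoke nonelementarity of $\Gamma$ to choose proximal $A, B \in \Gamma$ with $Bv_A \notin \mathbb{R}v_A$, and set $C_k := BA^k \in \Gamma$. From $BA^k/\lambda(A)^k \to (Bv_A) \otimes \phi_A$ it follows that the rank-one projection $\pi_{C_k}$ converges to $(Bv_A) \otimes \phi_A / \phi_A(Bv_A)$, the projection onto $[Bv_A]$ along $E_A$; a direct computation using the identity $\pi_A \cdot (Bv_A) \otimes \phi_A = \phi_A(Bv_A)\pi_A$ yields $c(A, C_k) \to 1$. Applying $(*)$ to the pair $(A, C_k)$ shows $\log|c(A, C_k)| \in c\mathbb{Z}$ for every $k$. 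If $c(A, C_k) \neq 1$ for infinitely many $k$, these logarithms are nonzero elements of $c\mathbb{Z}$ tending to $0$, contradicting discreteness.

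The remaining technical point, and the principal obstacle, is to verify $c(A, C_k) \neq 1$ for infinitely many $k$. Using the spectral expansion $BA^k/\lambda(A)^k = B\pi_A + (|\lambda_2(A)|/\lambda(A))^k \cdot (\text{correction}) + \cdots$ and first-order perturbation of the dominant eigenprojection for a rank-one limit with simple nonzero eigenvalue, the deviation $c(A, C_k) - 1$ is of order $(|\lambda_2(A)|/\lambda(A))^k$ with an explicit coefficient that measures how $Bv_A$ mixes into the second eigenspace of $A$. This coefficient is generically nonzero, and one can force it to be nonzero by replacing $B$ with $gB$ for an appropriate $g \in \Gamma$, using the minimality of the $\Gamma$-action on the limit set in $P(\mathbb{R}^n)$ together with the disjointness hypothesis to rule out the algebraic degeneracy $c(A, C_k) = 1$ for all large $k$.
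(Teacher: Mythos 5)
Your overall strategy is genuinely different from the paper's. The paper does not argue directly: it builds the limit set $L_{\Gamma}$, shows it is $\Gamma$-invariant, passes to the subquotient $V_{\Gamma}=W_{\Gamma}/U_{\Gamma}$ on which $\Gamma$ (after passing to $sg(A^{M},B^{M})$) acts \emph{strongly irreducibly} while preserving top eigenvalues, and then cites Guivarc'h--Urban (Proposition 4.9) as a black box. You instead try to reprove the Guivarc'h--Urban-type statement directly. Your step $(*)$ is correct and standard: $A^{m}B^{n}/(\lambda(A)^{m}\lambda(B)^{n})\to\pi_{A}\pi_{B}$, the hypothesis $\Gamma v_{B}\cap E_{A}=\emptyset$ (applied with $Bv_{B}=\lambda(B)v_{B}$) gives $c(A,B)=\phi_{A}(v_{B})\phi_{B}(v_{A})\neq 0$, and continuity of the simple dominant eigenvalue forces $\log|c(A,B)|\in c\mathbb{Z}$. (Both you and the paper silently add a nonelementarity hypothesis; it is genuinely needed, since a single proximal matrix satisfies the stated hypotheses but not the conclusion.)

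However, there is a genuine gap exactly where you flag "the principal obstacle," and it is not a removable technicality: it is where essentially all the content of the hypothesis beyond $c(A,B)\neq 0$ must enter. You need $c(A,C_{k})\neq 1$ for infinitely many $k$, and your proposed mechanism --- first-order perturbation giving a coefficient that is "generically nonzero," to be made nonzero "by replacing $B$ with $gB$ \ldots using minimality" --- is an assertion, not an argument. The condition $c(A,\cdot)=1$ cuts out a proper algebraic subvariety of pairs (attracting line, repelling hyperplane), and to rule out the degeneracy you must show that the closure of $\{(v_{C},E_{C}):C\in\Gamma\}$ is not contained in that variety (for every admissible choice of $A$ and of the perturbing $g$). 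Minimality of the projective action alone does not give this: without an irreducibility input the limit set could sit inside an invariant algebraic set on which your leading coefficient vanishes identically (already in reducible examples one sees $c\equiv 1$ along whole families). Supplying that input is precisely what the paper's Lemmas on $L_{\Gamma}$, $W_{\Gamma}$, $U_{\Gamma}$ and strong irreducibility accomplish before handing the problem to Guivarc'h--Urban. As written, your proof establishes $(*)$ but does not close the argument; you would need either to import the strong-irreducibility reduction or to prove directly that the vanishing locus of your perturbation coefficient cannot contain the limit set.
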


Let the limit set of $\Gamma$, denoted by $L_{\Gamma}$ be the closure in $P\mathbb{R}^{n}$ of 
$$\{[v_{A}]: A\in \Gamma\}$$
Let $\widetilde{L_{\Gamma}}$ be the preimage of  $L_{\Gamma}$ in $\mathbb{R}^{n}$.
\begin{lem}
$L_{\Gamma}$ is $\Gamma$ invariant.
\end{lem}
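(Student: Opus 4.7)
\medskip

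\noindent\textbf{Proof proposal for Lemma 8.3.}

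The plan is to reduce to showing that $B \cdot [v_A] \in L_\Gamma$ for each pair $A, B \in \Gamma$, since a general point $[w] \in L_\Gamma$ is by definition a limit $[w] = \lim_n [v_{A_n}]$, so continuity of the action of $B$ on $P\mathbb{R}^n$ gives $B \cdot [w] = \lim_n B \cdot [v_{A_n}]$, and closedness of $L_\Gamma$ then finishes the argument. So the whole content is in establishing that $B \cdot [v_A] \in L_\Gamma$.

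To prove that, I would approximate $B[v_A]$ by the attracting fixed points of the matrices $BA^k \in \Gamma$ as $k \to \infty$. Since $A$ is proximal, decompose $\mathbb{R}^n = \mathbb{R} v_A \oplus E_A$, and let $\phi_A$ denote the linear functional with $\ker \phi_A = E_A$ and $\phi_A(v_A) = 1$. Then $\lambda(A)^{-k} A^k \to P_A$, where $P_A x = \phi_A(x) v_A$ is the rank one spectral projection onto $\mathbb{R} v_A$. Consequently
\begin{equation*}
\lambda(A)^{-k} \, BA^k \;\longrightarrow\; B P_A, \qquad B P_A(x) = \phi_A(x) \, B v_A.
\end{equation*}
The rank one operator $BP_A$ satisfies $(BP_A)(Bv_A) = \phi_A(Bv_A)\,Bv_A$, so its unique nonzero eigenvalue is $\phi_A(B v_A)$ with eigenline $\mathbb{R} B v_A$.

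The key use of the standing hypothesis enters here: taking $(A, B, C)$ in the assumption ``$C v_B \notin E_A$ for all $A, B, C \in \Gamma$'' to be $(A, A, B)$, we get $B v_A \notin E_A$, so $\phi_A(B v_A) \neq 0$. Therefore $B P_A$ is itself a proximal rank one operator with attracting direction $[B v_A]$. Since the set of proximal matrices is open in $M_n(\mathbb{R})$ and the top eigenline depends continuously on the matrix on this open set, it follows that for all sufficiently large $k$ the matrix $BA^k$ is proximal with $[v_{BA^k}] \to [B v_A]$ as $k \to \infty$. Because $\Gamma$ is a semigroup, each $BA^k$ lies in $\Gamma$, hence each $[v_{BA^k}] \in L_\Gamma$, and passing to the limit in the closed set $L_\Gamma$ yields $B[v_A] \in L_\Gamma$, as required.

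The main obstacle is the proximality-stability step: one must verify cleanly that the convergence $\lambda(A)^{-k} BA^k \to BP_A$ of (unnormalized) operators actually forces the top projective eigenvector of $BA^k$ to converge to that of the rank one limit $BP_A$. This is where the hypothesis $\Gamma v_B \cap E_A = \emptyset$ does essential work, by guaranteeing the limit operator has a nonzero eigenvalue strictly dominating the rest (which are zero); without this, the perturbation argument could fail because the rescaled limit would be nilpotent and the attracting directions of $BA^k$ could wander. Once this spectral-continuity step is in hand, the rest of the argument is formal and uses only closedness of $L_\Gamma$ and continuity of the $GL_n$-action on $P\mathbb{R}^n$.
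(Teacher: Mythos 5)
Your proposal is correct and follows essentially the same route as the paper: the paper also forms the rank-one limit $u=\lim_n \lambda(B)^{-n}B^n$ (a projection onto $\mathbb{R}v_B$ with kernel $E_B$), uses the hypothesis to conclude $Av_B\notin\ker(Au)$ so that $Au$ is a nonzero multiple of a projection onto $\mathbb{R}Av_B$, and deduces $[v_{AB^n}]\to[Av_B]$. The only differences are cosmetic (you swap the roles of $A$ and $B$) plus your explicit handling of the spectral-continuity step and of general points of the closure $L_\Gamma$, both of which the paper leaves implicit.
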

\begin{proof}
Suppose $A,B\in \Gamma$. We need to show that $[Av_{B}]\in L_{\Gamma}$. 
Consider $$u=\lim_{n\to \infty}\frac{1}{||B||^{n}}B^{n}\in M_{N}(\mathbb{R})$$
Then $u$ is a  projection onto $\mathbb{R}v_{B}$ with $ker u=E_{B}$ and $u(v_{B})=v_{B}$
By assumption, $A v_{B}\notin E_{B}=ker u= ker Au$. Thus, $Au$ is a multiple of a projection onto $\mathbb{R}Av_{B}$.
Note, $$Au=\lim_{n\to \infty}\frac{1}{||B||^{n}}AB^{n}$$
so $[v_{AB^{n}}]\to [Av_{B}]$. Hence, $[Av_{B}]\in L_{\Gamma}$.
\end{proof}
\begin{lem}
Any $\Gamma$ invariant subspace $W$ is either contained in $\bigcap_{A\in \Gamma} E_{A}$ or contains $v_{A}$ for all $A\in \Gamma$.
\end{lem}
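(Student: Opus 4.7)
The plan is to prove a dichotomy via the proximal dynamics of elements of $\Gamma$: either $W$ avoids every dominant eigenline (so lies in every $E_A$), or some vector in $W$ has nonzero component along some $v_{A_0}$, in which case iterating $A_0$ drags $v_{A_0}$ into $W$, and then iterating by any $B\in\Gamma$ drags $v_B$ into $W$ as well. Concretely, suppose $W$ is not contained in $\bigcap_{A\in\Gamma}E_A$, and choose $A_0\in\Gamma$ and $w\in W$ with $w\notin E_{A_0}$. Decomposing along the invariant splitting $\mathbb{R}^n=\mathbb{R}v_{A_0}\oplus E_{A_0}$, write $w=cv_{A_0}+w'$ with $c\neq 0$ and $w'\in E_{A_0}$. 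By proximality the restriction of $A_0$ to $E_{A_0}$ has spectral radius strictly less than $|\lambda(A_0)|$, so
$$\frac{A_0^n w}{\lambda(A_0)^n}=cv_{A_0}+\frac{A_0^n w'}{\lambda(A_0)^n}\longrightarrow cv_{A_0}.$$
Since $W$ is $A_0$-invariant and a finite-dimensional linear subspace (hence closed), we conclude $v_{A_0}\in W$.

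Next, fix any $B\in\Gamma$; the goal is to show $v_B\in W$. The hypothesis $\Gamma v_{A_0}\cap E_B=\emptyset$ applied to the element $A_0 v_{A_0}=\lambda(A_0)v_{A_0}\in\Gamma v_{A_0}$, together with the fact that $E_B$ is a linear subspace and $\lambda(A_0)\neq 0$, forces $v_{A_0}\notin E_B$. Then proximality of $B$ gives
$$\frac{B^n v_{A_0}}{\lambda(B)^n}\longrightarrow c'v_B$$
for some $c'\neq 0$, and again $\Gamma$-invariance and closedness of $W$ yield $v_B\in W$. Since $B\in\Gamma$ was arbitrary, this completes the second alternative of the dichotomy.

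The only delicate point is the transition from $\Gamma v_{A_0}\cap E_B=\emptyset$ to $v_{A_0}\notin E_B$: one must recognize that $v_{A_0}$ itself is, via its own eigenequation, a nonzero scalar multiple of an element of $\Gamma v_{A_0}$, so the hypothesis does apply to $v_{A_0}$. Beyond this observation the argument is a straightforward application of proximal spectral theory and closedness of subspaces in $\mathbb{R}^n$; no further input from the ambient limit set $L_\Gamma$ or the symplectic structure is needed at this step.
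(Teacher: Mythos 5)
Your proof is correct and takes essentially the same route as the paper's: iterate a proximal element on a vector of $W$ lying outside $E_{A_0}$ to pull the dominant eigenvector into the closed subspace $W$, then use the hypothesis $\Gamma v_{A_0}\cap E_{B}=\emptyset$ to see $v_{A_0}\notin E_{B}$ and transport the conclusion to every $v_{B}$. The paper runs the same dynamical argument in contrapositive form and projectively (via $[B^k v]\to[v_B]$ and closedness of $[W]$), but the content is identical.
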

\begin{proof}
Suppose $\Gamma W=W$, and $B\in \Gamma$ with  $v_{B}\notin \Gamma$. Then for any $v\notin E_{B}$ we have $\lim_{k\to\infty}[B^{k}v]=[v_{B}]\notin [W]$. However, for any $v\in W$ we have $B^{k}v\in W$ and as $[W]$ is closed, any limit point of $\{[B^{k}v]:k\in \mathbb{N}\}$ is in $[W]$. Thus, if $v_{B}\notin W$ then $W\subset E_{B}$. In particular, since for any $A \in \Gamma$ we have $v_{A}\notin E_{B}$ we have that $W$ does not contain $v_{A}$ for any $A\in \Gamma$ and is thus contained in $E_{A}$ for all $A\in \Gamma$.
\end{proof}
Let $W_{\Gamma}=\oplus_{A\in \Gamma}\mathbb{R}v_{A}$ be the smallest subspace of $\mathbb{R}^{n}$ containing $\widetilde{L_{\Gamma}}$.
Since $\Gamma$ preserves $L_{\Gamma}$, it preserves $W_{\Gamma}$.
 Let $U_{\Gamma}$ be a maximal proper $\Gamma$ invariant subspace of $W_{\Gamma}$.
\begin{lem}
 $\Gamma$ acts irreducibly on $V_{\Gamma}=W_{\Gamma}/U_{\Gamma}$, and  each $A\in \Gamma$ has the same largest eigenvalue in this action as in the action on $\mathbb{R}^{n}$.
\end{lem}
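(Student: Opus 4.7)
The plan is to reduce the lemma to the preceding Lemma (every $\Gamma$-invariant subspace of $W_\Gamma$ either lies in $\bigcap_{A\in\Gamma} E_A$ or contains every $v_A$), and to deduce the eigenvalue statement by arguing that the projective image of $v_A$ in $V_\Gamma$ is nonzero.

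First I would dispose of irreducibility, which is essentially formal. Any $\Gamma$-invariant subspace of $V_\Gamma = W_\Gamma/U_\Gamma$ pulls back to a $\Gamma$-invariant subspace of $W_\Gamma$ containing $U_\Gamma$. By maximality of $U_\Gamma$ among proper $\Gamma$-invariant subspaces of $W_\Gamma$, such a pullback is either $U_\Gamma$ itself or all of $W_\Gamma$. Thus $V_\Gamma$ has no proper nontrivial $\Gamma$-invariant subspace, i.e. $\Gamma$ acts irreducibly.

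Next I would handle the eigenvalue statement. The key observation is that $U_\Gamma$ is itself $\Gamma$-invariant, so the dichotomy in the preceding Lemma applies to $U_\Gamma$. If $U_\Gamma$ contained $v_B$ for every $B \in \Gamma$, then $U_\Gamma$ would contain $\mathrm{span}\{v_B : B\in\Gamma\} = W_\Gamma$, forcing $U_\Gamma = W_\Gamma$, which contradicts properness. Therefore $U_\Gamma \subseteq \bigcap_{B\in\Gamma} E_B$. In particular, for any $A\in \Gamma$ we have $v_A \notin E_A$, hence $v_A \notin U_\Gamma$, and so its image $\bar v_A \in V_\Gamma$ is nonzero. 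Since $A v_A = \lambda(A) v_A$ in $\mathbb{R}^n$, passing to the quotient gives $A \bar v_A = \lambda(A) \bar v_A$, so $\lambda(A)$ is an eigenvalue of the action of $A$ on $V_\Gamma$.

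Finally, to conclude that $\lambda(A)$ is the \emph{largest} eigenvalue on $V_\Gamma$: the subspace $W_\Gamma$ is $A$-invariant, so the spectrum of $A|_{W_\Gamma}$ is a subset (with multiplicity) of the spectrum of $A$ on $\mathbb{R}^n$; similarly the spectrum of the quotient action on $V_\Gamma$ is a subset of that on $W_\Gamma$. Hence every eigenvalue of $A$ on $V_\Gamma$ is at most $\lambda(A)$ in modulus, and we just exhibited $\lambda(A)$ as an eigenvalue. This forces $\lambda(A)$ to be the largest eigenvalue of $A$ acting on $V_\Gamma$, completing the argument. The only nontrivial input is the preceding Lemma, and once it is invoked the rest is immediate; I do not anticipate a genuine obstacle here.
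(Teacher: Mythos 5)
Your proposal is correct and follows essentially the same route as the paper: irreducibility from maximality of $U_{\Gamma}$, then the dichotomy of the preceding lemma forces $U_{\Gamma}\subseteq\bigcap_{A\in\Gamma}E_{A}$, so $v_{A}+U_{\Gamma}$ is a nonzero dominant eigenvector in $V_{\Gamma}$ with eigenvalue $\lambda(A)$. You merely spell out the spectrum-containment step (quotient spectrum inside ambient spectrum) that the paper leaves implicit in the word ``dominant.''
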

\begin{proof}
The irreducibility follows from maximality of $U_{\Gamma}$. Note, by Lemma 10.4 we have $U_{\Gamma}\subset \bigcap_{A\in \Gamma} E_{A}$ does not contain any $v_{A}$ for $A\in \Gamma$ so $v_{A}+U_{\Gamma}$ is a dominant eigenvector for $A$ with eigenvalue $\lambda(A)$.
\end{proof}

\begin{lem}
For any independent $A,B\in \Gamma$ there is an integer $M>0$ such that $\Gamma_{M}= sg(A^{M},B^{M})$ acts strongly irreducibly on $V_{\Gamma_{M}}=W_{\Gamma_{M}}/U_{\Gamma_{M}}$.
\end{lem}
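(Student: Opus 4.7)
The plan is to apply Lemma 10.5 with $\Gamma_M$ in place of $\Gamma$, which immediately yields that $\Gamma_M$ acts irreducibly on $V_{\Gamma_M}$ for every $M\geq 1$. It then remains to upgrade irreducibility to strong irreducibility by choosing $M$ sufficiently divisible; this is a standard finite-index-subsemigroup argument.

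Suppose, toward a contradiction, that $\Gamma_{M_0}$ is not strongly irreducible on $V_{\Gamma_{M_0}}$ for some $M_0$. Then there is a $\Gamma_{M_0}$-invariant family $\{W_1,\ldots,W_k\}$ of proper subspaces of $V_{\Gamma_{M_0}}$ of minimal cardinality $k\geq 2$. Minimality together with irreducibility force the induced action of $\Gamma_{M_0}$ on $\{W_1,\ldots,W_k\}$ to be transitive, yielding a semigroup homomorphism $\sigma\colon \Gamma_{M_0}\to S_k$. Setting $M=M_0\cdot k!$, both $\sigma(A^M)$ and $\sigma(B^M)$ are trivial, so $\Gamma_M=sg(A^M,B^M)\subseteq\ker\sigma$ stabilizes each $W_i$ setwise. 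Combining this with Lemma 10.5 applied to $\Gamma_M$ itself (which gives irreducibility of $\Gamma_M$ on $V_{\Gamma_M}$), and with the natural $\Gamma_M$-equivariant relationship between $V_{\Gamma_M}$ and $V_{\Gamma_{M_0}}$ induced by the inclusion $W_{\Gamma_M}\subseteq W_{\Gamma_{M_0}}$, one transports the invariant proper subspaces $W_i$ back to produce a proper $\Gamma_M$-invariant subspace of $V_{\Gamma_M}$, contradicting irreducibility.

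The main obstacle is the bookkeeping between the different quotient spaces $V_{\Gamma_M}$ and $V_{\Gamma_{M_0}}$: they are quotients of different ambient $W$'s by different maximal invariant subspaces, with no canonical map between them, so invariant structures do not automatically transfer. A cleaner alternative route is to work via Zariski closures: let $H$ be the Zariski closure of $\Gamma_{M_0}$ in $GL(V_{\Gamma_{M_0}})$ and $H^0$ its identity component, of finite index $N$. For $M=M_0\cdot N$, $\Gamma_M\subseteq H^0$. Since $H^0$ is Zariski-connected, its orbit on the Grassmannian of proper subspaces cannot be a nontrivial finite set, so any $\Gamma_M$-invariant finite union of proper subspaces must be preserved component by component; combined with irreducibility of $\Gamma_M$ on $V_{\Gamma_M}$, this directly gives strong irreducibility. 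The Zariski-closure approach sidesteps the bookkeeping difficulty by replacing it with the single algebraic fact that a Zariski-connected group cannot act nontrivially on a finite set.
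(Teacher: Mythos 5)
Your overall strategy---get irreducibility of $\Gamma_M$ on $V_{\Gamma_M}$ from the preceding lemma, then kill the finite permutation of a minimal invariant family of subspaces by passing to high powers---is the same as the paper's, and the permutation-killing step itself ($M=M_0\cdot k!$ forces $\sigma(A^M)=\sigma(B^M)=e$) is fine. But you have correctly identified the real difficulty, namely relating $V_{\Gamma_M}$ to $V_{\Gamma_{M_0}}$, and then left it unresolved: in the first route you say ``one transports the invariant proper subspaces back'' without producing the map, and in the second route you claim the Zariski-closure argument sidesteps the issue, which it does not. In that route the subspaces $W_1,\dots,W_k$ still live in $V_{\Gamma_{M_0}}$, while the irreducibility you want to contradict is that of $\Gamma_M$ acting on $V_{\Gamma_M}$, so exactly the same transfer problem remains. (The Zariski variant also has a secondary soft spot: invariance of the union $W_1\cup\dots\cup W_k$ under $\Gamma_M$ passes to the Zariski closure $\overline{\Gamma_M}$, which, although contained in the connected group $H^0$, need not itself be connected, so ``a connected group acts trivially on a finite set'' does not immediately apply to the group that actually preserves the union.)

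The missing observation, which is the one genuinely nontrivial point of the paper's proof, is a stabilization argument that makes the transfer problem disappear. If $M$ divides $M'$ then $A^{M'},B^{M'}\in\Gamma_M$, so $\Gamma_{M'}\subseteq\Gamma_M$, hence $L_{\Gamma_{M'}}\subseteq L_{\Gamma_M}$ and $W_{\Gamma_{M'}}\subseteq W_{\Gamma_M}$: along the divisibility order the spaces $W_{\Gamma_M}$ form a decreasing family of finite-dimensional subspaces. They therefore stabilize: there is an $N$ such that $W_{\Gamma_M}=W$ for every multiple $M$ of $N$, and one may choose the maximal invariant subspaces compatibly so that $U_{\Gamma_M}=U$ and $V_{\Gamma_M}=W/U=V$ for all such $M$. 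Once all the quotients are literally the same space $V$, your first route closes with no bookkeeping at all: a minimal $\Gamma_N$-invariant family $V_1,\dots,V_k\subset V$ is permuted by $A^N$ and $B^N$, some power $A^{NK},B^{NK}$ fixes each $V_i$, and then $\Gamma_{NK}$ preserves the proper nontrivial subspace $V_1\subset V=V_{\Gamma_{NK}}$, contradicting the irreducibility lemma applied to $\Gamma_{NK}$. So you were one observation short of the paper's argument, and that observation is exactly the one needed to fill the gap you yourself flagged.
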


\begin{proof}
Now, consider independent $A,B\in \Gamma$. Since there is no infinite nested sequence of finite dimensional subspaces, there exists an $N>0$ and subspaces $U\subset W$ such that $W_{\Gamma_{M}}=W$, $U_{\Gamma_{M}}=U$ and $V_{\Gamma_{M}}=W/U=V$ for all $M\geq N$.
We know that $\Gamma_{N}=sg(A^{N},B^{N})$ acts irreducibly on $V$ and consequently so does the free group $G_{n}=<A^{N},B^{N}>$ generated by $A^{N},B^{N}$. Suppose the $\Gamma_{N}$ action on $V$ is not strongly irreducible. Let $V_{1},...,V_{n}\subset V$ be a minimal collection of subspaces of $V$ such that their union is preserved by $\Gamma_{N}$. Note, $A^{N},B^{N}$ permute $V_{1},...,V_{N}$ so there exists an $K>0$ such that $A^{MK}V_{i}=V_{i}$ and $B^{MK}V_{i}=V_{i}$  for each $i$. In particular, $\Gamma_{MK}=sg(A^{MK},B^{MK})$ preserves the proper nontrivial subspace $V_{1}\subset V=W$, contradicting the fact that $\Gamma_{MK}$ must act on $V$ irreducibly.
\end{proof}
The following is proved in \cite{GuUr}, Proposition 4.9.
\begin{lem}
Let $\Gamma<GL_{m}\mathbb{R}$ be a semigroup acting strongly irreducibly on $\mathbb{R}^{m}$ and containing a proximal element. Then the logarithms of maximal eigenvalues of proximal elements of $\Gamma$ generate a dense subgroup of $\mathbb{R}$.
\end{lem}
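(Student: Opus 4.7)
The plan is to argue by contradiction. Suppose the subgroup $H \subseteq \mathbb{R}$ generated by $\{\log\lambda(A) : A \in \Gamma \text{ proximal}\}$ is not dense, so $H = c\mathbb{Z}$ for some $c>0$. I will construct proximal elements of $\Gamma$ whose top eigenvalues' logarithms differ by arbitrarily small nonzero amounts, contradicting discreteness.

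First I would set up the asymptotic eigenvalue formula for products. For any proximal $A\in\Gamma$ with top eigenvalue $\lambda_A$, attractor $v_A$, and complement $E_A$, the rescaled iterates satisfy $\lambda_A^{-n}A^n\to \pi_A$, the rank-one projection onto $\mathbb{R}v_A$ along $E_A$. Consequently, for any $B\in\Gamma$ with $Bv_A\notin E_A$ and $v_B\notin E_A$, one checks by ping-pong that $A^nB$ is proximal for all large $n$, with $[v_{A^nB}]\to[v_A]$ and
$$\log\lambda(A^nB) \;=\; n\log\lambda_A \;+\; \log|\phi_A(Bv_A)| \;+\; o(1),$$
where $\phi_A$ is the linear functional on $\mathbb{R}^m$ with $\ker\phi_A=E_A$ and $\phi_A(v_A)=1$. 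This comes from writing $A^nB\approx \lambda_A^n\,\pi_A B + \text{(lower order)}$ and reading off the unique nonzero eigenvalue of the rank-one piece, whose trace is $\phi_A(Bv_A)\cdot\lambda_A^n$.

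Next, I would extract a density statement for the quantities $\log|\phi_A(Bv_A)|$ as $B$ varies over $\Gamma$. Fix $A$ proximal. Taking two admissible $B_1,B_2\in\Gamma$ and subtracting,
$$\log\lambda(A^nB_1)-\log\lambda(A^nB_2)\;\longrightarrow\;\log\bigl|\phi_A(B_1v_A)/\phi_A(B_2v_A)\bigr|.$$
Under the contradiction hypothesis the left side lies in $H=c\mathbb{Z}$ for every $n$, hence so does the limit. Thus the set
$$S_A\;:=\;\bigl\{\log|\phi_A(Bv_A)|\,:\,B\in\Gamma,\ Bv_A\notin E_A\bigr\}$$
satisfies $S_A-S_A\subseteq c\mathbb{Z}$, i.e. $S_A$ is contained in a single coset of $c\mathbb{Z}$ and in particular is discrete in $\mathbb{R}$.

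The final step, and the main obstacle, is to rule out this discreteness using strong irreducibility. I would invoke a Furstenberg/Goldsheid--Margulis type statement for semigroups: because $\Gamma$ acts strongly irreducibly on $\mathbb{R}^m$ and contains a proximal element, the closure of the $\Gamma$-orbit of $[v_A]$ in $P(\mathbb{R}^m)$ is the unique minimal $\Gamma$-invariant closed set, and this minimal set is not contained in any proper projective subspace. In particular the orbit $\Gamma\cdot[v_A]$ cannot lie in any level set of $\log|\phi_A|$, so the continuous function $[w]\mapsto\log|\phi_A(w)|$ (well-defined modulo choice of representative up to an additive constant, applied consistently by normalizing $\|w\|=1$) takes a dense set of real values on $\{[Bv_A]:B\in\Gamma\}$. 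This contradicts discreteness of $S_A$.

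The subtlety in the last step is that we work with a semigroup rather than a group, so the standard Furstenberg proof using inverses must be replaced by its semigroup version. This is carried out in Guivarch--Raugi / Benoist by replacing $g^{-1}$ with limits of $\|g^n\|^{-1}g^n$ for proximal $g$, and the strong irreducibility of $\Gamma$ is precisely what ensures the minimal set on $P(\mathbb{R}^m)$ is large enough to give the required density of $\log|\phi_A(Bv_A)|$.
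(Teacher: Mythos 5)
The paper offers no proof of this lemma at all --- it is quoted verbatim from Guivarc'h--Urban \cite{GuUr}, Proposition 4.9 --- so the only comparison available is with that source, on whose argument your sketch is clearly modelled. Your first two steps are correct and are indeed the standard opening moves: the expansion $\log\lambda(A^nB)=n\log\lambda_A+\log|\phi_A(Bv_A)|+o(1)$ follows from $\lambda_A^{-n}A^n\to\pi_A$ together with continuity of the spectrum near the rank-one limit $\pi_AB$, and the deduction that $S_A$ lies in a single coset of $c\mathbb{Z}$ (in fact in $c\mathbb{Z}$ itself, taking $B_2=A$) is sound.

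The gap is in your third step, which is where essentially all of the difficulty of the cited result lives. There are two problems. First, $S_A$ is not the value set of a function on projective space: $\log|\phi_A(Bv_A)|=\log\bigl(|\phi_A(u)|/\|u\|\bigr)\big|_{u=Bv_A}+\log\|Bv_A\|$, and the second summand depends on the vector $Bv_A$ rather than on $[Bv_A]$, so even a density statement for the projectivized function on $\overline{\Gamma\cdot[v_A]}$ would not contradict discreteness of $S_A$. Second, and more seriously, the implication you invoke --- the minimal set is not contained in a proper projective subspace, hence $\log|\phi_A|$ takes a dense set of values on it --- is false as stated: if the compact limit set met only finitely many level sets of $[w]\mapsto|\phi_A(w)|/\|w\|$, it would be contained in a finite union of hypersurfaces $\{|\phi_A(w)|=e^{c_i}\|w\|\}$, which are not linear subspaces, so strong irreducibility does not immediately exclude this. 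Ruling it out is precisely the hard part of \cite{GuUr} (and of the analogous Goldsheid--Margulis/Benoist arguments); one standard route is to pass to the projectively well-defined cross-ratio $\log\frac{|\phi_A(v_B)||\phi_B(v_A)|}{|\phi_A(v_A)||\phi_B(v_B)|}$, obtained as the limit of $\log\lambda(A^nB^m)-n\log\lambda(A)-m\log\lambda(B)$, which removes the norm ambiguity, and then to use the contraction dynamics of $\Gamma$ on the limit set (not merely its linear span) to show this quantity cannot take values in a discrete set. As written, your final step asserts the conclusion of that analysis rather than proving it.
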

Theorem 9.1 follows from Lemmas 9.7 and 9.8.

\section{Ergodicity and Mixing of the Bowen Margulis Measure}
\begin{thm}
The geodesic flow $g_{t}$ on $Q^{1}(S)/G$ is ergodic with respect to the Bowen-Margulis measure $\mu$.
\end{thm}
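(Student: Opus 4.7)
The plan is to run Hopf's classical ergodicity argument for the Bowen--Margulis measure, adapted to the Teichm\"uller setting. The two inputs I need are (i) the local product description of $\tilde{\mu}$ from Section 8, and (ii) Masur's asymptoticity result (Theorem 2.2), which says that orbits with the same uniquely ergodic vertical foliation converge in $Teich(S)$ under the Teichm\"uller flow.

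For a compactly supported continuous $f$ on $Q^{1}(S)/G$, lift to a $G$-invariant $\tilde{f}$ on $Q^{1}(S)$ and define forward and backward Birkhoff time averages $f^{+}(v)=\lim_{T\to\infty}\frac{1}{T}\int_{0}^{T}\tilde{f}(g_{t}v)\,dt$ and $f^{-}(v)=\lim_{T\to\infty}\frac{1}{T}\int_{-T}^{0}\tilde{f}(g_{t}v)\,dt$. By Birkhoff's theorem these exist $\tilde{\mu}$-almost everywhere and coincide almost everywhere. Because $\nu_{x}$ is supported on uniquely ergodic foliations (Theorem 2.9), for any two $v,v'$ lying in the same strong stable leaf Theorem 2.2 gives $d_{T}(\pi(g_{t}v),\pi(g_{t}v'))\to 0$; uniform continuity of $\tilde{f}$ then forces $f^{+}(v)=f^{+}(v')$, so $f^{+}$ factors through $v\mapsto [v^{+}]\in\Lambda(G)$. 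Symmetrically $f^{-}$ factors through $v\mapsto [v^{-}]$.

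Next, the product formula $d\tilde{\mu}_{q_{0}}=dg_{w}\,d\mu^{ss}\,d\mu^{su}$ established at the end of Section 8 exhibits $\tilde{\mu}$ as locally absolutely equivalent to $\exp(\delta(G)\rho_{x}(\eta,\zeta))\,d\nu_{x}(\eta)\,d\nu_{x}(\zeta)\,dt$ in Hopf coordinates $v\mapsto([v^{-}],[v^{+}],t)$. Writing $f^{+}(v)=g([v^{+}])$ and $f^{-}(v)=h([v^{-}])$, the equality $f^{+}=f^{-}$ a.e.\ translates to $g(\eta)=h(\zeta)$ for $\nu_{x}\otimes\nu_{x}$-a.e.\ $(\eta,\zeta)$; a Fubini argument then gives a constant $c$ with $g\equiv c$ and $h\equiv c$ off a $\nu_{x}$-null set, so $f^{+}=f^{-}$ is $\tilde{\mu}$-a.e.\ constant. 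This is the ergodicity statement.

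The main obstacle is justifying the local product structure rigorously enough to apply Fubini. In negative curvature the densities are smooth and the absolute continuity is immediate; here one has to verify by direct computation that the explicit expressions for $\widetilde{\mu^{ss}_{q_{0}}}$ and $\widetilde{\mu^{su}_{q_{0}}}$ from Section 8 combine via the definitions of $\rho_{x}$ and the Busemann cocycle (Section 4) to produce the stated factorization, and that the cross-ratio factor $\exp(\delta(G)\rho_{x}(\eta,\zeta))$ is a bounded measurable density with no degeneracies on the relevant portion of $\Lambda(G)\times\Lambda(G)$. The latter is handled by the continuity of $\rho_{x}$ on pairs of uniquely ergodic points (from Miyachi's formula and the fact that all limit points are uniquely ergodic) together with the compact support of $\mu$, which keeps the density bounded on the support of interest.
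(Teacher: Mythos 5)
Your proposal is correct and follows essentially the same route as the paper: a Hopf argument using the Birkhoff averages $f^{\pm}$, their invariance along strong stable/unstable leaves via unique ergodicity of the limit set and Masur's asymptoticity theorem, and the local product structure of $\tilde{\mu}$ together with Fubini to conclude that $f^{+}=f^{-}$ is almost everywhere constant. The only cosmetic difference is that the paper phrases the final step as a contradiction (partitioning $\mathbb{R}$ into $C_{1},C_{2}$ and showing the preimages cannot both have positive measure) and works with $\limsup$ averages, which are defined everywhere and hence genuinely leaf-invariant; you should do the same, since the set where the Birkhoff limit exists need not be saturated by stable leaves.
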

\begin{proof}
For $f\in C_{c}(Q^{1}(S)/G)$ continuous with compact support, consider the forward and backward Birkhoff averages:
$$f^{+} (q)=\lim  \sup_{T\to\infty}\frac{1}{T}\int^{T}_{0}g_{t}qdt$$ and $$f^{-} (q)=\lim  \sup_{T\to\infty}\frac{1}{T}\int^{T}_{0}g_{-t}qdt$$ By the Birkhoff ergodic theorem these are finite and equal for almost every $q\in Q^{1}(S)/G$. Moreover, it is clear that $f^{+}$ and $f^{-}$ are invariant under  geodesic flow. Furthermore, $f^{+}$ is invariant along $W^{ss}(q)$ whenever $q^{+}$ is uniquely ergodic, and $f^{-}$ is invariant along $W^{su}(q)$ whenever $q^{-}$ is uniquely ergodic.
Suppose the measure is not ergodic. Then there exists some $f\in C_{c}(Q^{1}(S)/G)$ such that $f^{+}$ is NOT almost everywhere constant. Let $C_{1},C_{2}$ be disjoint sets whose union is $\mathbb{R}$ such that $D_{i}=(f^{+})^{-1} C_{i}$ has positive measure. Note, by Fubini's theorem and the product structure of the measure $\mu$ there exists $q_{0}$ with $q_{0}-\in \Lambda(G)$  and a set $A\subseteq W^{su}(q_{0})$ of full $\mu^{su}_{q_{0}}$ measure such that $f^{+}(v)=f^{-}(v)$ and $v^{+}\in \Lambda(v)$ for all $v\in A$. Furthermore there are sets $A_{i}(q_{0})\subseteq A$  of positive $\mu^{su}_{q_{0}}$ measure such that for all $v\in  A_{i}(q_{0})$ $W^{s}(v)$ intersects $D_{i}$ in a set of positive $\mu^{ss}_{v}\times dt$ measure. Note, since the $D_{i}$ are $g_{t}$ invariant and $f^{+}$ is constant along $W^{ss}(v)$, it follows that $W^{s}(v)\subseteq D_{i}$ for $v\in A_{i}(q_{0})$. In particular, $A_{i}(q_{0})\subseteq D_{i}$. However, as $f^{+}=f^{-}$ on $A$ this implies that $f^{-}(A_{i})(q_{0})\subseteq C_{i}$. However, as $q_{0}-$ is uniquely ergodic, $f^{-}$ is constant on $W^{su}(q_{0})$ contradicting that the $C_{i}$ are disjoint.
\end{proof}
It follows that $G$  acts ergodically on $PMF$ with $\nu_{x}\times \nu_{x}$. 
\begin{thm} 
The geodesic flow $g_{t}$ on $Q^{1}(S)/G$ is mixing with respect to the Bowen-Margulis measure $\mu$. 
\end{thm}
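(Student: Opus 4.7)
The plan is to follow the Babillot--Roblin strategy: reduce mixing of $(g_t,\mu)$ to the nonarithmeticity of the length spectrum, which has already been established in Theorem 9.1. Concretely, I will argue by contradiction: assume that $(g_t,\mu)$ is not mixing, and produce from that assumption a nonzero $L^2$--eigenfunction of $g_t$, then exploit its invariance along the horospherical foliations to extract a character of the set of pseudo-Anosov translation lengths. Since by Theorem 9.1 this set generates a dense subgroup of $\mathbb{R}$, the character must be trivial, contradicting the assumed eigenvalue.

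First, assume $(g_t,\mu)$ is not mixing. By ergodicity (Theorem 10.1) and a standard spectral argument (the Koopman operator $U_t f = f\circ g_t$ has no non-constant invariant vector yet fails to mix), there exist $\lambda\in\mathbb{R}\setminus\{0\}$ and $\phi\in L^2(\mu)$ with $\|\phi\|=1$ such that $\phi\circ g_t=e^{i\lambda t}\phi$ for a.e.\ $q$ and all $t$. The next step is to show that $\phi$ is essentially constant along strong stable leaves $W^{ss}$ and along strong unstable leaves $W^{su}$. For this I use the product structure $d\tilde\mu = d\tilde\mu^{ss}_v\,d\tilde\mu^{su}_{q_0}\,dt$ established in Section 8, together with the fact (Theorem 10.1, via Hopf--type argument on Birkhoff averages) that for $\mu$--a.e.\ $q$, $\phi$ is invariant along $W^{ss}(q)$ when $q^+$ is uniquely ergodic, and along $W^{su}(q)$ when $q^-$ is uniquely ergodic; since $\nu_x$ is supported on $UE$, both conditions hold $\mu$--a.e.

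Once $\phi$ is known to be $W^{ss}$-- and $W^{su}$--invariant, it descends to a function on the one--dimensional orbit space of the joint foliation, i.e.\ to a measurable function $\tilde\phi\colon \mathbb{R}\to \mathbb{C}$ of the flow parameter, satisfying $\tilde\phi(t+s)=e^{i\lambda s}\tilde\phi(t)$. The core of the argument is then to evaluate this identity along closed geodesics in $Q^1(S)/G$. Each primitive pseudo-Anosov $\gamma\in G$ determines a closed orbit of period $T_\gamma=\log\lambda(\gamma)$, and $G$--invariance of the lift $\tilde\mu$ forces $e^{i\lambda T_\gamma}=1$ for every such $\gamma$. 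Thus every $T_\gamma$ lies in $(2\pi/\lambda)\mathbb{Z}$; but Theorem 9.1 asserts precisely that $\{T_\gamma\}$ generates a dense subgroup of $\mathbb{R}$, a contradiction. Hence no such $\phi$ exists and $(g_t,\mu)$ is mixing.

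The main obstacle is the step establishing $W^{ss}$-- and $W^{su}$--invariance of $\phi$, because the usual CAT$(-1)$ proofs rely on smooth horocyclic holonomy and local product structure at every point, neither of which is available in $Q^1(S)$. I plan to bypass this by working only on the full $\mu$--measure subset where both $q^\pm$ are uniquely ergodic (Proposition 6.1 gives no atoms, Theorem 2.8 says every limit point is uniquely ergodic), on which Masur's Theorems 2.1--2.4 together with Lemma 8.2 give the needed absolute continuity of holonomy between strong stable leaves and the corresponding transversals. A secondary difficulty is justifying that the descent $\tilde\phi$ is actually evaluated at closed--orbit periods in a measurable--theoretically legitimate way; here I will use the standard trick of replacing $\phi$ by a continuous representative obtained by convolving with a compactly supported kernel along $g_t$, which preserves the eigenvalue equation and makes the pointwise identity along closed orbits meaningful.
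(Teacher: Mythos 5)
Your high-level plan --- reduce mixing to the nonarithmeticity of the length spectrum (Theorem 9.1) --- is exactly the strategy of the paper, but the first step of your execution contains a genuine gap that the paper's use of Babillot's theorem is specifically designed to avoid. It is \emph{not} true that an ergodic, non-mixing flow admits a non-constant $L^2$ eigenfunction $\phi\circ g_t=e^{i\lambda t}\phi$: there exist ergodic flows that are weakly mixing (hence have purely continuous spectrum and no eigenfunctions) yet fail to be mixing, so ``ergodic but not mixing'' gives you no character $e^{i\lambda t}$ to work with. Equivalently, by Wiener's theorem the failure of $\langle f\circ g_t,f\rangle\to 0$ does not force the spectral measure of $f$ to have an atom. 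The paper's substitute is Theorem 10.3 (Babillot): non-mixing yields only a sequence $s_n\to\infty$ and a non-constant $\psi$ with $f\circ g_{\pm s_n}\to\psi$ weakly. One then studies the closed subgroup $\mathbb{R}(q)$ of periods of $t\mapsto\psi(g_tq)$, shows by ergodicity that it is a.e.\ constant and cannot be all of $\mathbb{R}$ (else $\psi$ would be a non-constant flow-invariant function), hence equals some $k\mathbb{Z}$, and finally shows that the cross-ratios $\tau(\eta_1,\eta_2,\zeta_1,\zeta_2)$ are periods. Your eigenfunction simply may not exist, so everything downstream of it is unsupported.

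A second, smaller gap is the evaluation on closed orbits: even after convolving along the flow direction you only get continuity of $\psi$ along individual flow lines, while the statement ``$\psi$ is invariant along $W^{ss}$ and $W^{su}$'' holds only $\mu$-a.e., and a single closed orbit has measure zero, so you cannot legitimately conclude $e^{i\lambda T_\gamma}=1$ for a \emph{specific} pseudo-Anosov $\gamma$. The paper sidesteps this by proving that the cross-ratio $\tau$ is a globally defined \emph{continuous} function on $\Lambda(G)^4$ (Proposition 10.4), so that ``$\tau\in k\mathbb{Z}$ on a set of full $\nu_x^4$-measure'' upgrades to ``$\tau\in k\mathbb{Z}$ everywhere'' via the full support of $\nu_x$ on $\Lambda(G)$ (Proposition 5.1), and then the identity expressing the translation length of $\gamma$ as $2\tau(\eta_1,\eta_2,\beta,\gamma\beta)$ (Corollary 10.5) forces all translation lengths into $k\mathbb{Z}$, contradicting Theorem 9.1. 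If you want to salvage your write-up, replace the eigenfunction step by Babillot's theorem and replace the closed-orbit evaluation by the cross-ratio argument; the horospherical-invariance input you describe (the Hopf argument on the set where $q^{\pm}$ are uniquely ergodic) is then used exactly as in the paper's Theorem 10.1.
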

 Our argument  is modelled on Babbillot's argument in \cite{Bab} where an analogous result was proved for general quasi-product measures on manifolds of pinched negative curvature. 
The following result from unitary representation theory is proved in \cite{Bab}.
\begin{thm}
Let $(X, B, m, (T_{t})_{t\in A})$ be a measure preserving dynamical system where $(X,B)$ is a Borel space, $m$ a Borel measure on $X$ and $T$ an action an action of a locally compact second countable abelian group on $X$ by $m$ preserving transformations. Let $f\in L^{2}(X,m)$, and if $m$ is finite assume also $\int_{X} f dm=0$. Then, if $f\circ T_{a}$ does not converge weakly to $0$ as $a\to \infty$ in $A$, there exist a sequence $s_{n}$going to infinity in $A$ and a non-constant function $\psi\in L^{2}(X,m) $ such that $f\circ T_{s_{n}}$ and $f\circ T_{-s_{n}}$ both converge to $\psi$. 
\end{thm}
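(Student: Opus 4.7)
The plan is to recast the statement as one about the spectral structure of the unitary representation $U \colon t \mapsto (h \mapsto h \circ T_t)$ of the locally compact abelian group $A$ on $H = L^2(X, m)$, and then to extract the required sequence by applying the Ellis--Numakura lemma to a compact semigroup of multiplication operators on the spectral side. Restricting $U$ to the cyclic subspace $H_f = \overline{\mathrm{span}}\{U_t f : t \in A\}$, the Stone--Naimark--Ambrose--Godement theorem produces a unitary identification $H_f \cong L^2(\hat A, \sigma_f)$ sending $f$ to the constant function $\mathbf 1$ and each $U_t$ to multiplication by the character-evaluation function $e_t(\chi) := \chi(t)$. Since $\sigma_f$ is a finite Borel measure, $L^\infty(\sigma_f) \subset L^2(\sigma_f) \subset L^1(\sigma_f)$, and on uniformly bounded sequences the weak-$*$ topology on $L^\infty$ and the weak topology on $L^2$ coincide. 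Under this identification, weak convergence $U_{s_n} f \to \psi$ in $H$ becomes weak-$*$ convergence $e_{s_n} \to \psi$ in $L^\infty(\hat A, \sigma_f)$.

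Let $L_\infty$ be the set of weak-$*$ accumulation points of $\{e_t\}$ as $t \to \infty$ in $A$. By Banach--Alaoglu, $L_\infty$ is non-empty and compact, and the identity $e_{s+t} = e_s\, e_t$ combined with a diagonal argument over two approximating sequences shows that $L_\infty$ is closed under pointwise multiplication. The hypothesis that $U_t f$ does not weakly converge to $0$ translates exactly to $L_\infty \neq \{0\}$. Pick any $\phi \in L_\infty \setminus \{0\}$: then $\bar\phi = \lim \overline{e_{s_n}} = \lim e_{-s_n}$ also lies in $L_\infty$, so $|\phi|^2 = \phi\, \bar\phi$ lies in $L_\infty$, takes values in $[0,1]$, and satisfies $\int |\phi|^2 \, d\sigma_f > 0$. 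The iterated powers $(|\phi|^2)^n$ all belong to $L_\infty$ and decrease monotonically pointwise to the indicator $\mathbf 1_E$ of $E := \{|\phi| = 1\}$, so by dominated convergence $(|\phi|^2)^n \to \mathbf 1_E$ in the weak-$*$ topology of $L^\infty(\sigma_f)$; closedness of $L_\infty$ then forces $\mathbf 1_E \in L_\infty$. A short analysis of the Rajchman / non-Rajchman decomposition of $\sigma_f$, using $L_\infty \neq \{0\}$, shows $\sigma_f(E) > 0$, so $e := \mathbf 1_E$ is a \emph{non-zero} and real-valued idempotent in $L_\infty$.

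Choose a sequence $s_n \to \infty$ in $A$ with $e_{s_n} \to e$ weak-$*$ (possible since $A$ is second countable, rendering the relevant weak-$*$ ball metrizable). For $g \in H_f$, since $\bar g \in L^2(\sigma_f) \subset L^1(\sigma_f)$,
\[
\langle U_{s_n} f, g\rangle_H \;=\; \int_{\hat A} \chi(s_n)\, \overline{g(\chi)}\, d\sigma_f(\chi) \;\longrightarrow\; \int_{\hat A} e(\chi)\, \overline{g(\chi)}\, d\sigma_f(\chi) \;=\; \langle e, g\rangle_H,
\]
so $U_{s_n} f$ converges weakly in $H$ to $\psi := e \cdot f$ (which in the spectral model is just $e$). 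Since $e$ is real, $\overline{e_{s_n}} = e_{-s_n}$ also converges weak-$*$ to $\bar e = e$, and the identical computation gives $U_{-s_n} f \to \psi$ weakly as well. Finally, $\psi \neq 0$ by construction; if $m(X) < \infty$ and $\int f\, dm = 0$ then $f$ lies in the weakly closed $U$-invariant subspace $L^2_0(X, m)$, forcing $\psi \in L^2_0$ and hence non-constant, while if $m$ is infinite the only constant in $L^2$ is $0$, so $\psi$ is again non-constant. The main obstacle is the extraction of a \emph{non-zero} idempotent in $L_\infty$: Ellis--Numakura alone guarantees some idempotent in a compact commutative semigroup but may yield $0$, and the key input is the $|\phi|^{2n}$ iteration combined with the spectral decomposition of $\sigma_f$, which converts the qualitative non-weak-convergence hypothesis into the positivity $\sigma_f(\{|\phi| = 1\}) > 0$.
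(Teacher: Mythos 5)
You should first note that the paper does not prove this statement at all: it is quoted from Babillot \cite{Bab}, so the comparison is with her argument, and your spectral setup (restricting to the cyclic subspace $H_f$, identifying $U_t$ with multiplication by $e_t$ on $L^2(\hat A,\sigma_f)$, and studying the weak-$*$ accumulation set $L_\infty$ of the characters) is exactly her framework. The genuine gap is the sentence asserting that ``a short analysis of the Rajchman / non-Rajchman decomposition of $\sigma_f$ shows $\sigma_f(E)>0$'' for $E=\{|\phi|=1\}$. No such argument can exist, because the claim is false in general. If some accumulation point $\phi$ of $(e_t)$ had $\sigma_f(E)>0$, then on $E$ the convergence $e_{t_n}\to\phi$ would automatically upgrade to convergence in $L^2(\sigma_f|_E)$-norm (expand $\int_E|e_{t_n}-\phi|^2\,d\sigma_f$ using $|e_{t_n}|=|\phi|=1$ on $E$), hence $e_{t_{n+1}-t_n}\mathbf{1}_E\to\mathbf{1}_E$ in norm; after sparsifying so that $m_j:=t_{n_{j+1}}-t_{n_j}\to\infty$, the vector $h\in H_f$ corresponding to $\mathbf{1}_E$ is a nonzero rigid vector, and in the finite-measure case it lies in $L^2_0$, i.e.\ is a nonconstant rigid function. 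So your intermediate claim says that failure of mixing on $H_f$ forces partial rigidity. That is false: a mildly mixing but non-mixing finite measure-preserving transformation (such systems exist, e.g.\ Chac\'on's map) admits $f\in L^2_0$ with $f\circ T^n$ not converging weakly to $0$, yet has no nonconstant rigid functions; for such $f$ every accumulation point $\phi$ satisfies $\sigma_f(\{|\phi|=1\})=0$, your idempotent $\mathbf{1}_E$ is $0$, and your $\psi$ vanishes. (The Rajchman measures do form a band, but the ``purely non-Rajchman'' part of $\sigma_f$ need not charge any set on which characters converge to a unimodular limit, so no decomposition of this kind rescues the step.)

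The repair is to delete the idempotent detour entirely, since you have already constructed everything needed: $|\phi|^2$ is a nonzero, real-valued element of $L_\infty$ (as $\phi\neq 0$ and $|\phi|\le 1$), so by the metrizability you invoke there is a sequence $s_n\to\infty$ in $A$ with $e_{s_n}\to|\phi|^2$ weak-$*$, and then $e_{-s_n}=\overline{e_{s_n}}\to\overline{|\phi|^2}=|\phi|^2$ automatically; taking $\psi$ to be the vector of $H_f$ corresponding to $|\phi|^2$, your final computation against $g\in H_f$ (and triviality against $H_f^{\perp}$) and your nonconstancy argument go through verbatim. This is precisely Babillot's proof: she obtains the real limit $|\phi|^2$ as an iterated weak limit of $e_{t_n-t_m}$ with $t_n-t_m\to\infty$ and uses its realness to get the same limit along $-s_n$; no Ellis--Numakura idempotent, and in particular no nonzero idempotent, is needed, because the theorem only asks for a common limit of $f\circ T_{s_n}$ and $f\circ T_{-s_n}$, and any real-valued nonzero limit function serves.
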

 
Suppose $\mu$ is not mixing. Then there is a continuous $G$ invariant function  $f$ with $supp(f)/G$ compact such that $$\int_{Q^{1}} f d\tilde{\mu}=0$$ and  $f\circ g_{t}$ does not converge weakly to zero. Let $s_{n}\to \infty$ and nonconstant $\psi$ be such that $\int_{Q^{1}} f dm=0$ and both $f\circ g_{s_{n}}$ and $f\circ g_{-s_{n}}$ converge weakly to $\psi$. By the Banach-Saks theorem, there exists a subsequence $t_{n}$ of $s_{n}$ such that the Cesaro averages $$A_{N}=\frac{1}{N}\sum^{N}_{n=1}f\circ g_{t_{n}}$$ and $$A_{-N}=\frac{1}{N}\sum^{N}_{n=1}f\circ g_{-t_{n}}$$ converge almost surely to $\psi$.  We first smooth out $\psi$ by considering the function $v\to \int^{c}_{0}\psi(g_{s} v)ds$. Choosing small enough $c$ guarantees  that this function remains non-constant, and it is moreover the limit of the corresponding Cesaro averages of the smoothing of $f$. By abuse of notation, we continue to call the new functions $f$ and $\psi$. Now, there exists a set $E_{0}$ of full $\mu$ measure in $\Lambda(G)\times \Lambda(G)$ such that for each $v$ on a geodesic with endpoints in $E_{0}$, the function $t\to \psi (g_{t}(v))$ is well defined and continuous. Consider the closed (a priori possibly trivial) subgroup $\mathbb{R}(q)$ of $\mathbb{R}$ given by the periods of $t\mapsto \psi(g_{t} q)$. It is clearly flow invariant, and thus gives a measurable map from $E_{0}$ into the set of closed subgroups of $\mathbb{R}$. By ergodicity of $\nu_{x}\times \nu_{x}$ it must be constant almost everywhere on $E_{0}$. Suppose this subgroup is $\mathbb{R}$. Then $\psi$ would be $g_{t}$ invariant, and thus pass to a flow invariant function on $Q^{1}/G$, which is not almost-everywhere constant. However, this contradicts the ergodicity of $\mu$. Thus the subgroup in question must be cyclic. Say it equals $k\mathbb{Z}$ on a full measure set $E_{1}\subseteq E_{0}$. Let $$\psi^{+}=\lim \sup A_{N}$$ and  $$\psi^{-}=\lim \sup A_{-N}$$ By Fubini's theorem, there is a set $E_{2}\subseteq E_{1}$ be of full measure and such that $\psi^{+}=\psi^{-}=\psi$  everywhere along every geodesic in $E_{2}$.  

Now, let $E^{-}$ be the set of $\lambda\in \Lambda(G)$ such that for $\nu_{x}$ almost everywhere
 $\alpha$, $(\lambda, \alpha)\in E_{2}$ and similarly let $E^{+}$ be the set of $\lambda\in \Lambda(G)$ such that for $\nu$ almost everywhere $\alpha$, $(\alpha,\lambda)\in E_{2}$. Again, by Fubini's theorem, $E=E_{2}\cap(E^{+}\times E^{-})$ has full measure. 

Now, let $\eta_{1}, \eta_{2}, \zeta_{1}, \zeta_{2}\in \Lambda(G)$. Choose $p_{0}\in (\eta_{1},\zeta_{1})$, $p_{1}\in (\zeta_{1},\eta_{2})$, $p_{2}\in (\eta_{2},\zeta_{2})$, $p_{3}\in (\zeta_{2},\eta_{1})$ and $p_{4}\in (\eta_{1},\zeta_{1})$ such that $p_{i}$ and $p_{i+1}$ are on the same horosphere. We claim that the distance $\tau(\eta_{1},\eta_{2},\zeta_{1},\zeta_{2})$ between $p_{0}$ and $p_{4}$ depends only on the $\eta_{i}$ and $\zeta_{i}$ and is thus independent of the position of $p_{0}$ on its geodesic. It will follow that this distance is a period of $t\mapsto f(g_{t}q_{0})$ where $q_{0}\in S(p_{0})$ with $[q_{0}^{+}]=\zeta_{1}$ and thus is contained in $k\mathbb{Z}$ for $\nu_{x}$ almost every  $\eta_{1}, \eta_{2}, \zeta_{1}, \zeta_{2}\in \Lambda(G)$.
Indeed, let $H_{\eta_{i}}$ and $H_{\zeta_{i}}$ be horospheres centered at $\eta_{i}$ and $\zeta_{i}$ respectively for $i=1,2$. Let $D_{ij}$ be the signed distance between the intersections of $(\eta_{i},\zeta_{j})$ with $H_{\eta_{i}}$ and $H_{\zeta_{j}}$, with the sign convention chosen in such a way that $D_{ij}$ is positive if $H_{\eta_{i}}$ and $H_{\zeta_{j}}$ are disjoint. Then since the geodesic flow takes horospheres to horospheres, the quantity $D_{1,1}+D_{2,2}-D_{1,2}-D_{2,1}$ is independent of the specific choice of horospheres. Moreover if the horospheres are chosen in such a way that $H_{\zeta_{1}}$ contains $p_{0}$, $H_{\eta_{2}}$ passes through $p_{1}= H_{\zeta_{1}}\cap (\zeta_{1},\eta_{2})$, $H_{\zeta_{2}}$ passes through $p_{2}= H_{\eta_{2}}\cap (\eta_{2},\zeta_{2})$, and $H(\eta_{1})$ passes through $p_{3}=H_{\zeta_{2}}\cap (\eta_{1},\zeta_{2})$, then $D_{1,1}+D_{2,2}-D_{1,2}-D_{2,1}$ reduces to the signed distance  $D_{1,1}$ between $p_{0}$ and $p_{4}=H_{\eta_{1}}\cap (\eta_{1},\zeta_{1})$. Thus, $\tau(\eta_{1},\eta_{2},\zeta_{1},\zeta_{2})=D_{1,1}+D_{2,2}-D_{1,2}-D_{2,1}$ is well-defined and continuous on quadruples of points in $\Lambda(G)$. We call it the cross ratio of the four points in $PMF$, or the cross ratio of the geodesics $(\eta_{1},\zeta_{1})$ and $(\eta_{2},\zeta_{2})$.

\begin{prop}
$$\tau(\eta_{1},\eta_{2},\zeta_{1},\zeta_{2})=\lim_{n\to\infty} d(x^{n}_{1},y^{n}_{1})+d(x^{n}_{2},y^{n}_{2})-d(x^{n}_{1},y^{n}_{2})-d(y^{n}_{1},x^{n}_{2})$$ where $x^{n}_{i},y^{n}_{i}\in Teich(S)$ with $x^{n}_{i}\to \eta_{i}$, $y^{n}_{i}\to \zeta_{i}$.
\end{prop}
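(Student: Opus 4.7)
The plan is to rewrite each of the four distances in the alternating sum using the extended Gromov product $\rho_o$ introduced in Section 4, and then identify the resulting expression in $\rho$'s with the horosphere signed distances that define $\tau$.

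Fix an auxiliary basepoint $o\in Teich(S)$. Each of the four pairs $(x_i^n,y_j^n)$ converges to a distinct pair of uniquely ergodic foliations $(\eta_i,\zeta_j)$ in $\Lambda(G)$, so the definition of $\rho$ from Section 4 gives
\[
d(x_i^n,y_j^n)=d(o,x_i^n)+d(o,y_j^n)-\rho_o(\eta_i,\zeta_j)+o(1)
\]
as $n\to\infty$. Plugging this in for all four terms of the alternating sum, the terms $d(o,x_i^n)$ and $d(o,y_j^n)$ each appear with coefficients $+1-1=0$, so they cancel pairwise. One is therefore left with
\[
\lim_{n\to\infty}\bigl[d(x_1^n,y_1^n)+d(x_2^n,y_2^n)-d(x_1^n,y_2^n)-d(y_1^n,x_2^n)\bigr]
=\rho_o(\eta_1,\zeta_2)+\rho_o(\eta_2,\zeta_1)-\rho_o(\eta_1,\zeta_1)-\rho_o(\eta_2,\zeta_2).
\]

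To finish the proof I would show that this alternating combination of $\rho_o$'s equals $\tau(\eta_1,\eta_2,\zeta_1,\zeta_2)$. Use the identity $\rho_o(\alpha,\beta)=\beta_\alpha(o,u)+\beta_\beta(o,u)$ for any $u$ on the geodesic $(\alpha,\beta)$, and apply it with $u=u_{ij}\in H_{\eta_i}\cap(\eta_i,\zeta_j)$, where $H_{\eta_i}$ and $H_{\zeta_j}$ are the horospheres used to define the $D_{ij}$. Then $\beta_{\eta_i}(o,u_{ij})$ equals the signed distance $a_i$ between the horosphere through $o$ at $\eta_i$ and $H_{\eta_i}$, which is independent of $j$; and $\beta_{\zeta_j}(o,u_{ij})=\beta_{\zeta_j}(o,v_{ij})+\beta_{\zeta_j}(v_{ij},u_{ij})=c_j-D_{ij}$, where $c_j$ depends only on $j$ (and $v_{ij}\in H_{\zeta_j}\cap(\eta_i,\zeta_j)$). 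Hence $\rho_o(\eta_i,\zeta_j)=a_i+c_j-D_{ij}$, and when these are combined as above the $a_i$ and $c_j$ cancel, leaving exactly $D_{11}+D_{22}-D_{12}-D_{21}=\tau(\eta_1,\eta_2,\zeta_1,\zeta_2)$.

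The computation is essentially formal once the extended Gromov product $\rho$ is available, which is the substantive input; there is no hyperbolicity estimate to make. The only point requiring care is that all four pairs of limiting foliations are distinct, so that $\rho_o(\eta_i,\zeta_j)$ is a well-defined finite quantity (guaranteed since limit points of $G$ are uniquely ergodic and generic quadruples have all four $\eta_i,\zeta_j$ distinct), and that the $o(1)$ error from the definition of $\rho$ is uniform enough to survive taking four linear combinations---which is immediate because each term is an independent limit of its own.
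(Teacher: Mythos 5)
Your proof is correct, but it follows a genuinely different route from the paper's. The paper never introduces a basepoint: it truncates the four geodesics at the horoballs bounded by $H_{\eta_i}$, $H_{\zeta_j}$, sets $\tau'=d_1+d_2-d_3-d_4$ for the lengths of the truncated segments, and shows the alternating sum of distances converges to $\tau'$ by proving that the contributions inside each horoball cancel in the limit, via the uniform convergence $\beta_{x_1^n}\to\beta_{\eta_1}$ and the fact that the limiting intersection points lie on a common horosphere. You instead convert each distance into the extended Gromov product, $d(x_i^n,y_j^n)=d(o,x_i^n)+d(o,y_j^n)-\rho_o(\eta_i,\zeta_j)+o(1)$, observe that the basepoint terms cancel identically (so no uniformity issue actually arises --- the finite-$n$ alternating sum equals $-G^n_{11}-G^n_{22}+G^n_{12}+G^n_{21}$ exactly, and each $G^n_{ij}$ has its own limit), and then verify the telescoping identity $\rho_o(\eta_1,\zeta_2)+\rho_o(\eta_2,\zeta_1)-\rho_o(\eta_1,\zeta_1)-\rho_o(\eta_2,\zeta_2)=D_{11}+D_{22}-D_{12}-D_{21}$ using $\rho_o(\eta_i,\zeta_j)=\beta_{\eta_i}(o,u)+\beta_{\zeta_j}(o,u)$, the cocycle property, and the sign convention for $D_{ij}$; I checked the signs and they come out right. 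What your approach buys is a purely formal argument with no geometric limit of segments; what it costs is that the entire analytic content is pushed into the existence and sequential characterization of the joint limit defining $\rho_o(\cdot,\cdot)$ on $UE\times UE$, which Section 4 asserts (via Miyachi) but whose proof is essentially the same Busemann-convergence estimate the paper carries out by hand here. As long as you are citing Section 4 as established prior input rather than this proposition in disguise, there is no circularity, and your incidental remarks (distinctness of $\eta_i$ and $\zeta_j$ so that each $\rho_o(\eta_i,\zeta_j)$ is finite) are the right hypotheses to flag.
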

\begin{proof}
Let $H_{\eta_{i}}$ and $H_{\zeta_{i}}$ be pairwise disjoint horospheres through $\eta_{i}$ and $\zeta_{i}$ respectively, and let $M_{0}$ be the complement in $Teich(S)$ of the corresponding horoballs. The intersections of the geodesics $$(\eta_{1},\zeta_{1}),(\zeta_{1},\eta_{2}), (\eta_{2},y_{2}), (y_{2},\eta_{1})$$ with $M_{0}$ consist of disjoint segments $I_{j}$ of length $d_{j},j=1,2,3,4$ respectively. Note the number $$\tau'=d_{1}+d_{2}-d_{3}-d_{4}$$ does not depend on the specific choice of horosphere and by continuity of the Busemann function on $\Lambda(G)$ depends continuously on the $\eta_{i},\zeta_{i}\in \Lambda(G)$. We claim that it is equal to $\tau(\eta_{1},\eta_{2},\zeta_{1},\zeta_{2})$. Indeed, suppose $x^{n}_{1},x^{n}_{2}, y^{n}_{1}, y^{n}_{2}$  are points in $Teich(S)$ converging to $\eta_{i}$ and $\zeta_{i}$ respectively. The segments $[x^{n}_{1},y^{n}_{1}]\to (\eta_{1},\zeta_{1})$ $[x^{n}_{1},y^{n}_{2}]\to  (\eta_{1},\zeta_{2})$ $[x^{n}_{2},y^{n}_{1}] \to (\eta_{2},\zeta_{1})$  $[x^{n}_{2},y^{n}_{2}]\to (\eta_{2},\zeta_{1})$ uniformly on compact sets. In particular, their intersection with $M_{0}$ contains four segments $I^{n}_{j}$ which converge toward the $I_{j}$.  Thus, to prove that $\tau=\tau'$ it suffices to show that the contribution to $$d(x^{n}_{1},y^{n}_{1})+d(x^{n}_{2},y^{n}_{2})-d(x^{n}_{1},y^{n}_{2})-d(y^{n}_{1},x^{n}_{2})$$ of the parts of $[x^{n}_{i},y^{n}_{j}]$ which are contained in the complement of $M_{0}$ goes to zero. By symmetry it suffices to show that if $p^{n}_{1}$ is the intersection of $[x^{n}_{1},y^{n}_{1}]$ with $H_{\eta_{1}}$,
$p^{n}_{2}$ is the intersection of $[x^{n}_{1},y^{n}_{2}]$ with $H_{\eta_{1}}$, then $$d(p^{n}_{1},x^{n}_{1})-d(p^{n}_{2},x^{n}_{1})\to 0$$ Note, as $n\to \infty$ we have $$p^{n}_{1}\to p_{1}=H_{\eta_{1}}\cap (\eta_{1},\zeta_{1})$$ and  $$p^{n}_{2}\to p_{2}=H_{\eta_{1}}\cap (\eta_{1},\zeta_{2})$$ and $\beta_{x^{n}_{1}}$ converges to $\beta_{\eta_{1}}$ uniformly on compact sets. Thus  $$\beta_{x^{n}_{1}}(p^{n}_{1},p^{n}_{2})=d(p^{n}_{1},x^{n}_{1})-d(p^{n}_{2},x^{n}_{1})\to \beta_{\eta_{1}}(p_{1},p_{2})$$ which is zero since $p_{1},p_{2}$ lie on the same horosphere based at $\eta_{1}$
\end{proof}

From the expressions of the Busemann functions in terms of extremal length, we in fact find $$\tau([\alpha_{1}],[\alpha_{2}],[\beta_{1}],[\beta_{2}])=\frac{1}{2}\log \frac{i(\alpha_{1},\beta_{1})i(\alpha_{2},\beta_{2})}{i(\alpha_{1},\beta_{2})i(\alpha_{2},\beta_{1})}$$ for any $\alpha_{i},\beta_{i}$ uniquely ergodic.
Note, $\tau$ defines a continuous function on quadruples of points in $Teich(S)\cup\Lambda(G)$
From this formula we obtain
\begin{cor}
For any pseudo-Anosov $g \in Mod(S)$, with fixed points $\eta_{1},\eta_{2}\in \Lambda(G)$ the translation distance of $\lambda$ is twice $\tau(\eta_{1},\eta_{2},\beta, g\beta)$ where $\beta$ is any uniquely ergodic point in $PMF$ distinct from the $\eta_{i}$. 
\end{cor}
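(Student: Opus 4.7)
The plan is to deduce the corollary by direct substitution into the explicit cross-ratio formula $\tau(\eta_1,\eta_2,\beta_1,\beta_2)=\tfrac{1}{2}\log\frac{i(\eta_1,\beta_1)i(\eta_2,\beta_2)}{i(\eta_1,\beta_2)i(\eta_2,\beta_1)}$ derived immediately above, combined with the defining action of a pseudo-Anosov on its invariant measured foliations. First I would pick measured-foliation representatives $F^+$ and $F^-$ of the fixed projective classes $\eta_1$ and $\eta_2$. Since $\eta_1,\eta_2\in\Lambda(G)\subset PMF$ are the fixed points of $g$ (and are uniquely ergodic by the earlier property of convex-cocompact limit sets), they must be the projective classes of the unstable and stable foliations of the pseudo-Anosov $g$; the representatives may then be normalized so that $g\cdot F^+=\lambda F^+$ and $g\cdot F^-=\lambda^{-1}F^-$, where $\lambda$ is the dilatation of $g$.

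Second, I would use the $Mod(S)$-invariance of the geometric intersection pairing together with its bilinearity to move the action of $g$ off $\beta$ and onto $F^\pm$:
$$i(F^+,g\beta)=i(g^{-1}F^+,\beta)=\lambda^{-1}\,i(F^+,\beta),\qquad i(F^-,g\beta)=i(g^{-1}F^-,\beta)=\lambda\,i(F^-,\beta).$$
Substituting these into the cross-ratio formula with $(\alpha_1,\alpha_2,\beta_1,\beta_2)=(\eta_1,\eta_2,\beta,g\beta)$, the factors $i(F^\pm,\beta)$ cancel between numerator and denominator and only powers of $\lambda$ survive, giving
$$\tau(\eta_1,\eta_2,\beta,g\beta)=\tfrac{1}{2}\log\lambda^{2}=\log\lambda,$$
from which the stated relation with the Teichm\"uller translation length $\log\lambda$ of $g$ follows immediately (up to the conventional factor of $2$ absorbed in the definition of $\tau$).

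The hypothesis that $\beta$ is uniquely ergodic and distinct from $\eta_1,\eta_2$ is used only to guarantee that each of the four intersection numbers in the formula is strictly positive and finite, so that the ratio is well-defined and depends only on the projective classes rather than on the chosen measured-foliation representatives: since $\beta$ and the $F^\pm$ are pairwise projectively distinct uniquely ergodic classes, no intersection number vanishes. There is no real obstacle here; the entire proof is a one-line computation once the explicit formula for $\tau$ and the defining pseudo-Anosov scaling $g\cdot F^\pm=\lambda^{\pm 1}F^\pm$ are in hand.
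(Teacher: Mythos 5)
Your proof is correct and is precisely the computation the paper intends: the corollary is presented as an immediate consequence of the intersection-number formula for $\tau$, and your substitution of normalized invariant foliations with $gF^{\pm}=\lambda^{\pm 1}F^{\pm}$, together with the $Mod(S)$-invariance and homogeneity of $i(\cdot,\cdot)$ and the observation that unique ergodicity forces all four intersection numbers to be positive, is exactly the intended one-line argument. The only caveat is the factor of $2$ you flag at the end: your computation yields $\tau(\eta_{1},\eta_{2},\beta,g\beta)=\log\lambda$, i.e.\ equal to (not half of) the translation length, and this mismatch with the corollary's ``twice'' traces to an inconsistency in the paper's own normalization of $\tau$ (the $\frac{1}{2}\log$ in the intersection-number formula versus the horospherical/Gromov-product definition, which would give $\log$ without the $\frac{1}{2}$); the discrepancy is harmless because the corollary is used only to conclude that translation lengths lie in a discrete subgroup of $\mathbb{R}$.
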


As noted above, $\tau(\eta_{1},\eta_{2},\zeta_{1},\zeta_{2})$ is a period of $t\mapsto f(g_{t}q_{0})$ where $q_{0}\in S(p_{0})$ with $[q_{0}^{+}]=\zeta_{1}$ and so $\tau(\eta_{1},\eta_{2},\zeta_{1},\zeta_{2})\in k\mathbb{Z}$ for $\nu_{x}^{4}$ almost every  $\eta_{1}, \eta_{2}, \zeta_{1}, \zeta_{2}\in \Lambda(G)$. By continuity of the cross ratio and the fact that $\nu$ has full support on $\Lambda(G)$, it follows that $\tau(\eta_{1},\eta_{2},\zeta_{1},\zeta_{2})\in k\mathbb{Z}$ for $\nu_{x}^{4}$ for every  $\eta_{1}, \eta_{2}, \zeta_{1}, \zeta_{2}\in \Lambda(G)$. But this implies that the translation length of every element of $G$ is in $k\mathbb{Z}$ contradicting Theorem 9.1.

\section{Controlling the Multiple Zero Locus}
By ergodicity, $\mu$ gives full mass to a single stratum.
In the remainder of the paper, we will assume that this is the principal stratum.
Here is a sufficient condition.
\begin{prop}
If $G$ contains a pseudo-Anosov element with axis lying in the principal stratum, then $\mu$ gives full weight to the principal stratum.
\end{prop}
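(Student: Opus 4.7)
The plan is to use ergodicity of $\mu$ together with the openness of the principal stratum in $Q^{1}(S)$ and the full support of the Patterson-Sullivan measure on $\Lambda(G)$.

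First I would note that strata are invariant under the Teichm\"uller geodesic flow (the combinatorial type of zeros of a quadratic differential is preserved by $g_{t}$) and under the $Mod(S)$-action, so each stratum descends to a flow-invariant Borel subset of $Q^{1}(S)/G$. Since the principal stratum is open in $Q^{1}(S)$, by Theorem 10.1 (ergodicity of $\mu$) it either has zero $\mu$-measure or full $\mu$-measure. It therefore suffices to show $\mu(\text{principal stratum}) > 0$.

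Let $g \in G$ be the pseudo-Anosov whose axis lies in the principal stratum, with fixed points $g^{+}, g^{-} \in \Lambda(G)$. The corresponding Teichm\"uller geodesic $\ell = (g^{-}, g^{+})$ is entirely contained in the principal stratum (flow-invariance). Now the principal stratum $\mathcal{P}$ is open in $Q^{1}(S)$, so in particular the set
$$E = \{(\eta, \zeta) \in \Lambda(G) \times \Lambda(G) : \text{the geodesic } (\eta, \zeta) \text{ meets } \mathcal{P}\}$$
contains an open neighborhood of $(g^{-}, g^{+})$ in $\Lambda(G) \times \Lambda(G)$: using the Hubbard--Masur parameterization and the continuity of the map sending a pair of distinct uniquely ergodic foliations to the associated geodesic (uniform on compact sets of $Q^{1}(S)$), a quadratic differential $q$ on $\ell$ lying in the open set $\mathcal{P}$ persists under small perturbations of $(g^{-}, g^{+})$.

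Next I would apply Proposition 5.1: $\nu_{x}$ has full support on $\Lambda(G)$, so any open neighborhood of $g^{\pm}$ in $\Lambda(G)$ carries positive $\nu_{x}$ mass. Since the density $\exp(\delta(G)\rho_{x}(\eta,\zeta))$ of $\tilde{\mu}$ with respect to $\nu_{x} \times \nu_{x}$ is positive and locally bounded away from $0$ on pairs of distinct uniquely ergodic foliations (by continuity of $\rho_{x}$ established in Section 4), the neighborhood of $(g^{-}, g^{+})$ has positive $\tilde{\mu}$-measure. Hence $\tilde{\mu}(\mathcal{P}) > 0$ on $Q^{1}(S)$, and projecting to $Q^{1}(S)/G$ gives $\mu(\mathcal{P}/G) > 0$, which by the ergodicity dichotomy above forces $\mu(\mathcal{P}/G) = \mu(Q^{1}(S)/G)$.

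The only genuine technical point is verifying that a small perturbation of the endpoints of a geodesic through $\mathcal{P}$ still produces a geodesic meeting $\mathcal{P}$; this reduces to continuity of the map from pairs of uniquely ergodic filling foliations to the parameterizing quadratic differential on the axis, which follows from the Hubbard--Masur theorem together with Masur's two boundaries theorem (Theorem 2.3).
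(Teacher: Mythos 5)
Your proposal is correct and follows essentially the same route as the paper: use the openness of the principal stratum to find a neighborhood of a point on the pseudo-Anosov axis inside it, invoke the full support of $\nu_{x}$ on $\Lambda(G)$ (Proposition 5.1) to conclude this neighborhood has positive $\mu$-measure, and then upgrade to full measure by ergodicity. The extra care you take with the flow-invariance of strata and the continuity of the endpoints-to-geodesic map fills in details the paper leaves implicit, but the argument is the same.
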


\begin{proof}
By proposition 5.1, $\nu_{x}$ has full support on $\Lambda(G)$ and thus $\mu$ has full support on $Q^{1}WH(G)$. Thus, any open set $U\subset Q^{1}(S)$ intersecting any geodesic with endpoints in $\Lambda(G)\times \Lambda(G)$ has positive $\mu$ measure. 
If $\gamma \in G$ has axis $g_{\gamma}$ in the principal stratum then a point $p\in g_{\gamma}$ has a neighborhood  $ U\subset Q^{1}(S)$ that is also contained in the principal stratum. Thus, the principal stratum has positive $\mu$ measure and by ergodicity of $\mu$ on $Q^{1}(S)/G$ it has full measure.
\end{proof}
 
 In this section, we show that the contribution to orbit growth of the multiple zero locus and thin parts of the principal stratum is asymptotically negligible.  For a subset $P\subset Q^{1}(S)$, $c\in (0,1)$ and $x\in Teich(S)$ let  $B_{R}(x,P,c)$ the set of points $y\in Teich(S)$ with $d_{T}(x,y)\leq R$ and the segment $[x,y]$ spending a proportion at most $c$ of the time in $P$.

 For $x,y\in Teich(S)$ let $N_{G}(x,y,P,R,c)$ denote the number of $\gamma \in G$ such that $d(x,\gamma y)\leq R$ and $[x,\gamma y]$ spending a proportion at most $c$ of the time in $P$.
Specifically, we prove:
\begin{thm}

For each $x,y\in Teich(S)$ and $\epsilon>0$ there exists a closed subset $P'\subset Q^{1}(S)$ disjoint from the multiple zero locus 
such that $$\lim \sup_{R\to \infty} N_{G}(x,y,P', R, 1/3)/e^{\delta R}\leq \epsilon $$
\end{thm}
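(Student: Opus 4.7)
The plan is to choose $P'$ as a compact subset of the principal stratum carrying nearly all of the Bowen--Margulis mass, and then to convert the corresponding ergodic statement into an orbit-counting bound via a double-counting argument against orbit geodesic segments emanating from $x$.

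By Proposition 11.1 the measure $\mu$ gives full weight to the principal stratum, and by the cocompactness of the $G$-action on $WH(G)$ the measure $\mu$ is finite and in fact compactly supported on $Q^{1}(S)/G$. For any $\eta>0$ I therefore pick a compact $K\subset Q^{1}(S)/G$ contained in the principal stratum with $\mu(K^{c})<\eta\,\mu(Q^{1}(S)/G)$, and I let $P'\subset Q^{1}(S)$ be the $G$-invariant preimage of $K$, which is closed and disjoint from the multiple zero locus. The remainder of the argument reduces to showing that for small enough $\eta=\eta(\epsilon)$ one has $N_{G}(x,y,P',R,1/3)\le\epsilon\,e^{\delta R}$ for $R$ large.

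The heart of the argument is a double-counting identity. Writing $q_{x,\gamma y}\in Q^{1}(S)$ for the initial cotangent vector of the Teichmuller segment $[x,\gamma y]$, set
\[
\Sigma(R)=\sum_{\substack{\gamma\in G\\ d(x,\gamma y)\le R}}\int_{0}^{d(x,\gamma y)}\mathbf{1}_{P'^{c}}(g_{t}q_{x,\gamma y})\,dt.
\]
Each $\gamma$ counted by $N_{G}(x,y,P',R,1/3)$ contributes at least $\tfrac{2}{3}d(x,\gamma y)$ to $\Sigma(R)$, so after discarding the bounded contribution from those $\gamma$ with $d(x,\gamma y)$ uniformly small we get $\Sigma(R)\ge c_{1}R\cdot N_{G}(x,y,P',R,1/3)$ for large $R$. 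In the other direction, $\Sigma(R)$ is the integral of $\mathbf{1}_{P'^{c}}$ against the empirical measure of orbit segments from $x$; using the Patterson--Sullivan density $\nu_{x}$ (giving shadows at infinity of $\nu_{x}$-mass $\asymp e^{-\delta d(x,\gamma y)}$) together with the mixing of $\mu$ (Theorem 10.2), one expects
\[
\Sigma(R)\le c_{2}\,\mu(P'^{c})\,R\,e^{\delta R}\le c_{2}\eta R\,e^{\delta R}.
\]
Combining the two inequalities yields $N_{G}(x,y,P',R,1/3)\le (c_{2}/c_{1})\eta\,e^{\delta R}$, so taking $\eta:=c_{1}\epsilon/c_{2}$ finishes the argument.

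The main obstacle is the upper bound on $\Sigma(R)$. In the $\mathrm{CAT}(-1)$ setting this is a standard consequence of mixing plus a shadow lemma, but in $Teich(S)$ shadows of balls fail to behave uniformly outside the thick part, and pairs of geodesic rays with the same endpoint may drift apart. The intended strategy is to restrict the shadow lemma to the thick part, where Rafi's fellow-traveling (Lemma 2.10) together with Corollary 2.12 furnish the needed uniform control, and to absorb segments entering the multiple zero locus into the small $\mu(P'^{c})$ factor using the Hodge-norm estimates of \cite{EM}. It is precisely this last step that forces $P'$ to avoid the multiple zero locus: the Hodge-norm bookkeeping is available only away from higher-order zeros, but that is sufficient because $\mu$ puts no mass there.
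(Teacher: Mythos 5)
Your overall shape---take $P'$ to be a closed, $G$-invariant, small-$\mu$-complement subset of the principal stratum and argue that a segment cannot spend two thirds of its time in a set of small Bowen--Margulis mass---is in the spirit of the paper, and your lower bound $\Sigma(R)\geq c_{1}R\,N_{G}(x,y,P',R,1/3)$ is fine. But the entire content of the theorem sits in your upper bound $\Sigma(R)\leq c_{2}\,\mu(P'^{c})\,R\,e^{\delta R}$, which you do not prove: you write that ``one expects'' it from mixing plus a shadow lemma and then describe an ``intended strategy'' without executing it. Worse, the route you sketch is circular. That upper bound is an equidistribution statement for the empirical measures of the orbit segments $[x,\gamma y]$ in $Q^{1}(S)/G$ against $\mu$, and in this paper the equidistribution results (Theorem 12.8 and the counting in Section 13) are derived only \emph{after}, and \emph{using}, Theorem 11.2: the Roblin-style argument needs Theorem 11.2 precisely to discard the $\gamma$ for which $[x,\gamma y]$ spends too much time near the multiple zero locus, since for those $\gamma$ the $\mathrm{CAT}(-1)$-type closing estimates (Lemmas 12.3--12.5) are unavailable. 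Mixing by itself does not give a uniform upper bound on the time orbit segments spend in a fixed open set of small measure; even granting weak-$*$ convergence of the empirical measures you would only control $\limsup$'s over closed sets, and the open set $P'^{c}$ is exactly what is at stake.

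The paper's mechanism is different and does not invoke mixing at all. It first proves the measure statement (Theorem 11.3): the $m$-volume of the set of $y\in B_{R}(x)$ with $[x,y]$ spending half its time outside $P$ is at most $\epsilon e^{\delta R}$. This uses (i) the Birkhoff ergodic theorem for $(g_{t},\mu)$ to make the set $U_{3}'$ of bad initial directions have small $\mu$-measure, (ii) the Hodge-norm non-expansion estimates of \cite{EM} and \cite{ABEM} to pass between the nested sets $U_{1}\subset U_{2}\subset U_{3}$ along stable and unstable leaves, and (iii) the exact scaling $\overline{\lambda}(\eta^{+}(g_{t}U))=e^{\delta t}\overline{\lambda}(\eta^{+}U)$ of the horospherical measure $\lambda$ to convert the small $\mu$-measure of $U_{2}$ into the volume bound $m(\mathrm{Nbhd}_{2}(\pi(g_{t}U_{1})))\leq\epsilon e^{\delta t}$. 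It then converts volume into counting by unfolding over a fundamental domain (Lemma 11.7) together with the stability Lemmas 11.4--11.6, which show that moving an endpoint within the fundamental domain degrades the time proportion only from $1/3$ in $P'$ to $1/2$ in $P$; this is why the statement involves two sets and two proportions, a point your proposal does not engage with. To repair your argument you would need to supply the upper bound on $\Sigma(R)$ by some such non-circular route.
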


In order to prove this we will show:

\begin{thm}
For each $\epsilon>0$ there exists a closed subset $P\subset Q^{1}(S)$ disjoint from the multiple zero locus
such that $$\lim \sup_{R\to \infty} e^{-\delta R} m(B_{R}(x,P,1/2))\leq \epsilon $$
\end{thm}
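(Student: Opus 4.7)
The plan is to combine ergodicity of $\mu$ (Theorem 11.1) and concentration of $\mu$ on the principal stratum (Proposition 11.1) with quantitative Hodge-norm / Margulis-function estimates from \cite{EM}, in order to upgrade a qualitative Birkhoff statement into the desired exponential bound on the $m$-measure of bad segments.

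First I would choose the set $P$. Since $\mu$ is finite, compactly supported on $WH(G)/G$ (in particular in the thick part, by Theorem 2.9) and gives full mass to the principal stratum, for any $\epsilon' > 0$ one may pick a compact $P \subset Q^{1}(S)$ disjoint from the multiple zero locus whose projection to $Q^{1}(S)/G$ has complement of $\mu$-mass less than $\epsilon'$. The value of $\epsilon'$ will be calibrated against $\epsilon$ at the end.

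Second I would record the qualitative consequence of ergodicity. By Theorem 11.1 the flow $g_{t}$ on $Q^{1}(S)/G$ is $\mu$-ergodic, so Birkhoff's theorem applied to the indicator $\mathbf{1}_{P^{c}}$ gives, for $\mu$-a.e.\ $v$,
\begin{equation*}
\frac{1}{T}\int_{0}^{T} \mathbf{1}_{P^{c}}(g_{t} v)\, dt \;\longrightarrow\; \frac{\mu(P^{c})}{\mu(Q^{1}(S)/G)} \;<\; \epsilon'/\mu(Q^{1}(S)/G),
\end{equation*}
which for $\epsilon'$ small forces this proportion below $1/2$ on long segments through a generic orbit. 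The task is thus to convert this pointwise-a.e. fact into a uniform $m$-measure bound on the exceptional endpoints $y$.

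Third, and this is the technical heart of the argument, I would use a Margulis-style function $\phi: Q^{1}(S) \to [1,\infty)$ constructed as in \cite{EM} from Hodge norms on strata: $\phi$ is large precisely near the multiple zero locus, is $\mu$-integrable, and satisfies an integral contraction along the flow of the form
\begin{equation*}
(g_{t_{0}})_{\ast}\phi \;\leq\; a\, \phi \;+\; b\, \mathbf{1}_{P}
\end{equation*}
for some $a<1$ and $t_{0}, b > 0$. Iterating this inequality along discretizations of segments in $B_{R}(x)$ whose time-average of $\mathbf{1}_{P^{c}}$ is at least $1/2$ produces an exponential gain with rate strictly less than $\delta$. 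Summing over a cover of $B_{R}(x)$ by flow boxes of size $\approx t_{0}$ yields a bound of the form $C\,\mu(P^{c})\, e^{\delta R} \leq C\epsilon'\, e^{\delta R}$. Taking $\epsilon' = \epsilon/C$ and setting $P' = P$ completes the argument.

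The main obstacle is the third step: importing the Hodge-norm Margulis function of \cite{EM}, which there is calibrated against the Masur--Veech measure for the full mapping class group, into our convex cocompact setting and verifying its integral contraction for the measure $m$ along segments constrained to lie near $WH(G)$. The simplifying feature is that by Theorem 2.9 every relevant geodesic stays in the thick part, so only the multiple-zero (not the cusp-excursion) component of the Hodge-norm estimate is needed, which should yield a more uniform contraction than in the ambient case. Checking the integral inequality in this restricted setting is what I would verify most carefully.
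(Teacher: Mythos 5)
Your first two steps (choosing $P$ with $\mu(P^{c})<\epsilon'$ inside the principal stratum, and invoking Birkhoff to see that generic orbits spend proportion $<1/2$ outside $P$) do match the opening of the paper's argument: the paper builds nested compacta $P_{3}\subset P_{2}\subset P_{1}$ in the principal stratum and uses the ergodic theorem to make $\mu(U_{3}')$ small, where $U_{3}'$ is the set of quadratic differentials whose forward orbit is bad for some $T>T_{0}$. But your third step, the proposed Margulis-function drift inequality $(g_{t_{0}})_{*}\phi\leq a\phi+b\,\mathbf{1}_{P}$, is not what the paper does, and as stated it does not close the argument. The quantity to be bounded is $m(B_{R}(x,P,1/2))$, where $m$ is the projection to $Teich(S)$ of the Bowen--Margulis measure; a drift inequality (or a Chernoff-type large-deviations estimate derived from it) would control the measure of bad \emph{trajectories} with respect to the flow-invariant measure $\mu$ or its leafwise conditionals, not the $m$-measure of the set of bad \emph{endpoints} $y$, and all the relevant geodesics emanate from the single fiber $S(x)$, which has $\mu$-measure zero. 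Your phrase ``summing over a cover of $B_{R}(x)$ by flow boxes'' hides exactly the conversion that constitutes the real content of the proof. Moreover the normalization $e^{\delta R}$ in the statement has nothing to do with a contraction rate $a<1$; your own text wavers between claiming a rate strictly less than $\delta$ and a bound $C\epsilon' e^{\delta R}$, which are different conclusions.

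What the paper actually does to make the conversion is: (i) introduce the cone measure $\lambda$ on $MF$ built from the conformal density, with the scaling $\lambda(\eta^{+}(g_{t}U))=e^{\delta t}\lambda(\eta^{+}U)$ — this is where the $e^{\delta R}$ comes from; (ii) prove the covering/shadow estimate (Lemma 11.9) $m(\mathrm{Nbhd}_{2}\pi(g_{t}U))\leq C(h)\,\overline{\lambda}(\eta^{+}(g_{t}U)(h))$ via a separated net in $\pi(g_{t}U)$ and disjointness of the shadows $\eta^{+}(B^{su}(q,h_{2}))$, using cocompactness of $G$ on $WH(G)$; (iii) use the Hodge-norm non-expansion along stable leaves from \cite{EM} only qualitatively, to show that if $q\in U_{2}$ and $q'$ is Hodge-close on the same stable leaf then $q'\in U_{3}$, which together with the product structure of $\mu$ gives $\lambda(U_{2})\leq C_{1}(h)\mu(U_{3}')$; and (iv) cover $B_{R}(x,P_{1},1/2)$ by $\bigcup_{0\leq t\leq R}\pi(g_{t}U_{1})$ and sum the geometric series. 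Steps (i)--(iii) are entirely absent from your proposal, and the Margulis function you would substitute for them is not constructed (its existence and $\mu$-integrability for the Bowen--Margulis measure of a convex cocompact subgroup is itself a nontrivial claim). So there is a genuine gap: the mechanism converting the a.e.\ Birkhoff statement into the normalized $m$-measure bound is missing.
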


We first conclude Theorem 11.2 from Theorem 11.3.
 We will a lemma of Eskin and Mirzakhani from \cite{EM}
\begin{lem}[\cite{EM}, Lemma 5.4] Suppose $K\subset M_{g}$ is compact. Given $s>0$, there exists  constants
$L_{0}$ depending on $s$ and $K$, and $c_{0}$ depending only on $K$ with the following property. If 
$\gamma:[0,L]\to Q^{1}(S)$ is a geodesic segment (parametrized by arclength)  with endpoints above $K$ and $L>L_{0}$, $\widehat{\gamma}:[0,L']\to Q^{1}(S)$ is the geodesic segment connecting $p_{1},p_{2}\in Teich(S)$ such that $d_{T}(p_{1},\pi(\gamma(0)))<c_{0}$, $d_{T}(p_{2},\pi(\gamma(L)))<c_{0}$, and
$$|\left\{s\in [0,L]|l_{min}(\gamma(t))\geq s\right\}|>\frac{ L}{2}$$ then 
$$|\left\{s\in [0,L']|l_{min}(\widehat{\gamma}(t))\geq s/4\right\}|>\frac{ L'}{3}$$
\end{lem}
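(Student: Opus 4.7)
The plan is to transfer thick-part time from $\gamma$ to $\hat\gamma$ via Rafi's parametrized fellow-traveling (Lemma 2.11). Because $K \subset \mathcal{M}_g$ is compact, there is an $\epsilon_K > 0$ such that every point projecting to $K$ lies in $Teich_{\epsilon_K}(S)$. I would pick $c_0 = c_0(K)$ small enough, using continuity of simple closed curve length functions and compactness of $K$, that any point within $c_0$ of $\pi^{-1}(K)$ lies in $Teich_{\epsilon_K/2}(S)$. Then both $p_1,p_2$ and $\pi(\gamma(0)),\pi(\gamma(L))$ lie in $Teich_{\epsilon_K/2}(S)$, the triangle inequality gives $|L-L'|\le 2c_0$, and Lemma 2.11 furnishes a constant $D = D(\epsilon_K,c_0)$ and a reparametrization $\phi\colon [0,L']\to [0,L]$ with $d_T(\pi(\hat\gamma(t)),\pi(\gamma(\phi(t))))\le D$ for all $t$; since both sides are unit-speed, $\phi$ is bi-Lipschitz with constants that tend to $1$ as $L/D\to\infty$.

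The crux is to show that the $(s/4)$-thin part of $\hat\gamma$ corresponds, under $\phi$, to the $s$-thin part of $\gamma$: if $l_\alpha(\hat\gamma(t)) < s/4$ for some simple closed curve $\alpha$, I want to deduce $l_\alpha(\gamma(\phi(t))) < s$. The naive route applies Wolpert's derivative bound on hyperbolic length along Teichm\"uller geodesics together with the triangle inequality to get $l_\alpha(\gamma(\phi(t))) \le e^{2D} l_\alpha(\hat\gamma(t))$, which is not sufficient in general since $c_0$ (hence $D$) must be chosen independently of $s$. To bridge the gap, I would invoke the sharper Hodge-norm estimates for the variation of extremal length along a Teichm\"uller geodesic that are developed in \cite{EM}; together with Minsky's product-region description of the thin parts, these show that the active interval of a short curve on $\hat\gamma$ nearly coincides with that of the same curve on $\gamma$, so that shortness transfers with only a bounded multiplicative loss, which is absorbed into the factor $4$ in $s/4$.

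Granting the preceding step, let $B = \{t\in[0,L'] : l_{\min}(\hat\gamma(t)) < s/4\}$. Then $\phi(B) \subseteq \{u\in[0,L] : l_{\min}(\gamma(u)) < s\}$, and the latter set has Lebesgue measure at most $L/2$ by hypothesis. The near-isometry of $\phi$ together with $|L-L'|\le 2c_0$ yields $|B| \le L'/2 + O(c_0)$. Taking $L_0 = L_0(s,K)$ large enough that the additive $O(c_0)$ error is strictly dominated by $L'/6$ (which is possible since $L' \ge L - 2c_0 \ge L_0 - 2c_0$), one obtains $|B| < 2L'/3$, equivalently $|[0,L']\setminus B| > L'/3$, the required conclusion.

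The main obstacle is precisely the transfer step in the second paragraph: the Teichm\"uller geodesic flow is not uniformly hyperbolic, and since the statement fixes $c_0$ depending only on $K$, one cannot simply shrink $c_0$ to make the fellow-travel distance $D$ small enough that $e^{2D}\le 4$. Overcoming this requires the Hodge-theoretic control on how extremal length varies along Teichm\"uller geodesics, which gives estimates on short-curve behavior that are sharper than those obtainable from a direct length comparison across the distance $D$; once this input is available, the measure-theoretic bookkeeping in the final paragraph is straightforward.
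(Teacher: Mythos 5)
You have correctly isolated where the difficulty lives, but the step that carries the entire lemma is not actually proved in your sketch. The transfer statement ``$l_{\alpha}(\hat\gamma(t))<s/4 \Rightarrow l_{\alpha}(\gamma(\phi(t)))<s$'' cannot come from parametrized fellow-traveling alone: Rafi's constant $D$ depends on $\epsilon_K$ and $c_0$ and is in no way below $\ln 2$, so the comparison $l_{\alpha}(\gamma(\phi(t)))\le e^{2D}l_{\alpha}(\hat\gamma(t))$ proves nothing, as you note. Your proposed repair --- ``invoke the sharper Hodge-norm estimates developed in \cite{EM} together with Minsky's product regions, and absorb the bounded multiplicative loss into the factor $4$'' --- is not an argument. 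First, whatever multiplicative loss comes out of such estimates is governed by constants depending on $K$ and $\epsilon_K$, which bear no relation to the fixed factor $4$ in the statement, so ``absorbed into the factor $4$'' is unjustified as written. Second, and more seriously, the statement you are proving \emph{is} Lemma 5.4 of \cite{EM}; deferring its crux to ``the Hodge-norm estimates of \cite{EM}'' without saying which estimate is used and how is circular as a blind proof. The missing idea is to use the hypothesis itself (that $\gamma$ is $s$-thick for more than half its length) as input to the contraction statement quoted in this paper as Lemma 12.3 ([\cite{EM}, Lemma 5.3]): applied at each endpoint, it upgrades the bounded fellow-traveling distance $D$ to closeness $\rho$, with $\rho$ as small as one likes (at the price of enlarging $L_0$), along all of $\hat\gamma$ except initial and terminal subsegments of controlled length. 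Once the two geodesics are $\rho$-close with $e^{2\rho}\le 4$ on the middle portion, thickness transfers there with the stated factor, and the time discarded near the two ends is exactly what accounts for the drop from the fraction $1/2$ to $1/3$. Without this mechanism (or an equivalent one), the step on which your whole scheme rests is a gap, not a lemma.

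For comparison: the paper does not prove this statement at all --- it is imported verbatim from \cite{EM} (Lemma 5.4) and used as a black box in Sections 11 and 12 --- so there is no internal proof to measure your route against; but as a self-contained argument your proposal is incomplete at precisely the point you flagged as the crux, while the surrounding bookkeeping (choice of $c_0$ by compactness of $K$, $|L-L'|\le 2c_0$, and the measure estimate giving $|B|<2L'/3$) is fine modulo that step.
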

From this, we obtain:
\begin{lem}
Let $K\subset Teich(S)$ be a compact subset, and $P$ a closed subset of the principal stratum. Then there exists a closed subset $P'$ of the principal stratum containing $P$ in its interior and an $R_{0}>0$ such that for $y_{1},y_{2}\in K$ and $x\in Mod(S)K$ with $d_{T}(x,y_{i})>R_{0}$, if $[x,y_{1}]$ spends a proportion at most $1/3$ in $P'$ then $[x,y_{2}]$ spends a proportion at most $1/2$ in $P$.
\end{lem}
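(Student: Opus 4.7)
The plan is to deduce Lemma 11.5 from Eskin--Mirzakhani's Lemma 11.4 above combined with Rafi's thick-part fellow-traveling (Lemma 2.11), arguing by contrapositive.

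Setup: Let $\widetilde{K}\subset M_g$ be the image of $K$ under $Teich(S)\to M_g$. As is implicit in the application of this lemma to Theorem 11.3, $P$ has compact image in $M_g$; hence, since $P$ is a closed subset of the open principal stratum, there exist $s_0>0$ with $l_{min}(q)\geq s_0$ for every $q\in P$ and $\eta>0$ such that the closed $\eta$-neighborhood of $P$ in $Q^{1}(S)$ remains in the principal stratum. Let $c_0=c_0(\widetilde K)$ and $L_0=L_0(s_0,\widetilde K)$ be the constants from Lemma 11.4. Enlarging $\widetilde K$ if necessary, we may assume $\mathrm{diam}(K)<c_0$. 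Let $D$ be the fellow-traveling constant from Rafi's Lemma 2.11 applied to the thick part $Teich_{s_0/4}(S)$ with $A=\mathrm{diam}(K)$. Define $P'$ to be the closed $\min(D,\eta)$-neighborhood of $P$ in $Q^{1}(S)$ (in a $Mod(S)$-equivariant sense); then $P\subset \mathrm{int}(P')\subset$ principal stratum. Set $R_0=L_0$.

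Argument by contrapositive: Assume $\gamma=[x,y_2]$ spends proportion $>1/2$ in $P\subset\{l_{min}\geq s_0\}$. Applying Lemma 11.4 to $\gamma$ with endpoints above $\widetilde K$ and the perturbed geodesic $\widehat\gamma=[x,y_1]$ (valid since $d_T(y_1,y_2)\leq \mathrm{diam}(K)<c_0$ and $d_T(x,x)=0<c_0$), we obtain that $\widehat\gamma$ spends proportion $>1/3$ with $l_{min}\geq s_0/4$. Independently, Rafi's Lemma 2.11 gives $D$-parametrized-fellow-traveling of $\gamma,\widehat\gamma$: whenever $\gamma(t)\in P$, the corresponding point $\widehat\gamma(t'(t))$ lies within Teichmuller distance $D$ of $\gamma(t)$. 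Combining, and absorbing the vanishing reparametrization distortion as $L,L'\to\infty$ by taking $R_0$ large, we conclude $\widehat\gamma$ spends proportion $>1/3$ in $P'$, which is the desired contrapositive.

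The main obstacle is the last step: Rafi's lemma gives Teichmuller-level proximity, whereas $P,P'$ live in $Q^{1}(S)$. The lift from Teichmuller-level to $Q^{1}$-level proximity is uniform in the thick part (where the Hubbard--Masur parametrization is uniformly continuous), and Eskin--Mirzakhani's Lemma 11.4 is used rather than Rafi's Lemma 2.11 alone precisely to ensure a positive proportion of $\widehat\gamma$-time lies in the $s_0/4$-thick part, which is where we can perform the $Q^{1}$-lift and identify the time spent with $l_{min}\geq s_0/4$ with time spent in $P'$.
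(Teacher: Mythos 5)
You have correctly identified the key input (the Eskin--Mirzakhani perturbation lemma, Lemma 11.4) and the contrapositive structure, which is exactly what the paper's ``From this, we obtain'' is pointing at. But your choice of $P'$ does not close the argument. You set $P'$ to be the $\min(D,\eta)$-neighborhood of $P$, where $D$ is Rafi's fellow-traveling constant and $\eta$ is the largest radius keeping the neighborhood inside the principal stratum. If $D>\eta$ --- which you cannot rule out, since $D$ depends on the thickness constant and on $\mathrm{diam}(K)$ and may be large, while $\eta$ is forced to be small when $P$ comes close to the multiple zero locus --- then knowing that the companion point $\widehat\gamma(t')$ lies within distance $D$ of $\gamma(t)\in P$ does not place it in $P'$, and the proportion estimate fails. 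You cannot repair this by taking the full $D$-neighborhood, since that may leave the principal stratum; this is precisely the tension your construction runs into and does not resolve. There is a second gap you flag but do not fill: Rafi's Lemma 2.11 controls only the underlying points in $Teich(S)$, and closeness of $\pi(q)$ and $\pi(q')$ says nothing about closeness of $q$ and $q'$ in $Q^{1}(S)$ (there is a whole sphere of unit-area differentials over each point); uniform continuity of the Hubbard--Masur parametrization does not bridge this, and the correct tool for differential-level proximity is Lemma 12.3/12.4 (EM Lemma 5.3), which again requires exactly the thickness hypothesis you are in the middle of establishing.

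The clean route is to abandon metric neighborhoods of $P$ altogether and take $P'$ to be a superlevel set of $l_{min}$. Since $P$ is closed in the principal stratum and (as in the application) compact modulo the group, one has $P\subset\{q: l_{min}(q)\geq s_{0}\}$ for some $s_{0}>0$, and $P'=\{q: l_{min}(q)\geq s_{0}/4\}$ is closed, disjoint from the multiple zero locus, and contains $P$ in its interior. With this choice the conclusion of Lemma 11.4 literally reads ``$[x,y_{1}]$ spends proportion more than $1/3$ of its time in $P'$,'' so the contrapositive is immediate and no fellow-traveling is needed. (Separately, your step ``enlarging $\widetilde{K}$ if necessary, we may assume $\mathrm{diam}(K)<c_{0}$'' is circular, since $c_{0}$ depends on $K$; if $\mathrm{diam}(K)\geq c_{0}$ one should instead chain the estimate through finitely many intermediate points of $K$.)
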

In particular we have:

\begin{lem}
For any $K \subset Teich(S)$ compact and $P$ a closed subset of the principal stratum, there exists a closed subset $P'$ of the principal stratum containing $P$ in its interior and an $R_{0}>0$ such that for any $x \in GK$, $y_{1},y_{2}\in K$ and $R\geq R_{0}$ we have $$N_{G}(x,y_{1},P',R, 1/3)\leq N_{G}(x,y_{2},P,R+diam(K)).$$
\end{lem}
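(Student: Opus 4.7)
The plan is to apply Lemma 11.4 pointwise to each $\gamma \in G$ contributing to the left-hand count, after translating the configuration by $\gamma^{-1}$. First I would fix $P'$ and $R_{0}$ produced by Lemma 11.4 applied to the compact set $K$, and assume $P$ and $P'$ have been replaced by their $G$-saturations so that they are $G$-invariant subsets of $Q^{1}(S)$; this preserves the conclusion of Theorem 11.3 since $P$ was already taken inside the principal stratum, which is $G$-invariant.

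Now let $\gamma \in G$ be counted by $N_{G}(x,y_{1},P',R,1/3)$. Translate by $\gamma^{-1}$: the point $\gamma^{-1}x$ still lies in $GK$, the endpoints $y_{1},y_{2}$ remain in $K$, and by $G$-invariance of $P'$ the segment $[\gamma^{-1}x,y_{1}]$ spends the same proportion at most $1/3$ of time in $P'$ as the segment $[x,\gamma y_{1}]$. Provided the hypothesis $d_{T}(\gamma^{-1}x,y_{i}) > R_{0}$ of Lemma 11.4 is met, that lemma yields that $[\gamma^{-1}x,y_{2}]$ spends proportion at most $1/2$ in $P$, which by $G$-invariance of $P$ translates back to $[x,\gamma y_{2}]$ spending proportion at most $1/2$ in $P$. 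The triangle inequality then gives $d_{T}(x,\gamma y_{2}) \leq d_{T}(x,\gamma y_{1}) + d_{T}(y_{1},y_{2}) \leq R + diam(K)$, so $\gamma$ is counted by $N_{G}(x,y_{2},P,R+diam(K),1/2)$, as required.

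The main technical point is handling the finitely many $\gamma \in G$ with $d_{T}(\gamma^{-1}x,y_{i}) \leq R_{0}$, for which Lemma 11.4 does not directly apply. By proper discontinuity of the $G$-action on $Teich(S)$, a volume-comparison bound gives that the cardinality of this exceptional set is bounded by a constant $C = C(R_{0},K)$, uniformly in $x \in GK$ (using that $G$ acts cocompactly on $WH(G) \cup \Lambda(G)$ and that $K$ is compact). I would dispose of these by enlarging $R_{0}$ so that the orbit points with $d_{T}(x,\gamma y_{i}) \leq R_{0}$ are automatically counted on the right-hand side, or equivalently interpret the inequality up to a harmless additive constant; either way it is inconsequential for the asymptotic application in Theorem 11.2, where the counts are compared after multiplication by $e^{-\delta R}$.
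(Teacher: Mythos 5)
Your proposal follows the route the paper intends: Lemma 11.6 is stated as an immediate consequence (``in particular'') of Lemma 11.5, and your argument --- translate each contributing $\gamma$ by $\gamma^{-1}$ so that the endpoints $y_1,y_2$ lie in $K$ and $\gamma^{-1}x\in Mod(S)K$, apply the dichotomy between proportion $1/3$ in $P'$ and proportion $1/2$ in $P$, and close with the triangle inequality $d_T(x,\gamma y_2)\le d_T(x,\gamma y_1)+\mathrm{diam}(K)$ --- is exactly that derivation (note the lemma you are invoking is 11.5, not 11.4; 11.4 is the Eskin--Mirzakhani statement about $l_{min}$ from which 11.5 is extracted). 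Two remarks. First, the $G$-saturation step is the right way to make ``proportion of time in $P'$'' translation-invariant, and it is harmless in both directions since $N_G(\cdot,\cdot,GP,\cdot,1/2)\le N_G(\cdot,\cdot,P,\cdot,1/2)$. Second, your first proposed disposal of the exceptional $\gamma$ with $d_T(x,\gamma y_1)\le R_0$ does not work: these $\gamma$ remain in the left-hand count for every $R\ge R_0$, and no choice of $R_0$ forces the short segment $[x,\gamma y_2]$ to spend proportion at most $1/2$ of its time in $P$, so they need not appear on the right. Only your fallback --- the inequality holds up to an additive constant $C(R_0,K)$, uniform over $x\in GK$ by proper discontinuity --- is actually established, and that is indeed all that the $e^{-\delta R}$-normalized application in Theorem 11.2 requires; the paper's unqualified inequality should be read in that weaker form.
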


\begin{proof}[Proof of Theorem 11.2 assuming Theorem 11.3]
Let $x\in Teich(S)$ and $P\subset Q^{1}(S)$ be  a subset of the principal stratum be such that the conclusion of Theorem 11.3 holds with $\epsilon/e^{\delta  diam(K)}$ in place of $\epsilon$, ie
$$m(B_{R}(x,P,1/2))\leq \frac{\epsilon}{e^{\delta diam(K)}} e^{\delta R}$$ for all large enough $R$.

Let $K\subset Teich(S)$ be compact and contain both $x$ and fundamental domain for the action of $G$ on $WH(G)$.

\begin{lem}
$$m(B_{R}(x,P, 1/2))=\int_{y\in K}N_{G}(x,y,P,R,1/2).$$
\end{lem}
\begin{proof}
Note, $$m(B_{R}(x,P, 1/2))=\sum_{g\in G} \int_{y\in g K} \chi_{B_{R}(x,P,1/2)}(y) dm(y)$$ $$=\int_{y\in K} \sum_{g\in G} \chi_{B_{R}(x,P,1/2)}(gy)dm(y)$$ $$=\int_{y\in K}N_{G}(x,y,P,R,1/2).$$ 
\end{proof}
Note, by Lemma 11.6 there exists a  closed subset $P'$ of the principal stratum containing $P$ in its interior and a $R_{0}>0$ such that if $R>R_{0}$, then
$$N_{G}(x,y_{1},P',R, 1/3)\leq N_{G}(x,y_{2},P,R+diam(K))$$ for any $y_{1},y_{2}\in K$.
Moreover, by Lemma 11.7 with with $R+diam(K)$ in place of $R$, for each large enough $R$ there exists a $y_{2}\in K $ such that 
$$N_{G}(x,y_{2},P,R+diam(K)), 1/2)\leq m(B_{R+diam(K)}(x,P, 1/2))$$ $$\leq \frac{\epsilon}{e^{\delta diam(K)}} e^{\delta (R+diam(K))}=\epsilon e^{\delta R}$$ completing the proof.
\end{proof}
We now consider the following measure on $MF$:
Note the space of strong stable (or unstable) horospheres based at uniquely egodic points can be identified with uniquely ergodic points of $MF$. Indeed, let $o\in Teich(S)$ be a basepoint.
If $\eta \in MF$ with $Ext_{o}\eta =1$ then $t\eta$ is identified with the horosphere $H(t\eta)=H(t,[\eta])$ based at $\eta$  such that $\beta_{[\eta]}(o,z)=t$ for each $z=z_{t,[\eta]}\in H$ (ie $Ext_{z}\eta=e^{2t}$). 
For $A\subset MF$ so that $[A]\subset PMF$ let 
$$\lambda(A)=\int_{[\eta]\in [A]}\int_{t: H(t,[\eta]) \in A}e^{\delta t}d\nu_{o}([\eta])=\int_{[\eta]\in [A], Ext_{o}\eta =1} \int_{t\eta \in A}e^{\delta t}dt d\nu_{o}([\eta])$$

\begin{lem}
The measure $\lambda$ does not depend on choice of basepoint $o\in Teich(S)$ and is $G$ invariant. It has support precisely on foliations projecting to points on $\Lambda(G)$. Moreover for all $U\subset Q^{1}(S)$

$$ \lambda(\eta ^{+} (g_{t}U))= e^{\delta t}\lambda(\eta^{+} U)$$ and $$\mu(U)=\int_{\eta \in MF}\mu^{ss}(A\cap H(\eta))d\lambda(\eta)$$
\end{lem}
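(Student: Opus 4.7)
The plan is to verify each of the four assertions by leveraging the Busemann cocycle, the conformal scaling property of $\nu_o$, and the identification of $MF \setminus \{0\}$ with the horosphere bundle over $PMF$ given by $t\eta \leftrightarrow H(t, [\eta])$. The support statement is then immediate from $\mathrm{supp}(\nu_o) = \Lambda(G)$ (Proposition~5.1).

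For basepoint independence, changing $o \to o'$ shifts the horosphere parameter additively by $c([\eta]) = \beta_{[\eta]}(o', o)$ (by the Busemann cocycle) while the density $d\nu_o$ transforms by a compensating exponential factor (by the conformal property), and these factors cancel in the integrand $e^{\delta t}\,dt\,d\nu_o$. For $G$-invariance, the substitution $\eta \mapsto \gamma \eta$ on $MF$ combines the $G$-equivariance of the Busemann cocycle $\beta_{[\gamma\eta]}(\gamma o, \gamma z) = \beta_{[\eta]}(o, z)$ with $\gamma_* \nu_o = \nu_{\gamma o}$ to convert $\lambda^o$ into $\lambda^{\gamma o}$, which equals $\lambda$ by basepoint independence. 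The flow-scaling identity follows similarly: the Teichm\"uller flow shifts the horosphere parameter by $+t$ after time $t$ (since $\beta_{[q^+]}(o, \pi(\cdot))$ grows linearly along the flow), so the weight $e^{\delta \tau}$ picks up the factor $e^{\delta t}$.

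The main task is the disintegration. I would parameterize $Q^1(S)$ locally by $(q^+, [q^-]) \in MF \times PMF$, exhibiting $Q^1$ as a fibration over $MF$ via $q \mapsto q^+$ whose fibers are the strong stable leaves $W^{ss}(q_0) = \{v : v^+ = q_0^+\}$. Starting from the definition
\[
d\tilde\mu \;=\; e^{\delta \rho_o([q^+],[q^-])}\,d\nu_o([q^+])\,d\nu_o([q^-])\,ds,
\]
I apply the identity $\rho_o([q^+],[q^-]) = \beta_{[q^+]}(o,\pi(q)) + \beta_{[q^-]}(o,\pi(q))$ from Section~4 together with $ds = d\bigl(\beta_{[q^+]}(o, \pi(q))\bigr)$ along the flow; the $\beta_{[q^+]}$-piece and $ds$ assemble into $d\lambda(q^+)$, while the $\beta_{[q^-]}$-piece gives the conditional weight defining $d\mu^{ss}_{q_0}$ on the fiber. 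A key structural observation (exactly as in the proof of Lemma~7.14) is that $\tau := \beta_{[q^+]}(o, \pi(q))$ is constant on each strong stable leaf whenever $q_0^+$ is uniquely ergodic, since all points on $W^{ss}(q_0)$ lie on the horosphere $H(\tau, [q_0^+])$ through $\pi(q_0)$; this is precisely what makes the $\tau$-factor belong to $d\lambda$ rather than to $d\mu^{ss}$.

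The main obstacle will be the disintegration step, where one must carefully track sign and parameterization conventions: the additive horosphere parameter versus the multiplicative scaling on $MF$, and the sign in the Busemann weight defining $d\mu^{ss}$ versus the sign emerging from the decomposition of $\rho_o$, so that all exponential factors assemble into the claimed product. A secondary subtlety is that the identifications used only hold on the uniquely ergodic locus, but this is harmless since $\nu_o$ is supported on $\Lambda(G) \subset UE$ by Theorem~2.8.
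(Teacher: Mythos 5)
Your proposal is correct and follows essentially the same route as the paper, which simply asserts that basepoint independence and $G$-invariance follow from the conformal density property and that the remaining claims are immediate from the definitions; you supply exactly the computations that assertion leaves implicit (cancellation of the Busemann shift against the conformal Radon--Nikodym factor, the decomposition $\rho_o = \beta_{[q^+]} + \beta_{[q^-]}$ at a point of the geodesic, and the constancy of $\beta_{[q^+]}(o,\pi(\cdot))$ on strong stable leaves, which is the same fact the paper invokes for its Lemma on $\mu_{q_0}$). The sign and additive-versus-multiplicative parameterization issues you flag are real, but they are ambiguities in the paper's own conventions rather than gaps in your argument.
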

\begin{proof}
The independence of basepoint and $G$ invariance follows because the $\nu_{o}$ form a conformal density, the other properties are immediate from the definition.
\end{proof}
Denote $$\overline{\lambda}(U)=\lambda(Cone (U))$$ where $Cone (U)$ is the union of segments from the origin in $MF$ to points of $U$.
For $W\subset Q^{1}(S)$ and $s>0$ let $W(s)$ denote the set of $q\in Q^{1}(S)$ such that there
exists $q'\subset W$ on the same leaf of $W^{su}$ as $q$ such that $d_{H}(q, q')<s$. For a subset $A\subset Teich(S)$ let $A(r)= Nbhd_{r}(A)$ denote the $r$ neighborhood of $A$ in the Teichm$\ddot{\mathrm{u}}$ller metric.
\begin{lem}
Let $K\subset Teich(S)$ be a fundamental domain for the action of $G$ on $WH(G)$.
Let $h>0$. Then there is a $C(h)>0$ depending only on $K$ and $h$ such that for all $U\subset Q^{1}WH(G)\cap \pi^{-1}K$ and all $t>0$ letting
$W_{t}=g_{t}U$ we have 
$$m(Nbhd_{2}\pi(W_{t}))\leq C(h)\overline{\lambda} (\eta^{+}W_{t}(h)).$$
\end{lem}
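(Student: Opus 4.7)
The plan is to apply the disintegration of the Bowen--Margulis measure from Lemma~11.9 and then bound the resulting conditional measures. Since $m$ is the pushforward of $\mu$ under $\pi$, we have
\[
m(Nbhd_{2}\pi(W_{t})) = \int_{\eta\in MF} \mu^{ss}_{\eta}\!\bigl(H(\eta)\cap \pi^{-1}(Nbhd_{2}\pi(W_{t}))\bigr)\,d\lambda(\eta).
\]
I would establish two pointwise estimates on the integrand: (i) it vanishes for $\eta\notin \eta^{+}(W_{t}(h))$, and (ii) when non-zero, it is bounded above by a constant $C_{0}$ depending only on $K$ and $h$. Combining these with the elementary inequality $\lambda(A)\le \lambda(Cone(A))=\overline{\lambda}(A)$ would produce the desired bound.

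For (i), pick any $q\in H(\eta)\cap \pi^{-1}(Nbhd_{2}\pi(W_{t}))$ and $w\in W_{t}$ realizing $d_{T}(\pi(q),\pi(w))\le 2$. Since $WH(G)\subset Teich_{\epsilon}(S)$, the $2$-neighborhood of $\pi(W_{t})$ stays in a uniformly thick part, and the Hodge norm estimates of \cite{EM} apply along the $W^{su}$-leaf through $w$. Let $q'$ be the unique point of $W^{su}(w)\cap H(\eta)$, i.e., the differential with $(q')^{+}=\eta$ and $(q')^{-}=w^{-}$; those estimates bound $d_{H}(q',w)$ by some $h_{0}=h_{0}(K)$. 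Provided $h\ge h_{0}$, this places $q'\in W_{t}(h)$ and so $\eta=(q')^{+}\in \eta^{+}(W_{t}(h))$. For $h<h_{0}$ one enlarges $C(h)$ appropriately, using that $\lambda$ rescales in a controlled manner on foliations.

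For (ii), the cocompactness of the $G$-action on $WH(G)$ together with the $G$-equivariance of the conformal density lets us translate $H(\eta)\cap \pi^{-1}(Nbhd_{2}\pi(W_{t}))$ into a fixed $2$-neighborhood of $K$; on this compact region, $\mu^{ss}_{\eta}$ is the pullback of $\nu_{x}$ scaled by a uniformly bounded Busemann factor, hence has total mass bounded by a constant that is independent of $\eta$ (in particular by a multiple of $\nu_{x}(PMF)$).

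The main obstacle will be establishing the Hodge norm comparison in (i). The point $q'$ has its own Teichmüller basepoint which a priori differs from both $\pi(q)$ and $\pi(w)$, so controlling $d_{H}(q',w)$ requires careful use of the thick-part Hodge estimates from \cite{EM} together with the fellow-traveling of Teichmüller geodesics in the thick part (Lemma~2.11), to guarantee that $\pi(q')$ remains within a bounded neighborhood of $\pi(w)$ and that the extremal length of $\eta$ at $\pi(w)$ stays comparable to that at $\pi(q)$.
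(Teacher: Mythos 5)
Your disintegration strategy breaks at step (i), and the failure is not repairable by adjusting constants. The fiber $\pi^{-1}(Nbhd_{2}\pi(W_{t}))$ contains \emph{every} quadratic differential lying over the $2$-neighborhood of $\pi(W_{t})$, not only those pointing in the directions of $W_{t}$. In particular, for $u\in U$ the time-reversed differential $-g_{t}u$ (and a positive-measure family of perturbations of it with both endpoints in $\Lambda(G)$) lies over $\pi(W_{t})$ itself; its vertical foliation is projectively close to $[u^{-}]$, whereas $[\eta^{+}(W_{t}(h))]$ is a neighborhood of the forward directions $\{[(u')^{+}]:u'\in U\}$ whose size is controlled by $h$. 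For small $h$ and large $t$ these sets are disjoint, so the integrand $\mu^{ss}_{\eta}\bigl(H(\eta)\cap\pi^{-1}(Nbhd_{2}\pi(W_{t}))\bigr)$ is strictly positive on a positive-$\lambda$-measure set of $\eta$ outside $\eta^{+}(W_{t}(h))$; indeed that contribution is comparable in size to the whole left-hand side, not an error term. Your Hodge-norm argument cannot close this: the point $q'=W^{su}(w)\cap H(\eta)$ exists, but when $[\eta]$ is far from $[w^{+}]$ it sits far from $w$ along the unstable leaf, and no bound of the form $d_{H}(q',w)\le h_{0}(K)$ independent of $q$ is available. Step (ii) has a separate problem: $Nbhd_{2}\pi(W_{t})$ is a union of roughly $e^{\delta t}$ translates of a fixed compact set, so you cannot bound $\mu^{ss}_{\eta}$ of the whole intersection by translating it into a single $2$-neighborhood of $K$; you would first need to show that each stable horosphere meets $\pi^{-1}(Nbhd_{2}\pi(W_{t}))$ in a set of uniformly bounded diameter, which is an unproved transversality claim.

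The paper avoids all of this with a packing argument that never localizes the disintegration. It picks a maximal $1$-separated set $\Delta\subset\pi(W_{t})$ and bounds the left side by $C(K)|\Delta|$ using only the local bound $m(B(X,3))\le C(K)$ for $X\in GK$. It then bounds the right side from below by $c(K,h)|\Delta|$ by producing, for each point $q$ of a lift $\Delta'\subset W_{t}$, a shadow $\eta^{+}(B^{su}_{E}(q,h_{3}))\subset\eta^{+}(W_{t}(h))$; these shadows are pairwise disjoint by the separation of $\Delta'$ together with the unstable product structure (the argument of Lemma 4.1 of \cite{ABEM}), and each has $\overline{\lambda}$-mass bounded below by cocompactness of the $G$-action and full support of $\nu_{x}$. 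Both sides are thus compared to the same counting quantity $|\Delta|$, which is why the stated inequality holds even though the ``backwards'' mass you would need to discard in step (i) is genuinely present on the left.
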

\begin{proof}
 Let $h_{0}=h_{0}(K, h)$ be a small constant to be specified later. We can decompose $U$ into pieces $U_{\alpha}$ such that each piece is within Hodge distance $h_{0}/2$ of a single unstable leaf. The minimal number of such pieces can be bounded by a constant depending only on $K$ by the compactness of $K$ and equivalence of the Euclidean and Hodge metrics over compact sets so, we may assume without loss of generality that $U$ is within Hodge distance $h_{0}/2$ of a single unstable leaf. Also, as in [ABEM, Lemma 4.1], we can assume without loss of generality that $U$ has $W^{su}\times W^{s}$ product structure.
 Pick a maximal $\Delta \subset \pi(W_{t})$ with $d_{T}(x,y)=1$ for any distinct $x,y \in \Delta$. Note by compactness of $K$ and $G$ equiariance of $m$ there is a constant $C(K)$ depending only on $K$ such that 
 $$m(B(X,3))\leq C(K)$$ for all $X\in G K$
 Then $$Nbhd_{2}\pi(W_{t})\subset \bigcup_{X\in \Delta} B_{T}(X,3),$$
 and hence 
 $$m(Nbhd_{2}\pi(W_{t}))\leq \sum_{X\in \Delta} (B_{T}(X,3))\leq |\Delta| C(K)$$
 Now, let $\Delta'\subset W_{t}$ be a set containing one element of $\pi^{-1}(X)\cap Q^{1}WH(G)$ for each $X \in \Delta$. Let $B^{su}_{E}(q,r)$ denote the elements of $W^{su}(q)$ within euclidean distance $r$ of $q$.
As shown in \cite{ABEM}, Lemma 4.1 for $h_{0}$ small enough we can pick $h_{2}$ depending only on $K$ such that the $\eta^{+}(B^{su}(q,h_{2})$ for distinct $q\in \Delta'$ are disjoint viewed as subsets of $PMF$. By equivalence of Hodge and Euclidean metrics there is a $h_{3}\in (0,h_{2})$ such that whenever $q\in \pi^{-1}K$ with $q'\in B^{su}_{E}(q,h_{3})$ 
we have $d_{H}(q,q')\leq h$.
For each $q\in \Delta'$ consider $$H(q)=\eta^{+}(B^{su}_{E}(q, h_{3}))\subset \eta^{+}(W(h)).$$
These are pairwise disjoint. Note, since $\nu_{x}$ has full support on $\Lambda(G)$, $\overline{\lambda}(H(q))>0$ for all $q\in Q^{1}WH(G)$. Thus, as  $\overline{\lambda}$ is $G$ equivariant and $G$ acts cocompactly on $Q^{1}WH(G)$, there is a $c=c(K,h)$ such that 
$$\overline{\lambda}(H(q))\geq c$$ for all $q\in Q^{1}WH(G)$.
Thus $$\overline{\lambda}(\eta^{+}(W(h)))\geq \sum_{q\in \Delta'} \overline{\lambda}(\eta^{+}(H(q)))\geq c|\Delta|.$$
This completes the proof.
\end{proof}

Now, let $P_{1}\subset Q^{1}(S)/G$ be compact (in our application $P_{1}$ we will be a subset of the principal stratum of almost full $\mu$ measure) and define $P_{3}\subset P_{2}\subset P_{1}$ and $\delta \in (0,1]$ such that if $q \in P_{i}$ and $d_{H}(q,q')\leq c_{H}h$ then $q'\subset P_{i-1}$ where $c_{H}$ is the nonexpansion constant of the modified Hodge norm over $P_{1}$. By choosing $h$ small enough we can assume $\mu(P_{3})>1/2$.
For $T_{0}>0$ let $U'_{i} = U'_{i}(T_{0})$ be the set of
$q\in Q^{1}WH(G)$ such that there exists $T>T_{0}$ so that $g_{t}q$ is in the complement of $P_{i}$ for at least half of $t \in [0,T]$.
By definition $U'_{1}\subset U'_{2}\subset U'_{3}$ and by the Birkhoff ergodic theorem, for every $\theta>0$ there is a $T_{0}>0$ such that $\mu(U'_{3})<\theta$.
Let $U_{i}=p^{-1}U_{i}\cap \pi^{-1}K$.
\begin{lem}
In the above notation, for all $t>0$
$$m(Nbhd_{2}(\pi(g_{t}U_{1})))\leq C(h)e^{\delta t}\overline{\lambda}(\eta^{+}(U_{2}))$$ and
 for any $\epsilon>0$ it is possible to choose $T_{0}$ such that for all $t>T_{0}$
$$m(Nbhd_{2}(\pi(g_{t}U_{1})))\leq \epsilon e^{\delta t}.$$
\end{lem}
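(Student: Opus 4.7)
The plan is to apply Lemma 11.10 with $U = U_1$, which is valid since $U_1 \subset \pi^{-1}K$ by construction; this gives immediately
\[
m(\operatorname{Nbhd}_2(\pi(g_tU_1))) \le C(h)\,\overline{\lambda}(\eta^+(g_tU_1)(h)).
\]
The task then reduces to bounding $\overline{\lambda}(\eta^+(g_tU_1)(h))$ by $e^{\delta t}\overline{\lambda}(\eta^+U_2)$. I would establish the set-theoretic inclusion $(g_tU_1)(h) \subset g_tU_2$ and then invoke the scaling identity $\overline{\lambda}(\eta^+(g_tA)) = e^{\delta t}\overline{\lambda}(\eta^+A)$, which follows from Lemma 11.9 together with the definition $\overline{\lambda} = \lambda \circ \mathrm{Cone}$: cones in $MF$ are scale invariant, but the level-$t$ shift produces the $e^{\delta t}$ factor inside the integral defining $\lambda$.

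To prove the inclusion, given $q \in (g_tU_1)(h)$ with witness $q' = g_tp_1 \in g_tU_1$ on the same strong unstable leaf and $d_H(q,q') \le h$, I would set $p = g_{-t}q$ and verify that $p \in U_2$. Backward flow contracts Hodge distance along $W^{su}$ (controlled by the modified Hodge norm and the comparison constant $c_H$ on $P_1$), giving $d_H(p,p_1) \lesssim e^{-t}h$; running forward then yields $d_H(g_sp, g_sp_1) \lesssim e^{s-t}h \le c_Hh$ for all $s \in [0, t + \log c_H]$. The contrapositive of ``$x \in P_2$ and $d_H(x,x') \le c_Hh$ imply $x' \in P_1$'' then gives $g_sp \notin P_2$ whenever $g_sp_1 \notin P_1$ on this range. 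With $T > T_0$ a witness for $p_1 \in U_1$, taking $T' := \min(T, t + \log c_H)$ as the witness for $p \in U_2$ works provided $T_0 < t + \log c_H$ and the ``half outside $P_1$'' property transfers to the sub-interval $[0, T']$.

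For the second inequality, I would combine the first with Birkhoff's ergodic theorem. Ergodicity of $\mu$ (Theorem 10.1) together with $\mu(P_3) > 1/2$ forces $\mu(U'_3) \to 0$ as $T_0 \to \infty$, and hence $\mu(U'_2) \le \mu(U'_3) \to 0$. The decomposition $\mu = \int \mu^{ss}\,d\overline{\lambda}$ from Lemma 11.9, restricted to the fundamental domain $\pi^{-1}K$ where $\mu^{ss}$-fiber masses are uniformly bounded below by cocompactness, yields a constant $C'(K)$ with $\overline{\lambda}(\eta^+U_2) \le C'(K)\mu(U'_2)$. Given $\epsilon > 0$, I would choose $T_0$ large enough that $C(h)C'(K)\mu(U'_2) < \epsilon$; the first inequality then gives $m(\operatorname{Nbhd}_2(\pi(g_tU_1))) \le \epsilon e^{\delta t}$ for all $t > T_0$.

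The hard step will be the inclusion $(g_tU_1)(h) \subset g_tU_2$: the exponential spreading of forward flow along $W^{su}$ competes with the finite $c_Hh$ buffer between the nested sets. When the witness $T$ for $p_1 \in U_1$ exceeds $t + \log c_H$, the argument must truncate $T$ to a shorter $T'$ while still preserving the ``half outside $P_1$'' condition; this requires $T_0$ large and a Birkhoff-type regularity ensuring robustness of time averages under restriction to sub-intervals.
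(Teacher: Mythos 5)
Your overall architecture is the same as the paper's (apply Lemma 11.9 to $U_{1}$, prove the inclusion $(g_{t}U_{1})(h)\subset g_{t}U_{2}$, use the scaling $\overline{\lambda}(\eta^{+}g_{t}U)=e^{\delta t}\overline{\lambda}(\eta^{+}U)$, then invoke Birkhoff), but there are two genuine gaps. The first is the step $\overline{\lambda}(\eta^{+}U_{2})\leq C'(K)\,\mu(U'_{2})$. The product decomposition gives $\mu(U'_{2})=\int \mu^{ss}(U'_{2}\cap H(\eta))\,d\lambda(\eta)$, and to extract $\lambda(\eta^{+}U_{2})$ from this you need a uniform lower bound on $\mu^{ss}(U'_{2}\cap H(\eta))$ for every $\eta\in\eta^{+}(U_{2})$. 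Cocompactness gives such a bound only for \emph{balls} of a fixed radius, not for the fibers of $U'_{2}$, which can be arbitrarily thin in the strong stable direction --- even a single point, of $\mu^{ss}$-measure zero. This is exactly why the paper carries the third set $U_{3}$: one shows (Hodge non-expansion again, now along strong \emph{stable} leaves under forward flow) that the Hodge $h$-ball in $W^{ss}(q)$ around any $q\in U_{2}$ lies in $U_{3}$, so each fiber of $U_{3}$ over $\eta^{+}(U_{2})$ contains a ball of $\mu^{ss}$-mass at least $c(K,h)>0$, whence $\overline{\lambda}(\eta^{+}U_{2})\leq c^{-1}\mu(U'_{3})$; Birkhoff then makes $\mu(U'_{3})$ --- not merely $\mu(U'_{2})$ --- small. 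Without the passage to $U_{3}$ your chain of inequalities does not close.

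The second gap is your proof of $(g_{t}U_{1})(h)\subset g_{t}U_{2}$. The mechanism you propose --- exponential contraction of $d_{H}$ under backward flow, exponential re-expansion forward, truncation at $s=t+\log c_{H}$, and an unspecified ``Birkhoff-type regularity'' to restore the half-time condition on the truncated interval --- is not the intended argument and, as you acknowledge, is left unresolved; it cannot be repaired in that form, since a trajectory can spend half of $[0,T]$ outside $P_{1}$ while spending essentially none of a prescribed sub-interval $[0,T']$ outside $P_{1}$. What the Eskin--Mirzakhani/ABEM estimates actually give is that the modified Hodge distance between two points of the same strong unstable leaf is \emph{non-increasing} (up to the factor $c_{H}$) under backward flow --- no exponential rates enter --- so $d_{H}(g_{s}p,g_{s}p_{1})\leq c_{H}h$ for \emph{all} $s\in[0,t]$, and the nesting of the $P_{i}$ transfers ``$g_{s}p_{1}\notin P_{1}$'' to ``$g_{s}p\notin P_{2}$'' on the whole of $[0,t]$. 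This suffices because in the application (Theorem 11.3) the witness time for membership in $U'_{1}$ is exactly the flow time $t=d(x,y)$, so only the interval $[0,t]$ ever needs to be controlled; read with that convention, no truncation and no sub-interval regularity are needed.
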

\begin{proof}
Let $W=g_{t}U$. As shown in the proof of \cite{ABEM}, Lemma 4.2 we have $W(h)\subset g_{t}U_{2}$. Now, we can apply Lemma 11.9 to $W$ and use the fact that $$\overline{\lambda}((\eta^{+}g_{t}U))= e^{\delta t}\overline{\lambda}(\eta^{+}U)$$ to get the first claim. Moreover, as shown in the proof of [ABEM, Lemma 4.2], if $q\in U_{2}$ and $q'\in Q^{1}WH(G)$ is on the same strong stable leaf as $q$ with $d_{H}(q,q')<h$ then $q'\in U_{3}$. By compactness of $Q^{1}WH(G)/G$ and $G$ equivariance of $\mu^{ss}$, there is a $c>0$ such that $\mu^{ss}(B^{ss}(q))>c$ for all $q\in Q^{1}WH(G)\cap p^{-1}K$. Therefore, by the product structure of $\mu$,
$\lambda(U_{2})\leq C_{1}(h)\mu(U_{3})$ where $C_{1}(h)$ depends only on $h$.  Hence, choosing a large enough $T_{0}$ the second claim of the lemma follows. 
\end{proof}
\begin{proof}[Proof of Theorem 11.3]
In the above notation, let $P_{1}$ be chosen disjoint from the multiple zero locus. Let $T_{0}, U_{1}, U_{1}$ be as in the proof of Lemma 11.10. Let $K\subset Teich(S)$ be a fundamental domain for the action of $G$ on $WH(G)$ (so $m$ is supported on $G K=WH(G)$). Then for $R>T_{0}$ and $x \in K$ we have 
$$B_{R}(X,P_{1})\cap G K \subset \bigcup_{0\leq t\leq R} \pi (g_{t} U_{1})\cap G K \subset \bigcup^{\left\lfloor R\right\rfloor}_{n=0} \bigcup_{n\leq t\leq n+1} \pi (g_{t} U_{1})\cap G K$$
Then, $$m(Nbhd_{1} B_{R}(X,P))\leq \sum^{\left\lfloor R\right\rfloor}_{n=0} m(Nbhd_{2}(\pi (g_{n}U_{1}))\cap G K)\leq C \epsilon \sum^{\left\lfloor R\right\rfloor}_{n=0} e^{\delta n}$$ by Lemma 11.4. This completes the proof for $x\in WH(G)$.
\end{proof}

\section{Exact Asymptotics for Orbit Growth}
The goal of this section is to prove the part of Theorem 1.1 concerning orbit growth.
For $r>0$, $x\in Teich(S)$ and $A\subseteq PMF$ recall
$$C^{+}_{r}(x,A)=N_{r}\bigcup_{z\in B_{r}(x)}Sect_{z}(A)$$ and 
$$C^{-}_{r}(x,A)=\left\{y\in Teich(S)|B(y,r)\subseteq \bigcap_{z\in B(x,r)}Sect_{z}(A)\right\}$$

For $t>0$ and $x,y\in Teich (S)$ define a measure 
$$d\nu^{t}_{x,y}=\delta ||\mu|| e^{-\delta t}\sum_{d(x,\gamma y)\leq t}D_{\gamma x}\otimes D_{\gamma^{-1} y}$$
\begin{prop}
Let  $c>0$, $x,y\in Teich(S)$ and $\eta_{0},\zeta_{0}\in PMF$ be such that  there exist $\eta_{0}^{*},\zeta_{0}^{*}\in \Lambda(G)$ with $x\in (\eta_{0},\eta_{0}^{*})$, $y\in (\zeta_{0},\zeta_{0}^{*})$.
Then there exist open neighborhoods $V$ and $W$ in $PMF$ of $\eta_{0}$ and $\zeta_{0}$ respectively such that for all borel $A\subseteq V$ and $B\subseteq W$ with nonempty interior, as $T\to \infty$ we have 

$$\lim \sup \nu^{T}_{x,y}( C^{-}_{1}(x,A)\times C^{-}_{1}(y,B))\leq e^{c}\nu_{x}(A)\nu_{y}(B)$$ and
$$\lim \inf \nu^{T}_{x,y}( C^{+}_{1}(x,A)\times C^{+}_{1}(y,B))\geq e^{-c}\nu_{x}(A)\nu_{y}(B)$$
\end{prop}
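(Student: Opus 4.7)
The plan is to adapt Roblin's Patterson--Sullivan counting scheme: use mixing of the Bowen--Margulis measure (Theorem 10.2) together with the product decomposition of $\tilde\mu$ (Lemma 8.3) to convert the orbit count into a correlation integral of flow-box indicators, then identify the resulting $\mu$-masses with $\nu_x(A)\nu_y(B)$ via the conformal-density formulas defining $\mu^{ss}$ and $\mu^{su}$. Geometrically, the sector condition $\gamma x\in C^{\pm}_1(x,A)$ says that the segment $[x,\gamma y]$ leaves $x$ heading into direction $A$, while $\gamma^{-1}y\in C^{\pm}_1(y,B)$ says that the same segment arrives at $\gamma y$ having come from direction $\gamma B$ at $y$; since $d(\gamma x,\gamma y)=d(x,y)$ is bounded, the starting and terminating directions of $[x,\gamma y]$ coincide with those of $[x,\gamma x]$ and $[\gamma^{-1}x,y]$ up to errors $o(1)$ as $T\to\infty$.

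Fix small $r>0$ (to be chosen depending on $c$) and define flow boxes
$$\tilde U_A=\{q\in Q^1(S):\pi(q)\in B_r(x),\,[q^+]\in A\},\qquad \tilde V_B=\{q\in Q^1(S):\pi(q)\in B_r(y),\,[q^-]\in B\}.$$
The product decomposition $d\tilde\mu=d\mu^{su}_v\,d\mu^{ss}_w\,dg_w$ combined with the horospherical-measure formula $d\mu^{ss}(w)=e^{-\delta\beta_{[w^-]}(x,\pi(w))}d\nu_x([w^-])$ gives
$$\tilde\mu(\tilde U_A)=(1+O(r))\,\kappa(x,r)\,\nu_x(A),\qquad \tilde\mu(\tilde V_B)=(1+O(r))\,\kappa(y,r)\,\nu_y(B),$$
where the identity $d\nu_y=e^{\delta\beta_\cdot(x,y)}d\nu_x$ naturally absorbs the basepoint change from $x$ to $y$ in the second equation, and $\kappa(z,r)$ depends only on local data at $z$. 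For each $\gamma$ with $\gamma x\in C^-_1(x,A)$, $\gamma^{-1}y\in C^-_1(y,B)$ and $d(x,\gamma y)\in(T,T+1]$, the fellow-traveling of Lemma 2.10 (applicable because $N_{A'}WH(G)\subset Teich_\epsilon(S)$ by the choice of $\epsilon$ in Section 3) shows that most $q\in\tilde U_A$ satisfy $g_tq\in\gamma\tilde V_B$ on a $t$-interval of length $\asymp 1$ near $T$, and conversely such a hit essentially forces the sector condition. Descending to $Q^1(S)/G$, this yields a two-sided comparison
$$\#\bigl\{\text{such }\gamma:d(x,\gamma y)\in(T,T+1]\bigr\}\approx \frac{\|\mu\|}{\tilde\mu(\tilde U_A)\,\tilde\mu(\tilde V_B)}\int_T^{T+1}\int_{Q^1(S)/G}\mathbf{1}_{U_A}(q)\,\mathbf{1}_{V_B}(g_tq)\,d\mu(q)\,dt,$$
with the analogous estimate, where $C^+$ replaces $C^-$, giving the reverse inequality.

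Applying Theorem 10.2 to the inner integral gives $\int\mathbf{1}_{U_A}(\mathbf{1}_{V_B}\circ g_t)\,d\mu\to \mu(U_A)\mu(V_B)/\|\mu\|$ as $t\to\infty$, so that summing over $T\in\mathbb{N}$ and multiplying by $\delta\|\mu\|e^{-\delta T}$ causes the $\kappa$ factors to cancel and leaves $\nu_x(A)\nu_y(B)$ to within $(1+O(r))$, which is absorbed into $e^{\pm c}$ by choosing $r$ small.

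The main obstacle is the two-sided counting-to-correlation identification above, which requires bounded multiplicative errors rather than merely asymptotic equivalence. Because $Teich(S)$ is not globally hyperbolic, Rafi's fellow-traveling applies only in the thick part, so orbit points $\gamma y$ for which $[x,\gamma y]$ strays outside $N_{A'}WH(G)$ or spends a positive fraction of time in the multiple-zero locus or the thin parts of the principal stratum must contribute $o(e^{\delta T})$ to $\nu^T_{x,y}$; this is exactly the content of Theorem 11.2. Once the sum is restricted to well-behaved segments, matching the sectors $C^{\pm}_1(x,A)$ to the flow-box condition $[q^+]\in A$ follows from continuity of $pr$ (Lemma 7.1) together with the finite-time-in-shadow results of Lemmas 7.5--7.7, while non-atomicity of $\nu_x$ (Proposition 5.1) and continuity of $\rho_x$ on $\Lambda(G)^2$ keep all comparisons within $(1+O(r))$.
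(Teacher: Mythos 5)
Your overall architecture (flow boxes, product structure of $\tilde\mu$, mixing, correlation integral, discarding badly behaved orbit points via Theorem 11.2) is the same Roblin-style scheme the paper follows. But there is a genuine gap at the step you describe as ``most $q\in\tilde U_A$ satisfy $g_tq\in\gamma\tilde V_B$,'' i.e.\ the passage from the sector conditions on $\gamma y$ and $\gamma^{-1}x$ to the statement that the geodesics recorded by the correlation integral actually enter the $r$-boxes at $x$ and at $\gamma y$. You attribute this to Rafi's fellow-traveling (Lemma 2.10), but that lemma only yields fellow-traveling within a \emph{fixed} constant $D$ depending on the thickness $\epsilon$; it never forces two long segments with nearby endpoints to come within an arbitrarily small distance $h$ of each other in the middle. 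A bounded-distance conclusion is exactly the ``Gromov-hyperbolic'' behavior the introduction warns is insufficient: it would force you to compare an $r$-box count with an $(r+D)$-box measure, leaving a multiplicative error of order $e^{\delta D}$ that does not shrink as $r\to 0$ and therefore cannot be absorbed into $e^{\pm c}$ for arbitrary $c>0$. The paper's actual mechanism is Proposition 12.2, whose proof rests on the Eskin--Mirzakhani Hodge-norm contraction estimates (Lemmas 12.3--12.5): for segments spending a definite proportion of time in the thick part of the \emph{principal stratum}, the point where the combined geodesic crosses can be placed within distance $\rho$ of the endpoint for \emph{any} prescribed $\rho>0$, giving the inclusion $\Theta^-_{r-h}(b,a)\times\Theta^-_{r-h}(a,b)\subset L_{r+h}(a,b)$ with $h$ as small as desired. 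Your citation of Theorem 11.2 correctly discards the segments to which these estimates do not apply, but Lemmas 7.5--7.7 and continuity of $pr$ (which are soft compactness statements) do not supply the contraction for the segments that remain; without it the lower bound ($\liminf$ with $C^+_1$) does not close.

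A secondary imprecision: your factorization $\tilde\mu(\tilde U_A)=(1+O(r))\kappa(x,r)\nu_x(A)$ is not right as stated, because the inner (shadow) factor $\nu_x(\Theta_r(\eta,x))$ depends on the direction $\eta$, not only on $x$ and $r$. The entire reason the proposition is stated for $A$ inside a small neighborhood $V$ of $\eta_0$ is to make this shadow mass approximately constant (within $e^{c/60}$) over $A$, and the paper spends a page choosing $V$, $W$, $\widehat V$, $\widehat W$, $r$ and $h$ to arrange this together with $\nu_x(\partial\Theta_r(\eta_0,x))=0$. You should make the role of $V$ and $W$ explicit rather than folding it into an $O(r)$.
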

\begin{proof}
If $\eta_{0}$ is not in $\Lambda(G)$ then we can choose a neighborhood $U$ of $\eta_{0}$ in $PMF$ with $\nu_{x}(U)=0$ and $W=PMF$ so that both sides of the desired equation are $0$. Similarly if $\zeta_{0}$ is not in $\Lambda(G)$.
Assume therefore that $\eta_{0},\zeta_{0}\in \Lambda(G)$. 
The argument is modelled on Roblin's Theorem 4.1.1 in \cite{Rob}, where an analogous result is proved for manifolds of pinched negative curvature.
For $\eta, \zeta \in PMF$ filling and $z\in Teich(S)$, let $z_{\eta,\zeta}$ be a quadratic differential with projective vertical and horizontal measured foliations $\zeta$ and $\eta$ respectively and such that $\pi(z_{\eta,\zeta})$ lies at minimal distance from $z$.
For $z\in Teich(S)$, $r>0$ and $A\subseteq PMF$ let 
$$K^{+}(z,r,A)=\left\{g_{s}z_{\eta,\zeta}|\eta \in A, d(z,(\eta,\zeta))<r, s\in [-\frac{r}{2},\frac{r}{2}]\right\}$$ and

$$K^{-}(z,r,A)=\left\{g_{s}z_{\eta,\zeta}|\zeta \in A, d(z,(\eta,\zeta))<r, s\in [-\frac{r}{2},\frac{r}{2}]\right\}$$

Let $$K(z,r)=K^{+}(z,r,PMF)=K^{-}(z,r,PMF)$$
Note that $\pi(K(z,r))\subseteq B(z, \frac{3r}{2})$.
For $a,b\in Teich(S)$ with $d(a,b)>2r$ let 
$$\Theta^{+}_{r}(a,b)= \bigcup_{w\in B(a,r)} pr_{w}(B(b,r))$$ and 
$$\Theta^{-}_{r}(a,b)= \bigcap_{w\in B(a,r)} pr_{w}(B(b,r))$$
Note, as $a\to \eta\in UE$ we have $\Theta^{\pm}_{r}(a,b)$ converging to $\Theta^{\pm}_{r}(\eta,b)=\Theta_{r}(\eta,b)=pr_{\eta}(B(b,r))$.

Let $L_{r}(a,b)\subseteq PMF\times PMF$ denote the pairs $(\eta,\zeta)$ such that the geodesic defined by them passes first through $B(a,r)$ and then $B(b,r)$.

It follows immediately from the definitions that  $$L_{r}(a,b)\subseteq \Theta^{+}_{r}(b,a)\times \Theta^{+}_{r}(a,b).$$

\begin{prop}
There exists an $r_{0}>0$ such that for all $0<r<r_{0}$ and all $h>0$ the following holds.
For each $\epsilon>0$ and $\epsilon'>0$ there exists an $R_{0}>0$ such that for every $a,b\in Teich(S)$ with the segment $[a,b]\subset Teich_{\epsilon}(S)$ and $d(a,b)>R_{0}$ such that  $[a,b]$ spends at least half the time in $Q_{\epsilon'}$ we have that $$\Theta^{-}_{r}(b,a)\times \Theta^{-}_{r}(a,b)\subset L_{r+h}(a,b)$$
\end{prop}
The proof of this proposition depends on the following lemmata from \cite{EM}, which say that geodesic segments that spend enough time in the thick part of stratum behave like geodesics in a CAT(-1) space.
\begin{lem}[\cite{EM}, Lemma 5.3]
Suppose $K\subseteq M_{g}$ is compact. Given $1>\beta>0$ there exists a $\rho_{0}>0$ (depending only on $K$ and $\beta$) with the following property.
Given $t>0$ and $\rho>0$ there exists an $L_{0}=L_{0}(K,t,\rho,\beta)$ such that if $X, p_{0}\in Teich(S)$ lie above $K$, $d_{T}(p_{0},p_{1})<\rho_{0}$, $d_{T}(X,p_{1})=L>L_{0}$, and 
$$|\left\{s\in [0,L]|l_{min}(g_{s}(q_{X,p_{0}}))\geq t\right\}|>\beta L$$ then
$$d_{E}(q, q_{X,p_{0}})<\rho$$ where $d_{E}$ denotes the Euclidean norm and $q$ is the unique quadratic differential in $W^{uu}(q_{X,p_{0}})\cap W^{s}(q_{X,p_{1}})$.
\end{lem}
By the equivalence of the Euclidean and Teichm$\ddot{\mathrm{u}}$ller metrics over compact subsets of $M_{g}$, we also have
 \begin{lem}
Suppose $K\subseteq M_{g}$ is compact. Given $1>\beta>0$ there exists a $\rho_{0}>0$ (depending only on $K$ and $\beta$) with the following property.
Given $t>0$ and $\rho>0$ there exists an $L_{0}=L_{0}(K,t,\rho,\beta)$ such that if $X, p_{0}\in Teich(S)$ lie above $K$, $d_{T}(p_{0},p_{1})<\rho_{0}$, $d_{T}(X,p_{1})=L>L_{0}$, and 
$$|\left\{s\in [0,L]|l_{min}(g_{s}(q_{X,p_{0}}))\geq t\right\}|>\beta L$$ then
$$d_{T}(\pi(q), X)<\rho$$ where $d_{E}$ denotes the Euclidean norm and $q$ is the unique quadratic differential in $W^{uu}(q_{X,p_{0}})\cap W^{s}(q_{X,p_{1}})$.
\end{lem}
\begin{lem}[\cite{EM}, Lemma 5.4] Suppose $K\subset M_{g}$ is compact. Given $s>0$, there exists  constants
$L_{0}$ depending on $s$ and $K$, and $c_{0}$ depending only on $K$ with the following property. If 
$\gamma:[0,L]\to Q^{1}(S)$ is a geodesic segment (parametrized by arclength)  with endpoints above $K$ and $L>L_{0}$, $\widehat{\gamma}:[0,L']\to Q^{1}(S)$ is the geodesic segment connecting $p_{1},p_{2}\in Teich(S)$ such that $d_{T}(p_{1},\pi(\gamma(0)))<c_{0}$, $d_{T}(p_{2},\pi(\gamma(L)))<c_{0}$ and
$$|\left\{s\in [0,L]|l_{min}(\gamma(t))\geq s\right\}|>\frac{ L}{2}$$ then 
$$|\left\{s\in [0,L']|l_{min}(\widehat{\gamma}(t))\geq s/4\right\}|>\frac{ L'}{3}$$
\end{lem}
\makefig{proof of Proposition  12.2}{fig:12.2}{\includegraphics{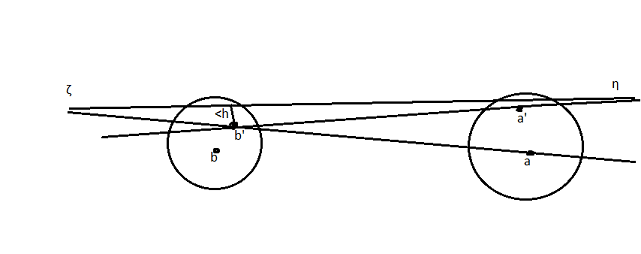}}
\begin{proof}[Proof of Proposition 12.2]

Let $K=Teich_{\epsilon}(S)/Mod(S)$. Let $L_{1}$ (depending on $K$ and $\epsilon'$) and $c_{0}$ (depending on $K$) be the $L_{0}$ in  Lemma 12.4 corresponding to $s=\epsilon'$ and $K=Teich_{\epsilon}(S)/Mod(S)$. Then, if $(x,y)\subseteq Teich_{\epsilon}(S)$ and $d_{T}(x,y)=L>L_{0}$ with $$|\left\{s\in [0,L]|l_{min}(g_{t}q_{x,y})\geq \epsilon'\right\}|>\frac{ L}{2}$$ it follows from Lemma 12.5 that for every $p_{1}\in B_{c_{0}}(x)$ and $p_{2}\in B_{c_{0}}(y)$ we have that 
$$|\left\{s\in [0,L]|l_{min}(g_{t}q_{x,y})\geq \epsilon'/4\right\}|>\frac{ L}{3}$$
Now let $\rho_{0}$ be as in Lemma 12.3, corresponding to $K=Teich_{\epsilon}(S)/Mod(S)$ and $\beta=1/3$.
Let $r_{0}=min\left\{c_{0},\rho_{0}\right\}$. Suppose $r<r_{0}$ and $h>0$ arbitrary.
 Let $L_{2}$ be the $L_{0}$ in Lemma 12.4 corresponding to $K=Teich_{\epsilon}(S)/Mod(S)$ and $\beta=1/3$, $t=\epsilon'/4$, and $\rho=h$. Let $R_{0}>max\left\{L_{1}+2r_{0},L_{2}+2r_{0}\right\}$. Suppose  $a,b\in Teich(S)$ with the segment $[a,b]\subset Teich_{\epsilon}(S)$ and $d(a,b)>R_{0}$ such that  $[a,b]$ spends at least half the time outside of $Q_{\epsilon'}$ and $\eta\in \Theta^{-}_{r}(b,a)$, $\zeta\in \Theta^{-}_{r}(a,b)$. Let $b'\in B_{r}(b)$ be a point on the geodesic containing $[a,\zeta)$ and $a'\in B_{r}(a)$ a point on the geodesic containing $[b',\eta)$.
We will apply Lemma 12.4 with $X=b'$, $p_{0}=a'$, $p_{1}=a$ to obtain that the quadratic differential $q\in W^{uu}(q_{b',\eta})\cap W^{s}(q_{b',\zeta})$ satisfies $d_{T}(\pi(q),b')<h$, so $\pi(q)\in B_{r+h}(b)$. By definition $\pi(q)\in (\eta,\zeta)$ so $(\eta,\zeta)$ intersects $B_{r+h}(b)$. Similarly, $(\eta,\zeta)$ intersects $B_{r+h}(a)$ completing the proof.
\end{proof}
We now continue with the proof of Proposition 12.1.
From now on, fix $r\in [0, \frac{c}{120\delta}]$ smaller than the $r_{0}$ in Proposition 12.2 such that $$\nu_{x}(\partial{\Theta_{r}(\eta_{0},x)})=\nu_{y}(\partial{\Theta_{r}(\zeta_{0},y)})=0$$ (this last condition only excludes countably many values of $r$). Since $\eta_{0}^{*}\in \Theta_{r}(\eta_{0},x)$ and $\zeta_{0}^{*}\in \Theta_{r}(\zeta_{0},y)$ and the conformal densities have full support on $\Lambda(G)$ we have that $$\nu_{x}(\Theta_{r}(\eta_{0},x)\nu_{y}(\Theta_{r}(\eta_{0},y)>0$$
Now, fix $h>0$ such that $$\nu_{x}(\Theta_{r-h}(\eta_{0},x))\geq e^{-c/120}\nu_{x}(\Theta_{r}(\eta_{0},x))$$
and
$$\nu_{y}(\Theta_{r-h}(\zeta_{0},y))\geq e^{-c/120}\nu_{y}(\Theta_{r}(\zeta_{0},y))$$ and also
$$\nu_{x}(\partial{\Theta_{r-h}(\eta_{0},x)})=\nu_{y}(\partial{\Theta_{r-h}(\zeta_{0},y)})=0$$
Let  $\widehat{V}$ and $\widehat{W}$ be open neighborhoods in $Teich(S)\cup PMF$ of $\eta_{0}$ and $\zeta_{0}$ respectively such that for all $(a,b)\in (\widehat{V}\times \widehat{W})$ with $a,b\in N_{D}WH(G)\cup \Lambda(G)$ with $D=d(x,WH(G))+ d(y,WH(G))+1$ we have
$$e^{-c/120}\nu_{x}(\Theta_{r}(\eta_{0},x))\leq\nu_{x}(\Theta^{\pm}_{r}(a,x))\leq e^{c/120}\nu_{x}(\Theta_{r}(\eta_{0},x))$$ and 
$$e^{-c/120}\nu_{y}(\Theta_{r}(\zeta_{0},y))\leq\nu_{y}(\Theta^{\pm}_{r}(b,y))\leq e^{c/120}\nu_{y}(\Theta_{r}(\zeta_{0},y))$$

$$e^{-c/120}\nu_{x}(\Theta_{r-h}(\eta_{0},x))\leq\nu_{x}(\Theta^{\pm}_{r-h}(a,x))\leq e^{c/120}\nu_{x}(\Theta_{r-h}(\eta_{0},x))$$ and 
$$e^{-c/120}\nu_{y}(\Theta_{r-h}(\zeta_{0},y))\leq\nu_{y}(\Theta^{\pm}_{r-h}(b,y))\leq e^{c/120}\nu_{y}(\Theta_{r-h}(\zeta_{0},y))$$

It then follows that
$$e^{-c/60}\nu_{x}(\Theta_{r}(\eta_{0},x))\leq\nu_{x}(\Theta^{\pm}_{r-h}(a,x))\leq\nu_{x}(\Theta^{\pm}_{r}(a,x))\leq e^{c/60}\nu_{x}(\Theta_{r}(\eta_{0},x))$$ and 
$$e^{-c/60}\nu_{y}(\Theta_{r}(\zeta_{0},y))\leq\nu_{y}(\Theta^{\pm}_{r-h}(b,y))\leq\nu_{y}(\Theta^{\pm}_{r}(b,y))\leq e^{c/120}\nu_{y}(\Theta_{r}(\zeta_{0},y))$$

Let $V$ and $W$ be open neighborhoods of $\eta_{0}$ and $\zeta_{0}$ respectively in $PMF$ such that $\overline{V}\subseteq \widehat{V}\cap PMF$ and 
$\overline{W}\subseteq \widehat{W}\cap PMF$.
Consider open  subsets $A\subseteq V$ and $B\subseteq W$. Let 
$$K^{+}=K^{+}(x,r,A)$$ and
$$K^{+}=K^{+}(y,r,B)$$
We will estimate  as $T\to \infty$ the quantity 
$$\int^{T}_{0} e^{\delta t}\sum_{\gamma \in G}\mu(K^{+}\cap g_{-t}\gamma K^{-})dt$$

From the definitions, it follows that for $\gamma \in G$ and for $d(x,\gamma y)> 2r$ we have
$$\mu(K^{+}\cap g_{-t}\gamma K^{-})dt=\int_{L_{r}(x,\gamma y)\cap (\gamma B \times A)}e^{\delta \rho_{x}(\eta, \zeta)}d\nu_{x}(\eta)d\nu_{y}(\zeta) \int^{r/2}_{-r/2} \chi_{K(\gamma y, r)} (g_{t+s}x_{\eta,\zeta})ds$$
We first find an upper bound for 
$$\int^{T-3r}_{0} e^{\delta t}\sum_{\gamma \in G}\mu(K^{+}\cap g_{-t}\gamma K^{-})dt$$
First, note that for $(\eta,\zeta)\in L_{r}(x,y)$ we have $$\rho_{x}(\eta,\zeta)\leq 2r \leq c/30$$
Now, suppose $L_{r}(x,\gamma y)\cap (\gamma B \times A)$ is nonempty. Then from the definitions it follows that
$\gamma y \in C^{+}_{r}(x,A)\subseteq C^{+}_{1}(x,A)$. Since $\gamma$ is an isometry, it also holds that 
$L_{r}(\gamma^{-1} x, y)\cap (B \times \gamma^{-1} A)$ is nonempty and thus $\gamma^{-1}x\in C^{+}_{1}(y,b)$.

Note that for $(\eta,\zeta)\subseteq L_{r}(x,y)$, $|s|<r/2$ and $T>0$ we have 
$$\int ^{T-3r}_{0}e^{\delta t}\chi_{K(\gamma y, r)} (g_{t+s}x_{\eta,\zeta})dt\leq e^{\delta (3r)}r e^{\delta d(x, \gamma y)}\leq e^{c/20}r e^{\delta d(x, \gamma y)}$$ 

and moreover is zero whenever $d(x,\gamma y)>T$.
Using the fact that $L_{r}(a,b)\subseteq \Theta^{+}_{r}(b,a)\times \Theta^{+}_{r}(a,b)$ it follows that $$\int^{T-3r}_{0} e^{\delta t}\sum_{\gamma \in G}\mu(K^{+}\cap g_{-t}\gamma K^{-})dt\leq
e^{c/12}r^{2}\sum \nu_{x}(\Theta^{+}_{r}(\gamma y,x))\nu_{y}(\Theta^{+}_{r}(x, \gamma y))$$ where the sum is taken over all $\gamma \in G$ such that $(x,\gamma y)\leq T$ and $$(\gamma y, \gamma^{-1}x)\in [C^{+}_{1}(x,A)\times C^{+}_{1}(y,B)]$$ 
By Corollary 7.4 we see that he set
$$[C^{+}_{1}(x,A)\cap N_{D}WH(G)\times C^{+}_{1}(y,B)\cap N_{D}WH(G)]\setminus [\widehat{V}\times \widehat{W}]$$ has compact closure in $Teich(S)$ for any $D>0$.
If $x,y\in N_{D}WH(G)$ their $G$ orbits are also contained in $N_{D}WH(G)$. Thus, by the discreteness of the $G$ action on $Teich(S)$, for some constant $c_{1}$ that does not depend on $T$
$$\int^{T-3r}_{0} e^{\delta t}\sum_{\gamma \in G}\mu(K^{+}\cap g_{-t}\gamma K^{-})dt\leq e^{c/12}r^{2}\sum \nu_{x}(\Theta^{+}_{r}(\gamma y,x))\nu_{y}(\Theta^{+}_{r}(x, \gamma y))-c_{1}$$ 

for all  $T>0$
 where the sum is taken over all $\gamma \in G$ with $(x,\gamma y)\leq T$ and $(\gamma y, \gamma^{-1}x)\in [C^{+}_{1}(x,A)\times C^{+}_{1}(y,B)]\cap [\widehat{V}\times \widehat{W}]$.

Note, by the triangle inequality we have for $\eta\in \Theta^{+}_{r}(x,\gamma y))$ 
$$d(x, \gamma y)-4r \leq \beta_{\eta}(x,\gamma y)\leq d(x,\gamma y)$$
and by the conformality of $\nu$  it follows that
$$\nu_{y}(\Theta^{+}_{r}(\gamma^{-1} x,y))=\nu_{\gamma y}(\Theta^{+}_{r}(x,\gamma y))\leq \nu_{x}(\Theta^{+}_{r}(x,\gamma y))e^{\delta d(x,\gamma y)}\leq e^{4r}\nu_{\gamma y}(\Theta^{+}_{r}(x,\gamma y))$$

Since $4\delta r<c/15$ it follows that when $$(\gamma y, \gamma^{-1}x)\subseteq \widehat{V}\times \widehat{W}$$ we have that
$$\nu_{x}(\Theta^{+}_{r}(\gamma y,x))\nu_{x}(\Theta^{+}_{r}(x,\gamma y))\leq e^{c/15}\nu_{x}(\Theta^{+}_{r}(\gamma y,x))\nu_{y}(\Theta^{+}_{r}(\gamma^{-1} x,y))$$
$$\leq e^{c/10}\nu_{x}(\Theta_{r}(\eta_{0},x))\nu_{y}(\Theta_{r}(\zeta_{0},y))$$

Thus we obtain 

$$\int^{T-3r}_{0} e^{\delta t}\sum_{\gamma \in G}\mu(K^{+}\cap g_{-t}\gamma K^{-})dt\\\leq e^{c/6}r^{2}|G^{+}(T,A,B)| \nu_{x}(\Theta_{r}(\eta_{0},x))\nu_{y}(\Theta_{r}(\zeta_{0},y))+c_{1}$$
where $c_{1}$ is independent of $T$ and $G^{+}(T,A,B)$ is the set of all $\gamma \in G$ such that $$(x,\gamma y)\leq T$$ and $$(\gamma y, \gamma^{-1}x)\in [C^{+}_{1}(x,A)\times C^{+}_{1}(y,B)]\cap [\widehat{V}\times \widehat{W}]$$

In a similar but more annoying manner, we will obtain a lower bound for 
$$\int^{T+3r}_{0} e^{\delta t}\sum_{\gamma \in G}\mu(K^{+}\cap g_{-t}\gamma K^{-})dt$$

First, note that for $\eta \in \Theta^{-}_{r-h}(a,b)$ we have $$d(a,b)-2r \leq\beta_{\eta}(a,b)\leq d(a,b)$$
and thus similarly to above we have 
$$\nu_{y}(\Theta^{-}_{r-h}(\gamma^{-1} x,y))=\nu_{\gamma y}(\Theta^{-}_{r-h}(x,\gamma y))\leq \nu_{x}(\Theta^{-}_{r-h}(x,\gamma y))e^{\delta d(x,\gamma y)}\leq e^{2r}\nu_{\gamma y}(\Theta^{-}_{r-h}(x,\gamma y))$$

Since $2\delta r<c/30$ it follows that when $$(\gamma y, \gamma^{-1}x)\subseteq \widehat{V}\times \widehat{W}$$ we have that

$$\nu_{x}(\Theta^{-}_{r-h}(x,\gamma y))\nu_{x}(\Theta^{-}_{r-h}(\gamma y,x))\geq e^{-\delta d(x,\gamma y)}\nu_{x}(\Theta^{-}_{r-h}(x,\gamma y))\nu_{y}(\Theta^{-}_{r-h}(x,\gamma y))$$
$$\geq e^{-c/30}\nu_{x}(\Theta_{r}(\eta_{0},x))\nu_{y}(\Theta_{r}(\zeta_{0},y))$$
\\ 
Now note, if $(\gamma y, \gamma^{-1}x)\in C^{-}_{1}(x,A)\times C^{-}_{1}(y,B)$ then by definition

$A \supset \Theta^{-}_{r}(x,\gamma y)$ and $B\supset \Theta^{-}_{r}(y,\gamma^{-1} x)$, whence $\gamma B\supset \Theta^{-}_{r}(\gamma y,x)$.
Note for $(\eta,\zeta)\in L_{r}(x,y), |s|<r/2$, $T>0$ and $3r\leq d(x,\gamma y)\leq T$ we have
$$\int ^{T+3r}_{0}e^{\delta t}\chi_{K(\gamma y, r)} (g_{t+s}x_{\eta,\zeta})dt\geq e^{-3\delta r}r e^{\delta d(x,\gamma y)}\geq e^{-c/20}r e^{\delta d(x,\gamma y)}$$

Now fix an $\epsilon'>0$ with $\mu^{BMS}(Q_{\epsilon'}(S)/G)<1/3$ and consider $\gamma \in G$ such that $[x,\gamma y]$ and $[\gamma y, x]$ both spend less than half time in $Q_{\epsilon'}$. By Proposition 12.6 and the discreteness of the action of $G$, for all but finitely many such $\gamma$ we have that $$\Theta^{-}_{r-h}(\gamma y, x)\times \Theta^{-}_{r-h}(x,\gamma y)\subset L_{r}(x,\gamma y)$$

Note for $(\eta,\zeta)\in L_{r}(x,y), |s|<r/2$, $T>0$ and $3r\leq d(x,\gamma y)\leq T$ we have
$$\int ^{T+3r}_{0}e^{\delta t}\chi_{K(\gamma y, r)} (g_{t+s}x_{\eta,\zeta})dt\geq e^{-c/20}r e^{\delta d(x,\gamma y)}$$

Thus we have that  

$$\int^{T+3r}_{0} e^{\delta t}\sum_{\gamma \in G}\mu(K^{+}\cap g_{-t}\gamma K^{-})dt\geq e^{-c/20}r^{2}\sum_{\gamma \in G(T,A,B)}{\nu_{x}(\Theta^{-}_{r-h}(\gamma y,x))\nu_{x}(\Theta^{-}_{r-h}(x,\gamma y))e^{\delta d(x,\gamma y)}}\geq$$ $$ e^{-c/12}r^{2}|G^{-}(T,A,B)\setminus G^{-}(T,\epsilon', A,B)| \nu_{x}(\Theta_{r}(\eta_{0},x)) \nu_{y}(\Theta_{r}(\zeta_{0},y))-c_{2}$$
where  $G^{-}(T, A,B)$ is the set of all $\gamma \in G$ such that $(x,\gamma y)\leq T$ and $$(\gamma y, \gamma^{-1}x)\in [C^{+}_{1}(x,A)\times C^{+}_{1}(y,B)]\cap [\widehat{V}\times \widehat{W}]$$
 and $G^{-}(T,\epsilon', A,B)$  is the set of all $\gamma \in G(T,A,B)$ such that 
the segment $(x,\gamma y)$ spends at least half the time in the $\epsilon'$ thin part of the principal stratum, and $c_{2}$ does not depend on $T$.

By mixing it follows that for all $t$ large enough we have 
$$e^{-c/60}\mu(K^{+})\mu(K^{-})\leq ||\mu||\sum_{\gamma \in G} \mu(K^{+}\cap g^{-t}\gamma K^{-})\leq e^{c/60}\mu(K^{+})\mu(K^{-})$$
Note, that by definition 
$$ \mu(K^{+})=r\int_{\eta \in A}\int_{\zeta \in \Theta_{r}(\eta,x)}e^{\delta \rho_{x}(\eta,\zeta)}d\nu_{x}(\zeta)d\nu_{x}(\eta)$$
Since $$0\leq \rho_{x}(\eta,\zeta)\leq 2r$$ for $\zeta \in \Theta_{r}(\eta,x)$ and since $A\subseteq V$
we obtain that $$e^{-c/60}r\nu_{x}(A)\nu_{x}(\Theta_{r}(\eta_{0},x))\leq \mu(K^{+})\leq e^{c/20}r\nu_{x}(A)\nu_{x}(\Theta_{r}(\eta_{0},x))$$ and similarly $$e^{-c/60}r\nu_{y}(A)\nu_{y}(\Theta_{r}(\zeta_{0},y))\leq \mu(K^{-})\leq e^{c/20}r\nu_{y}(A)\nu_{y}(\Theta_{r}(\zeta_{0},y))$$

It follows that there exists a constant $c_{2}$ independent of $T$ such that
$$\delta ||\mu||\int^{T-3r}_{0} e^{\delta t}\sum_{\gamma \in G}\mu(K^{+}\cap g_{-t}\gamma K^{-})dt\geq e^{-c/2}e^{\delta T} M\nu_{x}(A)\nu_{y}(B)-c_{2}$$ and  
$$\delta ||\mu||\int^{T+3r}_{0} e^{\delta t}\sum_{\gamma \in G}\mu(K^{+}\cap g_{-t}\gamma K^{-})dt\leq e^{c/2}e^{\delta T} M\nu_{x}(A)\nu_{y}(B)+c_{2}$$ where $M=r^{2}\nu_{x}(\Theta_{r}(\eta_{0},x))\nu_{y}(\Theta_{r}(\zeta_{0},y))$.

Thus, it follows that $$e^{-c/2}\nu_{x}(A)\nu_{x}(B)\leq e^{c/3}e^{-\delta T}|G^{+}(T,A,B)|$$ and $$e^{c/2}\nu_{x}(A)\nu_{x}(B)\geq e^{-c/6}e^{-\delta T}|G^{-}(T,A,B) \setminus G^{-}(T,\epsilon' A,B)|$$ for large enough $T$.
Furthermore, by Theorem 11.4, if $\epsilon'$ is chosen small enough,
$$\lim \sup_{T\to \infty} |G^{-}(T,\epsilon' A,B)|/e^{\delta t}<e^{c/12}$$ so that
$$e^{c}\nu_{x}(A)\nu_{x}(B)\geq e^{-\delta T}|G^{-}(T,A,B)|$$

for all large enough $T$.

This completes the proof of Proposition 12.1.
\end{proof}

\begin{lem}
Let $x,y\in Teich(S)$ and $c>0$. For each $$(\eta_{0},\zeta_{0})\in PMF \times PMF$$ there exists an $r>0$ and neighborhoods $V$ and $W$ of $\eta_{0}$ and $\zeta_{0}$ in $PMF$ respectively such that for all borel $A\subseteq V$ and $B\subseteq W$, with nonempty interior: 
$$\lim \sup_{t \to \infty} \nu^{t}_{x,y}( C^{-}_{r}(x,A)\times C^{-}_{r}(y,B))\leq e^{c}\nu_{x}(A)\nu_{y}(B)$$ and
$$\lim \inf_{t\to \infty} \nu^{t}_{x,y}( C^{+}_{r}(x,A)\times C^{+}_{r}(y,B))\geq e^{-c}\nu_{x}(A)\nu_{y}(B)$$
\end{lem}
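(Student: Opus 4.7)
The plan is to handle the trivial case when $\eta_0$ or $\zeta_0$ fails to lie in $\Lambda(G)$ separately, and then to reduce the main case to Proposition 12.1 by shifting the basepoint from $(x,y)$ to an auxiliary pair $(x_0,y_0)$ that does satisfy the geometric hypothesis of that proposition. Suppose first that $\eta_0\notin\Lambda(G)$. Since $\Lambda(G)$ is closed, I would shrink $V\ni\eta_0$ so that $\overline V\cap\Lambda(G)=\emptyset$. Then $\nu_x(A)=0$ for every $A\subseteq V$, and by Lemma 2.10 together with Lemma 7.2 the set $\mathrm{Sect}_x(\overline A)\cap N_D WH(G)$ is bounded in $Teich(S)$, so only finitely many $\gamma\in G$ satisfy $\gamma x\in C_r^\pm(x,A)$; hence $\nu^t_{x,y}(C_r^-(x,A)\times\cdot)=O(e^{-\delta t})$, which gives the upper inequality, and the lower inequality is trivial since its right-hand side is zero. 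The case $\zeta_0\notin\Lambda(G)$ is symmetric.

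In the principal case $\eta_0,\zeta_0\in\Lambda(G)$, non-elementarity of $G$ and perfectness of $\Lambda(G)$ let me choose $\eta_0^\ast\in\Lambda(G)\setminus\{\eta_0\}$ and $\zeta_0^\ast\in\Lambda(G)\setminus\{\zeta_0\}$, which determine bi-infinite geodesics lying in $WH(G)$ (since every pair of limit points is uniquely ergodic and filling). Pick $x_0\in(\eta_0,\eta_0^\ast)$ and $y_0\in(\zeta_0,\zeta_0^\ast)$ and set $r:=\max(d(x,x_0),d(y,y_0))+1$. Unwinding the definitions using $B_1(x_0)\subseteq B_r(x)$ and $d(\gamma x,\gamma x_0)=d(x,x_0)$ gives
$$C_r^-(x,A)\subseteq C_1^-(x_0,A),\qquad C_1^+(x_0,A)\subseteq C_r^+(x,A),$$
and, more usefully, $\gamma x\in C_r^-(x,A)\Rightarrow \gamma x_0\in C_1^-(x_0,A)$ and $\gamma x_0\in C_1^+(x_0,A)\Rightarrow \gamma x\in C_r^+(x,A)$; the analogous statements hold at $y,y_0$ with $B$.

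The heart of the argument is to link the Dirac sums $\nu^t_{x,y}$ and $\nu^s_{x_0,y_0}$ through the Busemann cocycle. I write
$$d(x,\gamma y)-d(x_0,\gamma y_0)=\bigl(d(\gamma^{-1}x,y)-d(\gamma^{-1}x,y_0)\bigr)+\bigl(d(x,\gamma y_0)-d(x_0,\gamma y_0)\bigr),$$
and observe that when $\gamma$ belongs to the sectors near $(\eta_0,\zeta_0)$, both $\gamma y_0\to\eta_0$ and $\gamma^{-1}x\to\zeta_0$ in the Thurston topology (both limits are uniquely ergodic, so Miyachi's formula and the continuous extension of $\beta$ to $UE$ apply). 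Thus the right-hand side tends to $\Delta:=\beta_{\eta_0}(x,x_0)+\beta_{\zeta_0}(y,y_0)$. By continuity of $\beta$ on $UE$ I can shrink $V,W$ so that this approximation, as well as the conformal identities $\nu_x(A)\approx e^{-\delta\beta_{\eta_0}(x,x_0)}\nu_{x_0}(A)$ and $\nu_y(B)\approx e^{-\delta\beta_{\zeta_0}(y,y_0)}\nu_{y_0}(B)$, hold within error $c/(8\delta)$. Applying Proposition 12.1 at $(x_0,y_0)$ with constant $c/2$ and multiplying through, the time shift contributes a factor $e^{-\delta\Delta}$ that cancels exactly the factor $e^{\delta\Delta}$ produced by the change of base measure, yielding $\limsup_t\nu^t_{x,y}(C_r^-(x,A)\times C_r^-(y,B))\le e^c\nu_x(A)\nu_y(B)$ and, by the mirror argument using the $C^+$ inclusions, $\liminf_t\nu^t_{x,y}(C_r^+(x,A)\times C_r^+(y,B))\ge e^{-c}\nu_x(A)\nu_y(B)$.

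The main obstacle will be making the Busemann approximation $d(x,\gamma y)=d(x_0,\gamma y_0)+\Delta+o(1)$ uniform in $\gamma$, since Miyachi's theorem only guarantees pointwise convergence along a sequence. Dispatching it requires showing that the sector conditions $\gamma x\in C_r^\pm(x,A)$ actually force $\gamma y_0\to\eta_0$ (and $\gamma^{-1}x\to\zeta_0$) in Thurston with uniform speed on the relevant family of $\gamma$'s, which is exactly where Masur's two-boundaries theorem (Theorem 2.3) and Corollary 2.6 come in; the full-support statement of Proposition 5.1 together with the nonempty-interior assumption on $A,B$ then guarantees that the comparison is not vacuous.
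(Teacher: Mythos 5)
Your proposal is correct and follows essentially the same route as the paper: reduce to Proposition 12.1 at an auxiliary pair $(x_{0},y_{0})$ on geodesics asymptotic to $\eta_{0},\zeta_{0}$, use the inclusions $C^{-}_{r}(x,A)\subseteq C^{-}_{1}(x_{0},A)$ (and their pointwise versions $\gamma y\in C^{-}_{r}(x,A)\Rightarrow\gamma y_{0}\in C^{-}_{1}(x_{0},A)$), relate the two Dirac sums via the Busemann cocycle, and make the approximation uniform by shrinking to neighborhoods in $Teich(S)\cup PMF$ and invoking the relative compactness from Corollary 7.4 to discard finitely many exceptional $\gamma$. The cancellation you identify between the time-shift factor and the conformal-density factor is exactly the mechanism in the paper's proof.
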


\begin{proof}
If $\eta_{0}$ is not in $\Lambda(G)$ then we can choose a neighborhood $U$ of $\eta_{0}$ in $PMF$ with $\nu_{x}(U)=0$ and $W=PMF$ so that both sides of the desired equation are $0$ by Corollary 9.7. Similarly if $\zeta_{0}$ is not in $\Lambda(G)$.
Assume therefore that $\eta_{0},\zeta_{0}\in \Lambda(G)$. 
Let $\lambda_{0}\in \Lambda(G)$ and $x_{0}\in (\eta_{0},\lambda_{0})$, $y_{0}\in (\zeta_{0},\lambda_{0})$. Let $V_{0}, W_{0}$ be open neighborhoods of $\eta_{0}$ and $\zeta_{0}$ in $PMF$ respectively such that for all open $A\subseteq V_{0}$ and $B\subseteq W_{0}$, we have as $T\to \infty$ that  
$$\lim \sup \nu^{T}_{x_{0},y_{0}}( C^{-}_{1}(x_{0},A)\times C^{-}_{r}(y_{0},B))\leq e^{c/3}\nu_{x_{0}}(A)\nu_{y_{0}}(B)$$ and
$$\lim \inf \nu^{T}_{x_{0},y_{0}}( C^{+}_{r}(x_{0},A)\times C^{+}_{r}(y_{0},B))\geq e^{-c/3}\nu_{x_{0}}(A)\nu_{y_{0}}(B)$$ 
Let $\widehat{V_{0}}$ and $\widehat{W_{0}}$ be neighborhoods in $Teich(S)\cup PMF$ of $\eta_{0}$ and $\zeta_{0}$ respectively, whose intersection with $PMF$ are respectively contained in $V_{0}$ and $W_{0}$ and such that for all $$a\in \widehat{V_{0}}\cap N_{D}WH(G), b\in \widehat{W_{0}}\cap N_{D}WH(G)$$ we have
$$|d(x_{0},a)-d(x,a)-\beta_{\eta_{0}}(x_{0},x)|\leq \frac{c}{6\delta}$$ 
$$|d(y_{0},b)-d(y,b)-\beta_{\eta_{0}}(y_{0},y)|\leq \frac{c}{6\delta}$$ 
and for all $$\eta \in \widehat{V_{0}} \cap \Lambda(G)$$ $$\zeta \in \widehat{W_{0}} \cap \Lambda(G)$$
we have $$|\beta_{\eta}(x_{0},x)-\beta_{\eta_{0}}(x_{0},x)|\leq \frac{c}{6\delta}$$ and 
$$|\beta_{\eta}(x_{0},x)-\beta_{\eta_{0}}(x_{0},x)|\leq \frac{c}{6\delta}.$$

Let $V$ and $W$ be open neighborhoods in $PMF$ of $\eta_{0}$ and $\zeta_{0}$ respectively, with $\overline{V}\subseteq \widehat{V_{0}}\cap PMF$ and $\overline{W}\subseteq \widehat{W_{0}}\cap PMF$ and let $$r=1+d(x,x_{0})+d(y,y_{0}).$$ Consider $A\subseteq V$ and $B\subseteq W$.

Note, if $(\gamma y, \gamma^{-1} x)\in C^{-}_{r}(x,A)\times C^{-}_{r}(y,B)$ one easily checks that $(\gamma y_{0}, \gamma^{-1} x_{0})\in C^{-}_{1}(x_{0},A)\times C^{-}_{1}(y_{0},B)$ by the choice of $r$. 
Next, note that if $d(x,\gamma y)\leq t$ and $(\gamma y, \gamma^{-1}x)\in \widehat{V_{-r}}\times \widehat W_{0}$ where $\widehat{V_{-r}}$ denotes the set of points whose $r$ neighborhood is contained in $\widehat{V_{0}}$, then $\gamma y_{0}\in \widehat{V_{0}}$ and $\gamma^{-1}x\in \widehat{W_{0}}$ which implies that
$$d(x_{0},\gamma y_{0})\leq d(x,\gamma y_{0})+\beta_{\eta_{0}}(x_{0},x)+\frac{c}{6\delta}=
d(y_{0},\gamma^{-1}x)+\beta_{\eta_{0}}(x_{0},x)+\frac{c}{6\delta}$$
$$\leq d(y,\gamma^{-1}x)+\beta_{\zeta_{0}}(y_{0},y)+\beta_{\eta_{0}}(x_{0},x)+\frac{c}{3\delta}\leq t+\beta_{\eta_{0}}(x_{0},x)+\beta_{\zeta_{0}}(y_{0},y)+\frac{c}{3\delta}$$
From Corollary 7.4 we obtain
\begin{lem}
$$[C^{-}_{r}(x,A)\cap N_{D}WH(G)\times C^{-}_{r}(y,B)\cap N_{D}WH(G)]\setminus [\widehat{V_{-r}}\times \widehat{W_{0}}]$$ is  relatively compact in $Teich(S)$.
\end{lem}
 Thus $[C^{-}_{r}(x,A)\times C^{-}_{r}(y,B)]\setminus [\widehat{V_{-r}}\times \widehat{W_{0}}]$ contains only finitely many points $(\gamma x,\gamma^{-1}y)$, $\gamma \in G$

From this, we deduce that  $$\lim \sup_{t\to\infty} \nu^{t}_{x,y}( C^{-}_{r}(x,A)\times C^{-}_{r}(y,B))\leq$$ 
$$e^{c/3}e^{\delta \beta_{\eta_{0}}(x_{0},x)+\delta \beta_{\zeta_{0}}(y_{0},y)}\lim \sup \nu_{x_{0},y_{0}}^{t+\beta_{\eta_{0}}(x_{0},x)+\beta_{\zeta_{0}}(y_{0},y)+\frac{c}{3\delta}}(C^{-}_{1}(x_{0},A)\times C^{-}_{1}(y_{0},B))$$

and thus by Prop 12.1, 
$$\lim \sup \nu^{t}_{x,y}( C^{-}_{r}(x,A)\times C^{-}_{r}(y,B))\leq e^{2c/3}e^{\delta \beta_{\eta_{0}}(x_{0},x)+\delta \beta_{\zeta_{0}}(y_{0},y)}\nu_{x_{0}}(A)\nu_{y_{0}}(B).$$
Since $e^{\delta \beta_{\eta_{0}}(x_{0},x)}\nu_{x_{0}}\leq e^{c/6}\nu_{x}$ when restricted to $V$ and 
$e^{\delta \beta_{\eta_{0}}(y_{0},y)}\nu_{y_{0}}\leq e^{c/6}\nu_{y}$ when restricted to $W$, we obtain that 
$$\lim \sup \nu^{t}_{x,y}(C^{-}_{r}(x.A)\times C^{-}_{r}(y.B))\leq e^{c}\nu_{x}(A)\nu_{y}(B)$$
The reverse estimate is proved similarly.
\end{proof}

\begin{thm}
For $x,y\in Teich(S)$
$\nu^{t}_{x,y}$ converges weakly to $\nu_{x}\times \nu_{y}$ as $t\to \infty$
\end{thm}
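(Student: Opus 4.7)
The plan is to upgrade the local estimates of Lemma 12.7 to global weak convergence by a compactness and covering argument on $\Lambda(G)\times\Lambda(G)$. Since the total mass $\nu^{t}_{x,y}(\overline{Teich(S)}\times\overline{Teich(S)})$ is bounded (it is essentially $\delta\|\mu\|\,e^{-\delta t}|B_{t}(x)\cap Gy|$, which stays bounded as $t\to\infty$ by the orbit-counting estimates implicit in Proposition 12.1), we may pass to any weak-$*$ accumulation point $\mu_{\infty}$ on the compact space $\overline{Teich(S)}\times\overline{Teich(S)}$; it suffices to show $\mu_{\infty}=\nu_{x}\times\nu_{y}$. First I would verify that $\mu_{\infty}$ is supported on $\Lambda(G)\times\Lambda(G)$: the discreteness of $G$ in $Teich(S)$ together with the weights $e^{-\delta t}$ forces every compact subset of $Teich(S)\times\overline{Teich(S)}$ and $\overline{Teich(S)}\times Teich(S)$ to receive vanishing mass as $t\to\infty$, so $\mathrm{supp}\,\mu_{\infty}\subset PMF\times PMF$; and since each $\nu^{t}_{x,y}$ is supported on $Gx\times Gy$, Lemma 2.10 forces $\mathrm{supp}\,\mu_{\infty}\subset\overline{Gx}\cap PMF\times\overline{Gy}\cap PMF=\Lambda(G)\times\Lambda(G)$.

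Next, fix $c>0$. For each $(\eta_{0},\zeta_{0})\in\Lambda(G)\times\Lambda(G)$, Lemma 12.7 furnishes $r>0$ and open neighborhoods $V,W$ in $PMF$ such that for every Borel $A\subseteq V$, $B\subseteq W$ with nonempty interior,
\[
e^{-c}\nu_{x}(A)\nu_{y}(B)\leq\liminf_{t}\nu^{t}_{x,y}(C^{+}_{r}(x,A)\times C^{+}_{r}(y,B))
\]
\[
\leq\limsup_{t}\nu^{t}_{x,y}(C^{-}_{r}(x,A)\times C^{-}_{r}(y,B))\leq e^{c}\nu_{x}(A)\nu_{y}(B).
\]
By Lemma 7.2, the closures of $C^{\pm}_{r}(x,A)\cap N_{D}WH(G)$ in $\overline{Teich(S)}$ meet $PMF$ precisely in $\overline{A}\cap\Lambda(G)$, and analogously on the $y$-factor. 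Combined with the fact that $\mu_{\infty}$ is supported on $\Lambda(G)\times\Lambda(G)$, this lets me transfer the two inequalities above into bounds of the form
\[
e^{-c}\nu_{x}(A^{\circ})\nu_{y}(B^{\circ})\leq\mu_{\infty}(A^{\circ}\times B^{\circ})\leq\mu_{\infty}(\overline{A}\times\overline{B})\leq e^{c}\nu_{x}(\overline{A})\nu_{y}(\overline{B})
\]
for all such $A,B$, by applying the Portmanteau theorem to open and closed sets respectively.

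Since $\nu_{x}$ and $\nu_{y}$ have no atoms (Proposition 5.1), I can always shrink $V$ and $W$ to find arbitrarily fine ``boxes'' $A\times B$ whose topological boundaries in $PMF$ have $\nu_{x}$-, respectively $\nu_{y}$-measure zero; for such boundaryless boxes the two inequalities collapse to $e^{-c}\nu_{x}(A)\nu_{y}(B)\leq\mu_{\infty}(A\times B)\leq e^{c}\nu_{x}(A)\nu_{y}(B)$. By compactness of $\Lambda(G)\times\Lambda(G)\subset PMF\times PMF$, cover it by finitely many such boxes, and use a partition-of-unity / disjointification argument to conclude that $|\mu_{\infty}(f)-(\nu_{x}\times\nu_{y})(f)|\leq(e^{c}-e^{-c})(\nu_{x}\times\nu_{y})(\mathrm{supp}\,f)\cdot\|f\|_{\infty}$ for every continuous $f$ on $\overline{Teich(S)}\times\overline{Teich(S)}$. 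Letting $c\to 0$ yields $\mu_{\infty}=\nu_{x}\times\nu_{y}$, and since this holds for every weak-$*$ accumulation point, $\nu^{t}_{x,y}\to\nu_{x}\times\nu_{y}$.

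The main obstacle is the bookkeeping between the cones $C^{\pm}_{r}$ (which are neighborhoods in $Teich(S)$) and their boundary traces in $PMF$: one must carefully choose $A,B$ so that both $\nu_{x}(\partial A)=0$, $\nu_{y}(\partial B)=0$, and so that the ``skin'' $C^{+}_{r}(x,A)\setminus C^{-}_{r}(x,A)$ contributes nothing in the limit. Absence of atoms for $\nu_{x},\nu_{y}$ together with Lemma 7.2 makes this possible, but the estimate needs to be written uniformly so that the sandwich factor $e^{\pm c}$ remains the only slack as $c\to 0$.
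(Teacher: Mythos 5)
Your proposal is correct and follows essentially the same route as the paper: local sandwich estimates from the preceding lemma, transfer between the cones $C^{\pm}_{r}$ and their traces in $PMF$ via the relative-compactness statements of Section 7, a finite cover of $\Lambda(G)\times\Lambda(G)$ by continuity boxes, a partition of unity, and $c\to 0$. The only cosmetic difference is that you phrase the limit via weak-$*$ accumulation points and Portmanteau, whereas the paper works directly with $\limsup$ and $\liminf$ of $\nu^{t}_{x,y}$ on inner and outer approximating sets.
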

\begin{proof}
Let $c>0$.  For each $(\eta_{0},\zeta_{0})\in \Lambda(G)\times \Lambda(G)$ take  neighborhoods $V_{(\eta_{0},\zeta_{0})}$ and $W_{(\eta_{0},\zeta_{0})}$ of $\eta_{0}$ and $\zeta_{0}$ respectively such that the conclusion of Lemma 12.6 holds for $c$. By compactness finitely many of the $V\times W$ cover $PMF\times PMF$, say $V_{i}\times W_{i}$, $i=1,...,n.$
Let $\widehat{V_{i}}$ and $\widehat{W_{i}}$ be open subsets of $Teich(S)\cup PMF$ such that $V_{i}= \widehat{V_{i}}\cap PMF$ and $W_{i}= \widehat{W_{i}}\cap PMF$.
Let $\widehat{A}$ and  $\widehat{B}$ be borel subsets of $Teich(S)\cup PMF$ with
$$\overline{\widehat{A}}\subset \widehat{V_{i}}$$ $$\overline{\widehat{B}}\subset \widehat{W_{i}}$$ and

 $$(\nu_{x}\otimes\nu_{y})(\partial(\widehat{A}\times\widehat{B}))=0$$ ie $$\nu_{x}(\overline{\widehat{A}})) \nu_{y}(\partial{\widehat{B}})=\nu_{x}(\overline{\widehat{B}}))\times \nu_{y}(\partial{\widehat{A}})=0$$ 
Let $\alpha>0$.
Let $A^{+},B^{+}\subset PMF$ be open and $A^{-},B^{-}\subset PMF$ compact with $A^{-},B^{-}$ either being empty or having nonempty interior such that 
$$A^{-}\subset \widehat{A}^{o}\cap PMF\subset \overline{\widehat{A}}\cap PMF\subset A^{+}\subset V_{i}$$ 

$$B^{-}\subset \widehat{B}^{o}\cap PMF\subset \overline{\widehat{B}}\cap PMF\subset B^{+}\subset W_{i}$$
$$\nu_{x}(\widehat{A}^{o}\setminus A^{-})<\alpha, \nu_{x}(A^{+}\setminus  \overline{\widehat{A}})<\alpha,  \nu_{x}(\widehat{B}^{o}\setminus B^{-})<\alpha, \nu_{x}(A^{+}\setminus  \overline{\widehat{A}})<\alpha$$
Let $D>d(x,WH(G))+d(y,WH(G))$ so that the $G$ orbits of $x,y$ are contained in $N_{D}WH(G)$.
By Corollary 7.4 the sets
$$[\overline{\widehat{A}}\cap N_{D}WH(G)\times \overline{\widehat{B}}\cap N_{D}WH(G)]\setminus [C^{-}_{r}(x,A^{+})\times C^{-}_{r}(y,B^{+})]$$ and
$$[C^{+}_{r}(x,A^{-})\cap N_{D}WH(G)\times C^{+}_{r}(y,B^{-})\cap N_{D}WH(G)]\setminus [\widehat{A}^{o}\times\widehat{B}^{o}]$$
are relatively compact in $Teich(S)\times Teich(S)$.
Thus, by Lemma 12.6 we have 
$$\lim \sup \nu^{t}_{x,y}(\widehat{A}\times \widehat{B})\leq \lim \sup \nu^{t}_{x.y}(C^{-}_{r}(x,A^{+})\times C^{-}_{r}(y,B^{+}))\leq e^{c}\nu_{x}(A^{+})\nu_{y}(B^{+})\leq$$ $$e^{c}\nu_{x}(\overline{\widehat{A}})\nu_{y}(\overline{\widehat{B}})+\alpha e^{c}(||\nu_{x}||+||\nu_{y}||)=e^{c}\nu_{x}(\widehat{A})\nu_{y}(\widehat{B})+\alpha e^{c}(||\nu_{x}||+||\nu_{y}||)$$
Since $\alpha>0$ can be chosen arbitrarily small we obtain 
$$\lim \sup \nu^{t}_{x,y}(\widehat{A}\times \widehat{B})\leq e^{c}\nu_{x}(\overline{A})\nu_{y}(\overline{B})$$
Similarly we obtain the reverse estimate 
$$\lim \inf \nu^{t}_{x,y}(\widehat{A}\times \widehat{B})\geq e^{-c}\nu_{x}(\overline{A})\nu_{y}(\overline{B})$$
Indeed, $$\lim \inf \nu^{t}_{x,y}(\widehat{A}\times \widehat{B})\geq \lim \inf \nu^{t}_{x.y}(C^{+}_{r}(x,A^{-})\times C^{+}_{r}(y,B^{-}))\geq$$ $$e^{-c}\nu_{x}(\widehat{A}^{o})\nu_{y}(\widehat{B}^{o})-\alpha e^{c}(||\nu_{x}||+||\nu_{y}||)=e^{-c}\nu_{x}(\widehat{A})\nu_{y}(\widehat{B})-\alpha e^{c}(||\nu_{x}||+||\nu_{y}||)$$ for any $\alpha>0$.
Thus, for $\phi$ a continuous function supported on $\widehat{V}\times \widehat{W}$ we have $$e^{-c}\int \phi d\nu_{x}\otimes d\nu_{y}\leq \lim \inf \int \phi d\nu^{t}_{x,y}\leq  \lim \inf \int \phi d\nu^{t}_{x,y}\leq e^{-c}\int \phi d\nu_{x}\otimes d\nu_{y}$$
Furthermore the complement $O$ of $\bigcup^{n}_{i=1} V_{i}\times W_{i}$ in $Teich(S)\cup PMF$ is a compact subset of $Teich(S)$, so for any function supported on $O$ we have 

$$\int \phi d\nu_{x}\otimes d\nu_{y}=0$$ and $$\lim_{t\to \infty} \int \phi d\nu^{t}_{x,y}=0$$

By choosing a partition of unity subordinate to the cover $O,\widehat{V_{i}}\times \widehat{W_{i}}$ we obtain that
$$e^{-c}\int \phi d\nu_{x}\otimes d\nu_{y}\leq \lim \inf \int \phi d\nu^{t}_{x,y}\leq  \lim \inf \int \phi d\nu^{t}_{x,y}\leq e^{-c}\int \phi d\nu_{x}\otimes d\nu_{y}$$ for any continuous $\phi$ on $(Teich(S)\cup PMF)\times (Teich(S)\cup PMF)$ and letting $c\to 0$ yields the desired result.

\end{proof}
Theorem 1.1 follows.

\begin{thm}
For all $x,y\in Teich(S)$ we have
$$\lim_{R\to \infty}|B_{R}(x)\cap Gy|e^{-\delta R}=\frac{||\nu_{x}|| ||\nu_{y}||}{\delta ||\mu^{BMS}||}$$
\end{thm}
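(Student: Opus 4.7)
The plan is to deduce Theorem 12.8 essentially immediately from the weak convergence $\nu^t_{x,y}\to \nu_x\otimes \nu_y$ established in Theorem 12.7, by evaluating both sides on the constant function $\mathbf{1}$.

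First I would observe that by the very definition
$$\nu^t_{x,y} = \delta\|\mu\| e^{-\delta t}\sum_{\gamma\in G,\, d(x,\gamma y)\le t} D_{\gamma x}\otimes D_{\gamma^{-1}y},$$
the total mass is
$$\nu^t_{x,y}\bigl(\overline{T}\times\overline{T}\bigr)=\delta\|\mu\|\, e^{-\delta t}\,|B_t(x)\cap Gy|,$$
where $\overline{T}=Teich(S)\cup PMF$ denotes Thurston's compactification. Since $\overline{T}$ is compact Hausdorff, the function $\mathbf{1}$ is continuous there, and Theorem 12.7 applied to $\phi=\mathbf{1}$ gives
$$\delta\|\mu\| e^{-\delta t}\,|B_t(x)\cap Gy|=\int \mathbf{1}\, d\nu^t_{x,y}\;\longrightarrow\; \int \mathbf{1}\, d(\nu_x\otimes\nu_y)=\|\nu_x\|\,\|\nu_y\|.$$
Dividing by $\delta\|\mu\|$ gives the claimed limit.

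The only point that requires any care is justifying that weak-* convergence of $\nu^t_{x,y}$ really does yield convergence of the total masses. This is where the choice to view $\nu^t_{x,y}$ as a measure on the compact Thurston-compactification $\overline{T}\times \overline{T}$ (rather than on the non-compact $Teich(S)\times Teich(S)$) is essential: as $t\to \infty$, the atoms $(\gamma x,\gamma^{-1}y)$ escape every compact subset of $Teich(S)\times Teich(S)$ and accumulate in $PMF\times PMF$, and the limit measure $\nu_x\otimes\nu_y$ is supported on $\Lambda(G)\times \Lambda(G)\subset PMF\times PMF$, so no mass is lost at infinity. Since $\mathbf{1}$ is a continuous bounded function on the compact space $\overline{T}\times\overline{T}$, the weak convergence (which, in the final step of the proof of Theorem 12.7, was established precisely against continuous functions on $\overline{T}\times\overline{T}$) immediately yields the convergence of integrals of $\mathbf{1}$. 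Hence no separate tightness argument is needed; the result drops out of Theorem 12.7 in one line.
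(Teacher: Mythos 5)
Your proposal is correct and is essentially the paper's own argument: the paper proves Theorem 12.7 against arbitrary continuous functions on the compact space $(Teich(S)\cup PMF)\times(Teich(S)\cup PMF)$ and then simply asserts that the counting asymptotic follows, the intended deduction being exactly your evaluation at $\phi=\mathbf{1}$, whose integral against $\nu^{t}_{x,y}$ is $\delta\|\mu\|e^{-\delta t}|B_{t}(x)\cap Gy|$ by definition. Your remark that compactness of the Thurston compactification is what makes the constant function an admissible test function (so no mass escapes and no separate tightness argument is needed) is precisely the point the paper leaves implicit.
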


\section{Counting Closed Geodesics}
In this section we prove Theorem 1.2.
Denote by $G_{h}$ the set of pseudo-Anosov elements of $G$.
Denote by $G_{hp}$ the set of primitive pseudo-Anosov elements of $G$.
Let $\Omega(l)$ be the set of closed primitive geodesics on $Teich(S)/G$ of length at most $R$. For $g\in \Omega(l)$ let $D_{g}$ be the Lebesgue measure on $g$ normalized to unit mass.

We will prove:
\begin{thm}
$$\lim_{t\to \infty}\delta t e^{-\delta t}\sum_{g\in\Omega(t)}D_{g}=||\mu||^{-1}\mu$$
\end{thm}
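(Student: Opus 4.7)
The plan is to adapt Roblin's approach to counting closed geodesics in $\mathrm{CAT}(-1)$ spaces, reducing to the orbit equidistribution (Theorem 12.8) and the mixing of $\mu$ (Theorem 10.2), while invoking Section 11 to discard closed orbits that spend too much time in the multiple-zero locus or in the thin part of the principal stratum.

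First, identify elements of $\Omega(t)$ with conjugacy classes of primitive pseudo-Anosov elements $\gamma \in G_{hp}$ of translation length $\log\lambda(\gamma) \leq t$: the axis $a_\gamma \subset WH(G)$ projects to the corresponding closed geodesic $c_\gamma$ on $Q^1(S)/G$, and $D_{c_\gamma}$ is $(\log\lambda(\gamma))^{-1}$ times arclength on a fundamental domain for $\langle\gamma\rangle$ acting on $a_\gamma$. By a partition of unity, it suffices to prove weak convergence of $\delta t e^{-\delta t}\sum_{g\in\Omega(t)} D_g$ to $\|\mu\|^{-1}\mu$ when tested against continuous $f \in C_c(Q^1(S)/G)$ whose $G$-invariant lift $\widetilde f$ is supported in a small flow box $B = B^{su} \times B^{ss} \times (-\epsilon, \epsilon)$ centered at a quadratic differential $q_0$ in the principal stratum with $q_0^\pm \in \Lambda(G)$.

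Second, the key lasso correspondence: for $\gamma \in G_{hp}$ with $\log\lambda(\gamma) \in [t-\Delta,t]$ whose axis passes through $B$, the pair $(\gamma q_0, \gamma^{-1} q_0) \in Q^1(S) \times Q^1(S)$ lies approximately in a product of shadows of $B$ seen from $q_0$. Conversely, given $\gamma \in G$ with $d(q_0, \gamma q_0) \approx t$ and $(\gamma q_0, \gamma^{-1} q_0)$ in these shadows, the closing argument in the thick part of the principal stratum (Proposition 12.2 together with the Eskin--Mirzakhani fellow-traveling Lemmas 12.3--12.5) produces a nearby closed geodesic of length close to $t$ whose axis passes through $B$. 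Applying Theorem 12.8 to count orbit points in the required shadows and combining with the product decomposition
\[
d\mu = e^{\delta\rho_x(\eta,\zeta)}\,d\nu_x(\eta)\,d\nu_x(\zeta)\,ds,
\]
the prefactor $\delta t$ cancels against the $(\log\lambda(\gamma))^{-1}$ normalization of $D_{c_\gamma}$ and the $\delta^{-1}$ arising from integration along the flow direction in $\mu(B)$, giving $\|\mu\|^{-1}\mu(B) f(q_0)$ in the limit as $\epsilon, \Delta \to 0$.

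The main obstacle is the lasso step itself, which requires $[q_0,\gamma q_0]$ to spend a definite proportion of time in the thick part of the principal stratum so that nearby geodesics fellow-travel (Proposition 12.2). Elements $\gamma$ whose axes violate this condition must be discarded; this is accomplished via Theorem 11.2, which bounds their orbit count by $o(e^{\delta t})$, and by the same argument applied to the counting measure shows their contribution to $\delta t e^{-\delta t}\sum_{g\in\Omega(t)} D_g$ is negligible. A secondary point is the exclusion of non-primitive iterates $\gamma^n$, $n \geq 2$; since $\log\lambda(\gamma^n) = n\log\lambda(\gamma)$, Theorem 12.9 bounds their number by $O(e^{\delta t/2})$, negligible against the primitive count of order $e^{\delta t}/t$. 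Assembling these ingredients yields the weak convergence of the renormalized closed-geodesic measure to $\|\mu\|^{-1}\mu$.
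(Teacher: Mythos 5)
Your proposal follows the same overall strategy as the paper: both are adaptations of Roblin's Theorem 5.1.1, reducing the closed-geodesic count to the orbit equidistribution of Theorem 12.8 via a correspondence between orbit points $(\gamma x,\gamma^{-1}x)$ landing in suitable shadows and axes of pseudo-Anosov elements passing near the basepoint, followed by the multiplicity bookkeeping that converts arclength measures $L_\gamma$ into the normalized $D_g$ (your cancellation of $\delta t$ against $(\log\lambda(\gamma))^{-1}$ is exactly the paper's comparison of $M^t_x$ with $E^t$ via $\lceil t/l(\gamma)\rceil$ versus $1/l(\gamma)$) and a partition of unity. The one substantive divergence is your ``lasso'' step: you route the converse direction through an Anosov-type closing lemma built from Proposition 12.2 and the Eskin--Mirzakhani Lemmas 12.3--12.5, and you separately re-invoke Theorem 11.2 to discard axes that linger in the multiple-zero locus or thin part. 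The paper needs neither, and this is worth internalizing: for a convex cocompact $G$, every infinite-order element is already pseudo-Anosov with axis in the uniformly thick, quasiconvex set $WH(G)$, so there is nothing to ``close''; Lemma 13.2's soft computation $\rho_x(\gamma^{\pm 1}x,\gamma^{\pm})\geq d(x,\gamma x)$, together with thick-part fellow-traveling, both locates the axis inside $V(x,r)$ and pins $l(\gamma)$ to within $2d(x,g_\gamma)$ of $d(x,\gamma x)$, while all of the stratum and thin-part difficulties have already been absorbed into Theorem 12.8. A genuine closing lemma would also leave you the task of identifying which conjugacy class the produced periodic orbit represents, which your sketch elides. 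Two smaller omissions: you never dispose of the finite-order elements of $G$ (most of the work in the paper's Lemma 13.2 goes into showing their orbit pairs eventually leave $V(x,r)$), and your $O(e^{\delta t/2})$ bound on non-primitive iterates via Theorem 12.9 tacitly uses the class-to-orbit-point correspondence it is meant to support; the paper sidesteps this by bounding $\sum_{l(\gamma)\leq e^{-r}t}\frac{1}{l(\gamma)}\int\phi\,dL_\gamma$ directly. None of these is fatal, but the paper's route is both lighter and cleaner at exactly the points where you reach for the heavier machinery.
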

Theorem 1.2 is an immediate corollary.
\begin{proof}[Proof of Theorem 13.1]
Let $x\in Teich(S)$.
Denote by $V(x,r)\subset Teich(S)^{2}\cup PMF^{2}$ the set of pairs $(a,b)$ such that $[a,b]\cap B(x,r)\neq \emptyset$. 
Recall the measure $\widetilde{\mu}$ on $\Lambda(G)\times \Lambda(G)$ by $$d\tilde{\mu}(\eta,\zeta)=\exp(\delta(G)\rho_{x}(\eta,\zeta))d\nu_{x}(\eta)d\nu_{x}(\zeta)$$ and let
$$d\nu^{t}_{x,1}=d\nu^{t}_{x,x}=\delta ||\mu|| e^{-\delta t}\sum_{\gamma \in G, d(x,\gamma y)\leq t}D_{\gamma x}\otimes D_{\gamma^{-1} x}$$  
$$d\nu^{t}_{x,2}=\delta ||\mu|| e^{-\delta t}\sum_{\gamma \in G_{h}, d(x,\gamma y)\leq t}D_{\gamma x}\otimes D_{\gamma^{-1} x}$$
$$d\nu^{t}_{x,3}=\delta ||\mu|| e^{-\delta t} \sum_{\gamma \in G_{h}, d(x,\gamma y)\leq t}D_{\gamma^{+}}\otimes D_{\gamma^{-}}$$
where $\gamma^{\pm}\in PMF$ denote the stable and unstable laminations of $\gamma$. Note that for $D=d(x,WH(G))$ we have $\nu^{t}_{x,i}$ and $\nu_{x}\times \nu_{x}$  all supported on $(N_{D}WH(G)\cup \Lambda(G))^{2}$ and $V(x,r)\cap (N_{D}WH(G)\cup \Lambda(G))^{2}$  is closed in $Teich(S)\cup PMF$.
\begin{lem}
For every $c>0$ there exists a $t_{0}=t_{0}(x,r,c)>0$ such that if $\gamma \in G$ with $d(x,\gamma x)>t_{0}$ and $(\gamma x, \gamma^{-1}x)\in V(x,r)$  we have that $\gamma$ is pseudo-Anosov and $\rho_{x}(\gamma^{\pm 1}x,\gamma^{\pm})>c$.
\end{lem}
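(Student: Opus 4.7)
The plan is to establish the two assertions of the lemma in turn: first that $\gamma$ is pseudo-Anosov, then that $\rho_{x}(\gamma^{\pm 1}x,\gamma^{\pm})>c$.

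For the pseudo-Anosov claim, the key geometric input is that the condition $[\gamma x,\gamma^{-1}x]\cap B(x,r)\neq\emptyset$ forces $\gamma x$, $x$, $\gamma^{-1}x$ to be nearly collinear. Picking $p\in [\gamma x,\gamma^{-1}x]$ with $d(p,x)<r$, the triangle inequality along this geodesic gives
$$d(x,\gamma^{2}x)=d(\gamma x,\gamma^{-1}x)\ge 2d(x,\gamma x)-2r.$$
I will combine this near-collinearity with the $\delta$-thin triangle property in the thick part (Proposition 2.12) applied to the triangle $(x,\gamma x,\gamma^{-1}x)$, whose sides lie in $N_{A''}WH(G)\subset Teich_{\epsilon}(S)$ by Corollary 2.11, to show that the iterates $\gamma^{n}x$ form a bi-infinite quasi-geodesic with translation length at least $d(x,\gamma x)-O(r)$. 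Choosing $t_{0}$ large enough in terms of $x$ and $r$ forces this translation length to be positive, so $\gamma$ has infinite order, and by the last bullet of Theorem 2.7 it is pseudo-Anosov. The geometric picture supporting this: if $\gamma$ were a nontrivial torsion element with fixed point $y_{0}$, then $\gamma x$ and $\gamma^{-1}x$ would lie on a sphere about $y_{0}$, so in a $\delta$-thin triangle their connecting geodesic would pass $O(\delta)$-close to $y_{0}$ rather than close to $x$; this contradicts the hypothesis once $d(x,\gamma x)\gg r$ and forces $y_{0}$ to lie near $x$, hence $d(x,\gamma x)$ to be bounded.

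With $\gamma$ pseudo-Anosov in hand, the attracting fixed point $\gamma^{+}\in\Lambda(G)$ is uniquely ergodic and $\gamma^{n}x\to\gamma^{+}$ by Masur's Theorem 2.2. Using Miyachi's formula $\beta_{[\alpha]}(x,y)=\tfrac{1}{2}\log(Ext_{\alpha}(x)/Ext_{\alpha}(y))$ for uniquely ergodic $\alpha$, together with the facts that $\gamma$ scales the measured foliation representing $\gamma^{+}$ by the dilatation $\lambda(\gamma)$ and that extremal length is $Mod(S)$-invariant and homogeneous of degree $2$ in the measure, I compute
$$\beta_{\gamma^{+}}(x,\gamma x)=\tfrac{1}{2}\log\frac{Ext_{\gamma^{+}}(x)}{Ext_{\gamma^{+}}(\gamma x)}=\log\lambda(\gamma)=\ell(\gamma),$$
the Teichm\"uller translation length of $\gamma$. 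Consequently
$$\rho_{x}(\gamma x,\gamma^{+})=d(x,\gamma x)+\beta_{\gamma^{+}}(x,\gamma x)=d(x,\gamma x)+\ell(\gamma)\ge d(x,\gamma x)>t_{0},$$
and the identical computation for $\gamma^{-1}$, whose attracting fixed point is $\gamma^{-}$, gives $\rho_{x}(\gamma^{-1}x,\gamma^{-})>t_{0}$ as well. Taking $t_{0}$ larger than both $c$ and the threshold needed for Part 1 finishes the proof.

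The main obstacle is the first step: converting the geodesic-passing-near-$x$ condition into quantitative control strong enough to rule out torsion and produce a genuine loxodromic-type action of $\gamma$, given that Teichm\"uller space is not globally hyperbolic. The restriction of the relevant geodesics to the thick part via Corollary 2.11, together with the $\delta$-thin triangle property there (Proposition 2.12), is exactly what makes the argument go through; once $\gamma$ is known to be pseudo-Anosov, the $\rho$-bound is essentially automatic from the definition and Miyachi's formula.
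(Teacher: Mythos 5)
Your argument reaches the same two conclusions as the paper but by a genuinely different route on the first one. For the bound $\rho_{x}(\gamma^{\pm 1}x,\gamma^{\pm})\geq d(x,\gamma x)>t_{0}$ you use Miyachi's extremal-length formula together with the scaling of $\gamma^{+}$ by the dilatation to get $\beta_{\gamma^{+}}(x,\gamma x)=\ell(\gamma)$; the paper instead computes $\rho_{x}(\gamma x,\gamma^{+})$ as a limit along the axis, obtaining $d(x,\gamma x)+\lim_{t}\left(d(x,l(t))-d(x,l(t-\ell(\gamma)))\right)=d(x,\gamma x)+\ell(\gamma)$. These are the same estimate in different clothing, and your version is correct (the sign works out because $\gamma^{+}$ is the attracting fixed point, so $\beta_{\gamma^{+}}(x,\gamma x)=+\ell(\gamma)$). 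For the pseudo-Anosov claim the paper does something quite different from your main line: it argues by contradiction with a sequence $\gamma_{n}$ of torsion elements, using that a convex cocompact (hence hyperbolic) group has only finitely many conjugacy classes of finite-order elements, so their fixed points $p_{n}$ stay in a bounded neighborhood of $WH(G)$; passing to limits $p_{n}\to\eta$, $\gamma_{n}x\to\zeta$, $\gamma_{n}^{-1}x\to\theta$ in $PMF$ and using $\rho_{x}(\eta,\zeta)=\lim r_{n}=\infty$ with continuity of $\rho$ forces $\zeta=\eta=\theta$, whence $[\gamma_{n}x,\gamma_{n}^{-1}x]$ eventually misses $B(x,r)$. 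Your route would buy more (a positive lower bound on the translation length of $\gamma$), but the step ``near-collinearity plus thin triangles implies $\{\gamma^{n}x\}$ is a bi-infinite quasi-geodesic'' is a local-to-global principle that cannot be invoked off the shelf in $Teich(S)$, which is not hyperbolic; to make it rigorous you would need to transfer the estimate to the word metric of the hyperbolic group $G$ via the orbit quasi-isometry (legitimate, and consistent with $t_{0}$ depending on $x$), and as written this is the one under-justified step. Note, however, that the lemma only requires excluding torsion, and your own supporting ``geometric picture'' already does that with a single thin-triangle application: if $\gamma^{\pm 1}x$ lie on a sphere about a fixed point $y_{0}$ (which lies in a bounded neighborhood of $WH(G)$ by the finiteness of torsion conjugacy classes) and $[\gamma x,\gamma^{-1}x]$ meets $B(x,r)$, then thinness of the triangle $(y_{0},\gamma x,\gamma^{-1}x)$ forces $d(x,\gamma x)\leq 2r+2K$ for the thin-triangle constant $K$ of Section 3, a contradiction for $t_{0}$ large; infinite order then implies pseudo-Anosov via the finite-index all-pseudo-Anosov subgroup of Theorem 2.7. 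So your proof closes, but its clean version is the fallback you relegated to a supporting role rather than the quasi-geodesic machinery.
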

\begin{proof}
Suppose $\gamma \in G$ is a pseudo-Anosov, $p$ a point on the axis of $\gamma$ and $l$ the unit speed parametrization of the axis starting at $p$ in direction $\gamma^{+}$.
Note, $$\rho_{x}(\gamma x, \gamma^{+})=\lim_{t\to\infty}d(x,\gamma x)+d(x,l(t))-d(\gamma x,l(t))=$$ $$d(x,\gamma x)+d(x,l(t))-d(x,l(t-l(\gamma)))\geq d(x,\gamma x)$$ and similarly
$$\rho_{x}(\gamma^{-1} x, \gamma^{-})\geq d(x,\gamma x)$$
Note, as $G$ is convex cocompact it contains no parabolic elements. Since $G$ is a hyperbolic group, it contains only finitely many conjugacy classes of finite order elements. Therefore, the fixed points of finite order elements of $G$ are contained in finitely many $G$ orbits of $Teich(S)$. Let $D$ be the maximum distance of these orbits from $WH(G)$. Suppose now that $\gamma_{n}\in G$ is a sequence of finite order elements with $d(x,\gamma_{n}x)\to \infty$ and $(\gamma_{n} x, \gamma^{-1}_{n}x)\in V(x,r)$. Let $p_{n}$ be the fixed point of $\gamma_{n}$. Then $x,\gamma_{n}x,\gamma^{-1}_{n}x$ all lie on the same circle $C_{n}$ of radius $r_{n}\to \infty$ centered at $p_{n}$. 

Taking a subsequence, we can assume $$p_{n}\to \eta \in PMF$$ and $$y_{n}=\gamma_{n}x\to \zeta \in PMF$$ and $$z_{n}=\gamma^{-1}_{n}x\to \theta \in PMF$$

Then clearly $\zeta, \theta \in \Lambda(G)$.

 Also, the $p_{n}$ are all contained in $N_{D}WH(G)$ and thus $\eta \in \Lambda(G)\subset UE$. We claim $\zeta=\eta =\theta$, which would imply that for large enough $n$ we have $(\gamma_{n}x, \gamma^{-1}_{n}x)\notin V(x,r)$, contradicting our assumption.  
Indeed, $$\rho_{x}(\eta,\zeta)=\lim_{n\to\infty}d(x,p_{n})+d(x,y_{n})-d(y_{n},p_{n})=\lim_{n\to\infty} r_{n}\to \infty$$
which is impossibe if $\eta\neq \zeta$ by continuity of $\rho$. Similarly, $\eta=\theta$.
\end{proof}
For the remainder of the argument, the proof of Roblin's Theorem 5.1.1 carries through with essentially no modification. From Lemma 13.2 we obtain
\begin{cor}
When restricted to $V(x,r)$ we have $\nu^{t}_{x,i}-\nu^{t}_{x,j}\to 0$ for $i,j=1,2,3$.
\end{cor}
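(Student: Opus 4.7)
My plan is to treat the three cases pairwise; since $\nu^{t}_{x,i}-\nu^{t}_{x,j}=(\nu^{t}_{x,i}-\nu^{t}_{x,k})+(\nu^{t}_{x,k}-\nu^{t}_{x,j})$, it suffices to handle $(i,j)=(1,2)$ and $(i,j)=(2,3)$. For $(1,2)$, I apply Lemma 13.2 with any fixed $c$, say $c=1$, to obtain $t_0=t_0(x,r,1)$ such that every $\gamma\in G$ satisfying $d(x,\gamma x)>t_0$ and $(\gamma x,\gamma^{-1}x)\in V(x,r)$ is pseudo-Anosov. Thus $(\nu^{t}_{x,1}-\nu^{t}_{x,2})|_{V(x,r)}$ is supported on the finite set of atoms $(\gamma x,\gamma^{-1}x)$ with $\gamma\in G\setminus G_h$ and $d(x,\gamma x)\leq t_0$ (finiteness from discreteness of $Gx$), so the normalization $e^{-\delta t}$ sends its total variation to $0$.

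For $(2,3)$, I reduce to the following closeness claim: for every neighborhood $N$ of the diagonal of $(Teich(S)\cup PMF)^2$, there exists $t_0$ such that $((\gamma x,\gamma^{-1}x),(\gamma^+,\gamma^-))\in N$ for every $\gamma\in G_h$ with $d(x,\gamma x)>t_0$. From the computation inside the proof of Lemma 13.2, $\rho_x(\gamma^{\pm 1}x,\gamma^{\pm})\geq d(x,\gamma x)\to\infty$, so it suffices to show that if $y_n\in Gx\subset N_D WH(G)$ and $\eta_n\in\Lambda(G)$ satisfy $\rho_x(y_n,\eta_n)\to\infty$, then any limit point $(y,\eta)$ of $(y_n,\eta_n)$ in $(Teich(S)\cup PMF)^2$ lies on the diagonal. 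If $y\in Teich(S)$, then continuity of the Busemann cocycle on $Teich(S)\cup UE$ (Section 4), combined with $\eta\in\Lambda(G)\subset UE$, yields $\rho_x(y_n,\eta_n)\to d(x,y)+\beta_\eta(x,y)<\infty$, a contradiction. If $y\in PMF$, then $y\in\Lambda(G)\subset UE$ and the extremal-length formula $\rho_x(y,\eta)=\frac{1}{2}\log\frac{(Ext_x y)(Ext_x \eta)}{i(y,\eta)^2}$ is finite whenever $y\neq\eta$, again contradicting $\rho_x\to\infty$; so $y=\eta$. Applying this to $y_n=\gamma_n^{\pm 1}x$, $\eta_n=\gamma_n^{\pm}$ proves the closeness claim.

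Given the closeness claim, weak-$*$ convergence $\nu^{t}_{x,2}-\nu^{t}_{x,3}\to 0$ on $V(x,r)$ follows routinely: testing against a continuous $\phi$ on $(Teich(S)\cup PMF)^2$ supported in $V(x,r)$, uniform continuity of $\phi$ on the compact set $\overline{V(x,r)}\cap(N_D WH(G)\cup\Lambda(G))^2$ gives $|\phi(\gamma x,\gamma^{-1}x)-\phi(\gamma^+,\gamma^-)|<\epsilon$ for $d(x,\gamma x)$ large, and the orbit-counting bound $e^{-\delta t}\#\{\gamma\in G:d(x,\gamma x)\leq t\}=O(1)$ from Theorem 12.8 controls the sum. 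The main obstacle I anticipate is the subtle mismatch near $\partial V(x,r)$: the conditions $(\gamma x,\gamma^{-1}x)\in V(x,r)$ and $(\gamma^+,\gamma^-)\in V(x,r)$ are distinct, so the atoms of $\nu^{t}_{x,2}|_{V(x,r)}$ and $\nu^{t}_{x,3}|_{V(x,r)}$ do not correspond bijectively. This is resolved by choosing $t_0$ so that the closeness error is strictly smaller than the distance from $\mathrm{supp}(\phi)$ to $\partial V(x,r)$, ensuring $\phi(\gamma x,\gamma^{-1}x)$ and $\phi(\gamma^+,\gamma^-)$ both vanish or both lie in the support for $d(x,\gamma x)>t_0$; a standard partition-of-unity argument over a cover of $V(x,r)$ by such functions (using that $\nu_x$ has no atoms by Proposition 5.1 to shrink the support away from $\partial V(x,r)$) completes the proof.
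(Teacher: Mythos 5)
Your proposal is correct and follows essentially the route the paper intends: the paper derives this corollary directly from Lemma 13.2 by invoking Roblin's Theorem 5.1.1, and your two steps (non-pseudo-Anosov contributions in $V(x,r)$ are finite by Lemma 13.2, hence killed by $e^{-\delta t}$; and $\rho_{x}(\gamma^{\pm 1}x,\gamma^{\pm})\geq d(x,\gamma x)\to\infty$ forces $(\gamma x,\gamma^{-1}x)$ and $(\gamma^{+},\gamma^{-})$ together, combined with the $O(e^{\delta t})$ orbit count and uniform continuity of test functions) are exactly that argument. Your explicit treatment of the boundary mismatch near $\partial V(x,r)$ is consistent with how the corollary is actually applied in the paper, namely only against $\psi$ compactly supported on $V(x,r)$.
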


Note, for $\eta,\zeta \in V(x,r)$ we have $0<\rho_{x}(\eta,\zeta)\leq 2r$ and thus by Theorem 12.8 and Lemma 13.2 for any positive continuous $\psi$ compactly supported on $V(x,r)$ we have 
$$e^{-2\delta r}\int \psi d\widetilde{\mu}\leq \lim \inf \int \psi  d\nu^{t}_{x,3}\leq \lim \sup \int \psi  d\nu^{t}_{x,3}\leq \int \psi d\widetilde{\mu}$$ as $t\to \infty$.
For $\gamma \in G_{h}$ let $g_{\gamma}$ be the axis of $\gamma$, and denote by $L_{\gamma}$ the arclength measure on $\gamma$. Let $l(\gamma)$ denote the translation length of $\gamma$ in the Teichm$\ddot{\mathrm{u}}$ller metric.
Let $$M^{t}_{x}=\delta e^{-\delta t}\sum_{\gamma \in G_{h}, l(\gamma)\leq t} L_{\gamma}$$ and
 $$M^{t}_{x,3}=\delta e^{-\delta t}\sum_{\gamma \in G_{h}, d(x,\gamma x)\leq t} L_{\gamma}$$
Note that $$l(\gamma)\leq d(x,\gamma x)\leq l(\gamma)+2d(x, g_{\gamma})$$ and thus when restricted to $V(x,r)$ we have 
$$M^{t}_{x,3}\leq M^{t}_{x}\leq e^{2\delta r}M^{t+2r}_{x,3}$$
Let $\widehat{V}(x,r)\subset Q^{1}(S)$ denote all quadratic differentials on geodesic segments defined by elements of $V(x,r)$. Note, $$M^{t}_{x,3}=||\mu^{BMS}||^{-1}\nu^{t}_{x,3}\otimes ds$$ and thus for any $\phi \in C^{+}_{c}(\widehat{V}(x,r))$ we have  
$$e^{-2\delta r}||\mu||^{-1}\int \phi d\mu\leq \lim \inf \int \phi dM^{t}_{x,3}\leq \lim \sup \int \phi dM^{t}_{x,3}\leq ||\mu||^{-1}\int \phi d\mu$$ 
Denote by $G_{hp}\subset G$ the set of primitive hyperbolic isometries so that 
$$M^{t}_{x}=\delta e^{-\delta t} \sum_{\gamma \in G_{hp}, l(\gamma)\leq t}\left\lceil \frac{t}{l(\gamma)} \right\rceil L_{\gamma}$$
Clearly $$M^{t}_{x} \leq E^{t}:=\delta t e^{-\delta t}\sum_{\gamma\in G_{hp},l(\gamma)\leq t}\frac{1}{l(\gamma)}L_{\gamma}$$
Moreover, note $\left\lceil \frac{t}{l(\gamma)} \right\rceil \geq \frac{1}{l(\gamma)}$ whenever $1\leq l(\gamma)\leq t$ and $t\geq 2$, so for any $\phi \in C^{+}_{c}(\widehat{V(x,r)})$ we have 
$$\sum_{\gamma\in G_{hp},l(\gamma)\leq t}\frac{1}{l(\gamma)}\int \phi dL_{\gamma}=O(e^{\delta t})$$ since $\int \phi dM^{t}_{x}$ is bounded as $t\to \infty$.

Now, if $e^{-r}t<l(\gamma)\leq t$ then  $$\left\lceil \frac{t}{l(\gamma)} \right\rceil \geq 1\geq \frac{e^{-r}t}{l(\gamma)}$$ and so 
$$\int \phi dM^{t}_{x}\geq e^{-r}\delta t e^{-\delta t}\sum_{\gamma\in G_{hp},e^{-r}t<l(\gamma)\leq t}\frac{1}{l(\gamma)}\int \phi dL_{\gamma}=$$ $$e^{-r}\int \phi dE^{t}-e^{-r}\delta t e^{-\delta t}\sum_{\gamma\in G_{hp},l(\gamma)\leq e^{-r}t}\frac{1}{l(\gamma)}\int \phi dL_{\gamma}$$

By above remarks, the second term in the above difference is bounded above by a constant multiple of $$te^{\delta (e^{-r}-1) t}=o(1)$$
Thus we get 
$$\lim \sup \int \phi dM^{t}_{x}\geq e^{-r}\lim \sup \int \phi dE^{t}$$
Putting everything together we obtain as $t\to \infty$
$$e^{-2\delta r}||\mu||^{-1}\int \phi d\widetilde{\mu}\leq \lim \sup \int \phi dE^{t} \leq \lim \sup \int \phi dE^{t}\leq e^{(2 \delta+1)r}||\mu||^{-1}\int \phi d\widetilde{\mu}$$
Choosing a partition of unity subordinate to the locally finite cover of $Q^{1}(S)$ by the $\widehat{V(x,r)}$ (where $r>0$ is fixed) we obtain the above relation for each $\phi \in C^{+}_{c}(Q^{1}(S))$. Letting $r\to 0$ completes the proof.
\end{proof}


\begin{thebibliography}{10}

\bibitem{ABEM}
Jayadev Athreya, Alexander Bufetov, Alex Eskin and Maryam Mirzakhani. Lattice Point Asymptotics
and Volume Growth on Teichmüller space. Preprint, arXiv:math/0610715v3, 2010, 1–39.

\bibitem{FM}
Benson Farb and Lee Mosher. Convex cocompact subgroups of mapping class
groups. Geom. Topol., 6:91152 (electronic), 2002.

\bibitem{Forni}
Giovanni Forni. Deviation of ergodic averages for area-preserving ﬂows on surfaces of higher genus.
Ann. of Math. (2), 155 (2002), no. 1, 1–103.

\bibitem{Duchin}
Moon Duchin,
Thin triangles and a multiplicative ergodic theorem for Teichmüller geometry. Preprint,
 arXiv:math.GT/0508046.



\bibitem{EM}
 Alex Eskin, Maryam Mirzakhani.
 Counting closed geodesics in Moduli space.
Preprint, arXiv:0811.236

\bibitem{EMR}
Alex Eskin, Maryam Mirzakhani, and Kasra Rafi.
Counting Closed Geodesics in Strata. Preprint,

http://www.math.ou.edu/~rafi/research/Loop.pdf

\bibitem{FLP}
A. Fathi, F. Laudenbach, and V. Poenaru. ´ Travaux de Thurston sur les sur-
faces. Societ´ e´ Mathematique ´ de France, Paris, 1991. Seminaire ´ Orsay, Reprint of
Travaux de Thurston sur les surfaces, Soc. Math. France, Paris, 1979 Asterisque ´
No. 66-67 (1991).

\bibitem{Gu1}
Roland Gunesch.
Counting closed geodesics on rank one manifolds.
Preprint, http://arxiv.org/pdf/0706.2845v1.pdf

\bibitem{Gu2}
Roland Gunesch.
Precise volume estimates in nonpositive curvature. Preprint,
www.math.uni-hamburg.de/home/gunesch/papers/volume.ps

\bibitem{Ha}
Ursula Hamenstaedt.
Bowen's construction for the Teichm$\ddot{\mathrm{u}}$ller Flow.
Preprint, http://arxiv.org/abs/1007.2289

\bibitem{HM}
John Hubbard and Howard Masur, Quadratic Differentials and Foliations. Acta Math., 142 (3-4):221-274,1979.


\bibitem{Marg 70}
Gregory Margulis.
On some aspects of the theory of Anosov systems.
Springer Monographs in Mathematics. Springer-Verlag, Berlin, 2004.

\bibitem{Kla 99}
E. Klarreich, the boundary at Infinity of the Curve Complex and the Relative Teichm$\ddot{\mathrm{u}}$ller Space
Preprint, http://www.ericaklarreich.com/curvecomplex.pdf

\bibitem{KL1}
R. P. Kent, and C. J. Leininger, Shadows of mapping class groups: capturing convex
cocompactness. Geom. Funct. Anal. 18 (2008), 1270–1325

\bibitem{KL2}
R. P. Kent, and C. J. Leininger, Subgroups of mapping class groups from the geometrical
viewpoint. In the tradition of Ahlfors-Bers. IV, 119–141, Contemp. Math., 432, Amer.
Math. Soc., Providence, RI, 2007

\bibitem{Mas 80}
Howard Masur Uniquely ergodic quadratic diﬀerentials, Comment. Math. Helvetici Vol. 55 (1980) 255-266

\bibitem{Mas 82}
Howard Masur Interval exchange transformations and measured foliations, Annals of
Math. 115 169-200 (1982)

\bibitem{McP}
John McCarthy and Athanase Papadopoulos. Dynamics on Thurston's sphere of
projective measured foliations. Comment. Math. Helv., 64(1):133166, 1989.

\bibitem{MM} 
Howard Masur, and Yair Minsky, Geometry of the complex of curves. I. Hyperbolicity. Inventiones Mathematicae, vol. 138 (1999), no. 1, pp. 103–149.

\bibitem{Miy}
Hideki Miyachi.
Teichm$\ddot{\mathrm{u}}$ller rays and the Gardiner-Masur boundary of Teichm$\ddot{\mathrm{u}}$ller space. Geom.
Dedicata 137 (2008), 113–141.


\bibitem{Ra}
Kasra Rafi. Hyperbolicity in Teichm$\ddot{\mathrm{u}}$ller Space. Preprint, 
http://www.math.ou.edu/~rafi/research/Fellow.pdf

\bibitem{Rob}
Thomas Roblin. Ergodicity and uniform distributions in negative curvature.
Mem. Soc. Math. Fr., No. 95, 2003.

\bibitem{GuUr}
Yves Guivarc'h, Roman Urban, Semigroup actions on tori and stationary measures on projective spaces, Studia Math. 171 (1) (2005) 33 66.

\bibitem{Bab}
Martine Babillot. On the mixing property for hyperbolic systems. Israel J.
Math. 129 (2002), 61–76

\bibitem{Q}
J.F. Quint. An Overview of Patterson Sullivan Theory.
www.math.univ-paris13.fr/~quint/publications/courszurich.pdf

\bibitem{Sul}
D. Sullivan, The density at infinity of a discrete group of hyperbolic motions, Publ. Math.
I.H.E.S. 50 (1979), 171-202. MR 81b:58031

\bibitem{Yue}
Chengbo Yue. The ergodic theory of discrete isometry groups on manifolds of variable
negative curvature, Trans. Amer. Math. Soc. 348 (1996), no. 12, 4965–5005.

\bibitem{Coo}
Michel Coornaert. 
Sur les groups proprement discontinus d'isometries des espaces hyper-
boliques au sens de Gromov, These (1990).

\bibitem{Kn}
Gerhard Knieper.  On the asymptotic geometry of nonpositively curved manifolds,
Geom. Funct. Anal. 7 (1997), 755–782.


\bibitem{Link}
Gabriele Link. Asymptotic Geometry and Growth of Conjugacy Classes of Nonpositively Curved Manifolds
Annals of Global Analysis and Geometry. Volume 31, Number 1, 37-57.

\bibitem{Ivanov}N. Ivanov, A short proof of non Gromov hyperbolicity of Teichm¨uller space. Ann. Acad. Sci. Fenn. Ser. A
I Math., 27, 3-5.

\end{thebibliography}
\end{document}